\title[Tied monoids]{Tied monoids}
\author[D. Arcis]{Diego Arcis}
\address{Facultad de Ciencias de la Salud, Universidad Aut\'onoma de Chile - Sede Talca, 5 Poniente 1670, Talca 3460000, Chile.}
\email{diego.arcis@uautonoma.cl}
\author[J. Juyumaya]{Jes\'us Juyumaya}
\address{IMUV, Universidad de Valpara\'iso, Gran Breta\~{n}a 1111, Playa Ancha, Valpara\'iso 2340000, Chile.}
\email{juyumaya@gmail.com}
\newtheorem{thm}{\bf Theorem}[section]
\newtheorem{crl}[thm]{\bf Corollary}
\newtheorem{dfn}[thm]{\bf Definition}
\newtheorem{lem}[thm]{\bf Lemma}
\newtheorem{pro}[thm]{\bf Proposition}
\theoremstyle{definition}
\newtheorem{rem}{\bf Remark}[section]
\newtheorem*{exm}{\bf Example}
\newtheorem*{ntn}{\bf Notation}
\numberwithin{equation}{section}
\renewcommand{\thefigure}{\ifnum \c@section>\z@ \thesection.\fi
\@arabic\c@figure}
\begin{document}

\def\N{\mathbb{N}}

\def\b{{\rm b}}

\def\uu{\mathsf{u}}
\def\vv{\mathsf{v}}
\def\xx{\mathsf{x}}
\def\yy{\mathsf{y}}
\def\pp{\mathsf{p}}
\def\qq{\mathsf{q}}
\def\rr{\mathsf{r}}
\def\ss{\mathsf{s}}

\def\lg{{\rm{lg}}}
\def\sb{{\rm{sb}}}

\def\rev{\mathrm{rev}}

\def\A{{\rm{A}}}
\def\B{{\rm{B}}}
\def\D{{\rm{D}}}
\def\X{{\rm{X}}}

\def\BB{{\mathcal B}}
\def\EE{{\mathcal E}}
\def\PP{{\mathcal P}}

\def\SS{{\mathfrak S}}

\def\End{{\mathrm{End}}}
\def\Fix{{\mathrm{Fix}}}

\def\blue{\color{blue}}
\def\green{\color{green}}
\def\red{\color{red}}

\newcommand{\tf}[1]{$\mathfrak{T}_{#1}$} 

\def\th{\theta}
\def\oth{\bar{\theta}}
\def\tb{\Theta}
\def\td{\bar{\Theta}}
\def\oa{\bar{a}}
\def\ob{\bar{b}}
\def\od{\bar{d}}

\def\e{\epsilon}
\def\ve{\varepsilon}

\newcommand{\stblue}[1]{\textcolor{blue}{\st{#1}}}
\newcommand{\stred}[1]{\textcolor{red}{\st{#1}}}

\newcommand{\cancelblue}[1]{\textcolor{blue}{\cancel{#1}}}
\newcommand{\cancelred}[1]{\textcolor{red}{\cancel{#1}}}

\newcommand\figureone{
	\includegraphics{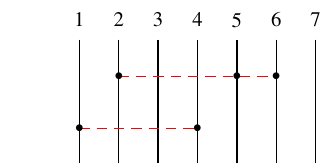} 
}

\newcommand\figuretwo{
	\includegraphics{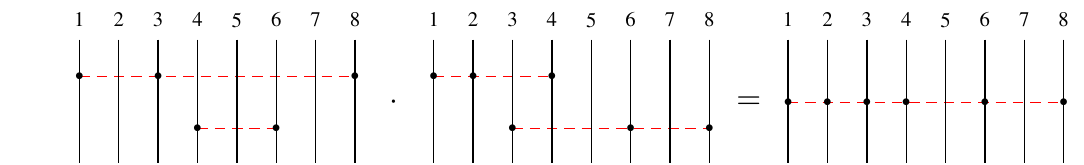} 
}

\newcommand\figurefou{
	\includegraphics{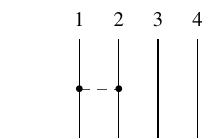}\includegraphics{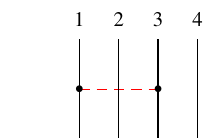}\includegraphics{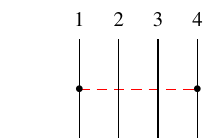}\includegraphics{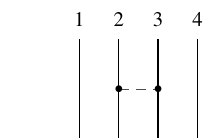}\includegraphics{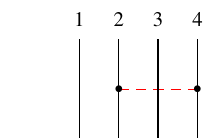}\includegraphics{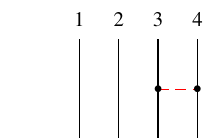} 
}

\newcommand\figurefiv{
}

\newcommand\figuresix{
	\includegraphics{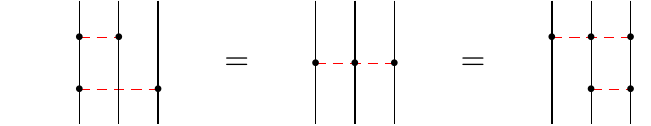} 
}

\newcommand\figuresev{
	\begin{array}{c}\includegraphics{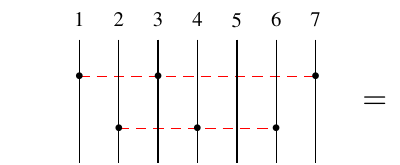}\\[1.5mm]\includegraphics{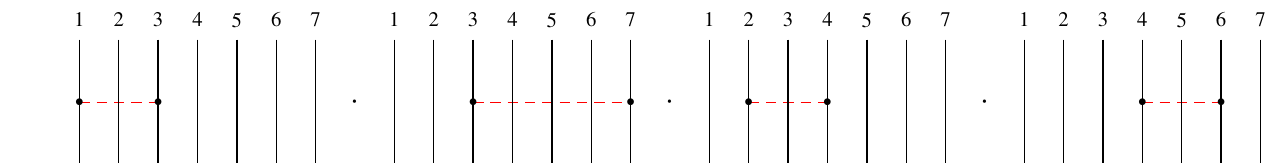}\end{array} 
}

\newcommand\figureeig{
	\includegraphics{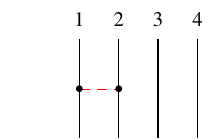}\includegraphics{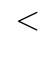}\!\!\!\!\!\!\!\!\!\!\includegraphics{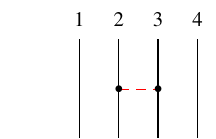}\includegraphics{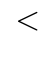}\!\!\!\!\!\!\!\!\!\!\includegraphics{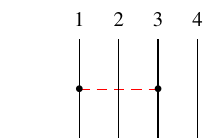}\includegraphics{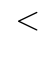}\!\!\!\!\!\!\!\!\!\!\includegraphics{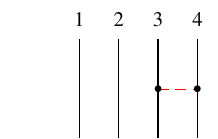}\includegraphics{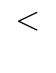}\!\!\!\!\!\!\!\!\!\!\includegraphics{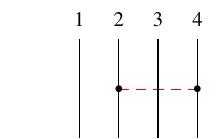}\includegraphics{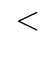}\!\!\!\!\!\!\!\!\!\!\includegraphics{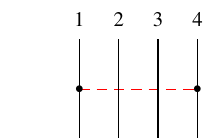} 
}

\newcommand\figurenin{
	\includegraphics{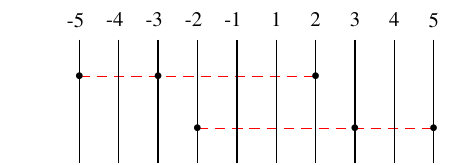} 
}

\newcommand\figureten{
	\begin{array}{ccc}\includegraphics{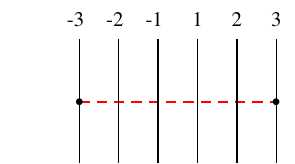}&\includegraphics{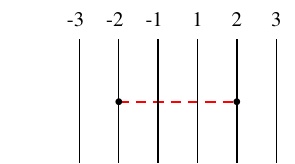}&\includegraphics{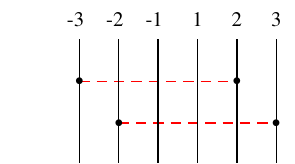}\\[1.5mm]\includegraphics{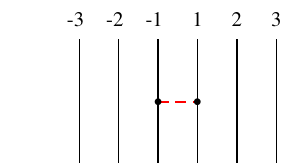}&\includegraphics{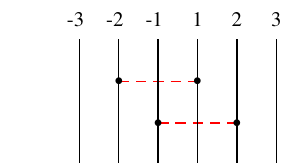}&\includegraphics{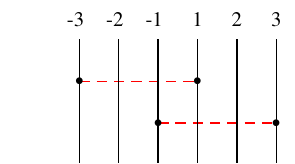}\\[1.5mm]\includegraphics{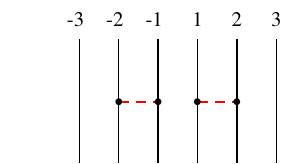}&\includegraphics{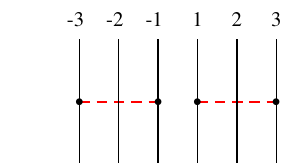}&\includegraphics{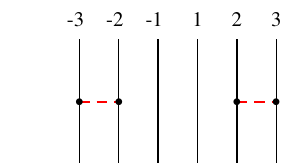}\end{array} 
}

\newcommand\figureele{
	\includegraphics{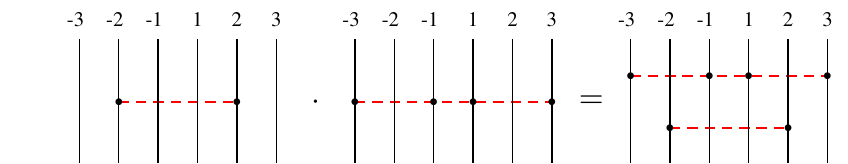} 
}

\newcommand\figuretwe{
	\includegraphics{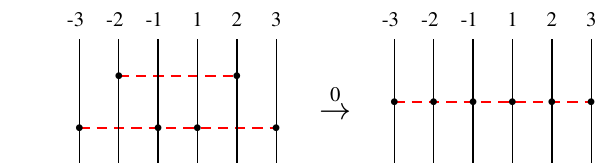} 
}

\newcommand\figuretenthr{
	\includegraphics{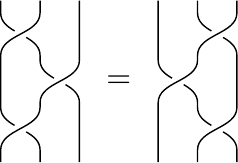}\qquad\qquad\includegraphics{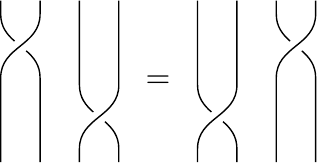} 
}

\newcommand\figuretenfou{
	\begin{array}{c}\includegraphics{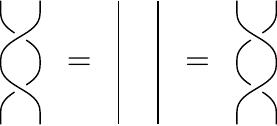}\quad\includegraphics{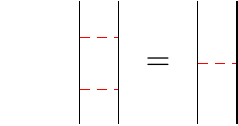}\quad\includegraphics{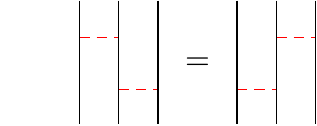}\\[2mm]\includegraphics{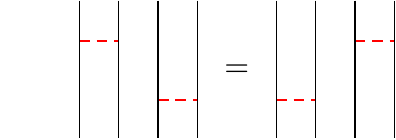}\qquad\includegraphics{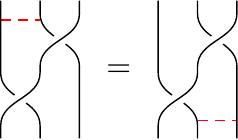}\qquad\includegraphics{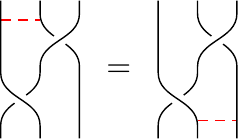}\qquad\\[2mm]\includegraphics{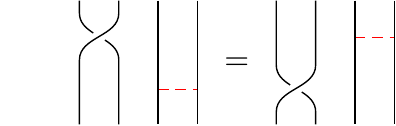}\qquad\includegraphics{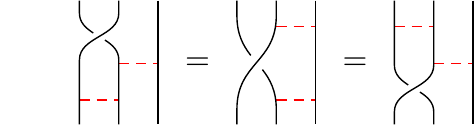}\qquad\end{array} 
}

\newcommand\figuretenfiv{
	\includegraphics{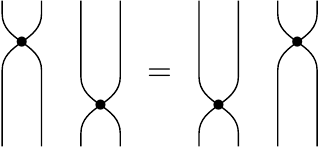}\qquad\includegraphics{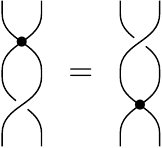}\qquad\includegraphics{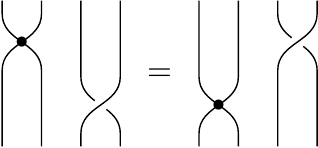}\qquad\includegraphics{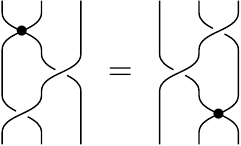} 
}

\newcommand\figuretensix{
	\begin{array}{c}\includegraphics{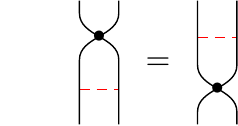}\quad\includegraphics{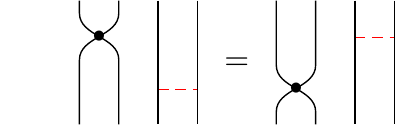}\quad\includegraphics{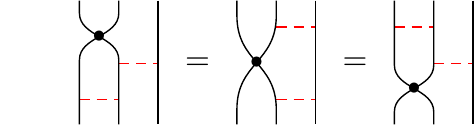}\\[2mm]\includegraphics{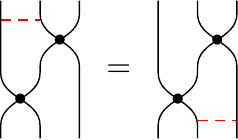}\qquad\includegraphics{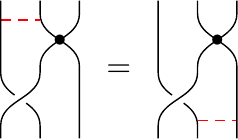}\qquad\includegraphics{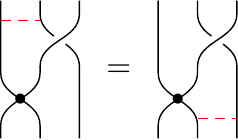}\quad\includegraphics{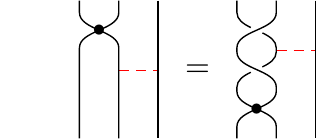}\end{array} 
}

\newcommand\figuretensev{
	\begin{array}{c}\includegraphics{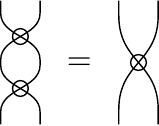}\qquad\includegraphics{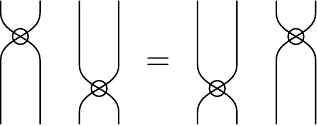}\qquad\includegraphics{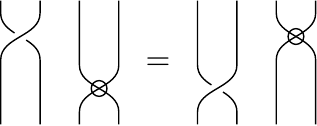}\\[0.5cm]\includegraphics{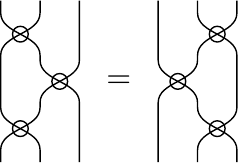}\qquad\includegraphics{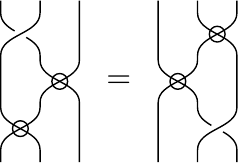}\end{array} 
}

\newcommand\figureteneig{
	\begin{array}{c}\includegraphics{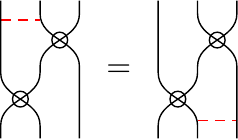}\qquad\includegraphics{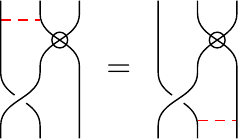}\qquad\includegraphics{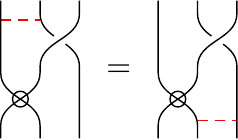}\quad\includegraphics{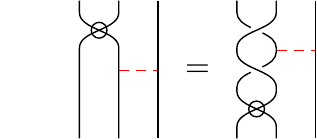}\\[0.5cm]\includegraphics{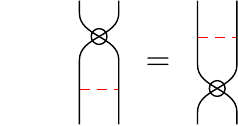}\quad\includegraphics{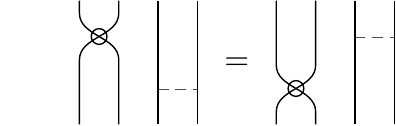}\quad\includegraphics{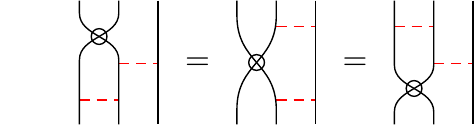}\end{array} 
}

\newcommand\figuretennin{
	\includegraphics{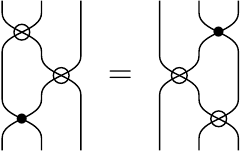}\qquad\quad\includegraphics{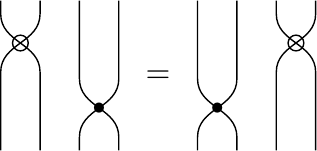} 
}

\begin{abstract}
We construct certain monoids, called \emph{tied monoids}. These monoids result to be semidirect products finitely presented and commonly built from braid groups and their relatives acting on monoids of set partitions. The nature of our monoids indicate that they should give origin to new knot algebras; indeed, our tied monoids include the tied braid monoid and the tied singular braid monoid, which were used, respectively, to construct new polynomial invariants for classical links and singular links. Consequently, we provide a mechanism to attach an algebra to each tied monoid. To build the tied monoids it is necessary to have presentations of set partition monoids of types A, B and D, among others. For type $A$ we use a presentation due to FitzGerald and for the other type it was necessary to built them.
\end{abstract}

\maketitle

\setcounter{tocdepth}{2}
\tableofcontents

\section*{Introduction}

Tied links and tied braids were introduced in \cite{AiJu16} as a generalization of classical links and classical braids. These generalizations were used to define polynomial invariants for classical links that result to be more powerful than the Homflypt and Kauffman polynomials, respectively, see \cite{AiJu16} and \cite{AiJu18}, cf. \cite[Subsection 9.2]{CJKL18}. The algebraic counterpart of tied links is the {\it tied braid monoid} \cite{AiJu16} in the same spirit as the braid group is the algebraic counterpart of classical links. The  tied braid monoid was originally defined through generators and relations whose source is purely diagrammatic; namely, these defining relations come out from the monomial defining relations of the diagram algebra called \emph{algebra of braids and ties} \cite{AiJu00,RyH11,Ma17,JaPo17}, shorted \emph{bt--algebra}, so that,  this algebra becomes a quotient of the monoid algebra of the tied braid monoid \cite[Remark 4.3]{AiJu16}.

Denote by $T\BB_n$ the tied braid monoid with $n$ strands and by $P_n$ the monoid of set partitions of the set $\{1,\ldots,n\}$. It was proved in \cite{PreAiJu18} that $T\BB_n$  can be decomposed as  the semidirect product $P_n\rtimes_{\rho}\BB_n$, between $P_n$ and the braid group $\BB_n$ on $n$ strand, corresponding to the action $\rho$ of $\BB_n$ on $P_n$, derived from the natural epimorphism from $\BB_n$ to the symmetric group on $n$ symbols. Also in \cite{PreAiJu18} it was the tied singular braid monoid $TS\BB_n$ defined and proved that it also has a decomposition as a semidirect product, this time between the monoid $P_n$ and the singular braid monoid $S\BB_n$. 

This paper deals with the construction of tied monoids by applying the Lavers' method \cite{La98}, that is, the construction of finitely presented  semidirect products of monoids from finite presentations of its monoid factors. More precisely, here we construct the tied monoid $T^{\Gamma}\!M$, that is the semidirect product $P\rtimes_{\rho}M$ where $M$ is taken as the Artin group of type $\Gamma\in\{\A,\B,\D\}$, the singular braid monoid \cite{Sm88,Ba92,Bi93} or the virtual braid group \cite{KaLa04,Kd04}, the action $\rho$ is one factorized through an homomorphism of $M$ onto the Coxeter group of type $\Gamma$ and the factor $P$ is an appropriate monoid of set partitions. Observe that the Lavers method applied to the Artin group $\BB_n$ (resp. $S\BB_n$) together with the monoid $P_n$ results to be the monoid $T\BB_n$ (resp. $TS\BB_n$). The monoids $T\BB_n$ and $TS\BB_n$ play the  algebraic role, as $\BB_n$ and $S\BB_n$ for tied link and tied singular links respectively; for details see \cite{AiJu16,PreAiJu18}. 

As was mentioned in the abstract a motivation to introduce the tied monoids is to apply them to knot theory via the construction of knot algebras. Thus we propose also a mechanism that attach an algebra, called \emph{tied algebra}, to each tied monoid, cf. \cite{Ma17,Ma18}. These algebras are obtained as a quotient of the monoid algebra of the tied monoids factoring out certain quadratic relations, cf. \cite[Eq. (20)]{AiJu19}. The mechanism captures the classical Hecke algebras, the bt--algebra defined in \cite{AiJu00}, as well the bt--algebras defined by Flores \cite{Fl19} and Marin \cite{Ma17}.

We are going to describe now as the paper is organized. Section \ref{S1} gives the tools from the monoid theory that will be used during the paper, including a Lavers' theorem on presentations of semi direct product of monoids; this section also gives the definition of the poset monoid of set partitions (or simply partitions) associated to a given set, and the general definition of a tied monoid. Section \ref{S2} starts by studying the monoid of partitions of type A, that is $P_n$. In \cite[Theorem 2]{Fi03}, D.G. FitzGerald has determined a presentation of $P_n$, see here Theorem \ref{001}. This theorem is used in Section \ref{S3} to construct presentations of the set partition monoids of types B and D, see respectively, Theorem \ref{087} and Theorem \ref{106}. To get these presentations it was necessary to consider the monoids of signed partitions and restricted signed partitions; thus presentations of these monoids are also constructed, see respectively Theorem \ref{067} and Theorem \ref{107}. All these theorems on presentations are interesting in their own right and seem to be absent in the literature. We observe that as a consequence of the found presentations we obtain normal forms in all monoids mentioned above.

Section \ref{S4}  deals with the construction, through the Lavers' method, of several tied monoids of type A. Firstly, we will see that both the tied braid monoid and the  tied singular braid monoid, defined in \cite{AiJu19}, can be recovering using this method, see Remark \ref{015} and Remark \ref{016} respectively. Secondly, two new tied monoids are introduced: the \emph{tied virtual braid monoid} (Theorem \ref{028}) and the \emph{tied virtual singular braid monoid} (Theorem \ref{017}).

In Section \ref{S5} we  use the Lavers' method to built  presentations of tied monoids related to the  Artin monoids of type B and D. To be precise,  in Theorem \ref{031}  we give a presentation of the tied monoid of type B, namely $T^B\!\BB_n^B=P_n^B\rtimes_{\rho}\BB_n^B$, where $P_n^B$ is the  monoid of B-partitions defined by Reiner \cite{Re97}, $\BB_n^B$ is the braid group of type B and  $\rho$ is  an action that is factorized with the natural epimorphism from $\BB_n^B$ to the Coxeter group of type B. The tied monoid of type D, namely $T^D\!\BB_n^D=P_n^D\rtimes_{\rho}\BB_n^D$, is  constructed similarly and addressed in Theorem \ref{TMD}.

Section \ref{S6} proposes a mechanism of construction of tied algebras, that is, algebras derived from tied monoids: as an example, we show as works the mechanism to obtain the bt--algebra. Also we show in detail as works the mechanism to obtain the tied algebra attached to the tied monoid $T^B\!\BB_n^B$.

\section{Preliminaries}\label{S1}

This section consists of the three subsections. The first one is on monoid presentations, the second one is on set partitions and the last one gives the definition of a tied monoid. 

\subsection{Monoid presentations}\label{095}

Given a monoid $M$ and $R\subseteq M\times M$, we will denote by $\overline{R}$ the \emph{congruence closure} of $R$, i.e. $\overline{R}$ is the minimal equivalence relation containing $R$ that is invariant respect to the product of $M$. Two elements $g,g'\in M$ will be called $R$\emph{-equivalent} and denoted by $g\equiv g'$, if $(g,g')\in\overline{R}$. Recall that, for a homomorphism of monoids $f:M\to N$, the set $\ker(f):=\{(g,g')\mid f(g)=f(g')\}$ is a congruence on $M$ called the \emph{kernel congruence}. 

\begin{pro}[Lallement {\cite[Isomorphism Theorem, p. 10]{Ll79}}]\label{105}
If $f:M\to N$ is an epimorphism, then $M/\ker(f)\simeq N$.
\end{pro}

For a set $A$ we denote by $A^*$ the free monoid generated by $A$. The length of a word $u\in A^*$ will be denoted by $\lg(u)$.

A pair $\langle A\mid R\rangle$ with $R\subseteq A^*\times A^*$ is said to be a \emph{presentation} for a monoid $M$, if $M$ is isomorphic to $A^*/\overline{R}$. Elements of $A$ are called generators and those of $R$ defining relations. We say that $M$ is \emph{finitely presented} if it has a presentation $\langle A\mid R\rangle$ with $A$ and $R$ finite.

\begin{pro}[Ru\v{s}kuc {\cite[Proposition 2.3]{Ru95}}]\label{009}
Let $M$ be a monoid generated by $A$, and let $R\subseteq A^*\times A^*$. Then $M=\langle A\mid R\rangle$ if and only if the following conditions hold:
\begin{enumerate}
\item We have $\overline{u}=\overline{u}'$ for all $(u,u')\in R$,
\item If $\overline{u}=\overline{u}'$ for some $u,u'\in A^*$, then $u,u'$ are $R$-equivalent.
\end{enumerate}
where $\overline{u},\overline{u}'$ denote the words $u,u'$ considered as elements of $M$.
\end{pro}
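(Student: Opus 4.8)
The plan is to reduce everything to a comparison of two congruences on the free monoid $A^*$: the kernel congruence $\ker(\pi)$ of the canonical projection $\pi\colon A^*\to M$ and the congruence closure $\overline{R}$. Since $M$ is generated by $A$, the map $\pi$ sending a word $u$ to the element $\overline{u}=\pi(u)$ it represents is a well-defined epimorphism of monoids, so Proposition \ref{105} gives $A^*/\ker(\pi)\simeq M$. I would take as the working reading that $M=\langle A\mid R\rangle$, \emph{relative to this fixed generating set}, means precisely that $\pi$ induces an isomorphism $A^*/\overline{R}\to M$, equivalently that $\ker(\pi)=\overline{R}$; this is what the abstract isomorphism $M\simeq A^*/\overline{R}$ amounts to once one insists that it respect the distinguished generators coming from $A$.

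First I would reformulate condition (1). By definition it says $\overline{u}=\overline{u}'$, that is $(u,u')\in\ker(\pi)$, for every $(u,u')\in R$; in other words $R\subseteq\ker(\pi)$. Because $\ker(\pi)$ is a congruence on $A^*$ and $\overline{R}$ is by definition the smallest congruence containing $R$, the inclusion $R\subseteq\ker(\pi)$ is equivalent to $\overline{R}\subseteq\ker(\pi)$ (one direction uses minimality of $\overline{R}$, the other uses $R\subseteq\overline{R}$). Next I would observe that condition (2) is a direct transcription of the reverse inclusion: it states that $(u,u')\in\ker(\pi)$ implies $(u,u')\in\overline{R}$, i.e. $\ker(\pi)\subseteq\overline{R}$. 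Hence the conjunction of (1) and (2) is exactly the equality $\ker(\pi)=\overline{R}$.

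With this in hand the equivalence falls out. For the backward direction, assuming (1) and (2) gives $\ker(\pi)=\overline{R}$, whence $A^*/\overline{R}=A^*/\ker(\pi)\simeq M$ by Proposition \ref{105}, so $M=\langle A\mid R\rangle$. For the forward direction, $M=\langle A\mid R\rangle$ means $\pi$ factors through an isomorphism $A^*/\overline{R}\to M$; since that isomorphism is injective, the kernel congruence of $\pi$ coincides with the kernel of the projection $A^*\to A^*/\overline{R}$, namely $\overline{R}$, so $\ker(\pi)=\overline{R}$. Then $R\subseteq\overline{R}=\ker(\pi)$ yields (1), while $\ker(\pi)=\overline{R}$ yields (2).

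The two inclusions above are routine; the only genuinely delicate point is the identification of ``$M=\langle A\mid R\rangle$'' with the congruence equality $\ker(\pi)=\overline{R}$, which I expect to be the main thing to pin down carefully. The subtlety is that the definition of presentation in the text is phrased as an abstract isomorphism $M\simeq A^*/\overline{R}$, so one must check that, relative to the fixed generating set $A$, such an isomorphism necessarily intertwines the two projections out of $A^*$. Invoking the Isomorphism Theorem in the form $A^*/\ker(\pi)\simeq M$ is precisely what makes this comparison canonical and lets both congruences be read off the single surjection $\pi$.
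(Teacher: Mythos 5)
The paper itself contains no proof of this proposition: it is quoted from Ru\v{s}kuc \cite[Proposition 2.3]{Ru95} as a known tool, so there is nothing internal to compare your argument against. On its own terms your proof is correct, and it is the standard argument: since $M$ is generated by $A$, the canonical map $\pi\colon A^*\to M$ is an epimorphism, Proposition \ref{105} identifies $M$ with $A^*/\ker(\pi)$, condition (1) is equivalent to $\overline{R}\subseteq\ker(\pi)$ (using that $\ker(\pi)$ is a congruence and $\overline{R}$ is the smallest congruence containing $R$), condition (2) is literally $\ker(\pi)\subseteq\overline{R}$, and the conjunction of the two is $\ker(\pi)=\overline{R}$. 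The point you flag as delicate is indeed the crux, and you resolve it the right way: under the paper's literal definition of presentation as an abstract isomorphism $M\simeq A^*/\overline{R}$, the forward implication would actually be false. For instance, $M=(\mathbb{N},+)$ generated by $A=\{a,b\}$ with $\overline{a}=1$, $\overline{b}=2$, and $R=\{(a,b^2)\}$ gives $A^*/\overline{R}\simeq\mathbb{N}\simeq M$ as abstract monoids, yet condition (1) fails because $\overline{a}=1\neq 4=\overline{b^2}$. So the generator-respecting reading you adopt, namely that the isomorphism is the one induced by $\pi$, equivalently $\ker(\pi)=\overline{R}$, is not an optional gloss but precisely the definition under which Ru\v{s}kuc's statement is true, and your two inclusions then give both directions cleanly.
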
 
We say that $r\in A^*\times A^*$ is a \emph{consequence} of $R$ if $r\in\overline{R}$. A \emph{transformation of type} \tf{1} is a move that transforms $\langle A\mid R\rangle$ into $\langle A\mid R\cup\{r\}\rangle$ for some $r$ that is consequence of $R$. Inversely, a \emph{transformation of type} \tf{2} transforms $\langle A\mid R\rangle$ into $\langle A\mid R\backslash\{r\}\rangle$ for some $r$ that is consequence of $R\backslash\{r\}$. A symbol $a\in A$ is said to be a \emph{consequence} of $R$ if $(a,u)\in\overline{R}$ for some word $u\in A^*$. A \emph{transformation of type} \tf{3} is a move that transforms $\langle A\mid R\rangle$ into $\langle A\cup\{a\}\mid R\cup\{(a,u)\}\rangle$ for some $a\not\in A^*$ and $u\in A^*$. Note that $a$ is a consequence of $R\cup\{(a,u)\}$. A \emph{transformation of type} \tf{4} transforms $\langle A\mid R\rangle$ into $\langle A\backslash\{a\}\mid S\rangle$, where $S$ is obtained from $R$ by removing a pair $(a,u)$ or $(u,a)$ for some $u\in A^*$, and replacing every other appearance of $a$ in $R$ by $u$, that is, every $(a,u')$ or $(u',a)$ in $R$ for some $u'\in A^*\backslash\{u\}$ is replaced by $(u,u')$ or $(u',u)$ respectively. Transformations of type \tf{1} to \tf{4} are called \emph{Tietze transformations}.

\begin{pro}[Ru\v{s}kuc {\cite[Proposition 2.5]{Ru95}}]\label{012}
Two finite presentations define the same monoid if and only if one can be obtained from the other by a finite number of applications of Tietze transformations.
\end{pro}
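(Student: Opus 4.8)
The plan is to prove the two implications separately, treating the forward direction (Tietze transformations preserve the defined monoid) as the routine part and the converse as the substantive one.

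For the forward direction, I would verify that each of the four moves preserves the isomorphism type of the monoid. Moves \tf{1} and \tf{2} are immediate: if $r$ is a consequence of $R$, then $r\in\overline R$, whence $\overline{R\cup\{r\}}=\overline R$ and the quotients $A^*/\overline R$ and $A^*/\overline{R\cup\{r\}}$ literally coincide. For \tf{3} I would exhibit the isomorphism $A^*/\overline R\to(A\cup\{a\})^*/\overline{R\cup\{(a,u)\}}$ induced by the inclusion $A\hookrightarrow A\cup\{a\}$, whose inverse is induced by the homomorphism fixing $A$ and sending $a\mapsto u$; one checks directly that both composites are identities, using that $(a,u)$ is a relation of the enlarged presentation. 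Move \tf{4} is the inverse of \tf{3}, so nothing new is needed. Since each move is reversible (\tf{1} inverts \tf{2}, and \tf{3} inverts \tf{4}), any finite chain of moves yields isomorphic monoids.

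For the converse, suppose $\langle A\mid R\rangle$ and $\langle B\mid S\rangle$ both present a monoid $M$, and fix the identification; after renaming (itself achievable by Tietze moves) I may assume $A\cap B=\emptyset$. Each generator $b\in B$ represents an element of $M$, hence equals $\overline{w_b}$ for some $w_b\in A^*$; symmetrically each $a\in A$ equals $\overline{v_a}$ for some $v_a\in B^*$. Starting from $\langle A\mid R\rangle$ I apply \tf{3} once for each $b\in B$, adjoining the generator $b$ with relation $(b,w_b)$, to reach $\langle A\cup B\mid R\cup\{(b,w_b):b\in B\}\rangle$, which still presents $M$. In this enlarged presentation the letter $a$ and the word $v_a$ represent the same element of $M$, so by condition (2) of Proposition \ref{009} each pair $(a,v_a)$ is a consequence of the current relations; likewise every defining relation in $S$ holds in $M$ and is therefore a consequence. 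Applying \tf{1} finitely many times I adjoin all the pairs $(a,v_a)$ and all of $S$, reaching the symmetric presentation
\[
P\;=\;\langle A\cup B\mid R\cup S\cup\{(b,w_b):b\in B\}\cup\{(a,v_a):a\in A\}\rangle.
\]

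The key observation is that $P$ is reachable by Tietze moves from \emph{both} starting presentations. Running the same construction from $\langle B\mid S\rangle$---adjoining each $a\in A$ with $(a,v_a)$ by \tf{3}, then adjoining $R$ and the pairs $(b,w_b)$ by \tf{1}---produces exactly the same presentation $P$, since the generator set and the relation set agree by construction. Because every Tietze move is invertible, I reverse the second chain to descend from $P$ to $\langle B\mid S\rangle$, and concatenation gives a finite sequence carrying $\langle A\mid R\rangle$ to $\langle B\mid S\rangle$; finiteness of $A,B,R,S$ ensures every stage involves only finitely many moves. The one point requiring genuine care---and the real content of the argument---is the passage from ``the relation holds in $M$'' to ``the relation is a consequence of the relations present,'' which is precisely what condition (2) of Proposition \ref{009} supplies and what legitimizes the \tf{1} steps.
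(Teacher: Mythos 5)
The paper itself offers no proof of this proposition: it is imported verbatim from Ru\v{s}kuc \cite[Proposition 2.5]{Ru95}, so there is no internal argument to compare against. Your proof is correct and is, in substance, the classical Tietze-theorem argument that the cited source uses: pass from each presentation to the common symmetric presentation $P$ by \tf{3} moves (adjoining the other alphabet with defining words) followed by \tf{1} moves, and identify correctly that the only nontrivial point --- upgrading ``this relation holds in $M$'' to ``this relation is a consequence of the relations currently present,'' which licenses the \tf{1} steps --- is exactly what Proposition \ref{009}(2) provides; invertibility of the moves then lets you reverse the second chain.

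One minor caveat on your forward direction: as defined in the paper, \tf{4} is slightly more general than the literal inverse of \tf{3}, because it also replaces the \emph{other} occurrences of $a$ in the remaining relations by $u$. To cover it you should add one line: such a move factors as finitely many \tf{1} moves (each substituted relation is a consequence of the original one together with $(a,u)$), then \tf{2} moves removing the originals, and finally an inverse-\tf{3} move deleting $a$ and $(a,u)$. This is routine, and it does not affect your converse, where every \tf{4} step used in reversing the chain is an exact inverse of a \tf{3} step (the words $v_a$ lie in $B^*$, so no substitution is ever needed).
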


Let $M,N$ be two monoids, and let $\rho:M\to\End(N)$ be a monoid action, that is a unitary monoid homomorphism. The semi direct product of $M$ with $N$ respect to this action, denoted by $N\rtimes_{\rho} M$, is the monoid $N\times M$ with the following product:\[(b,a)(d,c)=(b\rho_a(d),ac)\qquad a,c\in M,\,b,d\in N.\]

\begin{thm}[Lavers \cite{La98}]\label{013}
Let $\rho:M\to\End(N)$ be a monoid action, where $M$ and $N$ are monoids finitely presented, respectively, by $\langle A\mid R\rangle$ and $\langle B\mid S\rangle$. Then 
\[N\rtimes_{\rho} M\simeq\left\langle A\cup B\mid R\cup S\cup\{(ba,a\rho_a(b))\mid a\in A\text{ and }b\in B\}\right\rangle.\]
\end{thm}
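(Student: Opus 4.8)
The plan is to prove the isomorphism by exhibiting the right-hand side as a presentation of $G:=N\rtimes_\rho M$ and checking Ru\v{s}kuc's criterion, Proposition \ref{009}. Identify each $a\in A$ with the element $(1_N,\overline a)\in G$ and each $b\in B$ with $(\overline b,1_M)\in G$. First I would observe that $A\cup B$ generates $G$: any $(n,m)$ factors as $(n,1_M)(1_N,m)$, and since $\langle B\mid S\rangle$ presents $N$ and $\langle A\mid R\rangle$ presents $M$, the elements $(n,1_M)$ and $(1_N,m)$ are images of words in $B^*$ and $A^*$ respectively. It then remains to verify the two conditions of Proposition \ref{009} for the set of defining relations $R\cup S\cup C$, where $C$ denotes the set of mixed relations.

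Condition (1) requires $\overline u=\overline{u'}$ in $G$ for every defining relation. The relations of $R$ involve only letters of $A$, whose images lie in the submonoid $\{1_N\}\times M\cong M$, where they hold because $\langle A\mid R\rangle$ presents $M$; symmetrically the relations of $S$ hold in $N\times\{1_M\}\cong N$. For the mixed relations one checks directly, from the product formula $(b,a)(d,c)=(b\rho_a(d),ac)$ and the fact that $\rho$ is a monoid homomorphism, that both words represent the same element of $G$. This is the only genuine computation and it is routine.

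The substance lies in condition (2): if $\overline w=\overline{w'}$ in $G$ for $w,w'\in(A\cup B)^*$, then $w$ and $w'$ must be $(R\cup S\cup C)$-equivalent. The key step is a normal-form reduction. Using the mixed relations one segregates the two alphabets, gathering all generators of $N$ on one fixed side and all generators of $M$ on the other; since each such rewrite replaces a mixed length-two factor by a factor whose $N$-part $\rho_a(b)$ is \emph{again} a word over $B$ (because $\rho_a\in\End(N)$) while the number of $A$-letters is unchanged, the rewriting stays inside $(A\cup B)^*$ and terminates. As every rewrite invokes a relation of $C$, the resulting normal form is $(R\cup S\cup C)$-equivalent to the original word, so I may assume $w=uv$ and $w'=u'v'$ are already segregated with $u,u'\in B^*$ and $v,v'\in A^*$. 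A segregated word maps to a pair in $N\times M$ whose $M$-coordinate is the image of its $A$-part and whose $N$-coordinate is determined by its $B$-part; hence $\overline w=\overline{w'}$ forces $\overline u=\overline{u'}$ in $N$ and $\overline v=\overline{v'}$ in $M$. Now I invoke condition (2) of Proposition \ref{009} separately for the two given presentations: from $\langle B\mid S\rangle$ I get that $u,u'$ are $S$-equivalent, hence $(R\cup S\cup C)$-equivalent, and likewise $v,v'$ are $R$-equivalent; multiplying and combining with the normal-form reductions yields $w\equiv w'$, which is condition (2).

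The hard part will be the normal-form reduction and its identification with the set-theoretic decomposition $N\times M$: one must verify that the mixed relations really do suffice to separate the alphabets, that the induced rewriting terminates, and that $\rho_a(b)$ re-enters the alphabet $B$ so that segregated words correspond bijectively to the factors recovered by the projection $G\to M$ and by the $N$-component. This is precisely where the semidirect-product structure and the hypothesis $\rho\colon M\to\End(N)$ are used; everything after the segregation is a clean, componentwise appeal to the presentations of $M$ and $N$ together with Ru\v{s}kuc's criterion.
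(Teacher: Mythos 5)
The paper itself contains no proof of Theorem \ref{013}: it is quoted from Lavers' article and used as a black box, so your proposal has to be judged on its own terms. Your overall scheme --- identify $a$ with $(1_N,\overline{a})$ and $b$ with $(\overline{b},1_M)$, verify Ru\v{s}kuc's two conditions, and reduce condition (2) to the two given presentations by segregating the alphabets --- is the standard and correct route. But the step you wave off as ``the only genuine computation and it is routine'' is exactly where the argument breaks, because under the paper's stated conventions the printed mixed relations do \emph{not} hold in $G=N\rtimes_{\rho}M$. With the product $(b,a)(d,c)=(b\rho_a(d),ac)$ one computes $\overline{ba}=(\overline{b},1_M)(1_N,\overline{a})=(\overline{b},\overline{a})$, whereas $\overline{a\,\rho_a(b)}=(1_N,\overline{a})(\rho_{\overline{a}}(\overline{b}),1_M)=(\rho_{\overline{a}}^{\,2}(\overline{b}),\overline{a})$; these agree only when $\rho_{\overline{a}}^{\,2}=\mathrm{id}$. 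The exchange law that actually holds in $G$ is $ab=\rho_a(b)\,a$, so the mixed pairs should read $(ab,\rho_a(b)a)$; the printed pairs $(ba,a\rho_a(b))$ belong to the opposite convention (a right action). Concretely, take $M=N$ to be the free monoid on one generator and $\rho_a(b)=b^2$: then $\langle a,b\mid ba=ab^2\rangle$ has normal forms $a^ib^j$, while in $N\rtimes_{\rho}M$ one has $ab=b^2a$ but $ba\neq ab^2$; both monoids have exactly two indecomposable elements, and neither bijection between them respects the relations, so the two monoids are not isomorphic and the literal statement fails. Thus condition (1) of Proposition \ref{009} is false for the printed relation set, and no proof can succeed without amending it.

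The same mismatch infects your segregation step. Your projection argument needs normal forms in $B^*A^*$, so that $\overline{uv}=(\overline{u},\overline{v})$ and equality of images splits componentwise; but the printed relations rewrite $ba\mapsto a\rho_a(b)$, i.e.\ they push $B$-letters to the \emph{right}, producing $A^*B^*$, and for a word $vu$ with $v\in A^*$, $u\in B^*$ one has $\overline{vu}=(\rho_{\overline{v}}(\overline{u}),\overline{v})$, from which $\overline{u}=\overline{u'}$ cannot be recovered because $\rho_{\overline{v}}$ need not be injective. So the second half of your proof implicitly uses the corrected relations $(ab,\rho_a(b)a)$ while the first half claims to verify the printed ones. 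The repair is to restate the mixed relations as $(ab,\rho_a(b)a)$, or to add the hypothesis that $\rho_{\overline{a}}^{\,2}=\mathrm{id}$ for every $a\in A$ --- the latter holds in every application in this paper, since all the actions factor through Coxeter groups and the generators are sent to involutions, which is why the slip is invisible downstream. With that correction your outline is sound, provided you also replace the bare assertion ``terminates'' by an actual argument (the count of $A$-letters alone does not decrease anything): for instance, induct on the number of $A$-letters, moving the last $A$-letter past its $B$-suffix one letter at a time, each such pass strictly shortening the $B$-suffix to its right.
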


The rest of the material of this subsection belong to commutaive monoids theory and is taken from \cite{RoGa99}.

An \emph{admissible order} on a commutative monoid is an invariant total order in which the identity is the smallest element.

In what follows $X:=\{x_1<\ldots<x_n\}$ denotes a finite totally ordered set, and $CX^*$ the free commutative monoid generated by it. 

For $g,h\in CX^*$ we say that $h$ \emph{divides} $g$, denoted by $h|g$, if $g=g'h$ for some $g'\in CX^*$. The element $g'$ is denoted by $gh^{-1}$. Since $CX^*$ is commutative, for every $g\in CX^*$ there are unique $e_1,\ldots,e_n\in\N$ such that $g=x_1^{e_1}\cdots x_n^{e_n}$. We call $(e_1,\ldots,e_n)$ the \emph{exponent} of $g$ and denote it by $\exp(g)$. Note that $h|g$ if and only if $\exp(g)-\exp(h)\in\N^n$. The right lexicographic order on $\N^n$ induces a natural admissible order $<$ on $CX^*$: $g<h$ if $\exp(g)$ is smaller than $\exp(h)$, that is, $h|g$.

Set $R:=\{(a_1,b_1),\ldots,(a_r,b_r)\}$ a subset of $CX^*\times CX^*$. We say that $R$ is \emph{reduced} if the following conditions hold:
\begin{enumerate}
\item $b_i<a_i$ for all $i\in[r]$.
\item $a_i$ does not divide $a_j$ for all $i,j\in[r]$ with $i\neq j$.
\item $b_i$ does not divide $a_j$ for all $i,j\in[r]$.
\end{enumerate}

In the sequel we assume that $R$ is reduced. 

The \emph{normal form} of $R$ is the map $N:CX^*\to CX^*$ defined as follows:\[N(g)=\left\{\begin{array}{ll}
g&\text{if }a_i\text{ does not divide }g\text{ for all }i\in[r],\\
N(ga_m^{-1}b_m)&\text{if }m\text{ is minimal in }[r]\text{ satisfyng }a_m|g.
\end{array}\right.\]The element $N(g)$ is called the $R$--\emph{normal form} of $g$.

\begin{dfn}[Cf. Rosales--Garc\'ia {\cite[Theorem 6.7]{RoGa99}}]\label{081}
We call $R$ a \emph{canonical system of generators} of $\overline{R}$ if $N(g)=N(h)$ 
for all $g,h\in CX^*$ satisfying $(g,h)\in\overline{R}$. 
\end{dfn}

\begin{thm}[Rosales--Garc\'ia {\cite[Section 14.3]{RoGa99}}]\label{093}
Let $Y$ be a subset of $X$, and assume that $R$ is a canonical system of generators. Then, the submonoid of $\langle X\mid R\rangle$ generated by $Y$ is presented by $\langle Y\mid S\rangle$ where $S=R\cap(CY^*\times CY^*)$.
\end{thm}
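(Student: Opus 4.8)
The plan is to apply Ru\v{s}kuc's criterion (Proposition \ref{009}) to the natural surjection $CY^*\twoheadrightarrow T$, where $T$ denotes the submonoid of $\langle X\mid R\rangle$ generated by $Y$. Writing $\equiv_R$ and $\equiv_S$ for the congruences $\overline{R}$ on $CX^*$ and $\overline{S}$ on $CY^*$, the two conditions of Proposition \ref{009} read: (1) every pair of $S$ has equal image in $T$, which is immediate because $S\subseteq R$ forces $u\equiv_R u'$; and (2) whenever $u,u'\in CY^*$ satisfy $u\equiv_R u'$ (i.e.\ they coincide in $T$), they must satisfy $u\equiv_S u'$. All the content is in (2), and I would extract it from the normal form attached to the reduced canonical system $R$.

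The crux is the following confinement lemma: for $g\in CY^*$ the entire $R$-reduction of $g$ stays inside $CY^*$ and uses only relations of $S$, so that $N(g)\in CY^*$ and in fact $N(g)$ equals the $S$-normal form of $g$. Two observations drive this. First, divisibility alone gives that if $a_i\mid g$ with $g\in CY^*$ then $a_i\in CY^*$, since $\exp(a_i)\le\exp(g)$ coordinatewise and $\exp(g)$ is supported on $Y$. Second---and this is where the admissible order is essential---whenever $a_i\in CY^*$ one also has $b_i\in CY^*$: if some generator of $X\setminus Y$ appeared in $b_i$, then, the generators of $X\setminus Y$ being the most significant ones for the right lexicographic comparison, the largest such would occupy the top coordinate at which $a_i$ and $b_i$ differ, where $b_i$ is positive while $a_i\in CY^*$ vanishes, forcing $b_i>a_i$ and contradicting $b_i<a_i$. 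Hence each reduction step applicable to a word of $CY^*$ replaces $a_i$ by $b_i$ with $(a_i,b_i)\in S$ and keeps the word in $CY^*$; iterating, the deterministic reduction $N$ restricted to $CY^*$ is precisely the reduction defined by the subsystem $S$.

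Granting the confinement lemma the rest is formal. The set $S$ is reduced because conditions (1)--(3) for $R$ restrict to any subset, and $S$ is canonical in the sense of Definition \ref{081}: if $(g,h)\in\overline S\subseteq\overline R$ then $N(g)=N(h)$ by canonicity of $R$, and by confinement these common values are the $S$-normal forms. For condition (2) of Proposition \ref{009}: from $u\equiv_R u'$ with $u,u'\in CY^*$ we get $N(u)=N(u')$ by canonicity of $R$, hence the $S$-normal forms of $u$ and $u'$ agree by confinement; since each word is $\equiv_S$-equivalent to its $S$-normal form (every reduction step is an application of a relation of $S$), we conclude $u\equiv_S u'$. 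This verifies both hypotheses, giving $T\simeq\langle Y\mid S\rangle$; equivalently, one may phrase the conclusion through the Isomorphism Theorem (Proposition \ref{105}) applied to $CY^*\to T$, whose kernel congruence is thereby identified with $\overline S$.

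I expect the confinement lemma to be the main obstacle, and it is the step that genuinely uses the shape of the admissible order: the argument needs the generators of $X\setminus Y$ to be the most significant ones in the lexicographic comparison (so that a trailing term $b_i$ cannot smuggle them in), which is exactly what guarantees that passing from $R$ to $S$ loses no relation among the $Y$-generators. Everything else is a transfer of the completeness of $R$ to its subsystem $S$ through the normal form, and I would keep the bookkeeping of the reduction order implicit, since the minimal-index choice on a word of $CY^*$ automatically selects a relation of $S$.
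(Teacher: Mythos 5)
The paper offers no argument for Theorem~\ref{093}: it is quoted from Rosales--Garc\'ia, so there is no internal proof to measure your attempt against, and your skeleton (Ru\v{s}kuc's criterion, Proposition~\ref{009}, fed by a confinement property of the normal form $N$) is indeed the natural one for such an elimination statement. The gap is in the second half of your confinement lemma. You assert that the generators of $X\setminus Y$ are ``the most significant ones for the right lexicographic comparison''; you even identify this as the crux, but you treat it as automatic when nothing in the statement provides it: $Y$ is an \emph{arbitrary} subset of $X$, so $X\setminus Y$ need not sit at the top of the order, and the implication ``$a_i\in CY^*\Rightarrow b_i\in CY^*$'' is exactly the elimination--order property that can fail. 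Concretely, with $X=\{x_1<x_2<x_3\}$, $R=\{(x_3,x_2)\}$ (reduced and canonical) and $Y=\{x_1,x_3\}$, one has $a_1=x_3\in CY^*$ but $b_1=x_2\notin CY^*$, so already $N(x_3)=x_2$ leaves $CY^*$.

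Moreover, the gap cannot be closed by a cleverer argument, because the statement as printed is false for arbitrary $Y$. Take $X=\{x_1<x_2<x_3\}$ and $R=\{(x_2,x_1),(x_3,x_1)\}$: this $R$ is reduced, and it is canonical, since the quotient is $\mathbb{N}$ and $N(g)=x_1^{e_1+e_2+e_3}$ (where $\exp(g)=(e_1,e_2,e_3)$) detects exactly the congruence. For $Y=\{x_2,x_3\}$ one gets $S=R\cap(CY^*\times CY^*)=\emptyset$, hence $\langle Y\mid S\rangle\simeq CY^*\simeq\mathbb{N}^2$, whereas the submonoid of $\langle X\mid R\rangle$ generated by $Y$ is $\mathbb{N}$, both generators having the same image. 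So a hypothesis tying $Y$ to the order is indispensable, and what you have actually proved, correctly, is the amended statement in which $X\setminus Y$ consists of the largest generators (equivalently, the right--lexicographic order eliminates $X\setminus Y$); presumably this is the form of the result in Rosales--Garc\'ia. Be aware, finally, that this extra hypothesis is not available where the paper invokes Theorem~\ref{093}: in the presentation of $RP_n$ one takes $X=U_{\pm n}$ and $Y=U_{\pm n}^{\times}$, and the discarded generators $\mu_{-i,i}$ are not at the top of the order of Proposition~\ref{092} (for instance $\mu_{-1,1}<\mu_{1,2}$); indeed $N(\mu_{-3,-2}\mu_{-3,2})=\mu_{-3,-2}\mu_{-2,2}\notin CY^*$, so the confinement your argument relies on fails in that application as well.
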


\subsection{Set partitions}A \emph{set partition} (or simply \emph{partition}) of a nonempty set $A$ is a collection $I$ of subsets $I_1,\ldots,I_k$ of $A$, called \emph{blocks} of $I$, such that $I_1\cup\cdots\cup I_k=A$ and $I_i\cap I_j=\emptyset$ for all different $i,j\in[k]$. Blocks with a unique element are called \emph{singleton blocks}. There is a partial order $\preccurlyeq$ on the collection of all set partitions of $A$ such that $I\preccurlyeq J$ if each block of $J$ is a union of blocks of $I$. The collection set formed by the set partitions of $A$, which is denoted by $P(A)$, has a structure of idempotent commutative monoid with $1=\{\{a\}\mid a\in A\}$ and where the product $IJ$ of the set partitions $I$ with $J$ is defined as\[IJ=\min\{K\in P(A)\mid I\preccurlyeq K\text{ and }J\preccurlyeq K\}.\]Note that for all set partition $I$ of $A$, we have$1\preccurlyeq I\preccurlyeq\{A\}$.

In what follows $(A,<)$ denotes a finite totally ordered set. This relation yields a total order $<$ on the power set of $A$. Indeed, if $X,Y$ are subsets of $A$, then $X<Y$ if and only if either $|X|<|Y|$ or $|X|=|Y|$ with $\min(X\backslash(X\cap Y))<\min(Y\backslash(X\cap Y))$. We shall write a set partition $I$ of $A$ by $I=(I_1,\ldots,I_k)$ when its blocks $I_1,\ldots,I_k$ are listed in increasing order according to this total order, that is $I_1<\cdots<I_k$.

Write $A=\{a_1<\cdots<a_n\}$. We shall say that $I\in P(A)$ {\it is symmetric} if $a_i$ and $a_j$ belong to the same block, then $a_{n+1-i}$ and $a_{n+1-j}$ are in the same block as well, see \cite{Or00}. Note that the definition of a symmetric set partition depends on the choice of the total order on $A$. We will denote by $\SS P(A)$ the subset of all symmetric set partitions in $P(A)$.

To study the product of $P(A)$ we need to introduce the following definition.

\begin{dfn}
Set $I=\{I_1<\cdots<I_k\}$ to be a collection of subsets of $A$. For every $m\in \mathbb{Z}_{\geq 0}$ we define inductively $(I)^m$ as follows: $(I)^0=I$,\[(I)^1=\left\{\begin{array}{ll}
I&\text{if }I_i\cap I_j=\emptyset\text{ for all }i,j\in [k]\text{ with }i\neq j,\\
\{I_p\cup I_q\}\cup I\backslash\{I_p,I_q\}&\text{if }I_p,I_q\text{ are } < \text{-minimal in }I\text{ such that }I_p\cap I_q\neq\emptyset;\end{array}\right.\]and $(I)^m=((I)^{m-1})^1$ for $m>1$.
\end{dfn}

Observe that for every collection of subsets $I$, there exists $m \geq1$ such that $(I)^m$ is a set partition of some subset of $A$. We define the {\it exponent} of $I$, denoted by $e(I)$, as the minimal $m$ for which $(I)^m$ is a set partition. Notice that $e(I)=1$ if $I$ is a set partition.

\begin{pro}\label{000}
We have $IJ= (I\cup J)^{e(IUJ)}$ for all set partitions $I,J$ of $A$.
\end{pro}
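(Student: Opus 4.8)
The plan is to match both sides against the definition of the product: $IJ$ is declared to be the $\preccurlyeq$-least set partition $K$ with $I\preccurlyeq K$ and $J\preccurlyeq K$, so it suffices to prove that $(I\cup J)^{e(I\cup J)}$ is an upper bound of $\{I,J\}$ and lies below every upper bound. The guiding idea is that the deterministic merging recursion $(\cdot)^m$ computes connected components: regard the blocks of the collection $I\cup J$ as vertices of a graph in which two blocks are joined by an edge when they share an element, and I will argue that the blocks of $(I\cup J)^{e(I\cup J)}$ are exactly the connected components of this graph. Once this identification is in place, both required inequalities reduce to routine checks against the definition of $\preccurlyeq$.

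First I would record that $(I\cup J)^{e(I\cup J)}$ is a genuine set partition of $A$: the collection $I\cup J$ covers $A$ since both $I$ and $J$ do, and each application of $(\cdot)^1$ merely replaces two overlapping blocks by their union, leaving the total union unchanged, while by definition of $e(I\cup J)$ the blocks are pairwise disjoint after $e(I\cup J)$ steps. For the identification with connected components I would argue in two parts. No merge ever unites blocks from distinct components, since merges combine only overlapping blocks and overlapping blocks lie in a common component; hence each final block is contained in a single component. Conversely, if some component were split across two distinct final blocks, then a path of successively overlapping blocks running from one to the other would force two pairwise-disjoint final blocks to share an element, a contradiction. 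Thus each final block is precisely one full connected component.

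Next I would verify that $(I\cup J)^{e(I\cup J)}$ is an upper bound. Fix a block $C$ of $(I\cup J)^{e(I\cup J)}$ and a block $I_i$ of $I$ with $I_i\cap C\neq\emptyset$. Since $I_i$ itself belongs to the initial collection $I\cup J$, it shares an element with a block absorbed into $C$, hence lies in the same component, so $I_i\subseteq C$. Therefore every block of $(I\cup J)^{e(I\cup J)}$ is a union of blocks of $I$, i.e. $I\preccurlyeq(I\cup J)^{e(I\cup J)}$; the argument for $J$ is identical.

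Finally I would show minimality. Let $K$ be any set partition with $I\preccurlyeq K$ and $J\preccurlyeq K$, so each block of $I$ and of $J$ sits inside a single block of $K$. If two blocks of $I\cup J$ share an element $a$, both lie in the block of $K$ containing $a$; propagating this along the edges of the component graph shows that each component, hence each block of $(I\cup J)^{e(I\cup J)}$, lies inside a single block of $K$, so $(I\cup J)^{e(I\cup J)}\preccurlyeq K$. Combining the two inequalities identifies $(I\cup J)^{e(I\cup J)}$ with the $\preccurlyeq$-least upper bound, which is $IJ$. I expect the one point genuinely requiring care to be the identification in the second paragraph — that the deterministic minimal-block recursion lands exactly on the connected-component partition rather than stopping early or splitting a component — the remaining steps being direct translations of the definitions of $\preccurlyeq$ and of the product.
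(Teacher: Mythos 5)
Your proof is correct and takes essentially the same route as the paper's: show that $(I\cup J)^{e(I\cup J)}$ is a $\preccurlyeq$-upper bound of $\{I,J\}$ and that it lies below every upper bound (the paper compares it directly with $IJ$), then conclude by minimality. The only difference is one of detail: your explicit identification of the final blocks with the connected components of the block-intersection graph spells out the connectivity argument that the paper's proof leaves implicit.
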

\begin{proof}
Let $T=(I\cup J)^{e(I\cup J)}$. By definition $I\preccurlyeq T$ and $J\preccurlyeq T$, so $IJ\preccurlyeq T$. Let $E$ be a block of $T$. For every $a\in E$, $E$ contains the unique blocks $I_a$ of $I$ and $J_a$ of $J$ that contain $a$. This implies that every block of $T$ is contained in a block of $IJ$, hence $T\preccurlyeq IJ$.  Therefore $IJ=T$. 
\end{proof}

\subsection{Tied monoids}

Below the definition of a tied monoid which is one of the main objects introduced in the paper.

\begin{dfn}\label{155}
Let $M$ be a monoid, and let $P$ be a commutative idempotent monoid endowed with an action $\rho$ of $M$ on $P$ which is factored by a monoid homomorphism from $M$ onto a Coxeter group of type $\Gamma$. The \emph{tied monoid} of $M$ respect to $P$, denoted by $T^{\Gamma}\!M$, is defined as the semidirect product $P\rtimes_{\rho} M$.
\end{dfn}
We will omit $\Gamma$ in $T^{\Gamma}\!M$ whenever the Coxeter group is of type $\Gamma=\A$.

\begin{exm}
Consider $M=\mathbb{Z}$ and $P=\langle y\mid y^2=y\rangle$ with the action $\rho:M\to\End(P)$ that factorizes trivially onto $W=\mathbb{Z}/2\mathbb{Z}$. The tied monoid $TM=\mathbb{Z}\rtimes_{\rho}P$ is presented with generators $x,y,x^{-1}$ subject to the relations $xy=yx$ and $y^2=y$.
\end{exm}

In this paper $P$ will be taken as a monoid of set partitions or its relatives.

\begin{rem}[Involutions]
Under the hypothesis of Definition \ref{155}, denote by $\pi:M\to W$ the monoid homomorphism which $\rho$ is factored. Let $s\in W$ such that $s^2=1$. Then, for $(a,x)\in P\times M$ such that $\pi(x)=s$, we have $ax^2=x^2\rho_{x^2}(a)=x^2a$ in $T^{\Gamma}M$. Note that it occurs in particular for Coxeter generators of $W$.
\end{rem}

\section{Monoids of set partitions of type A}\label{S2}

In this section we recall a theorem due to FiztGerald which gives a presentation of the monoid $P(A)$ with $A$ finite, see Theorem \ref{001}.  This theorem is used several times during the  paper, for instance,  it is used to  determine   a presentation for the so-called monoid of signed  partitions (Theorem \ref{067}). In turn,  we use Fitzgerald's theorem to prove one of our main results: a presentation of the  monoid of set partitions of type B. Also, here is introduced the representation by ties for the set partitions, which, in this context, turns out to be more convenient than the other known representations. In fact, the topological flavour of the ties is justified in Section \ref{S4}.

\subsection{The monoid $P_n$}
 
Let $n$ be an integer greater than one, and let $[n]=\{1,\ldots,n\}$. A \emph{partition} of type $A_n$ or simply an $A_n$-\emph{partition} is a set partition of $[n]$. Along of the present paper $P([n])$ will be simply denoted by $P_n$.

Consider now the usual total order $<$ on $[n]$. As was explicated before, this order yields a total order on the power set of $[n]$. So, every collection of subsets of $[n]$ can be ordered according to $<$.

Sometimes it is very useful to represent $A_n$-partitions using linear graphs or ties between strings. A \emph{linear graph} of an $A_n$-partition is the graph whose vertices are the points of $[n]$ and its set of edges are arcs that connect the point $i$ with the point $j$, if $j$ is the minimum in the same block of $i$ satisfying $i<j$. For example, the graph below represents the $A_7$-partition $I=\{\{1,4\},\{2,5,6\},\{3\},\{7\}\}$.\begin{center}
\scalebox{.65}{\begin{picture}(120,25)
\put(0,2){$\bullet$}\put(23,2){$\bullet$}\put(46,2){$\bullet$}\put(69, 2){$\bullet$}\put(92, 2){$\bullet$}\put(115, 2){$\bullet$}\put(132,2){$\bullet$}
\put(0,-5){$_1$}\put(23,-5){$_2$}\put(46,-5){$_3$}\put(70,-5){$_4$}\put(92,-5){$_5$}\put(115,-5){$_6$}\put(133,-5){$_7$}\qbezier(2,5)(35,30)(72,5)\qbezier(95,5)(107,25)(119,5)\qbezier(25,5)(60,30)(95,5)
\end{picture}}\end{center}

A \emph{representation by ties} of an $A_n$-partition consists simply by replacing, in the graphical representation, the points with parallel vertical lines and the arcs with horizontal dashed lines, called \emph{ties}, so that if they come from the same block they are connected; the order of the arcs (from left to right) is translated in ordering the ties from bottom to top. The ties are drawn as dashed lines to indicate that they are transparent to the lines that they do no connect. Below the representation by ties of the set partition $I$ above.\[\figureone\]

From now on we will adopt the representation by ties, since the product of set partitions is nicely reflected in terms of ties. Furthermore, this representation will be useful in the rest of the paper.

\begin{exm}
Here there is a product of two $\A_8$-partitions:\[\figuretwo\]
\end{exm} 


For every $i,j\in [n]$ with $i<j$ we denote by $\mu_{i,j}$ the $\A_n$-partition $\mu_{\{i,j\}}$. Below all the possibilities of $\mu_{i,j}$ as $A_4$-partitions.\[\figurefou\]

The elements $\mu_{i,j}$'s define a presentation  for $P_n$. More precisely, we have the following  theorem.
 
\begin{thm}[FitzGerald {\cite[Theorem 2]{Fi03}}]\label{001}
The monoid $P_n$ may be presented with generators $\mu_{i,j}$ for all $i,j\in [n]$ with $i<j$ subject to the following relations:\begin{enumerate}
\item[]{\normalfont(P1)}\quad$\mu_{i,j}^2=\mu_{i,j}$ for all $i,j\in [n]$ with $i<j$,
\item[]{\normalfont(P2)}\quad$\mu_{i,j}\mu_{r,s}=\mu_{r,s}\mu_{i,j}$ for all $i,j,r,t\in [n]$ with $i<j$ and $r<s$,
\item[]{\normalfont(P3)}\quad$\mu_{i,j}\mu_{j,k}=\mu_{i,j}\mu_{i,k}=\mu_{j,k}\mu_{i,k}$ for all $i,j,k\in [n]$ with $i<j<k$.
\end{enumerate}
\end{thm}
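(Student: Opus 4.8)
The plan is to verify the two conditions of Ru\v{s}kuc's Proposition \ref{009} for the proposed generating set $\{\mu_{i,j}\mid i<j\}$ and relation set $R$ consisting of (P1), (P2), (P3). The first condition, that each defining relation holds in $P_n$, is routine: I would check directly from the product formula $IJ=(I\cup J)^{e(I\cup J)}$ of Proposition \ref{000} that $\mu_{i,j}$ is idempotent (giving (P1)), that generators commute since the monoid $P_n$ is commutative (giving (P2)), and that for $i<j<k$ the three products $\mu_{i,j}\mu_{j,k}$, $\mu_{i,j}\mu_{i,k}$, $\mu_{j,k}\mu_{i,k}$ all equal the set partition whose only nonsingleton block is $\{i,j,k\}$ (giving (P3)). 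This last identity is the combinatorial heart of the transitivity of being in the same block, and it is where the representation by ties makes the equality visually transparent.

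The second condition is the substantive part: I must show that whenever two words $u,u'$ in the generators represent the same element of $P_n$, they are $R$-equivalent. The natural approach is to define a \emph{normal form}: since every $A_n$-partition $I$ is determined by its nonsingleton blocks, and each block $B=\{b_1<\cdots<b_t\}$ can be written as the product $\mu_{b_1,b_2}\mu_{b_2,b_3}\cdots\mu_{b_{t-1},b_t}$ of ``consecutive'' generators, I would associate to each $I$ a canonical word $w_I$ obtained by listing such products over the nonsingleton blocks in increasing block order. I would first argue that $w_I$ indeed evaluates to $I$ in $P_n$, using (P3) to justify that any spanning tree of edges within a block collapses to the same element regardless of which edges are chosen.

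The key step is then to show that an \emph{arbitrary} word $u$ representing $I$ can be transformed into $w_I$ using only relations from $R$. The strategy is to process the generators of $u$ one at a time and track the collection of blocks they generate, showing that each generator $\mu_{r,s}$ appearing in $u$ either already lies in the span recorded by $w_I$ or can be reduced against it: using (P2) to move generators into a canonical order, (P1) to delete repetitions, and crucially (P3) to rewrite any edge $\mu_{r,s}$ joining two vertices already connected by a path of recorded generators into consecutive edges, thereby absorbing it. Concretely, if $r$ and $s$ lie in the same block of $I$ and are joined by a path $r=c_0,c_1,\ldots,c_m=s$ of recorded edges, repeated application of (P3) shows $\mu_{r,s}$ times that path equals the path alone, so $\mu_{r,s}$ is redundant modulo $R$; and conversely any edge that enlarges a block gets incorporated into the normal form in its proper position.

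The main obstacle I anticipate is precisely this reduction lemma: controlling the induction so that at every stage the partial product of the prefix of $u$ equals the canonical word for the partition that prefix generates, and checking that the case analysis for (P3) — merging two distinct blocks versus adding a chord inside an existing block — is exhaustive and that each case is handled using only the given relations. Once this ``word-to-normal-form'' reduction is established, condition (2) follows immediately: if $\overline{u}=\overline{u}'=I$ then both $u$ and $u'$ reduce to $w_I$, hence $u\equiv w_I\equiv u'$, and Proposition \ref{009} yields the presentation. I would also note that this argument simultaneously produces the normal forms promised in the introduction.
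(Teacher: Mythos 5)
Your argument is correct in outline, but there is nothing in the paper to compare it against: Theorem \ref{001} is not proved in this paper at all — it is imported from FitzGerald \cite[Theorem 2]{Fi03}, and the attribution in the theorem header is the paper's entire ``proof.'' That said, your normal-form strategy is essentially the standard argument, and it is consistent with the machinery the paper later builds \emph{on top of} the theorem: your chord-absorption step is exactly the first case of Corollary \ref{010}, your canonical word $w_I$ is exactly the normal form of Proposition \ref{074}, and the reduced/canonical-system formalization of Proposition \ref{092} plays the same role in the commutative setting. The one step of your plan that needs genuine care is the merge case — rewriting $\mathrm{chain}(B)\cdot\mathrm{chain}(C)\cdot\mu_{r,s}$ into $\mathrm{chain}(B\cup C)$ — where ``incorporated in its proper position'' is vague as stated. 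It does follow from your own chord lemma applied in both directions, once you make that lemma explicit in the right generality: since (P2) makes everything commute and (P3) equates \emph{all three} products of two distinct edges of a triangle, any generator $\mu_{x,y}$ whose endpoints are joined by a path (monotone or not) of edges occurring in the word can be adjoined to or deleted from it modulo $R$, by induction on the path length. Then saturate $\mathrm{chain}(B)\cdot\mathrm{chain}(C)\cdot\mu_{r,s}$ by adjoining every edge of $\mathrm{chain}(B\cup C)$ (each is such a chord), and afterwards delete every edge of the original word (each is a chord relative to $\mathrm{chain}(B\cup C)$, which stays present throughout). With that lemma in hand your induction on prefixes closes, Proposition \ref{009} applies, and, as you note, the normal form of Proposition \ref{074} comes out as a byproduct rather than as a consequence of the quoted theorem.
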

 
Diagrammatically, relation (P3) in the theorem above is represented as:\begin{equation}\label{004}\figuresix\end{equation} This relation implies the  following corollary.

\begin{crl}\label{010}
Let $r_1,\ldots,r_k\in[n]$ for some $k\geq2$ with $r_i\neq r_j$ for all $i\neq j$. Then:\[(\mu_{r_1,r_2}\cdots\mu_{r_{k-1},r_k})\mu_{i,j}=\left\{\begin{array}{ll}
\mu_{r_1,r_2}\cdots\mu_{r_{k-1},r_k}&\text{if }i=r_1\text{ and }j=r_k\\
\mu_{i,r_1}(\mu_{r_1,r_2}\cdots\mu_{r_{k-1},r_k})&\text{if }i<r_1\text{ and }j=r_k\\
(\mu_{r_1,r_2}\cdots\mu_{r_{k-1},r_k})\mu_{r_k,j}&\text{if }i=r_1\text{ and }j>r_k\\
\mu_{r_1,i}\mu_{i,r_2}\cdots\mu_{r_{k-1},r_k}&\text{if }r_1<i<r_2\text{ and }j=r_k\\
\mu_{r_1,r_2}\cdots\mu_{r_{k-1},j}\mu_{j,r_k}&\text{if }i=r_1\text{ and }r_{k-1}<j<r_k.
\end{array}\right.\]
\end{crl}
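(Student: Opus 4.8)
The plan is to derive all five identities algebraically from the FitzGerald relations (P1)--(P3), proceeding by induction on $k$. The engine of the argument is an \emph{order-free} version of (P3): since each $\mu_{i,j}$ denotes the set partition $\mu_{\{i,j\}}$ and is therefore symmetric in its indices, combining (P3) with the commutativity relations (P2) shows that for any three pairwise distinct $a,b,c\in[n]$ one has
\[\mu_{\{a,b\}}\mu_{\{b,c\}}=\mu_{\{a,b\}}\mu_{\{a,c\}}=\mu_{\{a,c\}}\mu_{\{b,c\}}.\]
Indeed the three two-element subsets of $\{a,b,c\}$ are exactly the three pairs occurring in (P3) once $\{a,b,c\}$ is listed in increasing order, and (P2) lets us ignore the order of the two factors; hence the product of any two of the three ties supported on $\{a,b,c\}$ is the same element. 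I would record this as a preliminary remark, since it is precisely what lets me apply (P3) even though the $r_\ell$ need not be increasing.

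Writing $C=\mu_{r_1,r_2}\cdots\mu_{r_{k-1},r_k}$, I would organise the five cases as two symmetric families plus the absorbing case. For Case~1 (both endpoints already tied) the induction peels off the last factor, $C\mu_{r_1,r_k}=(\mu_{r_1,r_2}\cdots\mu_{r_{k-2},r_{k-1}})\,\mu_{r_{k-1},r_k}\mu_{r_1,r_k}$, and order-free (P3) on $\{r_{k-1},r_k,r_1\}$ rewrites $\mu_{r_{k-1},r_k}\mu_{r_1,r_k}=\mu_{r_{k-1},r_k}\mu_{r_1,r_{k-1}}$; commuting by (P2) and invoking the inductive hypothesis (Case~1 for the chain on $r_1,\dots,r_{k-1}$) collapses the extra tie, the base case $k=2$ being exactly (P1). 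Cases~2 and~4 are handled together: both have left side $C\mu_{i,r_k}$, and the same peeling plus order-free (P3) on $\{r_{k-1},r_k,i\}$ turns the trailing $\mu_{r_{k-1},r_k}\mu_{i,r_k}$ into $\mu_{r_{k-1},r_k}\mu_{i,r_{k-1}}$; after commuting, the inductive hypothesis (the same case, now attaching $i$ at the $r_{k-1}$-end of the chain on $r_1,\dots,r_{k-1}$) yields either the prepended form $\mu_{i,r_1}C$ when $i<r_1$ (Case~2) or the subdivided form $\mu_{r_1,i}\mu_{i,r_2}\cdots\mu_{r_{k-1},r_k}$ when $r_1<i<r_2$ (Case~4). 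Each base case is a single application of order-free (P3) followed by (P2).

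Cases~3 and~5 are the mirror images, where the attached tie meets the chain at the \emph{front} endpoint $r_1$. Here I would first use (P2) to slide $\mu_{r_1,j}$ to the left of $C$, then apply order-free (P3) on $\{r_1,r_2,j\}$ to replace $\mu_{r_1,j}\mu_{r_1,r_2}$ by $\mu_{r_1,r_2}\mu_{r_2,j}$; what remains is $\mu_{r_1,r_2}$ times the analogous product for the shorter chain on $r_2,\dots,r_k$ with $j$ attached at its front endpoint $r_2$, to which the inductive hypothesis (Case~3 when $j>r_k$, Case~5 when $r_{k-1}<j<r_k$) applies, giving the suffix form $C\mu_{r_k,j}$ or the subdivided last-edge form $\mu_{r_1,r_2}\cdots\mu_{r_{k-1},j}\mu_{j,r_k}$ respectively; again the base case $k=2$ is one use of order-free (P3).

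I do not expect a genuine obstacle: the content is entirely contained in the order-free (P3), and the rest is bookkeeping of which triple to feed it and which inductive hypothesis to invoke. The one point that demands care is that $i$ (resp.\ $j$) must be distinct from the $r_\ell$ entering each application of (P3); this is guaranteed by the strict inequalities in the hypotheses, which also explain why the right-hand sides come out listed in increasing order. As a consistency check one can read both sides through Proposition~\ref{000}: in every case both words represent the single block $\{r_1,\dots,r_k\}$ (Case~1) or $\{i,r_1,\dots,r_k\}$ (Cases~2--5) together with singletons, confirming the asserted equalities in $P_n$.
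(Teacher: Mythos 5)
Your proof is correct and takes essentially the same route as the paper: the paper offers no written argument beyond asserting that relation (P3) implies the corollary, and your induction from the order-free form of (P3), combined with (P1) and (P2), supplies exactly that derivation (with the implicit reading, consistent with every use of the corollary in the paper, that the chain $r_1<\cdots<r_k$ is increasing, so the strict inequalities do force $i$, $j$ to be new points). Your closing check via Proposition \ref{000} also independently confirms each identity semantically in $P_n$, which covers any degenerate coincidences of indices.
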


The following result is a consequence of Proposition \ref{000}, Theorem \ref{001}  and Corollary \ref{010}.

\begin{pro}[Normal form]\label{074}
Every element of $P_n$ has a unique decomposition  up to permutation of its blocks. More precisely, if $I=\{I_1,\ldots,I_k\}\in P_n$, then\[I=\mu_{I_1}\cdots\mu_{I_k}\quad\text{with}\quad\mu_{I_i}=\mu_{a_{i_1},a_{i_2}}\cdots\mu_{a_{i_{t-1}},a_{i_t}}\]where $I_i=\{a_{i_{1}}<\cdots<a_{i_t}\}$. If $I$ is an ordered set partition, the decomposition, keeping the order of the blocks, is uniquely determined.
\end{pro}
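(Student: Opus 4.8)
The plan is to split the statement into existence of a decomposition of the prescribed shape and its uniqueness up to permutation of blocks, adopting the convention that a singleton block $\{a\}$ contributes the empty product $\mu_{\{a\}}=1$.

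For existence I would argue semantically. By Proposition \ref{000} each generator $\mu_{i,j}$ is the set partition whose only non-singleton block is $\{i,j\}$. Fixing a block $I_i=\{a_{i_1}<\cdots<a_{i_t}\}$, an induction on $t$ using Proposition \ref{000} shows that the chain $\mu_{I_i}=\mu_{a_{i_1},a_{i_2}}\cdots\mu_{a_{i_{t-1}},a_{i_t}}$ equals the set partition whose single non-singleton block is $I_i$: at each step the two blocks being multiplied share exactly the point $a_{i_s}$, so the exponent in $(\,\cdot\,)^{e(\,\cdot\,)}$ merges them into the longer block $\{a_{i_1},\ldots,a_{i_{s+1}}\}$. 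Multiplying the $\mu_{I_i}$ over all blocks and applying Proposition \ref{000} once more, the pairwise disjointness of $I_1,\ldots,I_k$ ensures no further merging, whence $\mu_{I_1}\cdots\mu_{I_k}=I$. Equivalently, since the $\mu_{i,j}$ generate $P_n$ by Theorem \ref{001}, any element is represented by some word, and repeated use of (P1), (P2) and Corollary \ref{010} --- which absorbs a stray generator into an adjacent chain, in particular resolving overlapping blocks via (P3) --- rewrites that word into the block-chain form.

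For uniqueness the key point is that the collection of blocks of a set partition is intrinsic to it. The existence computation shows that a word $\mu_{J_1}\cdots\mu_{J_l}$ of the prescribed form represents the partition whose non-singleton blocks are exactly $J_1,\ldots,J_l$. Hence, invoking Theorem \ref{001} so that equality in $P_n$ is equality of the underlying set partitions, two such words represent the same element precisely when $\{I_1,\ldots,I_k\}=\{J_1,\ldots,J_l\}$. Since the elements within each block are listed increasingly, the chain $\mu_{I_i}$ is determined by the block $I_i$ alone, so the two decompositions differ only by a permutation of their factors; because factors coming from disjoint blocks commute by (P2), fixing the increasing order of blocks singles out one word, giving the normal form for an ordered set partition.

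The step demanding the most care is the inductive collapse of a within-block chain under Proposition \ref{000}, where one must track the exponent $e(\,\cdot\,)$ and check that each added generator merges exactly one pair of blocks; this is routine but must be carried out cleanly. The genuinely delicate matter is keeping the uniqueness argument airtight, namely confirming that the normal form faithfully records the blocks so that distinct set partitions can never share a normal form.
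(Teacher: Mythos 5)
Your proposal is correct and matches the approach the paper intends: the paper gives no written proof beyond declaring the proposition ``a consequence of Proposition \ref{000}, Theorem \ref{001} and Corollary \ref{010}'', and your argument is exactly the fleshing-out of that citation --- Proposition \ref{000} to compute that each within-block chain collapses to $\mu_{I_i}$ and that disjoint blocks do not merge (existence), and the intrinsic nature of the blocks of a set partition (uniqueness up to permutation, with (P2) handling the ordered case). No gaps; your handling of singleton blocks as empty products and your care that uniqueness is asserted only among words of the prescribed chain form are both appropriate.
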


For instance, the normal form of the ordered $\A_7$-partition $I=\left(\{5\},\{1,3,7\},\{2,4,6\}\right)$ is $I=\mu_{1,3}\mu_{3,7}\mu_{2,4}\mu_{4,6}.$\[\figuresev\]

\subsubsection{Canonical system of generators}

Here we will show that relations (P1) and (P3) of Theorem \ref{001} determinate a  canonical system of generators which will be used in the next section. To be precisely, put $U_n:=\{\mu_{i,j}\mid1\leq i<j\leq n\}$  and denote by $S$  the subset of $CU_n^*\times CU_n^*$ formed by the pairs below, that define the relations (P1) and (P3) in Theorem \ref{001}.
\begin{itemize}
\item $(\mu_{i,j}^2,\mu_{i,j})$ for all $i,j\in [n]$ with $i<j$ and $|i|\leq j$,\label{088}
\item $(\mu_{i,j}\mu_{i,k},\mu_{i,j}\mu_{j,k})$ for all $i,j,k\in[n]$ with $i<j<k$,\label{075}
\item $(\mu_{i,k}\mu_{j,k},\mu_{i,j}\mu_{j,k})$ for all $i,j,k\in[n]$ with $i<j<k$,\label{086}
\end{itemize}

Note that $S$ is reduced and  $ P_n\simeq CU_n^*/\overline{S}$.  The rest of the subsection is dedicated to prove the following proposition.

\begin{pro}\label{092}
The set $S$ is a canonical system of generators of $\overline{S}$.
\end{pro}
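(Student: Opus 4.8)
The plan is to verify Definition \ref{081} directly: I must show that $N(g)=N(h)$ whenever $(g,h)\in\overline{S}$. Since $\overline{S}$ is the congruence closure of $S$, and $N$ is a well-defined map $CU_n^*\to CU_n^*$, it suffices to prove that the normal form $N$ is a \emph{congruence invariant}, i.e. that $N$ factors through $\overline{S}$. Because $N$ is defined by a rewriting procedure (repeatedly applying the rule $a_m\mapsto b_m$ for the minimal applicable index), the standard route is to recognize $S$ as a rewriting system and establish that it is \emph{confluent} and \emph{terminating}; once both hold, every element has a unique normal form depending only on its congruence class, which is exactly the assertion of the proposition.

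First I would record termination: each relation in $S$ sends $a_i$ to a strictly smaller $b_i$ in the admissible order $<$ on $CU_n^*$ (condition (1) of being reduced), and since $<$ is an admissible total order with $1$ minimal, no infinite descending chain exists; hence the rewriting defined by $N$ always halts. This also shows $N(g)$ is genuinely a normal form (irreducible: no $a_i$ divides it). Next I would handle confluence. By Newman's lemma, for a terminating system it is enough to check \emph{local confluence}, which for a commutative rewriting system reduces to resolving the \emph{overlaps} (critical pairs) between the left-hand sides $a_i$. Concretely, whenever two distinct rules with left-hand sides $a_i,a_j$ both apply to a common monomial — that is, $\mathrm{lcm}(a_i,a_j)$ is divisible by both — I must check that the two resulting rewrites can be brought back together to a common word using further applications of $S$.

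The left-hand sides here are the squares $\mu_{i,j}^2$ and the products $\mu_{i,j}\mu_{i,k}$ and $\mu_{i,k}\mu_{j,k}$ (for $i<j<k$). The critical-pair analysis is therefore a finite, index-bounded case check: overlaps occur, for instance, between $\mu_{i,j}^2$ and $\mu_{i,j}\mu_{i,k}$ on the shared factor $\mu_{i,j}$, between two product rules sharing a generator (e.g. $\mu_{i,j}\mu_{i,k}$ and $\mu_{i,k}\mu_{j',k}$ sharing $\mu_{i,k}$), and among triples of indices producing longer monomials such as $\mu_{i,j}\mu_{i,k}\mu_{i,l}$ or $\mu_{i,k}\mu_{j,k}\mu_{\ell,\cdots}$. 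For each such overlap I would compute both one-step descendants and reduce each to its normal form, verifying they coincide; the relations (P1) and (P3) — together with their consequences encoded in Corollary \ref{010} and the identity $IJ=(I\cup J)^{e(I\cup J)}$ of Proposition \ref{000} — guarantee the two branches meet, since all three expressions in (P3) represent the same set partition. I expect the main obstacle to be organizing the overlaps cleanly and not overlooking a critical pair, especially the triple overlaps among the (P3)-type rules where three indices interact and one must track carefully which generator is minimal-first under the rewriting rule; verifying joinability there is where the real work lies, although each individual case is a short computation once the overlap is identified.

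With termination and local confluence established, Newman's lemma yields confluence, so every congruence class of $\overline{S}$ contains exactly one irreducible word, which is its common normal form; thus $N(g)=N(h)$ for all $(g,h)\in\overline{S}$, proving that $S$ is a canonical system of generators of $\overline{S}$.
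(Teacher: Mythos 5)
Your overall strategy---orient each relation of $S$ from larger to smaller side in the admissible order, deduce termination, and then obtain confluence from local confluence via Newman's lemma---is coherent and genuinely different from the paper's argument, which never touches critical pairs: the paper instead uses FitzGerald's Theorem \ref{001} to identify $CU_n^*/\overline{S}$ with $P_n$, and shows (Lemmas \ref{089} and \ref{090}) that the $S$--normal form of \emph{any} word representing a partition $I$ equals the product over the blocks of $I$ of their connected chains, so that Proposition \ref{074} immediately forces $S$-equivalent words to have the same normal form. However, as written your proposal has a genuine gap at precisely the step you yourself flag as ``where the real work lies'': local confluence is never actually established. You enumerate the overlap types and then claim the two branches of each critical pair must meet because ``all three expressions in (P3) represent the same set partition.'' That inference is circular. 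Equality of the underlying set partitions only tells you the two descendants are $\overline{S}$-congruent; congruent words are joinable exactly when the system is confluent, which is what you are trying to prove. Indeed, Definition \ref{081} is precisely the assertion that congruence implies equal normal forms, so it cannot be invoked (via Theorem \ref{001}, Proposition \ref{000}, or Corollary \ref{010}, all of which are statements about $P_n$, i.e.\ about congruence classes) to dispose of the critical pairs.

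The gap is fillable, because the critical pairs do resolve under honest directed computation. For example, the overlap of the rules $\mu_{i,j}\mu_{i,k}\to\mu_{i,j}\mu_{j,k}$ and $\mu_{i,k}\mu_{i,l}\to\mu_{i,k}\mu_{k,l}$ (for $i<j<k<l$) on the shared generator $\mu_{i,k}$ gives the monomial $\mu_{i,j}\mu_{i,k}\mu_{i,l}$, whose two one-step descendants are $\mu_{i,j}\mu_{j,k}\mu_{i,l}$ and $\mu_{i,j}\mu_{i,k}\mu_{k,l}$; the first rewrites as
\[
\mu_{i,j}\mu_{j,k}\mu_{i,l}\;\to\;\mu_{i,j}\mu_{j,l}\mu_{j,k}\;\to\;\mu_{i,j}\mu_{j,k}\mu_{k,l},
\]
and the second as $\mu_{i,j}\mu_{i,k}\mu_{k,l}\to\mu_{i,j}\mu_{j,k}\mu_{k,l}$, so both reach the chain $\mu_{i,j}\mu_{j,k}\mu_{k,l}$. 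A complete proof along your lines requires this kind of verification for every index pattern (square--product overlaps, two rules sharing a generator with equal or distinct pivot indices, etc.); none of these is hard, but collectively they constitute the proof, and your text replaces them with the circular appeal above. Two smaller points: termination from admissibility is correct but tacitly uses Dickson's lemma (a total invariant order with $1$ minimal on an infinite monoid need not be well-founded in general; on $CU_n^*$ it is), and once confluence is in hand you should note explicitly that $N(g)$ is an irreducible element of the class of $g$, so uniqueness of irreducibles per class yields $N(g)=N(h)$ for $(g,h)\in\overline{S}$, which you do state. Compared with the paper's route, yours buys independence from FitzGerald's theorem (it would prove that $\langle U_n \mid S\rangle$ is a presentation with decidable word problem from scratch), at the cost of the case analysis; the paper's route buys brevity by leaning on the known structure of $P_n$.
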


Consider the total order on $U_n$ in which $\mu_{i,j}$ is smaller than $\mu_{r,s}$ if either $j<s$ or $j=s$ with $i>r$. As in Subsection \ref{095}, we obtain an admissible order $<$ on $CU_n^*$. In particular, we have the following inequalities:\[\mu_{i,j}\mu_{j,k}<\mu_{i,j}\mu_{i,k}<\mu_{i,k}\mu_{j,k},\qquad i,j,k\in[n],\,i<j<k.\]

For instance, we have the following inequalities:\[\figureeig\]

Informally, if we  would like to compare two elements of $U_n$ we look at their representations by ties: if both ties end at the same thread to the right, then it is bigger one that whose tie is longuer, otherwise, it is bigger one that whose tie ends more to the right.
 
Recall that $N:CU_n^*\to CU_n^*$ denotes the normal form of $CU_n^*$ respect to $S$.

\begin{lem}\label{089}
For every subset $X$ of $[n]$, the connected normal form of $\mu_X$ coincides with the $S$--normal form of every word representative of $\mu_X$ in $CX^*$.
\end{lem}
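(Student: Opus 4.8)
The plan is to reduce the statement to a combinatorial description of the words that are \emph{irreducible} with respect to $S$, and then to show that $\mu_X$ has exactly one such representative, namely its connected normal form. Throughout I write $X=\{a_1<\cdots<a_t\}$ and denote by $c_X:=\mu_{a_1,a_2}\mu_{a_2,a_3}\cdots\mu_{a_{t-1},a_t}$ the connected normal form of $\mu_X$ (cf. Proposition \ref{074}). Since every factor of a word representing $\mu_X$ must join two elements of $X$ --- otherwise the product would pull an element outside $X$ into the block --- any such word uses only the generators $\mu_{i,j}$ with $i,j\in X$, so the reduction of $N$ is governed by the restriction of $S$ to these generators. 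Because each relation of $S$ is a consequence of (P1) and (P3), hence holds in $P_n$, the reduction defining $N$ preserves the represented set partition; as it strictly decreases the word in the admissible order at each step, it terminates at a word that no left-hand side of $S$ divides. Thus $N(w)$ is an irreducible word representing $\mu_X$, and it suffices to prove that $c_X$ is the only such word.

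The key step is to model words and read off irreducibility graphically. I would associate to a word $w$ over the generators $\mu_{i,j}$, $i,j\in X$, the multigraph $G(w)$ on vertex set $X$ with one edge $\{i,j\}$ for each factor $\mu_{i,j}$ of $w$; then $\overline{w}=\mu_X$ in $P_n$ holds precisely when $G(w)$ is connected and spans $X$. Inspecting the three families of left-hand sides of $S$, the word $w$ is irreducible exactly when: (i) no generator has exponent $\geq 2$, so $G(w)$ is simple; (ii) no two factors $\mu_{i,j},\mu_{i,k}$ with $i<j<k$ occur, i.e. no vertex is the smaller endpoint of two edges; and (iii) no two factors $\mu_{i,k},\mu_{j,k}$ with $i<j<k$ occur, i.e. no vertex is the larger endpoint of two edges.

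It then remains to identify the connected spanning simple graphs on $X$ satisfying (ii) and (iii). These conditions say that every vertex has at most one neighbour above it and at most one below it; in particular every vertex has degree at most two, and no component can contain a cycle, since the maximal vertex of a cycle would be the larger endpoint of two edges, contradicting (iii). Hence $G(w)$ is a disjoint union of paths, each of which increases strictly along its edges. A connected graph of this kind spanning all of $X$ must be the single monotone path $a_1-a_2-\cdots-a_t$, whose edge set is exactly the set of factors of $c_X$. Therefore $G(w)=G(c_X)$, so $w=c_X$, giving $N(w)=c_X$ for every representative $w$, as claimed.

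I expect the only genuine difficulty to lie in this last paragraph: making airtight the passage from the purely local conditions (ii)--(iii) to the global ``disjoint union of monotone paths'' structure, including the exclusion of cycles; once the graph structure is pinned down, the identification with $c_X$ and the conclusion are immediate. The remaining points --- that a representative of $\mu_X$ uses only generators indexed in $X$, and that $N$ preserves the represented partition and terminates --- are routine consequences of Theorem \ref{001} and the set-up of Subsection \ref{095}.
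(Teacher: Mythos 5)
Your proof is correct and rests on essentially the same mechanism as the paper's: both observe that $S$-irreducibility of the normal form forbids a repeated factor, two factors sharing their smaller endpoint, and two factors sharing their larger endpoint, and then invoke connectedness of $\mu_X$ to force the strictly monotone chain $\mu_{a_1,a_2}\cdots\mu_{a_{t-1},a_t}$. The only difference is organizational: you classify all irreducible words globally (graphs that are disjoint unions of strictly increasing paths, with cycles excluded via their maximal vertex), whereas the paper argues by maximality, taking a longest connected chain dividing $N(u')$ and showing any further factor either creates a forbidden pattern or extends the chain --- the same combinatorics packaged as a contradiction.
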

\begin{proof}
Let $u'\in CX^*$ be a word representative of $\mu_X$. Then none of the words $\mu_{i,j}^2,\allowbreak\mu_{i,j}\mu_{i,k},\mu_{i,k}\mu_{j,k}$ is a subword of $N(u')$ for all $i<j<k$. Let $u=\mu_{a_1,a_2}\mu_{a_2,a_3}\cdots\mu_{a_{m-1},a_m}$ be the longest connected normal form that is a subword of $N(u')$. Suppose that $N(u')u^{-1}\neq 1$. Since $\mu_X$ is connected, there is $\mu_{i,j}\in U_n$ such that $\mu_{i,j}|N(u')u^{-1}$ and $a_k\in\{i,j\}$ for some $k\in[m]$. If $a_k=i$ and $k<m$, then $\mu_{i,j}\mu_{i,a_{k+1}}|u'$ which is a contradiction. If $a_k=j$ and $j>1$, then $\mu_{i,j}\mu_{a_{k-1},j}|u'$ which is a contradiction as well. Otherwise, either $u\mu_{i,j}$ or $\mu_{i,j}u$ is a connected normal form, which contradicts the fact that $u$ is longest. Therefore $N(u')u^{-1}=1$.
\end{proof}

Having in mind that $S$ is defined by connected words, we deduce the following lemma.

\begin{lem}\label{090}
Let $X,Y$ be two disjoint subsets of $[n]$, and let $u,u'\in CX^*$ be word representatives of $\mu_X$ and $\mu_Y$ respectively. Then $N(uu')=N(u)N(u')$.
\end{lem}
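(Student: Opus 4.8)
The plan is to reduce the claim to the single-block lemma just proved (Lemma \ref{089}) by exploiting that $X$ and $Y$ are \emph{disjoint}. The key structural observation is that since $X\cap Y=\emptyset$, no generator $\mu_{i,j}$ appearing in a word representative of $\mu_X$ can share an index with any generator appearing in a representative of $\mu_Y$; in particular, none of the reduction rules in $S$ can ever combine a generator coming from the $X$-part with one coming from the $Y$-part, because every defining relation in $S$ is built from \emph{connected} words (the pairs $(\mu_{i,j}^2,\mu_{i,j})$, $(\mu_{i,j}\mu_{i,k},\mu_{i,j}\mu_{j,k})$, $(\mu_{i,k}\mu_{j,k},\mu_{i,j}\mu_{j,k})$ all involve generators sharing a common index). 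This is exactly the remark preceding the statement, and it is the crux of the argument.

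Concretely, I would argue as follows. First I would observe that the product word $uu'\in C U_n^*$ is a representative of $\mu_{X\cup Y}$ viewed merely as a \emph{collection} of subsets; more to the point, the support of $u$ (the set of indices occurring in its generators) lies inside $X$ and that of $u'$ lies inside $Y$, so the two supports are disjoint. When we run the normal-form reduction $N$ on $uu'$, at each step we pick the minimal $m\in[r]$ with $a_m\mid uu'$ and replace. The heart of the argument is that every left-hand side $a_m$ of a rule in $S$ is a connected word supported on a single pair of indices $\{i,j\}$ or triple $\{i,j,k\}$; hence whenever $a_m\mid uu'$, the divisor $a_m$ is supported entirely within $X$ or entirely within $Y$, never straddling both. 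Therefore each reduction touches only the $X$-letters or only the $Y$-letters of the current word, and the reductions on the two parts are completely independent and commute with one another.

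From this independence I would conclude that the full reduction sequence for $uu'$ factors as the concatenation of the reduction sequence for $u$ and that for $u'$: since $N$ is a well-defined function on $CU_n^*$ (because $S$ is reduced, so confluence and termination hold by the Rosales--Garc\'ia framework of Subsection \ref{095}), the order in which we interleave the independent reductions does not affect the final result. Running all $X$-reductions first produces $N(u)$ on the $X$-letters while leaving the $Y$-letters untouched, and then running the $Y$-reductions produces $N(u')$; commutativity of $CU_n^*$ lets us freely reorder the letters of the two disjoint-support blocks. Hence $N(uu')=N(u)N(u')$, as required.

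The main obstacle, and the point deserving the most care, is the claim that no rule in $S$ can apply across the $X/Y$ boundary; everything else is bookkeeping once that is secured. I would pin this down by checking directly against the three families of left-hand sides in $S$: each is a connected word in the sense of Lemma \ref{089}, so its set of indices is contained in a single block-support, and since $\mathrm{supp}(u)\subseteq X$ and $\mathrm{supp}(u')\subseteq Y$ with $X\cap Y=\emptyset$, a single rule instance cannot simultaneously consume letters from $u$ and from $u'$. A secondary subtlety is to make sure that the minimality choice of $m$ in the definition of $N$ does not secretly create cross-block interaction through the \emph{ordering} of rules; but because any applicable rule is confined to one side, and since $N$ is well-defined independently of the reduction path (the canonical-system property being exactly what Proposition \ref{092} will establish, and in any case $N$ is a function by construction), the global normal form is insensitive to how we schedule the one-sided reductions. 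This confluence-type reasoning, combined with the disjoint-support observation, completes the proof.
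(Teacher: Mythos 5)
Your core observation is exactly the one the paper relies on: every left-hand side in $S$ is a connected word, so with the support of $u$ inside $X$, the support of $u'$ inside $Y$, and $X\cap Y=\emptyset$, no rule instance can straddle the two parts. (The paper offers essentially nothing beyond this remark as its proof.) However, your justification of the scheduling step --- that one may run all $X$-reductions first and then all $Y$-reductions --- has a genuine gap. You claim that ``$S$ is reduced, so confluence and termination hold.'' Reducedness does give termination (each step replaces $a_m$ by the strictly smaller $b_m$, and an admissible order on a finitely generated free commutative monoid is well-founded), but it does \emph{not} give confluence: path-independence of the reduction is precisely the canonical-system property of Definition \ref{081}, which for this $S$ is the content of Proposition \ref{092} --- and that proposition is proved \emph{after}, and by means of, Lemma \ref{090}. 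So your appeal is either based on a false implication or is circular. Your remaining fallback, that $N$ is a function by construction, is true but irrelevant: $N$ is defined by one specific strategy (always apply the minimal applicable rule), and being single-valued for that strategy does not license reordering the reduction steps.

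The gap closes easily, with no confluence needed, by sharpening your own disjointness observation. If $m$ is minimal with $a_m\mid uu'$ and (say) $a_m$ is supported in $X$, then $a_m\mid u$, and moreover $m$ is also minimal with $a_m\mid u$: any smaller rule applicable to $u$ would be applicable to $uu'$, contradicting minimality. Hence the defining recursion gives both $N(uu')=N\bigl((ua_m^{-1}b_m)u'\bigr)$ and $N(u)=N(ua_m^{-1}b_m)$; since $b_m$ has the same index support as $a_m$, the word $ua_m^{-1}b_m$ is still $X$-supported, and induction along the well-founded admissible order (strict descent holds because $b_m<a_m$) yields $N(uu')=N(ua_m^{-1}b_m)N(u')=N(u)N(u')$, with the symmetric argument when $a_m$ is supported in $Y$. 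In other words, the minimal-rule strategy on $uu'$ automatically restricts to the minimal-rule strategy on each side, so your ``independence'' intuition becomes a proof that never needs to reorder the reduction sequence.
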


\begin{proof}[Proof of Proposition \ref{092}]
Let $u,u'\in CU_n^*$ such that $u,u'$ are $S$-equivalent. Proposition \ref{074} implies that $u,u'$ have the same normal form. So, by Lemma \ref{090}, we obtain $N(u)=N(u')$.
\end{proof}

\subsection{ The monoid $SP_n$}

In what follows, we denote by $[\pm n]$ the set $\{\pm1,\ldots,\pm n\}$ which is considered with the usual order.  We simply write $P_{\pm n}$ instead of $P([\pm n])$ 
 and for every subset $K$ of $[\pm n]$ we denote by $-K$ the subset obtained by swapping $+$ and $-$ in $K$. A set partition $I$ of $[\pm n]$ is said to be a \emph{signed partition} if for every block $K$ of $I$, the set $-K$ is also a block of $I$. The set formed by all the signed partitions in $P_{\pm n}$ is denoted by $SP_n$.
\begin{pro}\label{066}
The set of signed partitions $SP_n$ is a submonoid of $P_{\pm n}$.
\end{pro}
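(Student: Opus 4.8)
The plan is to realise $SP_n$ as the fixed--point subset of a natural involution of $P_{\pm n}$ induced by the sign reversal $a\mapsto -a$ on $[\pm n]$, and then to invoke the elementary fact that the fixed--point set of a monoid endomorphism is a submonoid. First I would define $\sigma\colon P_{\pm n}\to P_{\pm n}$ by $\sigma(I)=\{-K\mid K\in I\}$. Since $a\mapsto -a$ is a bijection of $[\pm n]$, the collection $\sigma(I)$ is again a set partition of $[\pm n]$, and $\sigma$ is an involution. Unwinding the definition of a signed partition, $I$ is signed exactly when $-K$ is a block whenever $K$ is, i.e. exactly when $\sigma(I)=I$; thus $SP_n=\Fix(\sigma)$.

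The key step is to check that $\sigma$ is a homomorphism of monoids. Because $\sigma$ is induced by a bijection of the underlying set, it preserves the partial order $\preccurlyeq$ in both directions: each block of $\sigma(J)$ is a union of blocks of $\sigma(I)$ if and only if each block of $J$ is a union of blocks of $I$. Hence $\sigma$ is an order automorphism of the poset $(P_{\pm n},\preccurlyeq)$ and therefore commutes with the $\min$ operation defining the product, giving $\sigma(IJ)=\sigma(\min\{K\mid I,J\preccurlyeq K\})=\min\{K\mid \sigma(I),\sigma(J)\preccurlyeq K\}=\sigma(I)\sigma(J)$. (Alternatively one may invoke Proposition \ref{000} and observe that $\sigma((I\cup J)^m)=(\sigma(I)\cup\sigma(J))^m$ for every $m$, whence $\sigma(IJ)=\sigma(I)\sigma(J)$.) Moreover $\sigma(1)=1$, since $\sigma$ sends each singleton block $\{a\}$ to the singleton block $\{-a\}$.

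With these two facts the conclusion is formal: $\Fix(\sigma)$ contains $1$, and if $\sigma(I)=I$ and $\sigma(J)=J$ then $\sigma(IJ)=\sigma(I)\sigma(J)=IJ$, so $IJ\in\Fix(\sigma)$. As $SP_n=\Fix(\sigma)$, this shows $SP_n$ is closed under the product and contains the identity, i.e. it is a submonoid of $P_{\pm n}$.

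I expect the only step carrying actual content to be the verification that $\sigma$ respects the product; this is where one must use that the product of $P_{\pm n}$ is order--theoretic (the join in $(P_{\pm n},\preccurlyeq)$), or equivalently the explicit description of Proposition \ref{000}. Everything else---that $SP_n$ equals the fixed set, that $\sigma(1)=1$, and that a fixed--point set is a submonoid---is routine bookkeeping once the involution $\sigma$ is in place.
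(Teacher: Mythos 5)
Your proof is correct, but it takes a genuinely different route from the paper. The paper argues directly on blocks: it takes a block $H$ of $I$ and a block $K$ of $J$ that intersect and runs a case analysis (both non zero, one zero and one non zero, both zero) to check that the merged blocks of $IJ$ again come in $\pm$ pairs; this hands-on analysis also previews the zero-block bookkeeping that the paper needs later for $P_n^B$ and the zero closure. You instead observe that negation induces an involution $\sigma$ of $P_{\pm n}$, that $SP_n=\Fix(\sigma)$, and that $\sigma$ is an order automorphism of $(P_{\pm n},\preccurlyeq)$ and hence preserves the join defining the product, so the fixed-point set is a submonoid. Your argument is cleaner and more general: the same two lines show that the fixed set of \emph{any} bijection of the ground set is a submonoid, which in particular also proves that the set $\SS P(A)$ of symmetric partitions mentioned in the paper is a submonoid --- something the paper's block-by-block computation does not give for free. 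One small caveat: your parenthetical alternative via Proposition \ref{000} asserts $\sigma\bigl((I\cup J)^m\bigr)=\bigl(\sigma(I)\cup\sigma(J)\bigr)^m$ for every $m$, but this is not literally true step by step, because the pair of blocks merged by $(\cdot)^1$ is selected using the total order $<$ on subsets, which negation does not preserve (e.g. $\{1\}<\{2\}$ while $\{-2\}<\{-1\}$); only the terminal partitions agree. Since this is offered only as an alternative and your main argument (order automorphism $\Rightarrow$ join-preserving) is complete, the proof stands.
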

\begin{proof}
Let $I,J$ be two signed partitions, let $H$ be a block of $I$, and let $K$ be a block of $J$. Assume that $H$ intersects $K$. If $H,K$ are both non zero blocks, then $-H$ intersects $-K$ with $(H\cup K)\cap(-H\cup-K)=\emptyset$. If $H$ is a zero block and $K$ is a non zero block, then $-K$ intersects $H$ such that $-K\cup K\cup H$ is a zero subset of $[\pm n]$. If $H,K$ are both zero blocks, then $H\cup K$ is a zero subset. This implies that $IJ$ is a signed partition. Therefore $SP_n$ is a submonoid of $P_{\pm n}$.
\end{proof}

Our next purpose is to describe $SP_n$ through a presentation; to do that we need to introduce first some notations. For every nonempty subset $X$ of $[\pm n]$ we will denote by $\e_X$ the set partition $\mu_{-X}\mu_X$. Note that $\e_X=\e_{-X}$. Moreover, if $X$ intersects $-X$ then $\e_X=\e_{-X\,\cup\,X}$. For instance, $\{-2,3,5\}$ is a subset of $[\pm 5]$, then $\e_{\{-2,3,5\}}$ is represented as follows\[\figurenin\]

For every $i,j\in[\pm n]$ with $i<j$ we set $\e_{i,j}=\e_{\{i,j\}}$. Note that $\e_{i,j}=\e_{-j,-i}$ for all $i<j$. For instance, in $P_{\pm\,3}$ there are 9 $\e_{i,j}$'s:\[\figureten\]

\begin{lem}\label{069}
The set $E_n:=\{\e_{i,j}\mid i,j\in[\pm n],\,i<j\}$ generates $SP_n$. Furthermore $|E_n|=n^2$ for all $n\geq2$.
\end{lem}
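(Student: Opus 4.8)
The plan is to prove the two assertions separately: first that $E_n$ generates $SP_n$, and then the cardinality count.

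For generation, I would start from the normal form of Proposition \ref{074}, which applies verbatim to $P_{\pm n}$ since $[\pm n]$ is a finite totally ordered set: every $I\in SP_n$ equals the product $\prod_B\mu_B$ of the generators $\mu_B$ associated to its non-singleton blocks $B$ (singleton blocks contribute the identity). Because $I$ is signed, its non-singleton blocks split into two kinds: \emph{antisymmetric pairs} $\{K,-K\}$ with $K\neq -K$, and \emph{symmetric blocks} $K=-K$. Collecting the factors according to this splitting, the generation statement reduces to expressing $\mu_K\mu_{-K}$ (for $K\neq -K$) and $\mu_K$ (for $K=-K$) as products of elements of $E_n$.

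Both reductions follow from the commutativity of $P_{\pm n}$ together with the chain description of $\mu_K$ coming from Proposition \ref{074}. If $K=\{a_1<\cdots<a_t\}$ satisfies $K\cap(-K)=\emptyset$, then since $\e_{a_i,a_{i+1}}=\mu_{-a_{i+1},-a_i}\mu_{a_i,a_{i+1}}$ and everything commutes, the product $\e_{a_1,a_2}\cdots\e_{a_{t-1},a_t}$ collapses to $\mu_K\mu_{-K}$, the positive factors chaining up to $\mu_K$ and the negative ones to $\mu_{-K}$. If instead $K=-K$, let $c_1<\cdots<c_r$ be its positive elements; then $\e_{-c_1,c_1}=\mu_{\{-c_1,c_1\}}$ is the corresponding zero block (here I use the remark that $\e_X=\e_{-X\,\cup\,X}$ when $X$ meets $-X$), and the product $\e_{-c_1,c_1}\e_{c_1,c_2}\cdots\e_{c_{r-1},c_r}$ merges $-c_1,c_1$ and then successively $\pm c_2,\ldots,\pm c_r$ into the single symmetric block $K$, i.e.\ equals $\mu_K$. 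This exhibits every $I$ as a product over $E_n$.

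For the cardinality, I would count the pairs $\{i,j\}$ with $i<j$ in $[\pm n]$, of which there are $\binom{2n}{2}=2n^2-n$, and then quotient by the identifications among the $\e_{i,j}$. The only relation among the generators is $\e_{i,j}=\e_{-j,-i}$: indeed $\e_{i,j}$ is determined by its set of non-singleton blocks $\{\{i,j\},\{-i,-j\}\}$, so $\e_{i,j}=\e_{k,l}$ forces $\{k,l\}\in\{\{i,j\},\{-i,-j\}\}$, giving $(k,l)=(i,j)$ or $(k,l)=(-j,-i)$. The map $(i,j)\mapsto(-j,-i)$ is an involution on these pairs whose fixed points are exactly the $n$ zero pairs $\{-a,a\}$ with $a\in[n]$ (where $j=-i$). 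Orbit counting then yields $n+\tfrac{1}{2}\big((2n^2-n)-n\big)=n^2$ distinct generators, as claimed. The step requiring the most care is verifying that $\e_{i,j}=\e_{-j,-i}$ is the \emph{only} coincidence, i.e.\ that a generator is faithfully recorded by its pair of non-singleton blocks, and that the symmetric and antisymmetric chain products above really reproduce the intended blocks.
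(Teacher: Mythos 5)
Your proof is correct and takes essentially the same route as the paper: the paper's proof likewise combines the normal form of Proposition \ref{074} with the chain identities of Equation \ref{068}, namely $\e_X=\e_{x_1,x_2}\cdots\e_{x_{k-1},x_k}$ for blocks disjoint from their negatives and $\e_{-X\cup X}=\e_{-x_1,x_1}\e_{x_1,x_2}\cdots\e_{x_{r-1},x_r}$ for zero blocks, which are exactly your two chain computations. The only difference is that you spell out the involution/orbit count behind $|E_n|=n^2$ (and the fact that $\e_{i,j}=\e_{-j,-i}$ is the only coincidence among generators), which the paper asserts without detail.
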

\begin{proof}
Proposition \ref{000},  Proposition \ref{074} and the fact that $\e_{i,j}=\e_{-j,-i}$ for all $i<j$ imply that for every subset $X=\{x_1<\cdots<x_k\}$ of $[\pm n]$, we have\begin{equation}\label{068}\e_X=\e_{x_1,x_2}\cdots\e_{x_{k-1},x_k}\qquad\e_{-X\cup X}=\e_{-x_1,x_1}\e_{x_1,x_2}\cdots\e_{x_{r-1},x_r}.
\end{equation}So, Proposition \ref{074} implies that $E_n$ generates $SP_n$.
\end{proof}

\begin{thm}\label{067}	
The monoid of signed partitions $SP_n$ may be presented with generators $\e_{i,j} \in E_n$ subject to the following relations:\begin{enumerate}
\item[]{\normalfont(SP1)}\quad$\e_{i,j}^2=\e_{i,j}$ for all $i,j\in [\pm n]$ with $i<j$,
\item[]{\normalfont(SP2)}\quad$\e_{i,j}\e_{r,s}=\e_{r,s}\e_{i,j}$ for all $i,j,r,s\in [\pm n]$ with $i<j$ and $r<s$,
\item[]{\normalfont(SP3)}\quad$\e_{i,j}\e_{j,k}=\e_{i,j}\e_{i,k}=\e_{i,k}\e_{j,k}$ for all $i,j,k\in[\pm n]$ with $i<j<k$,
\item[]{\normalfont(SP4)}\quad$\e_{i,j}=\e_{-j,-i}$ for all $i,j\in[\pm n]$ with $i<j$.
\end{enumerate}
\end{thm}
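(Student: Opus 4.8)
The plan is to verify the two conditions of Ru\v{s}kuc's criterion (Proposition~\ref{009}) for the generating set $E_n$ --- whose spanning of $SP_n$ is already supplied by Lemma~\ref{069} --- with $R$ taken to be the relations (SP1)--(SP4). Condition (1), that each defining relation holds in $SP_n\subseteq P_{\pm n}$, costs almost nothing: since $P([\pm n])$ is idempotent and commutative, (SP1) and (SP2) are automatic, while (SP4) is precisely the identity $\e_{i,j}=\e_{-j,-i}$ recorded just after the definition of $\e_{i,j}$. For (SP3) I would argue blockwise: by Proposition~\ref{000} each of the three products $\e_{i,j}\e_{j,k}$, $\e_{i,j}\e_{i,k}$, $\e_{i,k}\e_{j,k}$ yields the signed partition whose only non-singleton blocks are $\{i,j,k\}$ and $\{-i,-j,-k\}$, so they coincide; this is just the signed shadow of FitzGerald's relation (P3).

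Condition (2), completeness, is the substance. I would first use (SP4) to rewrite every letter as a fixed representative of its orbit under the involution $(i,j)\mapsto(-j,-i)$ on admissible pairs; this involution has exactly $n$ fixed points $(-k,k)$ and $n(n-1)$ two-element orbits, which is consistent with the count $|E_n|=n^2$ of Lemma~\ref{069}. I would then establish a block normal form, mirroring Proposition~\ref{074} and Corollary~\ref{010}: any word of $E_n^*$ is shown to be $R$-equivalent to a product $\e_{X_1}\cdots\e_{X_\ell}$ ranging over the blocks of the represented signed partition, where within each block a connected chain $\e_{a_1,a_2}\e_{a_2,a_3}\cdots$ is assembled using (SP3), repetitions are deleted using (SP1), the block factors are sorted using (SP2), and zero blocks are written in the prescribed form $\e_{-x_1,x_1}\e_{x_1,x_2}\cdots$ of \eqref{068}. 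Since the blocks can be read back off such a normal form, it is unique, and condition (2) then follows exactly as in the proof of Proposition~\ref{092}: two words with the same image reduce to the same normal form and are therefore $R$-equivalent.

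The main obstacle lies in the reduction step of condition (2), and specifically in the zero blocks, the one genuinely new feature of the signed setting. When a product of $\e$'s forces some block to contain both $m$ and $-m$, that block must coalesce with its negative into a single symmetric block and the corresponding subword must collapse to the zero-block normal form; checking that every such coalescence is a consequence of (SP1)--(SP4) --- in particular of the interplay between (SP4) and the chain relation (SP3), which identifies the negative chain with the positive one --- is the delicate part. The remaining, ``generic'' reductions involving non-zero blocks are essentially the type-$A$ normal-form analysis already carried out for $P_n$, applied in parallel to a block and its negative.
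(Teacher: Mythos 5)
Your proposal is correct in outline, but it takes a genuinely different route from the paper's. You verify Ru\v{s}kuc's criterion (Proposition \ref{009}) directly, building a block normal form in the free monoid on the generators and reducing every word to it with (SP1)--(SP4). The paper never works at the level of words over $E_n$: it takes FitzGerald's presentation of $P_{\pm n}$ (Theorem \ref{001}) as given, defines the epimorphism $s\colon P_{\pm n}\to SP_n$, $\mu_{i,j}\mapsto\e_{i,j}$, proves that its kernel congruence is generated by the pairs $Q=\{(\mu_{i,j},\mu_{-j,-i})\}$ (Lemma \ref{077}, resting on Lemmas \ref{082} and \ref{076}), and invokes Lallement's isomorphism theorem (Proposition \ref{105}); the presentation (SP1)--(SP4) is then literally (P1)--(P3) together with $Q$. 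The paper's route is the more economical one: all word-level combinatorics (chain assembly, commutation, uniqueness of the block decomposition) is inherited from type A through Proposition \ref{074}, and the genuinely new work is done blockwise, with elements of $P_{\pm n}$ rather than with words. Your route is self-contained --- no Lallement, no intermediate congruence on $P_{\pm n}$ --- but it forces you to redo the type-A normal-form analysis by hand in the signed setting. (One small slip in your condition (1): for a triple such as $-1<1<2$ the product $\e_{-1,1}\e_{1,2}$ has the single zero block $\{-2,-1,1,2\}$ rather than the two blocks $\{i,j,k\}$ and $\{-i,-j,-k\}$; the three products in (SP3) still coincide, so the verification survives.)

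The ``delicate part'' you flag --- zero-block coalescence --- is exactly the content of the paper's Equation \ref{078} and Lemma \ref{082}, and it does follow from (SP1)--(SP4) alone, so your plan closes. Concretely, for $0<i<j$, (SP3) applied to the triple $-i<i<j$ gives $\e_{-i,i}\e_{i,j}=\e_{-i,i}\e_{-i,j}=\e_{-i,j}\e_{i,j}$, while (SP3) applied to $-j<-i<j$ followed by (SP4) gives $\e_{i,j}\e_{-i,j}=\e_{-j,-i}\e_{-i,j}=\e_{-j,-i}\e_{-j,j}=\e_{i,j}\e_{-j,j}$; together with (SP2) these yield $\e_{-i,i}\e_{i,j}\equiv\e_{-j,j}\e_{i,j}\equiv\e_{-i,j}\e_{i,j}$, which is what lets an arbitrary chain through a self-intersecting block be slid down to the canonical zero-block form of Equation \ref{068}. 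Be aware, though, that this is a genuine lemma requiring an induction along the chain, not a remark; the paper's proof of Lemma \ref{082} is essentially the text you would have to write.
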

 
The rest of the section is dedicated to prove Theorem \ref{067}.

Let $Q$ be the subset of $P_{\pm n}\times P_{\pm n}$ formed by the pairs $(\mu_{i,j},\mu_{-j,-i})$ with $i,j\in[\pm n]$ and $i<j$. For every $I,J\in P_{\pm n}$ we shall denote $I\equiv J$ for $(I,J)\in\overline{Q}$. As a consequence of relation (P3) in $P_{\pm n}$ we obtain the following $Q$-equivalences:\begin{equation}\label{078}\mu_{-i,i}\mu_{i,j}\equiv\mu_{-i,i}\mu_{-i,j}\equiv\mu_{-j,j}\mu_{i,j}\equiv\mu_{-j,j}\mu_{-i,j}\equiv\mu_{-i,j}\mu_{i,j}.\end{equation}

\begin{lem}\label{082}
Let $X$ be a nonempty subset of $[\pm n]$ such that $-X$ intersects $X$. Then, the set partition $\mu_X$ is $Q$-equivalent to $\mu_Y$ where $Y=\{-\min(|X|)\}\cup|X|$.
\end{lem}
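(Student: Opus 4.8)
The plan is to reduce $\mu_X$ to $\mu_Y$ by separating $X$ into its negative and positive parts, flipping the negative arcs through $Q$, and then repositioning the negative anchor with one application of (P3). Write $X=X^-\sqcup X^+$, where $X^-=\{x_1<\cdots<x_r\}$ collects the negative elements and $X^+=\{x_{r+1}<\cdots<x_k\}$ the positive ones; both are nonempty, since the hypothesis that $-X$ intersects $X$ forces some $a>0$ with $a,-a\in X$. By the normal form of Proposition \ref{074}, $\mu_X=(\mu_{x_1,x_2}\cdots\mu_{x_{r-1},x_r})\,\mu_{x_r,x_{r+1}}\,(\mu_{x_{r+1},x_{r+2}}\cdots\mu_{x_{k-1},x_k})$, a product of a negative block, the crossing arc $\mu_{x_r,x_{r+1}}$, and a positive block. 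The overall strategy is to push all of this onto the positive coordinates, keeping a single negative generator.

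First I would flip each negative arc: since $(\mu_{x_i,x_{i+1}},\mu_{-x_{i+1},-x_i})\in Q$ for every $i<r$ and $\overline{Q}$ is a congruence, the negative block satisfies $\mu_{x_1,x_2}\cdots\mu_{x_{r-1},x_r}\equiv\mu_{-X^-}$, where $-X^-=|X^-|$ is a block of positive integers. Using commutativity (P2) to move the crossing arc aside gives $\mu_X\equiv\mu_{x_r,x_{r+1}}\,\mu_{-X^-}\,\mu_{X^+}$. Here is the key point where the hypothesis is used: $-X^-=|X^-|$ and $X^+=|X^+|$ are positive blocks sharing the element $a$, so by Proposition \ref{000} their product merges into a single block, $\mu_{-X^-}\mu_{X^+}=\mu_{|X|}$. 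Since $x_{r+1}=\min X^+\in|X|$ while $x_r=-b$ is negative (with $b:=\min|X^-|$), absorbing the crossing arc into $|X|$, again by Proposition \ref{000}, yields $\mu_X\equiv\mu_{\{-b\}\cup|X|}$.

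It remains to lower the anchor from $-b$ to $-m$, where $m=\min|X|$, and this final adjustment is the step needing care. If $b=m$ we are done. Otherwise $m=\min|X^+|<b$, and both $m$ and $b$ lie in the positive block $|X|$. Writing $\mu_{\{-b\}\cup|X|}=\mu_{-b,m}\,\mu_{|X|}$ and flipping $\mu_{-b,m}\equiv\mu_{-m,b}$ through $Q$, I would insert the internal edge $\mu_{m,b}$ (absorbed by $\mu_{|X|}$ via Proposition \ref{000}) and apply (P3) to the triple $-m<m<b$ in the form $\mu_{-m,b}\mu_{m,b}=\mu_{-m,m}\mu_{m,b}$; absorbing $\mu_{m,b}$ back gives $\mu_{-b,m}\,\mu_{|X|}\equiv\mu_{-m,m}\,\mu_{|X|}=\mu_Y$. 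Equivalently, this last manipulation is one of the $Q$-equivalences recorded in \eqref{078}. Thus the main obstacle is not the flipping, which is mechanical, but recognizing that the two positive blocks merge, which is exactly where $-X\cap X\neq\emptyset$ enters, and that a single further flip together with (P3) slides the surviving negative generator down to the global minimum $-m$.
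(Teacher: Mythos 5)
Your proof is correct and follows essentially the same route as the paper's: write $\mu_X$ in normal form as negative arcs, crossing arc, and positive arcs; flip the negative arcs through $Q$; use the hypothesis $-X\cap X\neq\emptyset$ to merge the two positive blocks; and finally slide the negative anchor down to $-\min(|X|)$ via the equivalences in \eqref{078}. The only difference is that you unpack that last step explicitly through (P3), which the paper leaves as a direct citation of \eqref{078}.
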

\begin{proof}
Assume that $X=\{-y_t<\cdots<-y_1<x_1<\cdots<x_k\}$ for some $x_1,\ldots,x_k,\allowbreak y_1,\ldots,y_t\in[n]$. Then\[\begin{array}{rcl}\mu_X
&=&\mu_{-y_t,-y_{t-1}}\cdots\mu_{-y_2,-y_1}\mu_{-y_1,x_1}\mu_{x_1,x_2}\cdots\mu_{x_{k-1},x_k},\\
&\equiv&\mu_{-y_1,x_1}\mu_{x_1,x_2}\cdots\mu_{x_{k-1},x_k}\mu_{y_1,y_2}\cdots\mu_{y_{t-1},y_t}.
\end{array}\]Since $-X$ intersects $X$, then $x_i=y_j$ for some $i,j\in[n]$. Hence $\mu_X\equiv\mu_{-y_1,x_1}\mu_{|X|}$. Equation \ref{078} implies that $\mu_{-y_1,x_1}\mu_{|X|}\equiv\mu_{-i,i}\mu_{|X|}=\mu_Y$ with $i=\min\{x_1,y_1\}$.
\end{proof}

\begin{lem}\label{076}
Let $X,Y$ be two subsets of $[\pm n]$ such that $\e_X=\e_Y$. Then $\mu_X$ and $\mu_Y$ are $Q$-equivalent.
\end{lem}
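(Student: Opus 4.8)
The goal is to prove Lemma \ref{076}: if $\e_X=\e_Y$ for two subsets $X,Y\subseteq[\pm n]$, then $\mu_X$ and $\mu_Y$ are $Q$-equivalent. Recall that $\e_X=\mu_{-X}\mu_X$ by definition, so the hypothesis relates the \emph{symmetric} closures of the two subsets, and we must recover a $Q$-equivalence between the one-sided partitions $\mu_X$ and $\mu_Y$ themselves. The plan is to split into cases according to whether $X$ (respectively $Y$) meets its own negative $-X$, since Lemma \ref{082} already handles exactly the situation where $-X$ intersects $X$ by collapsing $\mu_X$ to the canonical form $\mu_{\{-\min(|X|)\}\cup|X|}$.

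First I would analyze what $\e_X=\e_Y$ forces on the block structures. The partition $\e_X=\mu_{-X}\mu_X$ has as its nontrivial blocks the set $X\cup(-X)$ merged into one block precisely when $X$ is connected, so equality of $\e_X$ and $\e_Y$ means that $X\cup(-X)=Y\cup(-Y)$ as the supporting (nonsingleton) block, i.e. the symmetrizations agree. I would record that $\e_X=\e_{-X\cup X}$ (noted in the text before Theorem \ref{067}) and that $|X|$ and $|Y|$ must coincide as subsets of $[n]$, since taking absolute values of the common block gives $|X|=|X\cup(-X)|\cap[n]=|Y\cup(-Y)|\cap[n]=|Y|$.

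The core case is when neither $X$ meets $-X$ nor $Y$ meets $-Y$. Here $\e_X=\e_Y$ with both sides genuinely built from two disjoint mirror blocks $X,-X$ and $Y,-Y$. Since $X\cup(-X)=Y\cup(-Y)$ and the two blocks on each side are the connected components, either $Y=X$ or $Y=-X$; in the first case $\mu_X=\mu_Y$ trivially, and in the second case $\mu_X\equiv\mu_{-X}=\mu_Y$ follows directly from relation (SP4)/the generating relation $\mu_{i,j}\equiv\mu_{-j,-i}$ applied factor by factor to the normal form of Proposition \ref{074}. When instead $-X$ intersects $X$, Lemma \ref{082} gives $\mu_X\equiv\mu_{\{-\min(|X|)\}\cup|X|}$, and the same lemma applied to $Y$ gives $\mu_Y\equiv\mu_{\{-\min(|Y|)\}\cup|Y|}$; since $|X|=|Y|$ these two canonical forms are literally equal, yielding $\mu_X\equiv\mu_Y$. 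The remaining mixed case, where $-X$ meets $X$ but $-Y$ does not meet $Y$ (or vice versa), I expect to be incompatible with $\e_X=\e_Y$: if $X$ is self-intersecting then $\e_X$ has a single symmetric block containing a $\pm i$ pair, forcing $Y$ to support the same block and hence $-Y$ to meet $Y$ as well, so this case is vacuous.

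The main obstacle will be bookkeeping in the self-intersecting case: one must be sure that $\e_X=\e_Y$ really pins down $|X|=|Y|$ and the \emph{same} minimal element, so that the two canonical forms produced by Lemma \ref{082} coincide on the nose rather than merely up to another $Q$-move. I would handle this by extracting $|X|$ as the set of absolute values occurring in the unique nonsingleton block of $\e_X$ — an invariant visibly determined by $\e_X$ alone — and then invoking Lemma \ref{082} on both $X$ and $Y$ to reduce to identical words, closing the proof by transitivity of $\equiv$.
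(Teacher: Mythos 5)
Your proof is correct and follows essentially the same route as the paper's: the same case split on whether $-X$ meets $X$ (with the mixed case ruled out because $\e_X=\e_Y$ forces the same self-intersection status), the identification $X\in\{Y,-Y\}$ handled by the relation $\mu_{i,j}\equiv\mu_{-j,-i}$ in the disjoint case, and Lemma \ref{082} reducing both $\mu_X$ and $\mu_Y$ to the common canonical form $\mu_{\{-\min(|X|)\}\cup|X|}$ in the self-intersecting case. Your extra bookkeeping (extracting $|X|$ as an invariant of $\e_X$) just makes explicit what the paper leaves implicit.
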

\begin{proof}
Since $\e_X$ and $\e_Y$ share their blocks, then $-X\cap X$ is empty if and only if $-Y\cap Y$ is empty. If $-X\cap X$ is empty, then $X\in\{-Y,Y\}$ which implies that $\mu_X$ and $\mu_Y$ are $Q$-equivalent. Assume now that $-X\cap X$ is nonempty. Since $-X\cup X=-Y\cup Y$, Lemma \ref{082} implies that $\mu_X$ and $\mu_Y$ are $Q$-equivalent.
\end{proof}

Let $s:P_{\pm n}\to SP_n$ be the epimorphism defined by $s(\mu_{i,j})=\e_{i,j}$ for all $i<j$.

\begin{lem}\label{077}
We have $\ker(s)=\overline{Q}$.
\end{lem}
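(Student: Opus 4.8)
The goal is to prove $\ker(s)=\overline{Q}$, where $s:P_{\pm n}\to SP_n$ sends $\mu_{i,j}\mapsto\e_{i,j}$ and $Q=\{(\mu_{i,j},\mu_{-j,-i})\mid i<j\}$. Since $\ker(s)$ is a congruence and $s$ sends both $\mu_{i,j}$ and $\mu_{-j,-i}$ to $\e_{i,j}=\e_{-j,-i}$, the inclusion $\overline{Q}\subseteq\ker(s)$ is immediate: every generating pair of $Q$ lies in $\ker(s)$, and $\ker(s)$ is a congruence closed under the monoid operations, so it contains the congruence closure $\overline{Q}$. This is the easy direction and I would dispatch it in one sentence.

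The substance is the reverse inclusion $\ker(s)\subseteq\overline{Q}$. The plan is to take $(g,h)\in\ker(s)$, i.e.\ two words $g,h\in P_{\pm n}$ with $s(g)=s(h)$, and show $g\equiv h$ (meaning $(g,h)\in\overline{Q}$). First I would reduce to the case of generators of blocks: by Proposition \ref{074}, every element of $P_{\pm n}$ has a normal form as a product $\mu_{X_1}\cdots\mu_{X_m}$ over its blocks, and since $s$ is a homomorphism the condition $s(g)=s(h)$ says the two signed partitions $s(g),s(h)$ agree as elements of $SP_n$. Writing each side blockwise, $s(g)=s(h)$ forces the collections of $\e$-blocks to match up: for each block $X$ appearing in $g$ there is a corresponding block $Y$ in $h$ (or a pairing involving $X$ and $-X$ with $Y$) with $\e_X=\e_Y$. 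The main engine is then Lemma \ref{076}: whenever $\e_X=\e_Y$ we already know $\mu_X$ and $\mu_Y$ are $Q$-equivalent. Applying this block by block and using that $\overline{Q}$ is a congruence (so $Q$-equivalence is compatible with taking products) upgrades the blockwise equivalences to $g\equiv h$.

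The step I expect to be the main obstacle is the bookkeeping of the correspondence between the blocks of $g$ and those of $h$ under $s$. The subtlety is that $s$ collapses the symmetry $\mu_{i,j}\mapsto\mu_{-j,-i}$, so a single block $K$ of the signed partition $s(g)$ can arise from a pair of mutually symmetric blocks $\{X,-X\}$ in $g$ (the non-zero case) or from a single self-symmetric block satisfying $-K=K$ (the zero-block case, where $-X$ meets $X$). One must argue that after identifying these cases the normal-form decompositions of $g$ and $h$ can be matched block-for-block up to the symmetry relation, and this is exactly where Lemma \ref{082} (normalizing zero blocks to the canonical shape $\{-\min|X|\}\cup|X|$) and Lemma \ref{076} do the work. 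I would organize the argument by splitting on whether a given block of $s(g)$ is a zero block or a non-zero block, handle each by invoking the appropriate lemma to realize the needed $Q$-equivalences, and then assemble everything using the congruence property of $\overline{Q}$. Once the blocks are matched, the identity $g\equiv h$ follows, giving $\ker(s)\subseteq\overline{Q}$ and hence equality.
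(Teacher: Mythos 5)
Your proposal reconstructs the paper's own proof essentially step for step: the same one\--sentence easy inclusion $\overline{Q}\subseteq\ker(s)$, and for the converse the same plan of decomposing both sides into block generators via Proposition \ref{074}, matching blocks so that Lemma \ref{076} applies, and then assembling the blockwise equivalences using the congruence property of $\overline{Q}$. As a reconstruction it is faithful, and your instinct that the block matching is where the difficulty lies is exactly right.

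But that matching step is a genuine gap --- in your write-up and, as it happens, in the paper's own proof, which asserts that $s(I)=s(J)$ forces $p=q$ together with a bijection of blocks satisfying $\mu_{I_k}\mu_{-I_k}=\mu_{J_{t(k)}}\mu_{-J_{t(k)}}$. The claim is false, because multiplying the $\e_{X_i}$'s can merge blocks of $g$ that are neither mutually symmetric nor self-symmetric. Take $n=3$, $g=\{\{1,2\},\{-1,3\},\{-2,-3\}\}$ and $h=\{[\pm 3]\}$: adjoining negatives links every block of $g$ to the others (for instance $-\{1,2\}=\{-1,-2\}$ meets $\{-1,3\}$), so $s(g)=\{[\pm 3]\}=s(h)$, yet $g$ has three blocks and $h$ has one, and no correspondence with equal $\e$'s can exist. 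Your two cases --- a symmetric pair $\{X,-X\}$ of blocks, or a single self-symmetric block --- do not cover this merging phenomenon, and Lemmas \ref{082} and \ref{076}, which treat one subset at a time, cannot bridge it. The lemma itself is still true, and the clean repair dispenses with matching altogether: show that $g\equiv s(g)$ for every $g\in P_{\pm n}$. Writing $g=\mu_{X_1}\cdots\mu_{X_m}$ over its blocks, each factor satisfies $\mu_X=\mu_X^2\equiv\mu_X\mu_{-X}=\e_X=s(\mu_X)$, where $\mu_X\equiv\mu_{-X}$ comes from applying the defining $Q$-moves to every generator in the normal form of $\mu_X$, and $s(\mu_X)=\e_X$ is Equation \ref{068}; multiplying over the blocks gives $g\equiv s(g)$, and then $s(g)=s(h)$ immediately yields $g\equiv s(g)=s(h)\equiv h$.
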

\begin{proof}
Clearly $(\mu_{i,j},\mu_{-j,-i})\in\ker(s)$ because $s(\mu_{i,j})=\e_{i,j}=\e_{-j,-i}=s(\mu_{-j,-i})$ for all $i,j\in[\pm n]$ with $i<j$. Let $I=\{I_1,\ldots,I_p\}$ and $J=\{J_1,\ldots,J_q\}$ such that $(I,J)\in\ker(s)$, that is $s(I)=\mu_{I_1}\mu_{-I_1}\cdots\mu_{I_p}\mu_{-I_p}=\mu_{J_1}\mu_{-J_1}\cdots\mu_{J_p}\mu_{-J_p}=s(J)$. Since blocks in a set partition are disjoint, Proposition \ref{074} implies that $p=q$ and that for every $k\in[p]$ there is $t=t(k)\in[q]$ such that $\mu_{I_k}\mu_{-I_k}=\mu_{J_t}\mu_{-J_t}$. So, by Lemma \ref{076}, the set partitions $\mu_k$ and $\mu_{J_t}$ are $Q$-equivalent for all $k\in[p]$. This implies that $I,J$ are $Q$-equivalent. Therefore $Q$ generates $\ker(s)$.
\end{proof}

\begin{proof}[Proof of Theorem \ref{067}] 
It is a consequence of Proposition \ref{105} applied to the epimorphism $s$ and Lemma \ref{077}.
\end{proof}

\section{Monoids  of set partitions of types B and D}\label{S3}

It is well-known that the lattice of set partitions of $[n]$ is isomorphic to the intersection lattice for the hyperplane arrangement of the Coxeter group of type $\A_n$. In this context, Reiner has defined the set partitions of types B and D, which are the definitions that we shall use here, see \cite{Re97}. These partitions of types B and D turn out to be subsets of $P_{\pm n}$, however these subsets are not submonoids of $P_{\pm n}$. Thus,  in order to obtain monoid structures, we introduce a new product on the these subsets  and  so we construct presentations for them.

\subsection{The monoid $P_n^B$}

A subset $K$ of $[\pm n]$ is said to be a \emph{zero subset} (resp. \emph{non zero subset}), if $-K=K$ (resp. $K\cap -K$ is empty). A block of a signed partition that is a zero subset (resp. non zero subset) is called a \emph{zero block} (resp. \emph{non zero block}) of it. Note that every block of a signed partition is either a zero or a non zero block.

As was defined by Reiner \cite[Section 2]{Re97}, a \emph{set partition of type $\B_n$} or simply a $\B_n$-\emph{partition} is a signed partition of $[\pm n]$ having at most one zero block. We will denote by $P_n^B$ the set formed by all the $\B_n$-partitions. We have the following poset inclusions:\[P_n^B\subset SP_n\subset P_{\pm n}.\] 

Observe that the monoid structure of $P_{\pm n}$ is not transferred to $P_n^B$: below two set $\B_6$-partitions whose product is not a $\B_6$-partition.\[\figureele\]

In order to solve this blockage we need to introduce the zero closure. The \emph{zero closure} of a signed partition $I$, denoted by $I_0$, is the collection obtained by replacing all its zero subsets by the union of them. For instance:\[\figuretwe\]Note that $(I_0)_0=I_0$.

\begin{dfn}[ Product]
Given $I$ and $J$ in $P_n^B$, the product of $I$ with $J$, denoted by $I\cdot_BJ$, is defined to be the zero closure of $IJ$.
\end{dfn}

 With this product, $P_n^B$ becomes in an idempotent commutative monoid with $1=\{\pm\{1\},\ldots,\pm\{n\}\}$. Note that $P_n^B$ is generated by $E_n$ as well.

The next proposition shows how to compute the product $\cdot_B$ through $\preccurlyeq$.

\begin{pro}\label{085}
For every $\B_n$-partitions $I,J$, we have:\[I\cdot_BJ=\min\{K\in P_n^B\mid I\preccurlyeq K\text{ and }J\preccurlyeq K\}.\]
\end{pro}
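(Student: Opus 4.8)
The plan is to characterize the product $I\cdot_BJ$ by a minimality property with respect to the order $\preccurlyeq$, restricted to the subposet $P_n^B$. Recall that by definition $I\cdot_BJ=(IJ)_0$, where $IJ$ is the product in $P_{\pm n}$ given by Proposition \ref{000}, and the zero closure $(\cdot)_0$ amalgamates all zero subsets into a single block. The key observation is that the ordinary product $IJ$ already satisfies $IJ=\min\{K\in SP_n\mid I\preccurlyeq K\text{ and }J\preccurlyeq K\}$, since $SP_n$ is a submonoid of $P_{\pm n}$ by Proposition \ref{066} and the product in the full partition monoid is the join for $\preccurlyeq$. So the task reduces to showing that passing to the zero closure produces exactly the $\preccurlyeq$-least element of $P_n^B$ that lies above both $I$ and $J$.

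First I would set $T:=I\cdot_BJ=(IJ)_0$ and verify that $T\in P_n^B$, i.e. that $T$ is a signed partition with at most one zero block. That $T$ is signed follows because $IJ\in SP_n$ and the zero closure of a signed partition remains signed (the union of zero subsets is again a zero subset, and non zero blocks come in $\pm$ pairs, unaffected). That $T$ has at most one zero block is immediate from the definition of zero closure, which merges all zero subsets into one. Next I would check $I\preccurlyeq T$ and $J\preccurlyeq T$: since $I\preccurlyeq IJ$ and $J\preccurlyeq IJ$, and since $IJ\preccurlyeq (IJ)_0=T$ (the zero closure only coarsens the partition by taking unions of blocks), transitivity of $\preccurlyeq$ gives both inequalities. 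This establishes that $T$ belongs to the candidate set $\{K\in P_n^B\mid I\preccurlyeq K,\,J\preccurlyeq K\}$.

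The heart of the argument is minimality: I must show $T\preccurlyeq K$ for every $K\in P_n^B$ with $I\preccurlyeq K$ and $J\preccurlyeq K$. Fix such a $K$. Since $K\in P_n^B\subset SP_n$ and $IJ$ is the $\preccurlyeq$-least signed partition above both $I$ and $J$, we get $IJ\preccurlyeq K$. It then suffices to show $T=(IJ)_0\preccurlyeq K$, i.e. that every block of $K$ is a union of blocks of $T$. The only blocks of $T$ that differ from those of $IJ$ form the single merged zero block $Z$, the union of all zero subsets of $IJ$. Because $K$ is a $\B_n$-partition it has \emph{at most one} zero block; and since $IJ\preccurlyeq K$ forces each zero block of $IJ$ to sit inside some block of $K$, every zero element of $[\pm n]$ appearing in $Z$ must land in a zero block of $K$ — hence in the \emph{unique} zero block of $K$. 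Therefore all the zero subsets of $IJ$ that were amalgamated into $Z$ lie in one and the same block of $K$, which shows $Z$ is contained in a single block of $K$. Combined with $IJ\preccurlyeq K$ for the non zero blocks, this yields $T\preccurlyeq K$.

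The main obstacle is this last point: I expect the delicate step to be justifying that the several zero subsets of $IJ$ cannot be separated by $K$. This is exactly where the defining constraint of $P_n^B$ (at most one zero block) is used, and one must argue carefully that a block of a \emph{signed} partition $K$ containing a signed element $a$ together with $-a$ is forced to be a zero block, so that any block of $K$ meeting $Z$ is the unique zero block of $K$. Once minimality is secured, the proposition follows, since an element that is both above $I,J$ in $P_n^B$ and below every such element is by definition the minimum $\min\{K\in P_n^B\mid I\preccurlyeq K\text{ and }J\preccurlyeq K\}$, giving $I\cdot_BJ=\min\{K\in P_n^B\mid I\preccurlyeq K\text{ and }J\preccurlyeq K\}$.
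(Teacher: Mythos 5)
Your proof is correct, and its overall skeleton is the same as the paper's: show that $(IJ)_0$ is an upper bound for $I$ and $J$ inside $P_n^B$, then show it lies below every $K\in P_n^B$ dominating both, and conclude it is the minimum. Where you diverge is in the second inequality, and there your argument is actually more complete than the paper's. The paper simply repeats the block-containment argument of Proposition \ref{000} (``for every $a\in E$, $E$ contains the blocks $I_a$ and $J_a$\dots'') and never addresses the one genuinely delicate point, namely why the amalgamated zero block of $(IJ)_0$ cannot be split across several blocks of $K$. Your route --- first get $IJ\preccurlyeq K$ from the minimality of the product in the ambient partition monoid, then observe that a block of the signed partition $K$ meeting a zero subset contains some $a$ together with $-a$, hence equals its own negative and is a zero block, hence is \emph{the} unique zero block of $K$ --- is precisely the justification the paper leaves implicit, and it is exactly where the defining ``at most one zero block'' condition of $P_n^B$ enters. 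One small simplification: you do not need the intermediate claim that $IJ$ is the minimum over $SP_n$; since $K\in P_n^B\subset P_{\pm n}$, the minimality of $IJ$ over all of $P_{\pm n}$ already yields $IJ\preccurlyeq K$ directly.
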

\begin{proof}
Proposition \ref{066} says that $IJ$ is a quasi $\B_n$-partition. Then $I,J\preccurlyeq IJ\preccurlyeq(IJ)_0$. Hence $I\cdot_BJ\preccurlyeq(IJ)_0$. Let $E$ be a block of $(IJ)_0$. For every $a\in E$, $E$ contains the unique blocks $I_a$ of $I$ and $J_a$ of $J$ that contain $a$. This implies that every block of $(IJ)_0$ is contained in a block of $I\cdot_BJ$. Therefore $(IJ)_0\preccurlyeq I\cdot_BJ$.
\end{proof}

\begin{ntn}
If there is no risk of confusion, in what follows of this subsection we shall simply denote $I\cdot_BJ$ by $IJ$ instead, whenever $I,J\in P_n^B$.
\end{ntn}

Set $\ve_X:=(\e_X)_0$ and $\ve_{i,j}:=(\e_{i,j})_0$ for all $i<j$. Our goal now is to prove the following theorem.

\begin{thm}\label{087}	
The monoid of $\B_n$-partitions $P_n^B$ may be presented with generators $\ve_{i,j}$ with $i,j\in [\pm n]$ satisfying $i<j$, subject to the following relations:\begin{enumerate}
\item[]{\normalfont(PB1)}\quad$\ve_{i,j}^2=\ve_{i,j}$ for all $i,j\in [\pm n]$ with $i<j$,
\item[]{\normalfont(PB2)}\quad$\ve_{i,j}\ve_{r,s}=\ve_{r,s}\ve_{i,j}$ for all $i,j,r,s\in [\pm n]$ with $i<j$ and $r<s$,
\item[]{\normalfont(PB3)}\quad$\ve_{i,j}\ve_{j,k}=\ve_{i,j}\ve_{i,k}=\ve_{i,k}\ve_{j,k}$ for all $i,j,k\in[\pm n]$ with $i<j<k$,
\item[]{\normalfont(PB4)}\quad$\ve_{-i,i}\ve_{-j,j}=\ve_{-i,j}\ve_{i,j}$ for all $i,j\in[n]$ with $i<j$,
\item[]{\normalfont(PB5)}\quad$\ve_{i,j}=\ve_{-j,-i}$ for all $i,j\in[\pm n]$ with $i<j$.
\end{enumerate}
\end{thm}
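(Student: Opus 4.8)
The plan is to follow the pattern of the proof of Theorem \ref{067}: exhibit $P_n^B$ as a quotient of a monoid whose presentation is already in hand, identify the kernel congruence explicitly, and conclude with Lallement's isomorphism theorem (Proposition \ref{105}). Since $P_n^B$ is generated by $E_n$ and each generator $\ve_{i,j}=(\e_{i,j})_0$ coincides with $\e_{i,j}$ as a set partition (a single $\e_{i,j}$ has at most one zero block, so it equals its own zero closure), the natural candidate is the zero-closure map $\phi\colon SP_n\to P_n^B$, $I\mapsto I_0$, which sends $\e_{i,j}\mapsto\ve_{i,j}$. First I would check that $\phi$ is a well-defined epimorphism of monoids, i.e. $(IJ)_0=(I_0J_0)_0=I_0\cdot_B J_0$ for all $I,J\in SP_n$; this is a short order-theoretic computation using that both products are the least upper bound for $\preccurlyeq$ (Proposition \ref{000} and Proposition \ref{085}), together with the observation that every zero subset of $I$ or of $J$ is absorbed into the unique zero block of $(IJ)_0$. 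Surjectivity is immediate, since $\phi$ restricts to the identity on $P_n^B$.

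Next comes the easy half. Among the defining relations, (PB1), (PB2), (PB3) and (PB5) are exactly the images under $\phi$ of (SP1), (SP2), (SP3) and (SP4), so they hold automatically in $P_n^B$. The only genuinely new relation is (PB4), which I would verify by direct inspection of both sides: $\ve_{-i,i}\ve_{-j,j}$ and $\ve_{-i,j}\ve_{i,j}$ both equal the $\B_n$-partition whose single zero block is $\{-i,i,-j,j\}$ and whose remaining blocks are singletons. This shows that the congruence $\overline{T}$ generated by the pairs $T=\{(\e_{-i,i}\e_{-j,j},\,\e_{-i,j}\e_{i,j})\mid i,j\in[n],\ i<j\}$ is contained in $\ker(\phi)$.

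The substance of the argument is the reverse inclusion $\ker(\phi)\subseteq\overline{T}$: if $I,J\in SP_n$ have the same zero closure, their $\e$-words must be $T$-equivalent modulo the relations of Theorem \ref{067}. Here I would isolate a \emph{merging of zero blocks} lemma. Two signed partitions have equal zero closure precisely when they share the same non-zero blocks and the same total zero set; by the normal form of Proposition \ref{074} (carried over to $E_n$ via \eqref{068} and Lemma \ref{069}) the non-zero parts contribute identical subwords, so the question reduces to comparing two ways of splitting one zero set into zero blocks. The key step is that (PB4), applied to the two \emph{seeds} $\e_{-z_1,z_1}$ and $\e_{-w_1,w_1}$ of two zero blocks with least positive elements $z_1<w_1$, rewrites their product as $\e_{-z_1,w_1}\e_{z_1,w_1}$, thereby connecting the two blocks into one; reorganising the outcome into standard form by (SP3) yields the signed partition in which the two zero blocks are fused. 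Iterating, any signed partition is $T$-equivalent to the one with all zero blocks merged, which is exactly its normal form as an element of $P_n^B$.

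Finally, Proposition \ref{105} applied to $\phi$ gives $P_n^B\simeq SP_n/\ker(\phi)=SP_n/\overline{T}$; combining this with the presentation of $SP_n$ from Theorem \ref{067} and adjoining the relation (PB4) produces the presentation $\langle E_n\mid\text{(PB1)--(PB5)}\rangle$, as claimed. I expect the main obstacle to be the reverse kernel inclusion, and within it the bookkeeping that guarantees the merging procedure never calls for a rewriting outside the permitted relations — in particular, that the order constraints $i<j<k$ in (SP3) and $i<j$ in (PB4) can always be met after relabelling, so that zero blocks can be fused in a canonical order that does not depend on the chosen starting word.
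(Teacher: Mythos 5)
Your proposal is correct and is essentially the paper's own proof: the zero-closure map $I\mapsto I_0$ is the paper's epimorphism, your merging-of-zero-blocks step is the paper's Lemma \ref{084}, the identification of the kernel congruence with $\overline{T}$ is the paper's Lemma \ref{083}, and the conclusion via Proposition \ref{105} combined with Theorem \ref{067} is identical. The only cosmetic differences are that the paper generates $\overline{T}$ by the pairs $(\e_{-i,i}\e_{-j,j},\,\e_{-i,i}\e_{i,j})$, which coincide in $SP_n$ with your (PB4) pairs $(\e_{-i,i}\e_{-j,j},\,\e_{-i,j}\e_{i,j})$, and that its merging argument passes through the all-diagonal form $\e_{-c_1,c_1}\cdots\e_{-c_{p+q},c_{p+q}}$ and back, where you instead apply (PB4) once to the two seeds and let the $SP_n$-relations reassemble the fused block.
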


The rest of the subsection is dedicated to prove Theorem \ref{087}. 

Let $T$ be the subset of $SP_n\times SP_n$ formed by the pairs $(\e_{-i,i}\e_{-j,j},\e_{-i,i}\e_{i,j})$ with $i,j\in[n]$ and $i<j$. For every $I,J\in SP_n$ we will simply denote $I\equiv J$ instead of $(I,J)\in\overline{T}$.

\begin{lem}\label{084}
Let $H,K$ be two disjoint zero subsets of $[\pm n]$. Then $\e_H\e_K$ and $\e_{K\cup H}$ are $T$-equivalent.
\end{lem}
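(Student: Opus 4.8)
$\textbf{Plan.}$ The statement to prove is Lemma \ref{084}: for two disjoint zero subsets $H,K$ of $[\pm n]$, the signed partitions $\ve_H \ve_K$ and $\ve_{K\cup H}$ are $T$-equivalent. Here $T$ is generated by the pairs $(\e_{-i,i}\e_{-j,j},\e_{-i,i}\e_{i,j})$, which via relation (SP4) ($\e_{i,j}=\e_{-j,-i}$) and the relations (SP1)--(SP3) of Theorem \ref{067} encode exactly the merging of two zero blocks into one. The plan is to reduce the general statement to an iterated application of the defining generator of $T$, using the normal form for signed partitions established in Lemma \ref{069} and the basic $T$-equivalences flowing from (SP3).

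$\textbf{Key steps.}$ First I would write each zero subset in a canonical form. Since $H$ is a zero subset, $-H=H$, so we may write $H = -|H| \cup |H|$ with $|H|=\{h_1<\cdots<h_p\}\subseteq[n]$, and likewise $K=-|K|\cup|K|$ with $|K|=\{k_1<\cdots<k_q\}$. By Equation \eqref{068} together with $\e_{i,j}=\e_{-j,-i}$, each factor $\e_H$ decomposes as a product $\e_{-h_1,h_1}\e_{h_1,h_2}\cdots\e_{h_{p-1},h_p}$, and similarly for $\e_K$ and for $\e_{K\cup H}$. Thus both $\e_H\e_K$ and $\e_{K\cup H}$ are products of generators $\e_{i,j}\in E_n$, where the only difference is that $\e_H\e_K$ carries two "zero-anchoring" factors $\e_{-h_1,h_1}$ and $\e_{-k_1,k_1}$ while $\e_{K\cup H}$ carries a single such factor. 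The entire content of the lemma is therefore that a product containing two zero generators $\e_{-i,i}\e_{-j,j}$ can be rewritten, modulo $\overline{T}$, using only one.

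$\textbf{The crux.}$ The main mechanism is the defining $T$-relation $\e_{-i,i}\e_{-j,j}\equiv\e_{-i,i}\e_{i,j}$, which replaces one of the two zero anchors by an ordinary merging generator $\e_{i,j}$. Applying this with $i=\min(|H|\cup|K|)$ and $j$ ranging over the remaining minima of the two pieces, I would collapse the second zero anchor into a chain of $\e_{i,j}$'s that, together with the surviving single anchor and the intra-block generators from \eqref{068}, reassemble precisely the canonical decomposition of $\e_{K\cup H}$. Concretely, after the substitution the product becomes a single connected chain based at $-i$, and by the transitivity of $\overline{T}$ (and the (SP3)-consequences recorded in Corollary \ref{010}, transported to the signed setting) this chain is $T$-equivalent to $\e_{-i,i}\e_{i,*}\cdots = \e_{K\cup H}$.

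$\textbf{Expected obstacle.}$ The delicate point will be bookkeeping the order of generators and ensuring that the intermediate products are genuinely valid signed partitions (so that the $T$-equivalences, which live in $SP_n\times SP_n$ rather than in the free monoid, actually apply at each step). In particular I must be careful that after invoking $\e_{-i,i}\e_{-j,j}\equiv\e_{-i,i}\e_{i,j}$ the remaining factors still combine to the intended element, which requires appealing to the uniqueness of the decomposition in Proposition \ref{074} and to Lemma \ref{076} to identify $Q$/$T$-equivalent representatives of the same block. Handling the case where $|H|$ and $|K|$ interleave in the global order on $[n]$ — rather than one lying entirely below the other — is where the combinatorial care is concentrated; here the commutativity relation (SP2) is what allows the anchors and the chain generators to be brought into the required adjacency before the defining $T$-relation is applied.
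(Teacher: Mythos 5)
Your proposal is correct, but its core differs from the paper's proof. Both open identically, with the chain decompositions $\e_H=\e_{-h_1,h_1}\e_{h_1,h_2}\cdots\e_{h_{p-1},h_p}$ and $\e_K=\e_{-k_1,k_1}\e_{k_1,k_2}\cdots\e_{k_{q-1},k_q}$ (the paper gets them from Lemma \ref{082}, you from Equation \ref{068}). The paper then applies the generating relation of $T$ many times: each chain is first turned into a product of anchors, $\e_H\equiv\e_{-h_1,h_1}\cdots\e_{-h_p,h_p}$ and likewise for $K$; these are concatenated and sorted by commutativity into $\e_{-c_1,c_1}\cdots\e_{-c_{p+q},c_{p+q}}$; and this symmetric ``all-anchors'' form is folded back into the single chain $\e_{-c_1,c_1}\e_{c_1,c_2}\cdots\e_{c_{p+q-1},c_{p+q}}=\e_{H\cup K}$ --- which is exactly what makes the interleaving of $|H|$ and $|K|$ invisible. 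You instead apply the generating relation exactly once, to the two minimal anchors, $\e_{-h_1,h_1}\e_{-k_1,k_1}\equiv\e_{-h_1,h_1}\e_{h_1,k_1}$, and then note that the resulting element of $SP_n$ already \emph{equals} $\e_{H\cup K}$: its blocks, read as edges, connect all of $H\cup K$ through the bridge $\{-h_1,h_1\}$ and the edge $\{h_1,k_1\}$, so the join is the one-block partition $\mu_{H\cup K}=\e_{H\cup K}$. That identification is legitimate --- and cheaper than your detour through Corollary \ref{010} and Lemma \ref{076} --- because $\overline{T}$ is a congruence on $SP_n$ itself, so every identity holding in $SP_n$ (commutativity, (SP3), Proposition \ref{000}) is available for free; for the same reason your ``expected obstacle'' is vacuous: since $SP_n$ is closed under products (Proposition \ref{066}), every intermediate expression is automatically a signed partition. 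In short, the paper buys a purely mechanical rewriting in which every step is a literal instance of the generator of $T$, while your route buys economy: one application of the relation plus a connectivity observation. (Two cosmetic slips: after the substitution the product is a tree, not a chain; and your opening sentence writes $\ve$ where $\e$ is meant.)
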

\begin{proof}
Lemma \ref{082} implies that there are $a_1<\ldots<a_p$ and $b_1<\ldots<b_q$ in $[n]$ such that $\e_H=\e_{-a_1,a_1}\e_{a_1,a_2}\cdots\e_{a_{p-1},a_p}$ and $\e_K=\e_{-b_1,b_1}\e_{b_1,b_2}\cdots\e_{b_{q-1},b_q}$. Hence $\e_H\equiv\e_{-a_1,a_1}\cdots\e_{-a_p,a_p}$ and $\e_K\equiv\e_{-b_1,b_1}\cdots\e_{-b_q,b_q}$. Then $\e_H\e_K\equiv\e_{-c_1,c_1}\cdots\e_{-c_{p+q},c_{p+q}}$, where $H\cup K=\{\pm c_1,\ldots,\pm c_{p+q}\}$ with $c_1<\cdots<c_{p+q}$ in $[n]$. By rewriting again we obtain $\e_H\e_K\equiv\e_{-c_1,c_1}\e_{c_1,c_2}\cdots\e_{c_{p+q-1},c_{p+q}}=\e_{H\cup K}$.
\end{proof}

\begin{lem}\label{083}
We have $(IJ)_0=I_0J_0$ for all $I,J\in SP_n$. Furthermore\[\overline{T}=\{(I,J)\in SP_n\times SP_n\mid I_0=J_0\}.\]
\end{lem}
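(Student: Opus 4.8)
The plan is to prove Lemma \ref{083} in two parts, first establishing the multiplicative identity $(IJ)_0 = I_0 J_0$ for all $I, J \in SP_n$, and then using it to characterize $\overline{T}$. For the first part I would argue directly from the structure of signed partitions. The key observation is that the zero closure operation $(\,\cdot\,)_0$ merges precisely all the zero blocks into a single zero block, leaving non-zero blocks untouched. So I would compare the zero blocks of $(IJ)_0$ with those of $I_0 J_0$. A block $E$ of $IJ$ is a zero block exactly when it is self-symmetric, i.e. $-E = E$; since $I, J$ are signed partitions, the product $IJ$ is a signed partition by Proposition \ref{066}, and a zero block of $IJ$ arises either from a zero block already present in $I$ or $J$, or from the coalescence of non-zero blocks $K, -K$ that get connected through a common zero block. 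The point is that the set of elements lying in zero blocks of $IJ$ is the same as the set of elements lying in zero blocks of $I_0 J_0$: both equal the union of all zero blocks, and forming the product and then the closure gives the same merged zero block regardless of the order in which one closes. I would verify this by showing $I_0 J_0 \preccurlyeq (IJ)_0$ and $(IJ)_0 \preccurlyeq I_0 J_0$ using Proposition \ref{085}, exactly mirroring the block-containment argument in the proof of Proposition \ref{085}.

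For the second part, I would prove $\overline{T} = \{(I,J) \mid I_0 = J_0\}$ by double inclusion. The inclusion $\overline{T} \subseteq \{(I,J) \mid I_0 = J_0\}$ amounts to checking that the generating pairs of $T$ are preserved by $(\,\cdot\,)_0$, i.e. that $(\e_{-i,i}\e_{-j,j})_0 = (\e_{-i,i}\e_{i,j})_0$ for all $i < j$ in $[n]$, together with the fact that $(\,\cdot\,)_0$ is compatible with the product (which is exactly the identity $(IJ)_0 = I_0 J_0$ just established, so that the relation $\{(I,J) \mid I_0 = J_0\}$ is a congruence containing $T$). The displayed identity holds because both sides have $\{-i,i,-j,j\}$ as their unique zero block: on the left $\e_{-i,i}\e_{-j,j} = \e_{\{-i,i\}}\e_{\{-j,j\}}$ merges the two zero blocks under closure, and on the right $\e_{-i,i}\e_{i,j}$ connects $i$ to both $-i$ and $j$, again producing a single zero block on $\{-i,i,-j,j\}$.

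For the reverse inclusion $\{(I,J) \mid I_0 = J_0\} \subseteq \overline{T}$, I would show that every signed partition $I$ is $T$-equivalent to a canonical representative determined solely by $I_0$. Here Lemma \ref{084} is the crucial tool: it lets me collapse disjoint zero subsets so that, using Equation \ref{068} and the rewriting in Lemma \ref{069}, any product of the $\e_{i,j}$'s can be brought to a form in which the zero part is expressed through a single chain $\e_{-c_1,c_1}\e_{c_1,c_2}\cdots$. Concretely, if $I_0 = J_0$ then $I$ and $J$ have identical non-zero blocks and the same union of zero blocks; Lemma \ref{084} shows that however the zero elements are distributed among the zero blocks of $I$ (resp. $J$), the corresponding $\e$-word is $T$-equivalent to the standard one attached to the single merged zero block. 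Hence $I \equiv I_0 \equiv J_0 \equiv J$.

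The main obstacle I expect is the first part, the identity $(IJ)_0 = I_0 J_0$, specifically the case analysis for how zero blocks are created in the product: a product of two non-zero blocks, or of a zero block with a non-zero block, can force non-zero blocks to fuse with their negatives and become zero (as already seen in the proof of Proposition \ref{066}). I must be careful that closing $I$ and $J$ separately before multiplying does not lose any of these newly-created zero blocks, nor create spurious connections among non-zero blocks. The cleanest route is to characterize the zero part of any signed partition as the union of all blocks $B$ with $B \cap (-B) \neq \emptyset$ after multiplication, and check that this set of elements is invariant under the three operations (multiply-then-close versus close-then-multiply); once that invariance is pinned down, the non-zero blocks match automatically because the product $IJ$ is already a signed partition before closure.
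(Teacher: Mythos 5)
Your proposal is correct and takes essentially the same route as the paper: both establish that the zero closure $I\mapsto I_0$ is multiplicative, deduce $\overline{T}\subseteq\{(I,J)\mid I_0=J_0\}$ from the fact that the generating pairs of $T$ have equal zero closures, and obtain the reverse inclusion by merging zero blocks via Lemma \ref{084}. The only difference is one of detail: you prove $(IJ)_0=I_0J_0$ by the mutual-refinement argument through Proposition \ref{085}, whereas the paper simply asserts this homomorphism property.
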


\begin{proof}
By definition $1_0=1$ and $I_0=J_0$ for all $(I,J)\in R$. Then $I\to I_0$ is an homomorphism, that is: $(IJ)_0=I_0J_0$ for all $I,J\in SP_n$. Since $(\e_{-i,i}\e_{-j,j})_0=(\e_{-i,i}\e_{i,j})_0$ for $i,j\in[n]$ with $i<j$, then $\overline{T}\subseteq\{(I,J)\mid I_0=J_0\}$. Let $I,J$ be two signed partitions such that $I_0=J_0$. Then $I,J$ share their non zero blocks and the union of their zero blocks coincide. So, Lemma \ref{084} implies that $I,J$ are $T$-equivalent.
\end{proof}

\begin{proof}[Proof of Theorem \ref{087}]
Since $E_n$ generates $P_n^B$, Lemma \ref{083} implies that the map $I\to I_0$ is an epimorphism from $SP_n$ to $P_n^B$ with kernel congruence $\overline{T}$. Hence, from Proposition \ref{105}, $P_n^B\simeq SP_n/\overline{T}$.
\end{proof}

\subsection{The monoid $P_n^D$}

In this subsection we not only introduce a monoid of set partitions of type $\D$ but also the monoid of restricted signed set partitions.

A zero subset is said to be \emph{positive} if it has more than two elements. A signed partition of $SP_n$ is said to be \emph{restricted} if all its zero blocks, if present, are positive. The set formed by all the restricted signed partitions in $SP_n$ is denoted by $RSP_n$.

As was defined by Reiner \cite{Re97}, a \emph{set partition of type $\D_n$} or simply a $\D_n$\emph{-partition} is a $\B_n$-partition in which its zero block, if present, is positive. In particular, $\e_{-i,i}$ is not a $\D_n$-partition for all $i\in[n]$. We will denote by $P_n^D$ the set of all $\D_n$-partitions. Note that we have the following poset inclusions:\[P_n^D\subset RSP_n\subset SP_n \subset P_{\pm n}\simeq P_{2n},\qquad P_n^D\subset P_n^B.\]

\subsubsection{The monoid $RSP_n$}

\begin{lem}
The set of restricted signed partitions $RSP_n$ is a submonoid of $SP_n$ generated by $E_n^{\times}:=\{\e_{i,j}\mid i,j\in[\pm n],\,i<j,\,j\neq-i\}$. Furthermore $|E_n^{\times}|=n^2-n$.
\end{lem}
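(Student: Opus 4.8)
The plan is to prove two things: first, that $RSP_n$ is closed under the product of $SP_n$ (so it is a submonoid), and second, that it is generated by the stated set $E_n^{\times}$, with the cardinality count following immediately.

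\textbf{Closure.} First I would show that $RSP_n$ is a submonoid. Recall from Proposition \ref{066} that $SP_n$ is closed under the product inherited from $P_{\pm n}$, so it suffices to verify that the product of two restricted signed partitions is again restricted, i.e. that no \emph{small} (two-element) zero block can be created. Let $I,J\in RSP_n$ and form $IJ$ in $SP_n$. By Proposition \ref{000} every block of $IJ$ is obtained by fusing blocks of $I$ and $J$ that chain together through common elements, so any block of $IJ$ is a union of blocks of $I$ and of $J$. In particular, if $IJ$ has a zero block $K$, then $K$ contains at least one block of $I$ (and of $J$); were $K$ a zero block arising purely from non-zero blocks of $I$ and $J$, the fusion forces $K$ to contain a non-zero block together with its negative, so $|K|\geq 4$. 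If instead $K$ already contains a zero block of $I$ or of $J$, that block is positive by hypothesis, hence has more than two elements, and $K\supseteq$ that block gives $|K|>2$ as well. Thus every zero block of $IJ$ is positive, so $IJ\in RSP_n$. Since the identity $1=\{\pm\{1\},\ldots,\pm\{n\}\}$ has no zero block, it is vacuously restricted, and $RSP_n$ is a submonoid.

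\textbf{Generation.} Next I would show $E_n^{\times}$ generates $RSP_n$. By Lemma \ref{069} the full set $E_n$ generates $SP_n$, and the generators excluded from $E_n^{\times}$ are exactly the $\e_{-i,i}$ (the case $j=-i$), which are precisely the signed partitions whose only non-singleton block is the two-element zero block $\{-i,i\}$. Given $I\in RSP_n$, I would use the normal form: write $I$ as a product of $\e_X$ over its blocks $X$, and for each block express $\e_X$ via the factorizations in Equation \ref{068} of Lemma \ref{069}. A non-zero block $X=\{x_1<\cdots<x_k\}$ gives $\e_X=\e_{x_1,x_2}\cdots\e_{x_{k-1},x_k}$, all of whose factors lie in $E_n^{\times}$ since consecutive distinct elements $x_t<x_{t+1}$ never satisfy $x_{t+1}=-x_t$ unless the pair is $\{-i,i\}$ itself, which cannot occur inside a non-zero block. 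A zero block is positive by restrictedness, so it has the form $-X\cup X$ with $|X|\geq 2$; then $\e_{-X\cup X}=\e_{-x_1,x_1}\e_{x_1,x_2}\cdots\e_{x_{r-1},x_r}$, and here the problematic factor $\e_{-x_1,x_1}$ can be rewritten. The key point is that relation (SP3) of Theorem \ref{067} gives $\e_{-x_1,x_1}\e_{x_1,x_2}=\e_{-x_1,x_2}\e_{x_1,x_2}$, trading the forbidden $\e_{-x_1,x_1}$ for $\e_{-x_1,x_2}$, which lies in $E_n^{\times}$ because $x_2\neq -(-x_1)=x_1$ and $x_2\neq -x_1$ as $x_1<x_2$ are in $X$. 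Hence every generator $\e_X$ appearing in $I$ is a product of elements of $E_n^{\times}$, so $E_n^{\times}$ generates $RSP_n$.

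\textbf{Cardinality.} Finally, the count is immediate: $E_n=\{\e_{i,j}\mid i<j\text{ in }[\pm n]\}$ has $|E_n|=n^2$ by Lemma \ref{069}, and $E_n^{\times}$ is obtained by removing the $n$ generators $\e_{-i,i}$ with $i\in[n]$, so $|E_n^{\times}|=n^2-n$. The main obstacle is the generation step: I expect the delicate point to be confirming that the rewriting via (SP3) always lands inside $E_n^{\times}$ and that positivity of zero blocks is exactly what guarantees $|X|\geq 2$ so that the index $x_2$ needed for the rewrite exists; the closure argument is a routine block-fusion count once Proposition \ref{000} is invoked.
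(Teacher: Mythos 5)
Your proof is correct and follows essentially the same route as the paper: the paper's one-line proof invokes Lemma \ref{069} together with the identity $\e_{-i,i}\e_{i,j}=\e_{-i,j}\e_{i,j}$, which is exactly your (SP3) rewrite eliminating the forbidden factor $\e_{-x_1,x_1}$ from the factorization of a positive zero block. The only difference is that you additionally spell out the closure of $RSP_n$ under the product, a point the paper leaves implicit.
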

\begin{proof}
It is a consequence of Lemma \ref{069} and the fact that $\e_{-i,i}\e_{i,j}=\e_{-i,j}\e_{i,j}$.
\end{proof}

Let $RP_n$ be the submonoid of $P_{\pm n}$ generated by $U_{\pm n}^{\times}:=U_{\pm n}\backslash\{\e_{-i,i}\mid i\in[n]\}$.

\begin{pro} 
The monoid $RP_n$ may be presented with generators $\mu_{i,j}$ with $i,j\in[\pm n]$ where $i<j$ and $j\neq-i$, subject to the following relations:\begin{enumerate}
\item[]{\normalfont(RP1)}\quad$\mu_{i,j}^2=\mu_{i,j}$ for all $i,j\in [\pm n]$ with $i<j$ and $j\neq-i$,
\item[]{\normalfont(RP2)}\quad$\mu_{i,j}\mu_{r,s}=\mu_{r,s}\mu_{i,j}$ for all $i,j,r,s\in [\pm n]$ with $i<j\neq-i$ \mbox{and $r<s\neq-r$},
\item[]{\normalfont(RP3)}\quad$\mu_{i,j}\mu_{j,k}=\mu_{i,j}\mu_{i,k}=\mu_{i,k}\mu_{j,k}$ for all $i,j,k\in[\pm n]$ with $i<j<k$ where $j\neq-i$ and $k\neq-j$,
\end{enumerate}
\end{pro}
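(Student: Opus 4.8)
The plan is to identify $RP_n$ as a submonoid of $P_{\pm n}$ generated by a subset of the FitzGerald generators, and then apply the machinery on canonical systems of generators from Subsection \ref{095}, specifically Theorem \ref{093}. Recall that Proposition \ref{092} established that the set $S$ (the relations (P1) and (P3)) is a canonical system of generators of $\overline{S}$, and $P_n \simeq CU_n^*/\overline{S}$. The first step is to transport this to the $\pm n$ setting: working over $[\pm n]$ in place of $[n]$, the very same argument shows that the corresponding set $S_{\pm n}$ of pairs coming from (P1) and (P3) over $CU_{\pm n}^*$ is a canonical system of generators of its congruence closure, and $P_{\pm n} \simeq CU_{\pm n}^*/\overline{S_{\pm n}}$. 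This requires no new work beyond observing that nothing in the proof of Proposition \ref{092} used the specific index set $[n]$ rather than $[\pm n]$.

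Next I would take $Y := U_{\pm n}^{\times} = U_{\pm n}\setminus\{\mu_{-i,i}\mid i\in[n]\}$ as the distinguished subset of the totally ordered generating set $X = U_{\pm n}$. By definition, $RP_n$ is exactly the submonoid of $P_{\pm n} = \langle U_{\pm n}\mid S_{\pm n}\rangle$ generated by $Y$. Since $S_{\pm n}$ is a canonical system of generators, Theorem \ref{093} applies directly and tells us that $RP_n$ is presented by $\langle Y\mid S_{\pm n}\cap(CY^*\times CY^*)\rangle$. The generators are then precisely the $\mu_{i,j}$ with $i<j$ and $j\neq -i$, matching the statement.

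The substantive step is to compute $S_{\pm n}\cap(CY^*\times CY^*)$ explicitly and check it equals the relation set (RP1)--(RP3). A pair from $S_{\pm n}$ survives the intersection exactly when every generator $\mu_{r,s}$ appearing in either side satisfies $s\neq -r$. For the (P1)-type pairs $(\mu_{i,j}^2,\mu_{i,j})$ this gives precisely the constraint $j\neq -i$, yielding (RP1); commutativity of $CU_{\pm n}^*$ accounts for (RP2), which is automatic in the free commutative monoid but must be recorded as a defining relation once we pass to an abstract presentation. For the (P3)-type pairs $(\mu_{i,j}\mu_{i,k},\mu_{i,j}\mu_{j,k})$ and $(\mu_{i,k}\mu_{j,k},\mu_{i,j}\mu_{j,k})$ with $i<j<k$, all three indices pairs $\{i,j\},\{i,k\},\{j,k\}$ must avoid the zero form, i.e.\ $j\neq -i$, $k\neq -i$, and $k\neq -j$. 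Here the main obstacle, and the point deserving care, is that the condition $k\neq -i$ is automatically implied: from $i<j<k$ together with $j\neq -i$ one must verify that $-i$ cannot equal $k$ without forcing $j=-i$ or a sign contradiction, so that only the two conditions $j\neq -i$ and $k\neq -j$ stated in (RP3) are genuinely needed. I would verify this small case analysis on signs and the ordering $i<j<k$ to confirm that the surviving (P3)-pairs are exactly those listed in (RP3), completing the identification.

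Assembling these, $S_{\pm n}\cap(CY^*\times CY^*)$ coincides with the union of the relations (RP1), (RP2) and (RP3), and Theorem \ref{093} then yields the claimed presentation of $RP_n$.
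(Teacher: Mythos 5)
Your overall route is exactly the paper's: its entire proof is the citation of Proposition \ref{092} (the relations of type (P1)/(P3) form a canonical system of generators) together with Theorem \ref{093} (the submonoid generated by a subset $Y$ of the generators is presented by the relations lying in $CY^*\times CY^*$), applied to $Y=U_{\pm n}^{\times}=U_{\pm n}\setminus\{\mu_{-i,i}\mid i\in[n]\}$ inside $P_{\pm n}\simeq P_{2n}$. Your extra bookkeeping --- transporting Proposition \ref{092} from $[n]$ to $[\pm n]$, recording commutativity as (RP2), and computing the intersection explicitly --- is in the right spirit.

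However, the one step you single out as ``deserving care'' is precisely where your argument breaks: it is \emph{false} that $i<j<k$ together with $j\neq -i$ forces $k\neq -i$. Take $(i,j,k)=(-2,1,2)$, available for every $n\geq 2$: then $-2<1<2$, $j=1\neq 2=-i$, and $k=2\neq -1=-j$, yet $k=-i$. Consequently the set of (P3)-pairs surviving the intersection $S_{\pm n}\cap(CY^*\times CY^*)$ is \emph{not} the set of triples singled out by the two conditions printed in (RP3); it is the set of triples satisfying all three exclusions $j\neq-i$, $k\neq-i$ and $k\neq-j$. The discrepancy is harmless only because (RP3), read literally, is ill-formed for the triples with $k=-i$: there the term $\mu_{i,k}=\mu_{i,-i}$ is not among the listed generators, so those instances must be discarded under any sensible reading of the statement (equivalently, the statement should carry the additional hypothesis $k\neq-i$). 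So the correct conclusion of your computation is that the surviving pairs are exactly the (RP3)-instances in which all three symbols $\mu_{i,j},\mu_{i,k},\mu_{j,k}$ avoid the forbidden form $\mu_{-a,a}$; the condition $k\neq-i$ must be \emph{imposed}, not derived. With that correction your proof is complete and coincides with the paper's.
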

\begin{proof}
It is a consequence of Proposition \ref{092} and Theorem \ref{093}.
\end{proof}

\begin{thm}\label{107} 
The monoid $RSP_n$ may be presented with generators $\e_{i,j}$ with $i,j\in[\pm n]$ where $i<j$ and $j\neq-i$, subject to the following relations:\begin{enumerate}
\item[]{\normalfont(RSP1)}\quad$\e_{i,j}^2=\e_{i,j}$ for all $i,j\in [\pm n]$ with $i<j$ and $j\neq-i$,
\item[]{\normalfont(RSP2)}\quad$\e_{i,j}\e_{r,s}=\e_{r,s}\e_{i,j}$ for all $i,j,r,s\in [\pm n]$ with $i<j\neq-i$ \mbox{and $r<s\neq-r$},
\item[]{\normalfont(RSP3)}\quad$\e_{i,j}\e_{j,k}=\e_{i,j}\e_{i,k}=\e_{i,k}\e_{j,k}$ for all $i,j,k\in[\pm n]$ with $i<j<k$ where $j\neq-i$ and $k\neq-j$,
\item[]{\normalfont(RSP4)}\quad$\e_{i,j}=\e_{-j,-i}$ for all $i,j\in[\pm n]$ where $i<j$ and $j\neq-i$.
\end{enumerate}
\end{thm}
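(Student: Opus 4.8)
The plan is to follow the proof scheme of Theorem \ref{067}, working inside the submonoid $RP_n$ of $P_{\pm n}$ whose presentation (relations (RP1)--(RP3)) was just obtained. First I would introduce the epimorphism $s\colon RP_n\to RSP_n$ determined by $s(\mu_{i,j})=\e_{i,j}$ for all $i,j\in[\pm n]$ with $i<j$ and $j\neq-i$. This $s$ is the restriction of the sign-symmetrization $I\mapsto I\cdot(-I)$, which is a homomorphism precisely because $P_{\pm n}$ is commutative; it lands in $RSP_n$ and is onto by the preceding lemma, since the generators $\e_{i,j}\in E_n^{\times}$ are exactly the images of the generators $\mu_{i,j}$ of $RP_n$. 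Writing $Q'$ for the set of pairs $(\mu_{i,j},\mu_{-j,-i})$ with $i<j$ and $j\neq-i$, relation (RSP4) is nothing but $Q'$, so by Proposition \ref{105} everything reduces to proving $\ker(s)=\overline{Q'}$: combining this equality with the presentation of $RP_n$ turns (RP1)--(RP3) into (RSP1)--(RSP3) and records $\overline{Q'}$ as (RSP4).

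The inclusion $\overline{Q'}\subseteq\ker(s)$ is immediate from $s(\mu_{i,j})=\e_{i,j}=\e_{-j,-i}=s(\mu_{-j,-i})$. For the reverse inclusion I would transcribe Lemmas \ref{082}, \ref{076} and \ref{077} into the restricted setting. Given $(I,J)\in\ker(s)$, the equality $s(I)=s(J)$ together with the normal form of Proposition \ref{074} (read inside $P_{\pm n}$) forces a block-by-block matching between $I$ and $J$ that is compatible with the symmetrization $I\mapsto I\cdot(-I)$; the restricted analogue of Lemma \ref{076} then upgrades each matched pair of $\mu$-blocks to a $Q'$-equivalence, whence $I\equiv J$ modulo $\overline{Q'}$.

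The step I expect to be the main obstacle is the restricted analogue of Lemma \ref{082}. For $SP_n$ one normalizes a block $X$ with $-X\cap X\neq\emptyset$ to the canonical representative $\mu_Y$ with $Y=\{-\min(|X|)\}\cup|X|$, whose normal form uses a factor $\mu_{-i,i}$; but $\mu_{-i,i}\notin RP_n$, so this route is unavailable. Instead one must show that every such block (which necessarily contains at least three points, since two antipodal points can only be joined through a third strand) is $Q'$-equivalent to a fixed representative built solely from $U_{\pm n}^{\times}$, routing the connection between $-i$ and $i$ through an auxiliary strand by means of (RP3) and the sign-flips in $Q'$. Verifying that these restricted representatives are well defined and that no identification beyond (RSP1)--(RSP4) is needed is the technical heart of the argument; once it is in place, Proposition \ref{105} applied to $s$ yields $RSP_n\simeq RP_n/\overline{Q'}$ and therefore the stated presentation.
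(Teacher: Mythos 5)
Your proposal is the paper's proof: the paper likewise restricts the symmetrization map $s$ to $RP_n$, notes that $s(RP_n)=RSP_n$ with the restriction surjective, invokes Lemma \ref{077} to identify the kernel congruence with $\overline{Q}$ (your $Q'$, i.e.\ relation (RSP4)), and concludes by Proposition \ref{105} that $RSP_n\simeq RP_n/\overline{Q}$, grafting (RSP1)--(RSP4) onto the presentation (RP1)--(RP3) of $RP_n$. If anything you are more careful than the paper, whose three-line proof simply asserts that Lemma \ref{077} restricts to $RP_n$, without addressing the point you correctly flag: the $Q$-chains in Lemmas \ref{082} and \ref{076} pass through factorizations involving $\mu_{-i,i}\notin RP_n$, and one must check they can be re-routed through a third strand via (RP3) with all contexts staying inside $RP_n$.
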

\begin{proof}
Recall that $s:P_{\pm n}\to SP_n$ is the homomorphism defined by $s(\mu_{i,j})=\e_{i,j}$ for all $i<j$. Note that $s(RP_n)=RSP_n$ and the restriction of $s$ to $RP_n$ is surjective. Lemma \ref{077} implies that $\ker(s)=\overline{Q}\subset RP_n$. So, Proposition \ref{105} says that $RP_n/\overline{Q}\simeq RSP_n$. 
\end{proof}

\begin{rem}\label{073}
As in Remark \ref{080} we can reduce the number of generators $\e_{i,j}$ to those additionally satisfying $|i|\leq j$ and remove relation (RSP4). In this case, relation (RSP3) should be replaced by the following four relations.
\begin{enumerate}
\item[]{\normalfont(RSP3a)\!\!\!}\quad$\e_{i,j}\e_{j,k}=\e_{i,j}\e_{i,k}=\e_{i,k}\e_{j,k}$ for all $i,j,k\in[n]$ with $i<j<k$,
\item[]{\normalfont(RSP3b)\!\!\!}\quad$\e_{-i,j}\e_{j,k}=\e_{-i,j}\e_{-i,k}=\e_{-i,k}\e_{j,k}$ for all $i,j,k\in[n]$ with $i<j<k$, 
\item[]{\normalfont(RSP3c)\!\!\!}\quad$\e_{-i,j}\e_{i,k}=\e_{-j,k}\e_{i,k}=\e_{-i,j}\e_{-j,k}$ for all $i,j,k\in[n]$ with $i<j<k$,
\item[]{\normalfont(RSP3d)\!\!\!}\quad$\e_{i,j}\e_{-i,k}=\e_{i,j}\e_{-j,k}=\e_{-j,k}\e_{-i,k}$ for all $i,j,k\in[n]$ with $i<j<k$.
\end{enumerate}
\end{rem}

\subsubsection{The monoid $P_n^D$}

\begin{thm}\label{106}
The monoid of $\D_n$-partitions $P_n^D$ may be presented with generators $\ve_{i,j}$ with $i,j\in[\pm n]$ satisfying $i<j$ and $j\neq-i$, subject to the following relations:\begin{enumerate}
\item[]{\normalfont(PD1)}\quad$\ve_{i,j}^2=\ve_{i,j}$ for all $i,j\in [\pm n]$ with $i<j$ and $j\neq-i$,
\item[]{\normalfont(PD2)}\quad$\ve_{i,j}\ve_{r,s}=\ve_{r,s}\ve_{i,j}$ for all $i,j,r,s\in [\pm n]$ with $i<j\neq-i$ \mbox{and $r<s\neq-r$},
\item[]{\normalfont(PD3)}\quad$\ve_{i,j}\ve_{j,k}=\ve_{i,j}\ve_{i,k}=\ve_{i,k}\ve_{j,k}$ for all $i,j,k\in[\pm n]$ with $i<j<k$ where $j\neq-i$ and $k\neq-j$,
\item[]{\normalfont(PD4)}\quad$\ve_{i,j}=\ve_{-j,-i}$ for all $i,j\in[\pm n]$ where $i<j$ and $j\neq-i$,
\item[]{\normalfont(PD5)}\quad$\ve_{-i,j}\ve_{i,j}\ve_{-r,s}\ve_{r,s}=\ve_{-a,b}\ve_{a,b}\ve_{b,c}\ve_{c,d}$ for all $i,j,r,s,a,b,c,d\in[\pm n]$ satisfying $\{i<j,r<s\}=\{a<b<c<d\}$.
\end{enumerate}
\end{thm}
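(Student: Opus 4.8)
The plan is to mirror the proof of Theorem \ref{087}, replacing the pair $SP_n\supset P_n^B$ by $RSP_n\supset P_n^D$ and using the presentation of $RSP_n$ obtained in Theorem \ref{107}. Let $T'\subseteq RSP_n\times RSP_n$ be the set of pairs defining relation (PD5), and write $I\equiv J$ for $(I,J)\in\overline{T'}$. First I would check that the zero closure $I\mapsto I_0$ restricts to an epimorphism $\partial\colon RSP_n\to P_n^D$: if $I\in RSP_n$ then all its zero blocks are positive, so their union is a single positive zero block and $I_0\in P_n^D$; since every $\D_n$-partition equals its own zero closure, $\partial$ fixes $P_n^D$ pointwise and is therefore onto. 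That $\partial$ is a homomorphism (for the product of $P_n^D$, which is again the zero closure of the product of $SP_n$) follows from Lemma \ref{083}, since $(IJ)_0=I_0J_0$ and $(I_0J_0)_0=I_0J_0$, whence $\partial(IJ)=I_0\cdot_BJ_0$. Consequently $\ker(\partial)=\{(I,J)\in RSP_n\times RSP_n\mid I_0=J_0\}$, and by Proposition \ref{105} it suffices to prove $\ker(\partial)=\overline{T'}$: then $P_n^D\simeq RSP_n/\overline{T'}$, and adjoining the pairs (PD5) to the presentation (RSP1)--(RSP4) of $RSP_n$ (identifying the generator $\e_{i,j}$ with $\ve_{i,j}=(\e_{i,j})_0$, which coincide because $j\neq-i$ forces no zero block) produces exactly (PD1)--(PD5).

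The inclusion $\overline{T'}\subseteq\ker(\partial)$ is immediate: both sides of each instance of (PD5) have the same zero closure, namely the single positive zero block on $\{\pm a,\pm b,\pm c,\pm d\}$, and $\partial$ is a homomorphism, so every generating pair of $\overline{T'}$ lies in $\ker(\partial)$. For the reverse inclusion, suppose $I_0=J_0$. Then $I$ and $J$ share all their non-zero blocks, and the union of their zero blocks is one common zero subset $Z$; the only possible discrepancy is how $Z$ is split into positive zero blocks. Thus the problem reduces to a \emph{merging lemma}, the type-$\D$ analogue of Lemma \ref{084}: for any decomposition of a zero subset $Z$ into positive zero blocks, the product of the canonical representatives of those blocks is $\overline{T'}$-equivalent to the single representative of $Z$. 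Granting this, any two decompositions of $Z$ are each $\equiv$ to the one-block representative, hence $\equiv$ to each other, so $I\equiv J$.

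The heart of the argument, and the step I expect to be the main obstacle, is this merging lemma. The positivity constraint forces a genuine departure from the type-$\B$ case: in $RSP_n$ one cannot break a zero block into the singleton pieces $\e_{-i,i}$ exploited in Lemma \ref{084}, so the smallest available building block is the ``double'' $\ve_{-i,j}\ve_{i,j}$ representing $\{\pm i,\pm j\}$. I would prove the lemma by induction on the number of positive parts, with (PD5) serving precisely as the base case that fuses two doubles $\{\pm i,\pm j\}$ and $\{\pm r,\pm s\}$ into the canonical chain form $\ve_{-a,b}\ve_{a,b}\ve_{b,c}\ve_{c,d}$ of the quadruple. The inductive step writes each positive zero block in chain form, merges the two leading doubles by (PD5), and then reattaches the remaining chain edges $\ve_{a_k,a_{k+1}}$ and $\ve_{b_\ell,b_{\ell+1}}$ to the growing block using relations (RSP3) together with commutativity (RSP2). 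The bookkeeping that verifies these reattachments always stay within the admissible range of generators (and never demand a forbidden $\ve_{-i,i}$) is routine but delicate; once it is in place, $\ker(\partial)=\overline{T'}$ follows, and the presentation is obtained from Proposition \ref{105} as above.
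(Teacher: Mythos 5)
Your proposal is correct and takes essentially the same route as the paper: both realize $P_n^D$ as the quotient of $RSP_n$ (with its presentation from Theorem \ref{107}) by the congruence generated by the (PD5) pairs, identified as the kernel of the zero-closure epimorphism, and then conclude by Proposition \ref{105}. Your explicit ``merging lemma'' is precisely the detail the paper compresses into its citation of Lemma \ref{083} (whose type-B counterpart is Lemma \ref{084}), so your write-up is, if anything, more complete at the one step the paper leaves terse.
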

\begin{proof}
Let $F$ be the subset of $RSP_n\times RSP_n$ formed by the pairs $(\e_{-i,j}\e_{i,j}\e_{-r,s}\e_{r,s},\allowbreak\e_{-a,b}\e_{a,b}\e_{b,c}\e_{c,d})$ with $i,j,r,s\in[n]$ satisfying $\{i<j,r<s\}=\{a<b<c<d\}$. By Lemma \ref{083}, $\overline{F}$ is the kernel congruence of the homomorphism $I\to I_0$ restricted to $RSP_n$. So, Proposition \ref{105} implies that $P_n^D\simeq RSP_n/\overline{F}$.
\end{proof} 

\section{Tied monoids of type A}\label{S4}

We will study in this section the tied monoid $T\!M = P\rtimes_{\rho}M$ introduced in Definition \ref{155}, where  $P=P_n$ and $M$ is the braid group or more generall a braid--like monoid. In order to describe the action $\rho$ we need to introduce first some notations. As usual, we denote by $\SS_n$ the \emph{symmetric group} on $n$ symbols. In what follows $M$ denotes a monoid accepting an epimorphism $\pi:M\to\SS_n$ and denote by $\pp$ the \emph{permutation action} of $\SS_n$ on $[n]$. This one induces an action, denoted again by $\pp$, of $\SS_n$ on $P_n$, that is,\begin{equation}\label{135}\pp_g(I)=\{\pp_g(I_1),\ldots,\pp_g(I_k)\},\qquad g\in\SS_n,\,I=\{I_1,\ldots,I_k\}\in P_n.\end{equation}
So, the action $\rho:M\to\End(P_n)$ is given by $\rho:=\pp\circ\pi$.

We organize the section in two subsections. The first one gives the necessary notations, as well technical results related with the symmetric group and the generators $\mu_{i,j}$'s all which will be used  in the second subsections and next Section; in the second one we will write out the presentation of some $TM$'s.
 
\subsection{A--Preliminaries} 

For $i\in[n-1]$ denote by $s_i$ the transposition exchanging $i$ with $i+1$. Recall that $\SS_n$ is generated by transpositions and may be presented with generators $s_1,\ldots,s_{n-1}$ subject to the following relations:\begin{enumerate}
\item[](A1)\quad$s_is_js_i=s_js_is_j$ for all $i,j\in[n-1]$ with $|i-j|=1$,
\item[](A2)\quad$s_is_j=s_js_i$ for all $i,j\in[n-1]$ with $|i-j|\geq2$,
\item[](A3)\quad$s_i^2=1$ for all $i\in[n-1]$.
\end{enumerate}Recall that $\SS_n$ acts on $P_n$ by $s(I):=\{s(I_1),\ldots,s(I_k)\}$ for all $s\in\SS_n$ and $I=\{I_1,\ldots,I_k\}\in P_n$. For $i,j,k\in[n]$ with $i<j$, we have\begin{equation}\label{100}s_k(\mu_{i,j})=\left\{\begin{array}{ll}
\mu_{i+1,j}&\text{if }i=k\text{ and }k+1<j,\\
\mu_{i,j-1}&\text{if }i<k\text{ and }j=k+1,\\
\mu_{i-1,j}&\text{if }i=k+1\text{ and }k+1<j,\\
\mu_{i,j+1}&\text{if }i<k\text{ and }j=k,\\
\mu_{i,j}&\text{if }(i,j)=(k,k+1)\text{ or }i,j\not\in\{k,k+1\}.
\end{array}\right.\end{equation}

Let $\langle A\mid R\rangle$ be a presentation of $M$, and let $x_1,\ldots,x_{n-1}\in A$ such that $\rho_{x_i}=\pi_{s_i}$ for all $i\in[n-1]$. We will study the relations of $TM$ obtained through the Lavers' method. For that we set $S$ to be the set of pairs $(\mu_{i,j}x_k,x_k\rho_{x_k}(\mu_{i,j}))$ in $TM\times TM$ for all $i,j\in[n]$ with $i<j$ and $k\in[n-1]$.

\begin{lem}\label{101}
The set $S$ corresponds to the following relations of $TM$:
\begin{enumerate}
\item $\mu_{i,j}x_i=x_i\mu_{i+1,j}$ if $i+1<j$,
\item $\mu_{i,j+1}x_j=x_j\mu_{i,j}$ if $i<j$,
\item $\mu_{i+1,j}x_i=x_i\mu_{i,j}$ if $i+1<j$,
\item $\mu_{i,j}x_j=x_j\mu_{i,j+1}$ if $i<j$,
\item $\mu_{i,j}x_k=x_k\mu_{i,j}$ if $(i,j)=(k,k+1)$ or $i,j\not\in\{k,k+1\}$.
\end{enumerate}
\end{lem}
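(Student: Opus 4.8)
The plan is to unwind the definition of $S$ using the semidirect product structure and the explicit action formula \eqref{100}. Recall that by Theorem \ref{013} each relation in $S$ has the form $(\mu_{i,j}x_k, x_k\rho_{x_k}(\mu_{i,j}))$, and since $\rho = \pp\circ\pi$ with $\rho_{x_k}=\pp_{s_k}$, we have $\rho_{x_k}(\mu_{i,j}) = s_k(\mu_{i,j})$. Thus the entire content of the lemma is to evaluate $s_k(\mu_{i,j})$ in every case, and the five cases listed are exactly the five branches of \eqref{100}. So the proof is essentially a direct substitution: I would simply state that the relation $(\mu_{i,j}x_k, x_k s_k(\mu_{i,j}))$ specializes according to the piecewise description of $s_k(\mu_{i,j})$ given in \eqref{100}, and match each branch to the correspondingly numbered item.

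More concretely, I would carry out the matching branch by branch. The branch $i=k$, $k+1<j$ gives $s_k(\mu_{i,j})=\mu_{i+1,j}$, yielding item (1); the branch $i<k$, $j=k+1$ gives $\mu_{i,j-1}$, which after reindexing (writing the relation with $j$ shifted) produces item (2); the branch $i=k+1$, $k+1<j$ gives $\mu_{i-1,j}$, producing item (3); the branch $i<k$, $j=k$ gives $\mu_{i,j+1}$, producing item (4); and the final branch, where $(i,j)=(k,k+1)$ or $i,j\notin\{k,k+1\}$, gives $\mu_{i,j}$ itself, producing the commuting relation (5). The only care needed is bookkeeping with the index shifts: items (2) and (4) are the same relation read from the two sides (one with source $\mu_{i,j+1}$ and one with $\mu_{i,j}$), as are (1) and (3), so I would make sure the ranges of indices are stated consistently with how \eqref{100} is phrased.

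The main obstacle, such as it is, is purely notational rather than mathematical: one must verify that the conditions $i<j$ attached to each $\mu_{i,j}$ remain satisfied after the action is applied, and that the inequalities defining each case in \eqref{100} translate correctly into the constraints ($i+1<j$, $i<j$, etc.) attached to items (1)--(5). Since the action $s_k$ permutes $\{i,j\}$ in an order-preserving way except when it swaps two adjacent values straddling $k$, I would briefly note that in each nontrivial branch exactly one endpoint of $\{i,j\}$ is moved, so the resulting pair is again a valid generator with its smaller index first. Given this, the lemma follows immediately from \eqref{100}, and I would expect the written proof to be a single short paragraph citing that formula.
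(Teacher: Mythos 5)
Your proposal is correct and is essentially the paper's own proof: the paper disposes of this lemma in one line by noting $\rho_{x_i}=\pi_{s_i}$ and "applying \ref{100}," which is exactly your branch-by-branch substitution of the action formula into the pairs $(\mu_{i,j}x_k,\,x_k\rho_{x_k}(\mu_{i,j}))$ defining $S$. Your extra bookkeeping on index shifts (e.g.\ that items (2) and (4) are the same relation read from the two sides) is a faithful elaboration of what the paper leaves implicit.
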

\begin{proof}
Since $\rho_{x_i}=\pi_{s_i}$ for all $i\in[n-1]$, it is obtained directly by applying \ref{100}.
\end{proof}

The previous lemma implies the following corollary.

\begin{crl}\label{097}
Assume that $x_i$ is invertible in $M$ for all $i\in[n-1]$. Then $S$ corresponds to the following relations of $TM$.\begin{enumerate}
\item $\mu_{i,j+1}=x_j\mu_{i,j}x_j^{-1}=x_j^{-1}\mu_{i,j}x_j$ if $i<j$,
\item$\mu_{i,j}=x_i^{-1}\mu_{i+1,j}x_i=x_i\mu_{i+1,j}x_i^{-1}$ if $i+1<j$,
\item$\mu_{i,j}x_k=x_k\mu_{i,j}$ if $(i,j)=(k,k+1)$ or $i,j\not\in\{k,k+1\}$.
\end{enumerate}
\end{crl}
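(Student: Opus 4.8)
The plan is to derive Corollary~\ref{097} directly from Lemma~\ref{101} by using the extra hypothesis that each $x_i$ is invertible. The strategy is to take the five relations (1)--(5) already established in Lemma~\ref{101} and repackage them, solving for the generators $\mu_{i,j}$ by conjugation. The key observation is that once $x_j$ has a two-sided inverse $x_j^{-1}$ in $M$ (hence in $TM$), any relation of the form $u\,x_k = x_k\,v$ can be rewritten symmetrically as $u = x_k v x_k^{-1}$ and as $v = x_k^{-1} u x_k$, and likewise $v = x_k^{-1} u x_k$ gives $u = x_k v x_k^{-1}$. So the whole content of the corollary is algebraic bookkeeping on top of Lemma~\ref{101}.

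First I would establish item (1) of the corollary. Taking relation (4) of Lemma~\ref{101}, namely $\mu_{i,j}x_j = x_j\mu_{i,j+1}$ for $i<j$, and multiplying on the left by $x_j^{-1}$ gives $\mu_{i,j+1} = x_j^{-1}\mu_{i,j}x_j$. Taking instead relation (2) of Lemma~\ref{101}, namely $\mu_{i,j+1}x_j = x_j\mu_{i,j}$ for $i<j$, and multiplying on the right by $x_j^{-1}$ gives $\mu_{i,j+1} = x_j\mu_{i,j}x_j^{-1}$. Combining the two yields the double equality in item~(1). Next I would treat item (2) in the same way: relation (3) of Lemma~\ref{101}, namely $\mu_{i+1,j}x_i = x_i\mu_{i,j}$ for $i+1<j$, rearranges to $\mu_{i,j} = x_i^{-1}\mu_{i+1,j}x_i$, while relation (1) of Lemma~\ref{101}, namely $\mu_{i,j}x_i = x_i\mu_{i+1,j}$, rearranges to $\mu_{i,j} = x_i\mu_{i+1,j}x_i^{-1}$. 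Together these give item~(2). Finally, item (3) of the corollary is simply relation (5) of Lemma~\ref{101} copied verbatim, since it is already a commuting relation and needs no inversion.

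I do not expect any genuine obstacle here, as this is essentially a matter of solving linear relations in an inverse-containing monoid. The only point requiring a little care is to confirm that invertibility of $x_i$ in $M$ indeed transfers to $TM$: since $TM = P_n\rtimes_\rho M$ and the copy of $M$ embeds as $\{(1,a)\mid a\in M\}$ with $(1,a)(1,a^{-1}) = (1,aa^{-1}) = (1,1)$, the element $(1,x_i)$ is invertible in $TM$ with inverse $(1,x_i^{-1})$, so writing $x_i^{-1}$ inside $TM$ is legitimate. With that remark in place, the proof reduces to the three rearrangements above, and one can simply state that the corollary follows by left- or right-multiplying the relations of Lemma~\ref{101} by the appropriate inverses $x_i^{-1}$ or $x_j^{-1}$.
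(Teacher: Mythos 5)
Your proposal is correct and matches the paper's approach: the paper simply states that Corollary \ref{097} follows from Lemma \ref{101}, and your rearrangements (left/right multiplication by $x_i^{-1}$ applied to items (1)--(4) of the lemma, with item (5) carried over verbatim) are exactly the intended bookkeeping. Your added remark that invertibility of $x_i$ in $M$ transfers to $(1,x_i)$ in $TM=P_n\rtimes_\rho M$ is a harmless and valid elaboration of a point the paper leaves implicit.
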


In what follows, we assume that $x_1,\ldots,x_{n-1}$ are invertible in $M$.

For $i\in[n-1]$ we set $\eta_i:=\mu_{i,i+1}$ and add these symbols to the presentation of $TM$ by using Tietze transformations of type \tf{3}. Note that, by definition, $\eta_1,\ldots,\eta_{n-1}$ satisfy relations (P1) and (P2) of $P_n$.

For $i,j\in[n]$ with $i<j$ set:\begin{equation}\label{029}\begin{array}{ll}
a_{i,j}=\left\{\begin{array}{ll}
1&\text{if }j=i+1,\\
x_i\cdots x_{j-2}&\text{if }j>i+1,
\end{array}\right.
&\quad
b_{i,j}=\left\{\begin{array}{ll}
1&\text{if }j=i+1,\\
x_{j-1}\cdots x_{i+1}&\text{if }j>i+1,
\end{array}\right.
\\[0.8cm]
\oa_{i,j}=\left\{\begin{array}{ll}
1&\text{if }j=i+1,\\
x_{j-2}\cdots x_i&\text{if }j>i+1,
\end{array}\right.
&\quad
\ob_{i,j}=\left\{\begin{array}{ll}
1&\text{if }j=i+1,\\
x_{i+1}\cdots x_{j-1}&\text{if }j>i+1.
\end{array}\right.\end{array}\end{equation}

\begin{lem}\label{098}
For each $i,j\in[n]$ with $i<j$ we have the following:
\begin{enumerate}
\item$\mu_{i,j}=\oa_{i,j}^{-1}\eta_{j-1}\oa_{i,j}=a_{i,j}\eta_{j-1}a_{i,j}^{-1}$,\label{033}
\item$\mu_{i,j}=\ob_{i,j}^{-1}\eta_i\ob_{i,j}=b_{i,j}\eta_ib_{i,j}^{-1}$.
\end{enumerate}
\end{lem}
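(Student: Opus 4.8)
The plan is to prove both identities by induction on $j-i$, exploiting the recursive structure of the words $a_{i,j}, b_{i,j}, \oa_{i,j}, \ob_{i,j}$ built from the $x_k$'s, together with the commutation and conjugation relations already recorded in Corollary \ref{097}. I will focus on the first identity $\mu_{i,j}=\oa_{i,j}^{-1}\eta_{j-1}\oa_{i,j}=a_{i,j}\eta_{j-1}a_{i,j}^{-1}$; the second is entirely analogous by a symmetric argument that moves the index $i$ rather than the index $j$.

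First I would establish the base case $j=i+1$. Here $a_{i,j}=\oa_{i,j}=1$ by definition \eqref{029}, and $\eta_{j-1}=\eta_i=\mu_{i,i+1}$ by the definition of the $\eta$'s, so both sides of \eqref{033} reduce to $\mu_{i,i+1}=\mu_{i,j}$, which holds trivially. For the inductive step, assume $j>i+1$ and that the claim holds for the pair $(i,j-1)$, i.e.\ $\mu_{i,j-1}=a_{i,j-1}\eta_{j-2}a_{i,j-1}^{-1}$. The key observation is the recursion $a_{i,j}=a_{i,j-1}\,x_{j-2}$ coming directly from \eqref{029}. Applying part (1) of Corollary \ref{097} in the form $\mu_{i,j}=x_{j-1}\mu_{i,j-1}x_{j-1}^{-1}$ would shift the second index, but I actually want to raise $\mu_{i,j-1}$ to $\mu_{i,j}$; the cleaner route is to note that the statement I must reach, $\mu_{i,j}=a_{i,j}\eta_{j-1}a_{i,j}^{-1}$, can be rewritten using $a_{i,j}=a_{i,j-1}x_{j-2}$ as $\mu_{i,j}=a_{i,j-1}\,(x_{j-2}\eta_{j-1}x_{j-2}^{-1})\,a_{i,j-1}^{-1}$, so it suffices to compute $x_{j-2}\eta_{j-1}x_{j-2}^{-1}=x_{j-2}\mu_{j-1,j}x_{j-2}^{-1}$ and feed the result through the conjugation by $a_{i,j-1}$.

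The main obstacle, and the real content of the computation, is this conjugation of a single generator $\eta_{j-1}=\mu_{j-1,j}$ by $x_{j-2}$. By part (2) of Corollary \ref{097} (with the roles of the indices read off carefully), conjugating $\mu_{j-1,j}$ by $x_{j-2}$ lowers the first index to produce $\mu_{j-2,j}$, and more generally conjugating successively by $x_{j-2},\ldots,x_i$ walks the first index down from $j-1$ to $i$ while the second index stays fixed at $j$. Thus the nested conjugation $a_{i,j}\eta_{j-1}a_{i,j}^{-1}=x_i\cdots x_{j-2}\,\mu_{j-1,j}\,x_{j-2}^{-1}\cdots x_i^{-1}$ collapses to $\mu_{i,j}$ exactly because each $x_k$ decrements the first index by one. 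I would phrase this precisely as a short subsidiary induction (or simply invoke the inductive hypothesis after the single conjugation step above), being careful that at each stage the hypotheses $i+1<j$ guaranteeing the applicability of the relevant case of \eqref{100}/Corollary \ref{097} are met, since those relations only fire when the indices are genuinely distinct and non-adjacent in the required way.

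For the equality of the two expressions $\oa_{i,j}^{-1}\eta_{j-1}\oa_{i,j}=a_{i,j}\eta_{j-1}a_{i,j}^{-1}$, I would observe that $\oa_{i,j}=x_{j-2}\cdots x_i$ is precisely the reverse word of $a_{i,j}=x_i\cdots x_{j-2}$, so $\oa_{i,j}^{-1}=x_i^{-1}\cdots x_{j-2}^{-1}$ conjugates in the same directional sense as $a_{i,j}$; both conjugations march the first index of $\mu_{j-1,j}$ down to $i$ and therefore land on the same element $\mu_{i,j}$. An equivalent and perhaps cleaner justification is that each relation in Corollary \ref{097}(2) is stated as a double equality $\mu_{i,j}=x_i^{-1}\mu_{i+1,j}x_i=x_i\mu_{i+1,j}x_i^{-1}$, so at every step of the walk the generator may be conjugated by either $x_k$ or $x_k^{-1}$ with the same effect, which is exactly what distinguishes the $\oa$-form from the $a$-form. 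Iterating this double equality down the chain gives both expressions simultaneously, completing the proof of part (1); part (2) follows by the mirror-image argument using $\ob_{i,j},b_{i,j}$ and part (1) of Corollary \ref{097}, which raises and lowers the \emph{second} index instead.
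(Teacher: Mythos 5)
Your proof is correct and takes essentially the same route as the paper: the paper's entire proof is the one line ``apply Corollary \ref{097} recursively,'' and your nested-conjugation walk (each $x_k$ or $x_k^{-1}$ moving one index of $\mu$ by one step, justified by the double equalities in Corollary \ref{097}(1)--(2)) is exactly that recursion written out in full, with the correct base case and the applicability conditions checked. The only blemish is your opening inductive hypothesis on the pair $(i,j-1)$, which your third paragraph rightly abandons in favour of the walk on the first index (i.e.\ peeling $a_{i,j}=x_i\,a_{i+1,j}$ rather than $a_{i,j}=a_{i,j-1}x_{j-2}$); this does not affect the substance of the argument.
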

\begin{proof}
It is obtained by applying Corollary \ref{097} recursively.
\end{proof}

\begin{crl}\label{099}
The set $S$ corresponds to the following relations.
\begin{enumerate}
\item $\mu_{i,j}=\oa_{i,j}^{-1}\eta_{j-1}\oa_{i,j}=b_{i,j}\eta_ib_{i,j}^{-1}=a_{i,j}\eta_{j-1}a_{i,j}^{-1}=\ob_{i,j}^{-1}\eta_i\ob_{i,j}$,\label{032}
\item$(a_{i,j}\eta_{j-1}a_{i,j}^{-1})x_k=x_k(a_{i,j}\eta_{j-1}a_{i,j}^{-1})$ if $(i,j)=(k,k+1)$ or $i,j\not\in\{k,k+1\}$.\label{038}
\end{enumerate}
\end{crl}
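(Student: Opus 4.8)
The plan is to derive Corollary \ref{099} directly from Corollary \ref{097} and Lemma \ref{098}, treating the two items separately. Recall that the set $S$ consists of the Lavers relations $(\mu_{i,j}x_k,x_k\rho_{x_k}(\mu_{i,j}))$, and that Corollary \ref{097} has already translated these into three families of relations under the invertibility assumption on the $x_i$. So everything that needs proving is that those three families are equivalent, as a set of consequences, to the two families listed in \eqref{032} and \eqref{038}.

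For item \eqref{032}, I would argue as follows. Lemma \ref{098} gives, as \emph{identities in $TM$}, the four expressions $\mu_{i,j}=\oa_{i,j}^{-1}\eta_{j-1}\oa_{i,j}=a_{i,j}\eta_{j-1}a_{i,j}^{-1}$ and $\mu_{i,j}=\ob_{i,j}^{-1}\eta_i\ob_{i,j}=b_{i,j}\eta_ib_{i,j}^{-1}$. These equalities are exactly what one obtains by unwinding items (1) and (2) of Corollary \ref{097} recursively, which is how Lemma \ref{098} was proved. Hence the single chain of equalities in \eqref{032} is nothing more than the conjunction of the two displayed lines of Lemma \ref{098}, and conversely each conjugation relation in items (1)--(2) of Corollary \ref{097} is recovered from \eqref{032} by comparing consecutive terms (e.g.\ $\mu_{i,j+1}=x_j\mu_{i,j}x_j^{-1}$ falls out of the $a_{i,j}$ expression since $a_{i,j+1}=a_{i,j}x_{j-1}$). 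So \eqref{032} is a reformulation of Corollary \ref{097}(1)--(2) after substituting the definitions \eqref{029} of $a_{i,j},b_{i,j},\oa_{i,j},\ob_{i,j}$; this is a routine bookkeeping check rather than a genuine difficulty.

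For item \eqref{038}, the content is that the commuting relation Corollary \ref{097}(3), namely $\mu_{i,j}x_k=x_k\mu_{i,j}$ whenever $(i,j)=(k,k+1)$ or $i,j\notin\{k,k+1\}$, can be rewritten by replacing $\mu_{i,j}$ with its expression $a_{i,j}\eta_{j-1}a_{i,j}^{-1}$ from Lemma \ref{098}\eqref{033}. Since that replacement is an identity in $TM$, substituting it into Corollary \ref{097}(3) yields exactly \eqref{038}, and the substitution is reversible; thus the two relation sets are consequences of one another. The only thing to verify is that the index restriction carried over is the same, which it is, since we have only rewritten the generator $\mu_{i,j}$ without altering which pairs $(i,j,k)$ the relation is asserted for.

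Putting these together, the set $S$ corresponds, via Corollary \ref{097}, to relations (1)--(3) there, and by the two reformulations above these are precisely the relations \eqref{032} and \eqref{038}; invoking Proposition \ref{012} (Tietze equivalence) or simply the fact that each relation is a consequence of the other set closes the argument. The step I expect to require the most care is the first reformulation: one must confirm that the four-fold equality in \eqref{032} together with the definitions in \eqref{029} faithfully encode \emph{all} of Corollary \ref{097}(1)--(2) and introduce no extra relations, i.e.\ that passing to the closed-form conjugators $a_{i,j}$ and $b_{i,j}$ neither loses nor adds information. This is where the recursion of Lemma \ref{098} does the real work, so I would lean on that lemma rather than re-expanding the telescoping products by hand.
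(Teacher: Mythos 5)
Your overall route --- deducing Corollary \ref{099} from Corollary \ref{097} and Lemma \ref{098}, and checking the two directions of the equivalence --- is exactly the paper's (its proof is the one-line ``direct consequence of Lemma \ref{098} and Corollary \ref{097}''), and both your forward direction and your treatment of item (\ref{038}) are fine. But your account of the converse direction, which is the only substantive content here, contains a genuine error. You claim that each conjugation relation of Corollary \ref{097}(1)--(2) is recovered from the chain in item (\ref{032}) ``by comparing consecutive terms'', with the example that $\mu_{i,j+1}=x_j\mu_{i,j}x_j^{-1}$ falls out of the $a_{i,j}$ expression via $a_{i,j+1}=a_{i,j}x_{j-1}$. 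That derivation fails: the recursion $a_{i,j+1}=a_{i,j}x_{j-1}$ appends $x_{j-1}$ on the \emph{right}, so the $a$-family only yields $\mu_{i,j+1}=a_{i,j}x_{j-1}\eta_{j}x_{j-1}^{-1}a_{i,j}^{-1}$, whereas the target is $x_ja_{i,j}\eta_{j-1}a_{i,j}^{-1}x_j^{-1}$; to pass from one to the other you would need to move $x_j$ across $a_{i,j}$ and trade $\eta_j$ for $\eta_{j-1}$, which requires relations of type (A1)/(A2) among the $x_k$ --- and at the point of Corollary \ref{099} these are \emph{not yet assumed} (the paper imposes them only afterwards, just before Proposition \ref{104}; here only invertibility is available). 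Likewise, comparing consecutive terms of the chain in item (\ref{032}) only produces relations among the four expressions for a \emph{fixed} pair $(i,j)$; it never links $\mu_{i,j}$ to $\mu_{i,j+1}$ or $\mu_{i+1,j}$, which is what Corollary \ref{097}(1)--(2) asserts.

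The correct mechanism --- and the reason the corollary must retain all four conjugated expressions --- is that each of the four conjugation relations in Corollary \ref{097}(1)--(2) is recovered from the \emph{matching} family, applied at adjacent index pairs, using the word-level recursions $a_{i,j}=x_ia_{i+1,j}$, $\oa_{i,j}=\oa_{i+1,j}x_i$, $b_{i,j+1}=x_jb_{i,j}$, $\ob_{i,j+1}=\ob_{i,j}x_j$. Concretely, the $b$-family gives $\mu_{i,j+1}=b_{i,j+1}\eta_ib_{i,j+1}^{-1}=x_j(b_{i,j}\eta_ib_{i,j}^{-1})x_j^{-1}=x_j\mu_{i,j}x_j^{-1}$; the $\ob$-family gives $\mu_{i,j+1}=x_j^{-1}\mu_{i,j}x_j$; the $a$-family gives $\mu_{i,j}=x_i\mu_{i+1,j}x_i^{-1}$; and the $\oa$-family gives $\mu_{i,j}=x_i^{-1}\mu_{i+1,j}x_i$. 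With this pairing, plus substituting item (\ref{032}) into item (\ref{038}) to recover Corollary \ref{097}(3) (as you did), the equivalence closes using nothing beyond invertibility of the $x_k$, which is what the statement requires.
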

\begin{proof}
It is a direct consequence of Lemma \ref{098} and Corollary \ref{097}.
\end{proof}

\begin{rem}\label{103} 
Lemma \ref{098}(\ref{033}) implies that each appearance of $\mu_{i,j}$ can be replaced by $a_{i,j}^{-1}\eta_{j-1}a_{i,j}$ in all relations of $TM$. In particular in relations (P1) to (P3). Hence, as in Corollary \ref{099}(\ref{032}), each $\mu_{i,j}$ appears at most once, so, Proposition \ref{012} implies that the generators $\mu_{i,j}$, with $i<j+1$, can be removed from the presentation of $TM$. Note that relations (P1) to (P3) were not removed, they appear with generators $\mu_{i,j}$ replaced by $a_{i,j}^{-1}\eta_{j-1}a_{i,j}$.  Moreover, every $\mu_{i,j}$ can be reached by $\eta_1$ via conjugation, indeed we have $\mu_{i,j}=\oa_{1,i+1}b_{1,k}\eta_1b_{1,k}^{-1}\oa_{1,i+1}^{-1}$ for all $i,j\in[n]$ with $i<j$.
\end{rem}

In what follows, we assume that the elements $x_1,\ldots,x_{n-1}$ satisfy relations (A1) and (A2) in $M$.

\begin{pro}\label{104}
Relations obtained by $S$ are equivalent to the following.
\begin{enumerate}
\item $x_ix_j\eta_i=\eta_jx_ix_j$ and $x_ix_j^{-1}\eta_i=\eta_jx_ix_j^{-1}$ for all $i,j\in[n-1]$ with $|i-j|=1$,\label{148}
\item $x_i\eta_j=\eta_jx_i$ for all $i,j\in[n-1]$ with $|i-j|\neq1$.\label{149}
\end{enumerate}
\end{pro}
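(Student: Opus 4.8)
The plan is to show that Proposition~\ref{104} is a restatement of the relations coming from the set $S$ described in Corollary~\ref{099}, under the standing hypotheses that the $x_i$ are invertible and satisfy the Artin braid relations (A1) and (A2). By Remark~\ref{103}, the generators $\mu_{i,j}$ with $j>i+1$ have been eliminated, so the only data carried by $S$ is how the $\eta_i=\mu_{i,i+1}$ interact with the $x_k$, plus the residue of relations (P1)--(P3) rewritten in terms of the $\eta_i$ and the $x_k$. Thus I would argue that the full family of relations in Corollary~\ref{099} collapses, after using (A1), (A2) and invertibility, to exactly the two families in the statement.

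First I would treat part (\ref{149}), the commuting relations. These come directly from Corollary~\ref{099}(\ref{038}) in the case $i,j\notin\{k,k+1\}$: when $|i-j|\ge 2$ so that the strands involved in $\eta_j=\mu_{j,j+1}$ are disjoint from those moved by $x_i$, Lemma~\ref{101}(5) already gives $\eta_jx_i=x_i\eta_j$ after specializing $\mu_{i,j}$ to $\eta_j$. The content is to check that $|i-j|\neq1$ is precisely the index condition guaranteeing $\{j,j+1\}\cap\{i,i+1\}=\emptyset$, and that the excluded diagonal case $(i,j)=(k,k+1)$ contributes nothing new (it gives $\eta_k x_k=x_k\eta_k$, which is subsumed). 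So this half is essentially bookkeeping on indices.

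The substantive half is part (\ref{148}), and this is where I expect the main obstacle. Here I would start from the conjugation formulas of Lemma~\ref{098}, namely $\eta_j=\mu_{j,j+1}$ and, for the adjacent index, the identities expressing $\mu_{i,j}$ as $a$- and $b$-conjugates of $\eta$. The strategy is to take the relation in Corollary~\ref{097}(1)--(2) that moves an index by one, i.e. $\mu_{i,j+1}=x_j\mu_{i,j}x_j^{-1}=x_j^{-1}\mu_{i,j}x_j$, specialize it so that both sides become expressions in neighbouring $\eta$'s, and then use (A1) $x_ix_jx_i=x_jx_ix_j$ to rewrite the conjugating words $a_{i,j},b_{i,j}$ and to transport $\eta_i$ past a product $x_ix_j$. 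Concretely, taking $j=i+1$ one should get $\eta_i=x_i^{-1}\eta_{?}x_i$ type identities that, after multiplying through and clearing the conjugation, yield $x_ix_{i+1}\eta_i=\eta_{i+1}x_ix_{i+1}$ and its $x_{i+1}^{-1}$-variant. The delicate point is producing \emph{both} the $x_ix_j$ and the $x_ix_j^{-1}$ versions and verifying they are genuinely independent consequences (or that one follows from the other together with (A1)), and making sure the braid relation is invoked in the correct direction so no extraneous hypothesis is smuggled in.

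Finally I would package the two directions into an equivalence as the statement demands: relations from $S$ imply (\ref{148})--(\ref{149}) by the rewriting above, and conversely (\ref{148})--(\ref{149}) together with (A1), (A2) and the definitions in \eqref{029} reconstruct every relation of Corollary~\ref{099}, since each $\mu_{i,j}$ is by Lemma~\ref{098} a fixed $x$-conjugate of some $\eta$ and its interaction with any $x_k$ is forced by moving the conjugating word through $x_k$ using only commutation and the braid relation. The cleanest way to organize this is to reduce everything to the single relation $\eta_1$ satisfies, as in Remark~\ref{103}, and then spread it to all $\eta_i$ by conjugation; the only thing to watch is that the admissible index ranges ($i+1<j$ versus $i<j$) in Corollary~\ref{097} are respected so that no relation is asserted outside its domain of validity.
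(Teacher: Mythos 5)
Your proposal is correct and follows essentially the same route as the paper's proof: relations (1) and (2) are extracted by specializing the conjugation identities of Corollary \ref{099} to adjacent indices (the $x_ix_j$ and $x_ix_j^{-1}$ variants arising from the two conjugation formulas, exactly as in the paper), and the converse is established by pushing $\eta$'s through the conjugating words $a_{i,j},\oa_{i,j},b_{i,j},\ob_{i,j}$ using only (A1), (A2) and invertibility, which is precisely the paper's telescoping computation $\eta_{j-1}(x_{j-2}x_{j-1})\cdots(x_ix_{i+1})=\oa_{i,j}b_{i,j}\eta_i$ together with its case analysis on $k$. The only difference is organizational (you suggest reducing to the single generator $\eta_1$ via Remark \ref{103}, where the paper rewrites each relation directly), which does not change the substance of the argument.
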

\begin{proof}
By Corollary \ref{099}(\ref{032}) we obtain, for $i,j\in[n]$ with $i<j$, the following relations:\[\eta_{j-1}\oa_{i,j}b_{i,j}=\oa_{i,j}b_{i,j}\eta_i,\quad\eta_{j-1}a_{i,j}^{-1}b_{i,j}=a_{i,j}^{-1}b_{i,j}\eta_i,\quad\eta_i\ob_{i,j}a_{i,j}=\ob_{i,j}a_{i,j}\eta_{j-1};\]note that the second one is equivalent to $b_{i,j}^{-1}a_{i,j}\eta_{j-1}=\eta_ib_{i,j}^{-1}a_{i,j}$. In particular, if $j=i+1$ the relations are trivial, and if $j=i+2$ we obtain:\[\eta_{i+1}x_ix_{i+1}=x_ix_{i+1}\eta_i,\quad\eta_{i+1}x_i^{-1}x_{i+1}=x_i^{-1}x_{i+1}\eta_i,\quad\eta_ix_{i+1}x_i=x_{i+1}x_i\eta_{i+1}\]which is just (1). Similarly, if $j=i+1$ in Corollary \ref{099}(\ref{038}) we obtain $\eta_ix_k=x_k\eta_i$ with $i=k$ or $i\not\in\{k,k+1\}$, which is just (2). We will prove that the remaining relations in Corollary \ref{099} are all obtained from (1) and (2). By using (1), we have:\[\begin{array}{c}
\eta_{j-1}\oa_{i,j}b_{i,j}=\eta_{j-1}(x_{j-2}x_{j-1})\cdots(x_ix_{i+1})=\oa_{i,j}b_{i,j}\eta_i,\\[1.5mm]
\eta_{j-1}a_{i,j}^{-1}b_{i,j}=\eta_{j-1}(x_{j-2}^{-1}x_{j-1})\cdots(x_i^{-1}x_{i+1})=\oa_{i,j}b_{i,j}\eta_i,\\[1.5mm]
\eta_i\ob_{i,j}a_{i,j}=\eta_i(x_{i+1}x_i)\cdots(x_{j-1}x_{j-2})=\ob_{i,j}a_{i,j}\eta_{j-1}.
\end{array}\]Now, we use (2) to obtain relations in Corollary \ref{099}(\ref{038}). If $k<i-1$ or $k>j$ we are done because $a_{i,j}x_k=x_ka_{i,j}$. If $i<k<j-1$ we obtain\[(a_{i,j}\eta_{j-1}a_{i,j}^{-1})x_k=a_{i,j}\eta_{j-1}x_{k-1}a_{i,j}^{-1}=a_{i,j}x_{k-1}\eta_{j-1}a_{i,j}^{-1}=x_k(a_{i,j}\eta_{j-1}a_{i,j}^{-1})\]This proves that we only need (1) and (2) to capture all relations of $S$.
\end{proof}

Let $y_1,\ldots,y_{n-1}\in X$ with $y_i\neq x_j$ for all $i\neq j$ such that $\rho_{y_i}=\pi_{s_i}$ for all $i\in[n-1]$. Denote by $T$ the set of pairs $(\mu_{i,j}y_k,y_k\rho_{y_k}(\mu_{i,j}))$ for $i,j\in[n]$ with $i<j$ and $k\in[n-1]$.

\begin{lem}\label{018}
The set $T$ corresponds to the following relations:
\begin{enumerate}
\item $\eta_i\ob_{i,j}y_ib_{i+1,j}=\ob_{i,j}y_ib_{i+1,j}\eta_{i+1}$ if $i+1<j$,\label{039}
\item $\eta_j\oa_{i,j+1}y_ja_{i,j}=\oa_{i,j+1}y_ja_{i,j}\eta_{j-1}$ if $i<j$,\label{040}
\item $\eta_{i+1}\ob_{i+1,j}y_ib_{i,j}=\ob_{i+1,j}y_ib_{i,j}\eta_i$ if $i+1<j$,\label{041}
\item $\eta_{j-1}\oa_{i,j}y_ja_{i,j+1}=\oa_{i,j}y_ja_{i,j+1}\eta_j$ if $i<j$,\label{042}
\item $\eta_{j-1}\oa_{i,j}y_ka_{i,j}=\oa_{i,j}y_ka_{i,j}\eta_{j-1}$ if $(i,j)=(k,k+1)$ or $i,j\not\in\{k,k+1\}$.\label{043}
\end{enumerate}
\end{lem}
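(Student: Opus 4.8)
The plan is to mirror exactly the proof of Lemma \ref{101}, since the set $T$ is defined in the same way as $S$ but with the generators $x_k$ replaced by the alternative generators $y_k$ satisfying the same relation $\rho_{y_k}=\pi_{s_k}$. First I would observe that, because $\rho_{y_k}=\pi_{s_k}$ for all $k\in[n-1]$, we have $\rho_{y_k}(\mu_{i,j})=s_k(\mu_{i,j})$, so the defining pairs of $T$ are precisely $(\mu_{i,j}y_k,\,y_ks_k(\mu_{i,j}))$, and the five cases of $s_k(\mu_{i,j})$ recorded in \eqref{100} split the relations into exactly the same five families as in Lemma \ref{101}, now written with $y_k$ in place of $x_k$. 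Thus at the level of the $\mu_{i,j}$'s one obtains $\mu_{i,j}y_i=y_i\mu_{i+1,j}$ when $i+1<j$, $\mu_{i,j+1}y_j=y_j\mu_{i,j}$ when $i<j$, and so on.

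The only real work is translating these $\mu$-relations into the stated relations in the generators $\eta_i=\mu_{i,i+1}$. Here I would invoke Lemma \ref{098}, which expresses each $\mu_{i,j}$ as a conjugate of a single $\eta$ by the words $a_{i,j},b_{i,j},\oa_{i,j},\ob_{i,j}$ of \eqref{029}. For item \eqref{039}, starting from $\mu_{i,j}y_i=y_i\mu_{i+1,j}$ (valid when $i+1<j$), I substitute $\mu_{i,j}=\ob_{i,j}^{-1}\eta_i\ob_{i,j}$ on the left and $\mu_{i+1,j}=\ob_{i+1,j}^{-1}\eta_{i+1}\ob_{i+1,j}$ on the right, then rearrange; the bookkeeping identity $\ob_{i,j}y_i=\,$(something)$\,b_{i+1,j}^{-1}\ob_{i+1,j}$ coming from the explicit forms $\ob_{i,j}=x_{i+1}\cdots x_{j-1}$ and $\ob_{i+1,j}=x_{i+2}\cdots x_{j-1}$ is what produces the factor $\ob_{i,j}y_ib_{i+1,j}$ conjugating $\eta_i$ to $\eta_{i+1}$. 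Items \eqref{040}, \eqref{041}, \eqref{042} are obtained identically from the respective $\mu$-relations of cases (2), (3), (4), using the $a_{i,j}/\oa_{i,j}$ expressions for the cases where the right endpoint moves and the $b_{i,j}/\ob_{i,j}$ expressions where the left endpoint moves. Item \eqref{043} follows directly from case (5): substituting $\mu_{i,j}=\oa_{i,j}^{-1}\eta_{j-1}\oa_{i,j}$ into $\mu_{i,j}y_k=y_k\mu_{i,j}$ and conjugating yields $\eta_{j-1}\oa_{i,j}y_ka_{i,j}=\oa_{i,j}y_ka_{i,j}\eta_{j-1}$.

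The step I expect to require the most care is verifying, in cases \eqref{039}--\eqref{042}, that after substituting the Lemma \ref{098} expressions and cancelling the common prefixes/suffixes, the surviving conjugating word is exactly the one asserted (e.g.\ $\ob_{i,j}y_ib_{i+1,j}$ rather than some other rearrangement). This is purely a matter of checking the telescoping of the index ranges in the definitions \eqref{029}, together with the fact that $y_k$ commutes appropriately with the $x_\ell$ occurring in these words when $|k-\ell|\ge 2$; I would carry it out for one case in full and note that the remaining three are symmetric under the obvious left/right and $\pm$ dualities. No new relations beyond those captured by Lemma \ref{098} are needed, so the proof reduces, as stated, to ``applying \eqref{100}'' together with the conjugation formulas.
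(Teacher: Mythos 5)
Your proposal is correct and takes essentially the same route as the paper, whose entire proof is the observation that the lemma follows from Lemma \ref{101} (whose argument applies verbatim with $y_k$ in place of $x_k$, since only $\rho_{y_k}=\pi_{s_k}$ is used), Lemma \ref{098}, and Remark \ref{103}. One detail worth fixing in your item (1): substitute $\mu_{i+1,j}=b_{i+1,j}\eta_{i+1}b_{i+1,j}^{-1}$ (the other expression offered by Lemma \ref{098}) rather than $\ob_{i+1,j}^{-1}\eta_{i+1}\ob_{i+1,j}$; then left-multiplying by $\ob_{i,j}$ and right-multiplying by $b_{i+1,j}$ yields the stated relation with no extra bookkeeping identity, and the same pairing of the two conjugation forms handles items (2)--(4).
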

\begin{proof}
It is a consequence of Lemma \ref{101}, Lemma \ref{098} and Remark \ref{103}.
\end{proof}

\begin{pro}\label{019}
Suppose that the following relations hold in $TM$:
\begin{itemize}
\item$y_ix_jx_i=x_jx_iy_j$ for all $i,j\in[n-1]$ with $|i-j|=1$,
\item$y_jx_i=x_iy_j$ for all $i,j\in[n-1]$ with $|i-j|\neq1$.
\end{itemize}
Then the elements of $T$ are equivalent to the following relations:
\begin{enumerate}
\item$\eta_ix_jy_i=x_jy_i\eta_j$ for all $i,j\in[n-1]$ with $|i-j|=1$,\label{044}
\item$\eta_iy_jx_i=y_jx_i\eta_j$ for all $i,j\in[n-1]$ with $|i-j|=1$,\label{045}
\item$\eta_iy_j=y_j\eta_i$ for all $i,j\in[n-1]$ with $|i-j|\neq1$.\label{046}
\end{enumerate}
\end{pro}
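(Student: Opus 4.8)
The plan is to parallel the proof of Proposition \ref{104}, since the set $T$ here plays the same role for the generators $y_k$ that $S$ played for the $x_k$ in the earlier proposition. By Lemma \ref{018}, the elements of $T$ already correspond to the five families of relations (\ref{039})--(\ref{043}); my task is to show that, under the two hypothesized braid-compatibility relations between the $y_i$ and $x_j$, all five families are consequences of the three target relations (\ref{044})--(\ref{046}), and conversely. First I would extract the base cases: specializing $j=i+2$ in (\ref{039})--(\ref{042}) (where the words $\oa,\ob,a,b$ collapse to single letters $x_{i+1}$ or $x_i$) should produce exactly relations of the form (\ref{044}) and (\ref{045}), while specializing $j=i+1$ in (\ref{043}) yields (\ref{046}). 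This shows the target relations are necessary.

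For the converse direction — deriving the general (\ref{039})--(\ref{043}) from (\ref{044})--(\ref{046}) — I would argue inductively on $j-i$, exactly as the displayed telescoping computation in Proposition \ref{104} does. The idea is that each word $\ob_{i,j}y_ib_{i+1,j}$ factors as a product of the two-letter blocks governed by (\ref{044})/(\ref{045}), so that $\eta_i$ can be commuted past them one block at a time, each step turning $\eta_\ell$ into $\eta_{\ell+1}$ via the $|i-j|=1$ relations, until it emerges as the appropriate $\eta_{i+1}$ or $\eta_{j-1}$ on the other side. The relations among the $\eta$'s and the $x$'s established in Proposition \ref{104}(\ref{148}) are available here, so I may freely move $\eta$ past the braid letters $x_k$ with $|i-k|\ge 2$, which handles the portions of the conjugating words not involving the single letter $y_k$.

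The two hypothesized relations, $y_ix_jx_i=x_jx_iy_j$ and $y_jx_i=x_iy_j$ for $|i-j|\ne 1$, are precisely what is needed to rewrite the mixed words $\oa_{i,j+1}y_ja_{i,j}$ and their analogues into a normal form compatible with the telescoping; concretely they let me pull the single $y$-letter to a canonical position and relate $a_{i,j+1}$ to $a_{i,j}$ up to the braid moves, so that relations (\ref{041}) and (\ref{042}) (the ``reversed'' conjugations) reduce to (\ref{039}) and (\ref{040}) after applying the $y$-$x$ compatibility. Relation (\ref{043}), the far-commutation case, follows from (\ref{046}) together with Proposition \ref{104}(\ref{149}) by the same conjugation argument used at the end of the proof of Proposition \ref{104}.

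The main obstacle I expect is bookkeeping in the mixed words: unlike Proposition \ref{104}, where the conjugating words were built purely from the $x_k$, here a single $y_k$ sits inside a product of $x$'s, and commuting $\eta$ through it requires invoking the hypothesized $y$-$x$ relations at just the right index. Keeping track of how the index of the emerging $\eta$ shifts as it passes each block, and verifying that the $y$-letter lands in the position that makes (\ref{044}) versus (\ref{045}) applicable, is the delicate part; the rest is the routine telescoping already modeled in Proposition \ref{104}.
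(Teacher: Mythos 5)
Your proposal is correct and is essentially the paper's own argument: the paper likewise obtains (\ref{044})--(\ref{046}) by specializing Lemma \ref{018} to the shortest words, then recovers Lemma \ref{018}(\ref{039})--(\ref{042}) by sliding the lone $y$-letter next to $\eta$ via the hypothesized $y$--$x$ relations, applying (\ref{044}) or (\ref{045}) once, and commuting $\eta$ through the remaining pure-$x$ word using the relations already derived from $S$ in Proposition \ref{104}, and it deduces Lemma \ref{018}(\ref{043}) from (\ref{046}) exactly as you describe. Two small corrections to your narrative: the collapsing specialization for the families Lemma \ref{018}(\ref{040}) and (\ref{042}) is $j=i+1$, not $j=i+2$, and the reversed families Lemma \ref{018}(\ref{041}) and (\ref{042}) do not reduce to (\ref{039}) and (\ref{040}) (the $y_k$ need not be invertible, so such a reduction is unavailable), but are instead handled by the symmetric computation with (\ref{045}) in place of (\ref{044}).
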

\begin{proof}
We obtain (1) and (2) by taking $|i-j|=1$ in Lemma \ref{018}(\ref{039}--\ref{042}), and (3) by taking $j=i+1$ in Lemma \ref{018}(\ref{043}). We will show that relations of Lemma \ref{018}(\ref{039}--\ref{042}) are consequences of (1) and (2). For $i+2<j$ we have:\[\begin{array}{c}
\eta_i\ob_{i,j}y_ib_{i+1,j}=\eta_ix_{i+1}y_i\ob_{i+1,j}b_{i+1,j}=x_{i+1}y_i\eta_{i+1}\ob_{i+1,j}b_{i+1,j}=\ob_{i,j}y_ib_{i+1,j}\eta_{i+1},\\[1.5mm]
\ob_{i+1,j}y_ib_{i,j}\eta_i=\ob_{i+1,j}b_{i+1,j}y_ix_{i+1}\eta_i=\ob_{i+1,j}b_{i+1,j}\eta_{i+1}y_ix_{i+1}=\eta_{i+1}\ob_{i+1,j}y_ib_{i,j}.
\end{array}\]Similarly, for $i+1<j$, we have the following:\[\begin{array}{c}
\eta_j\oa_{i,j+1}y_ja_{i,j}=\eta_jx_{j-1}y_j\oa_{i,j}a_{i,j}=x_{j-1}y_j\eta_{j-1}\oa_{i,j}a_{i,j}=\oa_{i,j+1}y_ja_{i,j}y_j\eta_{j-1},\\[1.5mm]
\oa_{i,j}y_ja_{i,j+1}\eta_j=\oa_{i,j}a_{i,j}y_jx_{j-1}\eta_j=\oa_{i,j}a_{i,j}\eta_{j-1}y_jx_{j-1}=\eta_{j-1}\oa_{i,j}y_ja_{i,j+1}.\end{array}\]Now, we show that Lemma \ref{018}(\ref{043}) is a consequence of (3). If $k<i-1$ or $k>j$ we are done because $x_k\oa_{i,j}=\oa_{i,j}x_k$ and $x_ka_{i,j}=a_{i,j}x_k$. If $i<k<j-1$ we obtain\[\eta_{j-1}\oa_{i,j}y_ka_{i,j}=\eta_{j-1}\oa_{i,j}a_{i,j}y_{k-1}=\oa_{i,j}a_{i,j}\eta_{j-1}y_{k-1}=\oa_{i,j}a_{i,j}y_{k-1}\eta_{j-1}=\oa_{i,j}y_ka_{i,j}\eta_{j-1}.\]This proves that we only need (1) to (3) to represent all relations of $T$.
\end{proof}

\begin{rem}\label{020}
Let $z_1,\ldots,z_{n-1}\in X$ such that, either $z_i=x_i$ for all $i\in[n-1]$ or $z_i=y_i$ for all $i\in[n-1]$, and let $i<j<k$ be consecutive numbers in $[n]$. Because of the presence of (P3) in the presentation of $TM$, we obtain the following relations:\begin{equation}\label{126}\mu_{i,j}\mu_{j,k}z_i=\mu_{j,k}\mu_{i,k}z_i=\mu_{i,j}\mu_{i,k}z_i,\qquad \mu_{i,j}\mu_{j,k}z_j=\mu_{i,j}\mu_{i,k}z_j=\mu_{j,k}\mu_{i,k}z_j.\end{equation}By applying Lemma \ref{101}, once on the second term and twice on the last one, we have:\[\mu_{i,j}\mu_{j,k}z_i=\mu_{j,k}z_i\mu_{j,k}=z_i\mu_{i,j}\mu_{j,k},\qquad\mu_{i,j}\mu_{j,k}z_j=\mu_{i,j}z_j\mu_{i,j}=z_j\mu_{j,k}\mu_{i,j}.\]Since $i,j,k$ are consecutive, in $TM$ we obtain\begin{equation}
\label{150}\eta_i\eta_jz_i=\eta_jz_i\eta_j=z_i\eta_i\eta_j,\quad|i-j|=1.\end{equation}Proposition \ref{104}(2) implies that each relation of (P3) in $TM$ can be obtained from these relations. More specifically, for $i,j,k\in[n]$ with $i<j<k$, Corollary \ref{097} implies\[\mu_{i,j}\mu_{i,k}=g\mu_{1,2}\mu_{1,3}g^{-1},\quad\mu_{i,j}\mu_{i,k}=g\mu_{1,2}\mu_{2,3}g^{-1},\quad\mu_{i,j}\mu_{i,k}=g\mu_{1,3}\mu_{2,3}g^{-1}.\]where $g=\oa_{1,i+1}b_{1,j}b_{2,k}$. 
In this way, we obtain that (P3) can be completely deduced from \ref{150} whenever $z_i$ is invertible with $i\in[n-1]$.  Similarly we can prove that commutativity of $\mu$'s is derived, in $TM$, from the commutativity of $\eta$'s.
\end{rem}

\begin{rem}\label{022}
Under the hypothesis of Proposition \ref{019}, we assume that $x_iy_i=y_ix_i$ for all $i\in[n-1]$. Let $i<j<k$ be two consecutive numbers in $[n]$, then, we obtain the relations $x_iy_i\mu_{i,k}=y_ix_i\mu_{i,k}$ and $x_jy_j\mu_{i,k}=y_jx_j\mu_{i,k}$. By applying Lemma \ref{101}, once on each term, we have $x_i\mu_{j,k}y_i=y_i\mu_{j,k}x_i$ and $x_j\mu_{i,j}y_j=y_j\mu_{i,j}x_j$. Since $i,j,k$ are consecutive, we obtain $x_i\eta_jy_i=y_i\eta_jx_i$ for all $i,j\in[n-1]$ with $|i-j|=1$. This relation is equivalent to $y_i\eta_j=x_i\eta_jy_ix_i^{-1}=x_i\eta_jx_i^{-1}y_i$ with $|i-j|=1$. Furthermore, by using Proposition \ref{019}(\ref{045}), we obtain the relation $\eta_iy_jy_i=\eta_iy_jx_ix_i^{-1}y_i=y_jx_i\eta_jx_i^{-1}y_i=y_jy_i\eta_j$ for all $i,j\in[n-1]$ with $|i-j|=1$.
\end{rem}

\subsection{Examples of $TM$}

\subsubsection{Tied braid and tied singular braid monoids}

The \emph{braid group} \cite{Ar25,Ar47}, denoted by $\BB_n$, is presented with generators $\sigma_1,\ldots,\sigma_{n-1}$ subject to the following relations:\begin{enumerate}
\item[](B1)\quad$\sigma_i\sigma_j\sigma_i=\sigma_j\sigma_i\sigma_j$ for all $i,j\in [n-1]$ with $|i-j|=1$,
\item[](B2)\quad$\sigma_i\sigma_j=\sigma_j\sigma_i$ for all $i,j\in [n-1]$ with $|i-j|\geq2$.\end{enumerate}

Diagrammatically, (B1) and (B2) can be represented, respectively, as follows:\[\figuretenthr\]

There is a natural epimorphism $\pi:\BB_n\to\SS_n$ defined by $\pi_{\sigma_i}=s_i$ for all $i\in[n-1]$. Hence, we may construct a presentation, via the Lavers' method (Theorem \ref{013}), of the tied monoid $TM$ with $M=\BB_n$.

\begin{pro}\label{014}
By applying the Lavers' method the tied monoid $TM$, with $M=\BB_n$ can be presented with generators $\sigma_1,\ldots,\sigma_{n-1}$ and $\sigma_1^{-1},\ldots,\sigma_{n-1}^{-1}$ satisfying {\normalfont(B0)} to {\normalfont(B2)}, and the additional generators $\eta_1,\ldots,\eta_{n-1}$ subject to the relations {\normalfont(T1)} to {\normalfont(T5)}, where:\begin{enumerate}
\item[]{\normalfont(B0)}\quad$\sigma_i\sigma_i^{-1}=\sigma_i^{-1}\sigma_i=1$ for all $i\in [n-1]$,
\item[]{\normalfont(T1)}\quad$\eta_i^2=\eta_i$ for all $i\in [n-1]$,
\item[]{\normalfont(T2)}\quad$\eta_i\eta_j=\eta_j\eta_i$ for all $i,j\in [n-1]$,
\item[]{\normalfont(T3)}\quad$\eta_i\sigma_j\sigma_i=\sigma_j\sigma_i\eta_j$ for all $i,j\in[n-1]$ with $|i-j|=1$,
\item[]{\normalfont(T4)}\quad$\sigma_i\eta_j=\eta_j\sigma_i$ for all $i,j\in [n-1]$ with $|i-j|\neq1$,
\item[]{\normalfont(T5)}\quad$\eta_i\eta_j\sigma_i=\eta_j\sigma_i\eta_j=\sigma_i\eta_i\eta_j$ for all $i,j\in [n-1]$ with $|i-j|=1$,
\item[]{\normalfont(T6)}\quad$\eta_i\sigma_j\sigma_i^{-1}=\sigma_j\sigma_i^{-1}\eta_j$ for all $i,j\in[n-1]$ with $|i-j|=1$.
\end{enumerate}
\end{pro}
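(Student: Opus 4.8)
The plan is to apply Lavers' theorem (Theorem~\ref{013}) directly. We have a presentation $\langle A\mid R\rangle$ of $M=\BB_n$, where $A=\{\sigma_1,\ldots,\sigma_{n-1},\sigma_1^{-1},\ldots,\sigma_{n-1}^{-1}\}$ and $R$ consists of the relations (B0), (B1), (B2); and we have the FitzGerald presentation $\langle B\mid S\rangle$ of $P_n$ from Theorem~\ref{001}, with $B=\{\mu_{i,j}\mid i<j\}$ and $S$ consisting of (P1), (P2), (P3). Lavers' theorem then presents $TM=P_n\rtimes_\rho\BB_n$ on generators $A\cup B$ with relations $R\cup S$ together with the mixed relations $(\mu_{i,j}\sigma_k,\sigma_k\rho_{\sigma_k}(\mu_{i,j}))$ for all generators. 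Since $\rho=\pp\circ\pi$ and $\pi_{\sigma_k}=s_k$, the action $\rho_{\sigma_k}(\mu_{i,j})=s_k(\mu_{i,j})$ is computed by Equation~\ref{100}, and Lemma~\ref{101} records exactly what these mixed relations say. The whole content of the proof is therefore to rewrite this raw Lavers presentation into the stated one (T1)--(T6) by a sequence of Tietze transformations (justified by Proposition~\ref{012}).

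First I would introduce the generators $\eta_i:=\mu_{i,i+1}$ via Tietze transformations of type \tf{3}, as prepared in the text preceding the statement; relations (T1) and (T2) are then just the instances of (P1) and (P2) for these consecutive generators. Next, since each $\sigma_i$ is invertible, Corollary~\ref{097} and Lemma~\ref{098} express every $\mu_{i,j}$ as a conjugate of some $\eta_{j-1}$ (or $\eta_i$) by the words $a_{i,j},b_{i,j},\oa_{i,j},\ob_{i,j}$ built from the $\sigma$'s. As explained in Remark~\ref{103}, this lets me eliminate all the non-consecutive generators $\mu_{i,j}$ by Tietze transformations of type \tf{4}, replacing each occurrence by the corresponding conjugate of an $\eta$. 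After this elimination the surviving generators are exactly the $\sigma_i^{\pm1}$ and the $\eta_i$.

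The core reduction is then to show that, modulo the braid relations (B1)--(B2), the remaining relations collapse to the short list (T3)--(T6). The mixed relations coming from $S$ reduce, by Proposition~\ref{104}, to $x_ix_j\eta_i=\eta_jx_ix_j$ and $x_ix_j^{-1}\eta_i=\eta_jx_ix_j^{-1}$ for $|i-j|=1$ together with $x_i\eta_j=\eta_jx_i$ for $|i-j|\neq1$; with $x_i=\sigma_i$ these are precisely (T3), (T6), and (T4). The transformed copies of relation (P3) reduce, by Remark~\ref{020}, to $\eta_i\eta_j\sigma_i=\eta_j\sigma_i\eta_j=\sigma_i\eta_i\eta_j$ for $|i-j|=1$, which is (T5); and the transformed copies of (P1), (P2) give nothing beyond (T1), (T2) once restricted to consecutive indices, again by Remark~\ref{020} (commutativity of the $\eta$'s propagates to commutativity of all $\mu$'s). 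I would invoke Proposition~\ref{012} to certify that the chain of Tietze moves preserves the presented monoid.

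The main obstacle I expect is the bookkeeping in the elimination step: one must check that after substituting $\mu_{i,j}\mapsto a_{i,j}^{-1}\eta_{j-1}a_{i,j}$ every original relation either becomes a consequence of (T1)--(T6) and the braid relations, or is literally one of them, with no hidden extra relation surviving. Concretely this means verifying that the conjugated forms of (P3) produced in Remark~\ref{020} really are deducible from the single family~\ref{150}, which in turn leans on Proposition~\ref{104}(2) to commute the $\sigma$'s past the relevant $\eta$'s; this is the step where the braid relations (B1)--(B2) are genuinely used and where a miscount of indices would break the argument. Since all of these supporting reductions are already established in Lemma~\ref{101}, Corollary~\ref{097}, Lemma~\ref{098}, Proposition~\ref{104} and Remark~\ref{020}, the proof of Proposition~\ref{014} itself should be a short assembly: apply Lavers' theorem, then cite these results in sequence to arrive at (B0)--(B2) and (T1)--(T6).
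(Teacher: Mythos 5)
Your proposal is correct and follows essentially the same route as the paper's proof: apply Lavers' theorem, introduce the $\eta_i$ by Tietze transformations and eliminate the non-consecutive $\mu_{i,j}$ via Lemma~\ref{098} and Remark~\ref{103}, obtain (T3), (T4), (T6) from Proposition~\ref{104} and (T5) from Remark~\ref{020}, and finally discard the transformed copies of (P1)--(P3) as consequences of (T1), (T2), (T5). The only cosmetic difference is that the paper additionally cites an external lemma of \cite{PreAiJu18} for that last removal step, where you rely on the conjugation argument of Remark~\ref{020}; the substance is identical.
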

\begin{proof}
According to the Lavers' method (Theorem \ref{013}), the relations (B0) to (B2) are obtained directly from $\BB_n$, and relations (P1) to (P3) from $P_n$. (T1) and (T2) are consequences of (P1) and (P2) in $P_n$ because the generators $\mu_{i,j}$ with $i<j-1$ are removed as in Remark \ref{103}. The remaining relations are obtained from Proposition \ref{104}, that is, relations (T3), (T4) and (T6). Finally, relation (T5) is a consequence of (P3) and can be obtained as in Remark \ref{020}. By \cite[Lemma 2(5--6)]{PreAiJu18} and Remark \ref{020}, relations (P1) to (P3) can be obtained by using (T1), (T2) and (T5) respectively, so they are removed from the presentation of $TM$. Thus, the proof is concluded.
\end{proof}

Diagrammatically, (B0) and (T1) to (T5) can be represented as follows:\[\figuretenfou\]

\begin{rem}
The Lavers' presentation for  the tied monoid $TM$ with $M=\BB_n$ coincide with the defining presentation of the {\it tied braid monoid} defined in \cite{AiJu16}. It is worth to observe that the defining presentation of the tied braid monoid was motived from  purely topological reasons.
\end{rem}
 
The \emph{singular braid monoid} \cite{Ba92,Bi93,Sm88}, denoted by $S\BB_n$, is presented with generators $\sigma_1,\ldots,\sigma_{n-1}$ and $\sigma_1^{-1},\ldots,\sigma_{n-1}^{-1}$ satisfying {\normalfont(B0)} to {\normalfont(B2)}, and $\tau_1,\ldots,\tau_{n-1}$ as additional generators subject to the following relations:\begin{enumerate}
\item[](S1)\quad$\tau_i\tau_j=\tau_j\tau_i$ for all $i,j\in[n-1]$ with $|i-j|\geq2$,
\item[](S2)\quad$\tau_i\sigma_j=\sigma_j\tau_i$ for all $i,j\in[n-1]$ with $|i-j|\neq1$,
\item[](S3)\quad$\tau_i\sigma_j\sigma_i=\sigma_j\sigma_i\tau_j$ for all $i,j\in[n-1]$ with $|i-j|\geq2$.
\end{enumerate}
Diagrammatically, (S1) to (S3) can be represented, respectively, as follows:\[\figuretenfiv\]

There is a natural epimorphism $\pi:S\BB_n\to\SS_n$ defined by $\pi_{\sigma_i}=s_i$ and $\pi_{\tau_i}=s_i$ for all $i\in [n-1]$. Hence, we may construct its tied monoid, that is $TS\BB_n$.

\begin{pro}\label{021}
According to the Lavers' method, the tied monoid $TM$ with $M=TS\BB_n$ can be presented with generators $\sigma_1,\ldots,\sigma_{n-1}$ and $\sigma_1^{-1},\ldots,\sigma_{n-1}^{-1}$ satisfying {\normalfont(B0)} to {\normalfont(B2)}, the generators $\tau_1,\ldots,\tau_{n-1}$ satisfying {\normalfont(S1)} to {\normalfont(S3)}, and the generators $\eta_1,\ldots,\eta_{n-1}$ satisfying {\normalfont(T1)} to {\normalfont(T5)} and subject to the following relations:\begin{enumerate}
\item[]{\normalfont(TS1)}\quad$\tau_i\eta_j=\eta_j\tau_i$ for all $i,j\in [n-1]$ with $|i-j|\neq1$,
\item[]{\normalfont(TS2)}\quad$\eta_i\tau_j\tau_i=\tau_j\tau_i\eta_j$ for all $i,j\in [n-1]$ with $|i-j|=1$,
\item[]{\normalfont(TS3)}\quad$\eta_i\tau_j\sigma_i=\tau_j\sigma_i\eta_j$ for all $i,j\in [n-1]$ with $|i-j|=1$,
\item[]{\normalfont(TS4)}\quad$\eta_i\sigma_j\tau_i=\sigma_j\tau_i\eta_j$ for all $i,j\in [n-1]$ with $|i-j|=1$,
\item[]{\normalfont(TS5)}\quad$\tau_i\eta_j=\sigma_i\eta_j\sigma_i^{-1}\tau_i$ for all $i,j\in [n-1]$ with $|i-j|=1$,
\item[]{\normalfont(TS6)}\quad$\eta_i\eta_j\tau_i=\eta_j\tau_i\eta_j=\tau_i\eta_i\eta_j$ for all $i,j\in [n-1]$ with $|i-j|=1$.
\end{enumerate}
\end{pro}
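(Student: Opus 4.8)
The plan is to unwind the Lavers presentation (Theorem \ref{013}) applied to $N=P_n$ and $M=S\BB_n$, and then to recognise each resulting family of relations through the technical results of Subsection 4.1. Writing $A=\{\sigma_1^{\pm1},\ldots,\sigma_{n-1}^{\pm1},\tau_1,\ldots,\tau_{n-1}\}$ for the generators of $S\BB_n$ and $B=\{\mu_{i,j}\}$ for those of $P_n$, Theorem \ref{013} presents $TM$ with generators $A\cup B$ subject to (B0)--(B2), (S1)--(S3), (P1)--(P3), together with two families of cross relations: the set $S$ of pairs $(\mu_{i,j}\sigma_k,\sigma_k\rho_{\sigma_k}(\mu_{i,j}))$ and the set $T$ of pairs $(\mu_{i,j}\tau_k,\tau_k\rho_{\tau_k}(\mu_{i,j}))$. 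As in the braid case I would first adjoin $\eta_i:=\mu_{i,i+1}$ by a Tietze transformation of type \tf{3}, and then, exploiting the invertibility of the $\sigma_k$, use Remark \ref{103} to eliminate every $\mu_{i,j}$ with $j>i+1$.

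The $S$-family is then handled verbatim as in Proposition \ref{014}: Proposition \ref{104} shows the $\sigma$-cross relations are equivalent to (T3), (T4) and (T6), relations (T1), (T2) survive from (P1), (P2), and (T5) is produced from (P3) exactly as in Remark \ref{020} (with $z=\sigma$). Because the $\sigma_k$ are invertible, Remark \ref{020} also lets (P3) be recovered from (T5), so (P1)--(P3) may be discarded. This yields the announced block (T1)--(T5) of $\eta$-relations.

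For the $T$-family the key point is that $\tau$ plays the role of the abstract generators $y_k$. I would first verify that the hypotheses of Proposition \ref{019} hold with $x_k=\sigma_k$, $y_k=\tau_k$: the relation $\tau_i\sigma_j\sigma_i=\sigma_j\sigma_i\tau_j$ for $|i-j|=1$ is (S3), and $\tau_j\sigma_i=\sigma_i\tau_j$ for $|i-j|\neq1$ is (S2). Lemma \ref{018} identifies $T$ concretely, and Proposition \ref{019} then reduces it to its conclusions (1)--(3), which are precisely (TS4), (TS3) and (TS1). The three remaining relations come from the two remarks: (TS6) is the instance $z=\tau$ of Remark \ref{020}, obtained as a forward derivation from (P3) and the $\tau$-cross relations, while (TS5) and (TS2) are read off from Remark \ref{022} with $x=\sigma$, $y=\tau$, whose extra hypothesis $\sigma_i\tau_i=\tau_i\sigma_i$ is again contained in (S2).

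The main obstacle I expect is completeness of the bookkeeping rather than any single hard computation: one must check that the listed relations capture \emph{all} of the Lavers cross relations $S\cup T$, and that nothing is lost when the generators $\mu_{i,j}$ with $j>i+1$ are removed. The one genuinely delicate point is the non-invertibility of $\tau$: whereas the $\sigma$-machinery both produces (T5) from (P3) and recovers (P3) from (T5), the analogous relation (TS6) can only be derived in the forward direction via Remark \ref{020}. This is harmless here, since (P3) has already been eliminated using the invertible $\sigma_k$, so (TS6) is simply recorded as a relation without being required to eliminate anything in return.
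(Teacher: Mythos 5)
Your proposal is correct and follows essentially the same route as the paper's proof: Lavers' theorem applied to $P_n\rtimes_\rho S\BB_n$, adjoining the $\eta_i$ and eliminating the $\mu_{i,j}$ via Remark \ref{103}, handling the $\sigma$-cross relations by Proposition \ref{104}, the $\tau$-cross relations by Proposition \ref{019} (with (S2), (S3) supplying its hypotheses), and obtaining the redundant relations (TS2), (TS5), (TS6) from Remarks \ref{022} and \ref{020}. Your closing observation about the non-invertibility of $\tau$ — that (TS6) is only a forward consequence and that (P3) must be eliminated through the invertible $\sigma_k$ — accurately reflects the logical structure the paper relies on.
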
 
\begin{proof} 
The relations (B0) to (B2) and (S1) to (S3) are inherited from $S\BB_n$, and relations (P1) to (P3) from $P_n$. Relations (T1) and (T2) are consequences of (P1) and (P2) in $P_n$ because the generators $\mu_{i,j}$ with $i<j-1$ are removed as in Remark \ref{103}. The remaining relations are obtained from Proposition \ref{104} and Proposition \ref{019}, that is, relations (T3), (T4), (TS1), (TS3) and (TS4). Relation (TS2) and (TS5) are redundant and can be obtained as in Remark \ref{022}. Relation (T5) is a consequence of (P3) and can be obtained as in Remark \ref{020}. By \cite[Lemma 2(5--6)]{PreAiJu18} and Remark \ref{020}, relations (P1) to (P3) can be obtained by using (T1), (T2) and (T5) respectively, so they are removed from the Lavers' presentation of $TS\BB_n$. Finally, relation (TS6) is  redundant since is obtained from (T5) as in Remark \ref{020}.
\end{proof}

Diagrammatically, (TS1) to (TS6) can be represented, respectively, as follows:\[\figuretensix\]

\begin{rem}\label{016}
The monoid $TS\BB_n$ was first introduced and constructed in \cite{PreAiJu18} and is called the \emph{tied singular braid monoid} and, as for $T\BB_n$, the original definition of $TS\BB_n$ is through a presentation,   conceived from topological considerations, which coincides that of Proposition \ref{021}.
\end{rem}

\subsubsection{Tied virtual braid and ties virtual singular braid monoids}\label{094}

The \emph{virtual braid group} \cite{Ka99,Kd04,KaLa04}, denoted by $V\BB_n$, is presented with generators $\sigma_1,\ldots,\sigma_{n-1}$ satisfying (B1) and (B2), and generators $\nu_1,\ldots,\nu_{n-1}$ subject to the following relations:\begin{enumerate}
\item[](V1)\quad$\nu_i^2=1$ for all $i\in [n-1]$,
\item[](V2)\quad$\nu_i\nu_j\nu_i=\nu_j\nu_i\nu_j$ for all $i,j\in [n-1]$,
\item[](V3)\quad$\nu_i\nu_j=\nu_j\nu_i$ for all $i,j\in [n-1]$ with $|i-j|\geq2$,
\item[](V4)\quad$\sigma_i\nu_j\nu_i=\nu_j\nu_i\sigma_j$ for all $i,j\in [n-1]$ with $|i-j|=1$,
\item[](V5)\quad$\sigma_i\nu_j=\nu_j\sigma_i$ for all $i,j\in [n-1]$ with $|i-j|\geq2$.
\end{enumerate}

Diagrammatically, (V1) to (V5) can be represented, respectively, as follows:\[\figuretensev\]

There is a natural epimorphism $\pi:V\BB_n\to\SS_n$ defined by $\pi_{\sigma_i}=s_i$ and $\pi_{\nu_i}=s_i$ for all $i\in[n-1]$. Hence, we may construct its tied monoid, that is $TV\BB_n$. We will call $TV\BB_n$ the \emph{tied virtual braid monoid}.

\begin{thm}\label{028}
The monoid $TV\BB_n$ can be presented with generators $\sigma_1,\ldots,\sigma_{n-1}$ and $\sigma_1^{-1},\ldots,\sigma_{n-1}^{-1}$ satisfying {\normalfont(B0)} to {\normalfont(B2)}, the generators $\nu_1,\ldots,\nu_{n-1}$ satisfying {\normalfont(V1)} to {\normalfont(V5)}, and the generators $\eta_1,\ldots,\eta_{n-1}$ satisfying {\normalfont(T1)} to {\normalfont(T5)} and subject to the following relations:\begin{enumerate}
\item[]{\normalfont(TV1)}\quad$\eta_i\nu_j\nu_i=\nu_j\nu_i\eta_j$ for all $i,j\in[n-1]$ with $|i-j|=1$,
\item[]{\normalfont(TV2})\quad$\eta_i\nu_j\sigma_i=\nu_j\sigma_i\eta_j$ for all $i,j\in [n-1]$ with $|i-j|=1$,
\item[]{\normalfont(TV3)}\quad$\eta_i\sigma_j\nu_i=\sigma_j\nu_i\eta_j$ for all $i,j\in [n-1]$ with $|i-j|=1$,
\item[]{\normalfont(TV4)}\quad$\nu_i\eta_j=\sigma_i\eta_j\sigma_i^{-1}\nu_i$ for all $i,j\in [n-1]$ with $|i-j|=1$,
\item[]{\normalfont(TV5)}\quad$\eta_i\nu_j=\nu_j\eta_i$ for all $i,j\in [n-1]$ with $|i-j|\neq1$,
\item[]{\normalfont(TV6)}\quad$\eta_i\eta_j\nu_i=\eta_j\nu_i\eta_j=\nu_i\eta_i\eta_j$ for all $i,j\in [n-1]$ with $|i-j|=1$.
\end{enumerate}
\end{thm}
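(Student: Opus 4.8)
The plan is to derive the presentation by the same route used for $T\BB_n$ and $TS\BB_n$ in Propositions \ref{014} and \ref{021}: feed the virtual-braid presentation of $V\BB_n$ (generators $\sigma_i^{\pm1},\nu_i$ with (B0)--(B2),(V1)--(V5)) and FitzGerald's presentation of $P_n$ (Theorem \ref{001}) into the Lavers machine (Theorem \ref{013}), and then simplify by Tietze transformations (Proposition \ref{012}). Since both $\sigma_i$ and $\nu_i$ are sent to $s_i$ by $\pi$, the Lavers presentation carries \emph{two} families of interaction relations, $(\mu_{i,j}\sigma_k,\sigma_k\rho_{\sigma_k}(\mu_{i,j}))$ and $(\mu_{i,j}\nu_k,\nu_k\rho_{\nu_k}(\mu_{i,j}))$. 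First I would adjoin $\eta_i:=\mu_{i,i+1}$, use Lemma \ref{098} to rewrite every $\mu_{i,j}$ as a $\sigma$-conjugate of an $\eta$, and discard the generators $\mu_{i,j}$ with $j>i+1$ exactly as in Remark \ref{103}.

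The contribution of $P_n$ and of the $\sigma$-interactions is then handled precisely as in Proposition \ref{014}. The relations (P1)--(P3), once the $\mu$'s are rewritten, reduce to (T1), (T2) and (T5) --- (T1),(T2) being immediate and (T5) coming from the triple-point identity via the connectedness argument of Remark \ref{020} together with \cite[Lemma~2]{PreAiJu18}, which simultaneously lets us delete (P1)--(P3). The $\sigma$-interaction family is governed by Proposition \ref{104}, yielding (T3), (T4) and the $\sigma_i^{-1}$-conjugation relation (T6) of the braid case.

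The genuinely new part is the $\nu$-interaction family, and here the argument must diverge from the singular case. Proposition \ref{019} is \emph{not} applicable, because its hypothesis would demand the relation $\nu_i\sigma_j\sigma_i=\sigma_j\sigma_i\nu_j$ ($|i-j|=1$), which is a forbidden move and fails in $V\BB_n$. Instead I would exploit that each $\nu_i$ is an involution ((V1)) satisfying the braid-type relations (V2),(V3): the whole A-preliminaries apparatus applies verbatim with $\nu$ in the role of $x$, and because $\nu_i^{-1}=\nu_i$ the two relations of Proposition \ref{104}(1) collapse to the single relation (TV1), while Proposition \ref{104}(2) gives (TV5). The mixed relations (TV2) and (TV3) are read off from the $j=i+2$ instances of Lemma \ref{018}, where the conjugating words $\oa,\ob$ shrink to a single letter; relation (TV4) then follows by comparing the two expressions for $\eta_i\sigma_j$ furnished by (T3) and (TV3), and (TV6) arises from the (P3)-triple-point identity by Remark \ref{020} applied with $z=\nu$. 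As with (TS5),(TS6) in Proposition \ref{021}, I expect (TV4) and (TV6) to be logically redundant but retained in the statement for symmetry.

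The hard part will be two redundancy checks that have no counterpart in the pure braid monoid. First, since Proposition \ref{019} is unavailable, I must show directly that the long instances of Lemma \ref{018} (those with $j>i+2$) are consequences of (TV1)--(TV3),(TV5) and the virtual relations, presumably by an induction on $j-i$ that mimics the proof of Proposition \ref{019} but substitutes (V4),(V5) for the failed hypothesis. Second, and most delicately, I must prove that the $\sigma_i^{-1}$-interaction relation (T6) becomes \emph{redundant} once the $\nu$-generators are present, so that the $\eta$-relations collapse to (T1)--(T5); concretely this amounts to deriving that $\sigma_i^2$ commutes with $\eta_{i+1}$, which I would attack through the involution identity $\nu_i\sigma_i\eta_{i+1}=\eta_{i+1}\nu_i\sigma_i$ extracted from (TV4) and (V1). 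Should this derivation resist, the cleaner fallback is to bypass Tietze bookkeeping altogether and verify the proposed presentation by Ru\v{s}kuc's criterion (Proposition \ref{009}): check that every listed relation holds in $TV\BB_n=P_n\rtimes_\rho V\BB_n$, and conversely that any two words equal in the semidirect product are connected by the listed relations.
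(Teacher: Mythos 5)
Your plan follows the paper's skeleton (Lavers' Theorem~\ref{013}, adjoining $\eta_i$ and removing the $\mu_{i,j}$ as in Remark~\ref{103}, Proposition~\ref{104} for the $\sigma$--interactions, Remarks~\ref{020} and \ref{022} for the redundancies), but it diverges from the paper's actual proof at exactly one point, and the divergence is to your credit. The paper \emph{does} obtain (TV2), (TV3) and (TV5) by citing Proposition~\ref{019} with $x_i=\sigma_i$, $y_i=\nu_i$; as you observe, that citation is unjustified, since the first hypothesis $\nu_i\sigma_j\sigma_i=\sigma_j\sigma_i\nu_j$ ($|i-j|=1$) is a forbidden move which fails in $V\BB_n$ (and the second hypothesis also fails at $i=j$, because $\sigma_i\nu_i\neq\nu_i\sigma_i$ there). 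So you have located a genuine gap in the paper's own argument, and your repair is the natural one: the $\nu_i$ are invertible and satisfy (V1)--(V3), so the A--preliminaries run with $\nu$ in the role of $x$, Proposition~\ref{104} then gives (TV1) and (TV5), while Proposition~\ref{019} with the roles swapped ($x=\nu$, $y=\sigma$) has hypotheses exactly (V4)--(V5) and returns (TV2), (TV3), (T4). What the paper's route buys (where it is valid, e.g.\ for $TS\BB_n$ in Proposition~\ref{021}) is brevity; what yours buys is an argument actually available in the virtual case.

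Two concrete cautions. First, your proposed attack on the redundancy of (T6) via (TV4) and (V1) is circular: (TV4) only yields that $\sigma_i^{-1}\nu_i$ (equivalently $\nu_i\sigma_i$) commutes with $\eta_j$, and passing from that to $\sigma_i^2\eta_j=\eta_j\sigma_i^2$ is equivalent to the statement you are trying to prove. The derivation that works equates the two expressions for $\eta_j$ furnished by (TV2) and (TV3): from $\eta_j=\nu_i\sigma_j\eta_i\sigma_j^{-1}\nu_i$ and $\eta_j=\nu_i\sigma_j^{-1}\eta_i\sigma_j\nu_i$ one gets $\sigma_j^2\eta_i=\eta_i\sigma_j^2$ for $|i-j|=1$, and then (T6) follows from (T3), the $i=j$ case of (T4), and (B1), since $\sigma_j\sigma_i^{2}\sigma_j^{-1}=\sigma_i^{-1}\sigma_j^{2}\sigma_i$. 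Second, your flagged ``hard part'' is real and sits exactly where Proposition~\ref{019}'s proof invokes the forbidden move: the instances $\mu_{i,j}\nu_k=\nu_k\mu_{i,j}$ with $i<k<j-1$ of Lemma~\ref{018}(5), where the step $y_ka_{i,j}=a_{i,j}y_{k-1}$ is unavailable for $y=\nu$. The clean fix is to first prove, by induction on $j-i$ using the (TV2)--(TV4) family, that the $\sigma$--conjugate and $\nu$--conjugate expressions of $\mu_{i,j}$ coincide; the problematic instances then become statements in $\nu$'s and $\eta$'s alone, covered by Proposition~\ref{104} with $x=\nu$. Your fallback via Proposition~\ref{009} is also sound. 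Finally, note that the paper's derivation of (TV4) is cleaner than yours and you may borrow it: apply Lemma~\ref{101} to both sides of $\mu_{i,k}\nu_i^2=\mu_{i,k}\sigma_i\sigma_i^{-1}$ (a consequence of (V1)) to get $\nu_i\eta_j\nu_i=\sigma_i\eta_j\sigma_i^{-1}$ directly, and then (TV1) follows as in Remark~\ref{022}.
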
 
\begin{proof}
We use the Lavers' method to compute the presentation of $TV\BB_n$. Relations (B0) to (B2) and (V1) to (V5) are obtained directly from $V\BB_n$, and relations (P1) to (P3) are obtained directly from $P_n$. Relations (T1) and (T2) are consequences of (P1) and (P2) in $P_n$ because the generators $\mu_{i,j}$ with $i<j-1$ are removed as in Remark \ref{103}. The remaining relations are obtained from Proposition \ref{104} and Proposition \ref{019}, that is, relations (T3), (T4), (TV2), (TV3) and (TV5). Relation (TV4) is redundant and can be obtained from (V1). Indeed, if $i<j<k$ are consecutive numbers in $[n]$, (V1) implies $\mu_{i,k}\nu_i^2=\mu_{i,k}\sigma_i\sigma_i^{-1}$ and $\mu_{i,k}\nu_j^2=\mu_{i,k}\sigma_j\sigma_j^{-1}$. By applying Lemma \ref{101} once on each term, we obtain $\nu_i\eta_j\nu_i=\sigma_i\eta_j\sigma_i^{-1}$ and $\nu_j\eta_i\nu_j=\sigma_j\eta_i\sigma_j^{-1}$, which are equivalent to (TV4). By using (TV4) we obtain (TV1) just as in Remark \ref{022}. Thus, (TV1) is also redundant. (T5) is a consequence of (P3) and can be obtained as in Remark \ref{020}. By \cite[Lemma 2(5--6)]{PreAiJu18} and Remark \ref{020}, relations (P1) to (P3) can be obtained by using (T1), (T2) and (T5) respectively, so they are removed from the presentation of $TV\BB_n$. Finally, relation (TV6) is redundant and obtained from (T5) as in Remark \ref{020}.
\end{proof}

Diagrammatically, (TV1) to (TV6) can be represented, respectively, as follows:\[\figureteneig\]

We will close the section by constructing, according to the Lavers' method, the tied monoid attached to the \emph{virtual singular braid monoid} \cite{CPM16}; this monoid, denoted by $VS\BB_n$, is presented with generators $\sigma_1,\ldots,\sigma_{n-1}$ and $\sigma_1^{-1},\ldots,\sigma_{n-1}^{-1}$ satisfying (B0) to (B2), the generators $\tau_1,\ldots,\tau_{n-1}$ satisfying (S1) to (S3), and the generators $\nu_1,\ldots,\nu_{n-1}$ satisfying (V1) to (V5), all of them subject to the following additional relations:\begin{enumerate}
\item[]{\normalfont(VS1)}\quad$\nu_i\nu_j\tau_i=\tau_j\nu_i\nu_j$ for all $i,j\in[n-1]$ with $|i-j|=1$,
\item[]{\normalfont(VS2)}\quad$\nu_i\tau_j=\tau_j\nu_i$ for all $i,j\in[n-1]$ with $|i-j|\geq2$.
\end{enumerate}
Diagrammatically, (VS1) and (VS2) can be represented as follows:\[\figuretennin\]

There is a natural epimorphism from $VS\BB_n$ to $\SS_n$ mapping $\sigma_i,\tau_i$ and $\nu_i$ to $s_i$ for all $i\in[n-1]$. Hence, we may construct its tied monoid, that is $TVS\BB_n$. We will call $TVS\BB_n$ the \emph{tied virtual singular braid monoid}.

As a consequence of Proposition \ref{021} and Theorem \ref{028}, we obtain the following result.

\begin{thm}\label{017}
The monoid $TVS\BB_n$ can be presented with generators $\sigma_1,\ldots,\sigma_{n-1}$ and $\sigma_1^{-1},\ldots,\sigma_{n-1}^{-1}$ satisfying {\normalfont(B0)} to {\normalfont(B2)}, the generators $\tau_1,\ldots,\tau_{n-1}$ satisfying {\normalfont(S1)} to {\normalfont(S3)}, the generators $\nu_1,\ldots,\nu_{n-1}$ satisfying {\normalfont(V1)} to {\normalfont(V5)} including {\normalfont(VS1)} and {\normalfont(VS2)}, and generators $\eta_1,\ldots,\eta_{n-1}$ satisfying {\normalfont(T1)} to {\normalfont(T5)}, {\normalfont(TS1)} to {\normalfont(TS6)} and {\normalfont(TV1)} to {\normalfont(TV6)}.
\end{thm}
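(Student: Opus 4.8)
The plan is to apply the Lavers' method (Theorem \ref{013}) to the monoid $M=VS\BB_n$ together with the set-partition monoid $P_n$, exactly as was done for the previous three tied monoids, and then to streamline the resulting presentation by recycling the computations already carried out in Proposition \ref{021} and Theorem \ref{028}. First I would record that $VS\BB_n$ is generated by the braid generators $\sigma_i^{\pm1}$, the singular generators $\tau_i$ and the virtual generators $\nu_i$, and that the epimorphism $\pi:VS\BB_n\to\SS_n$ sends each of $\sigma_i,\tau_i,\nu_i$ to $s_i$. The action $\rho=\pp\circ\pi$ is thus governed entirely by \eqref{100}, so the Lavers relations $(\mu_{i,j}g,g\rho_g(\mu_{i,j}))$ split into three families according to whether $g$ is a $\sigma$, a $\tau$, or a $\nu$. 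Each family is precisely the set of mixed relations already analyzed: the $\sigma$-family gives (T3)--(T5) by Proposition \ref{104} and Remark \ref{020}, the $\tau$-family gives (TS1)--(TS6) by Proposition \ref{021}, and the $\nu$-family gives (TV1)--(TV6) by Theorem \ref{028}.

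The key point is that Proposition \ref{019} is stated abstractly for any pair of alternative generators $x_i,y_i$ with $\rho_{x_i}=\rho_{y_i}=\pi_{s_i}$ satisfying (A1)--(A2); hence it applies verbatim with $x_i=\sigma_i$ and $y_i=\tau_i$ (yielding the TS-relations), and again with $x_i=\sigma_i$ and $y_i=\nu_i$ (yielding the TV-relations), since the braid-type mixed relations (S3) and (V4) provide exactly the hypotheses $y_ix_jx_i=x_jx_iy_j$ and $y_jx_i=x_iy_j$ required there. The only genuinely new ingredient relative to $S\BB_n$ and $V\BB_n$ is the presence of the extra $\tau$--$\nu$ compatibility relations (VS1) and (VS2) inside $VS\BB_n$; but these involve no $\eta$'s, so in the Lavers presentation they are simply inherited from the $S$-part (the relations $R$ of $M$) and contribute nothing to the mixed family beyond what is already accounted for.

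The main step is therefore to verify that no \emph{additional} mixed relations arise from the interaction of $\tau$ and $\nu$ through the set partitions: one must check that the redundancies established separately, namely that (TS2), (TS5), (TS6) follow from the remaining relations via Remark \ref{022} and Remark \ref{020}, and likewise that (TV1), (TV4), (TV6) are redundant, remain valid when all of $\sigma,\tau,\nu$ are present simultaneously. I would argue this by observing that each redundancy derivation used only relations among $\eta_i$ and a single family of braid-like generators at a time (together with the $\sigma^{\pm1}$ that are common to all three), and that adjoining (VS1)--(VS2) does not disturb those derivations. Consequently the full set of mixed relations reduces exactly to (T1)--(T5), (TS1)--(TS6) and (TV1)--(TV6), while (P1)--(P3) are eliminated through (T1), (T2), (T5) by \cite[Lemma 2(5--6)]{PreAiJu18} and Remark \ref{020}, as before.

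The hard part will be the bookkeeping needed to confirm that the two independent instantiations of Proposition \ref{019} (for $\tau$ and for $\nu$) do not produce overlapping or conflicting consequences, and that the removal of the $\mu_{i,j}$ with $j>i+1$ via Remark \ref{103} is consistent across all three mixed families at once. Since every such consequence has already been isolated in Proposition \ref{021} and Theorem \ref{028} using only the structural features shared by $VS\BB_n$, I expect this verification to be routine rather than conceptual; the statement then follows immediately by assembling the three presentations and invoking Theorem \ref{013}.
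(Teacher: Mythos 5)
Your proposal takes essentially the same route as the paper: the paper's entire proof of Theorem \ref{017} is the single observation that it follows from Proposition \ref{021} and Theorem \ref{028}, i.e.\ that the Lavers mixed relations of $TVS\BB_n$ split into the $\sigma$-, $\tau$- and $\nu$-families already reduced in those two results, while (VS1)--(VS2) involve no $\eta$'s and are simply inherited from the presentation of $VS\BB_n$. Your assembly of the three families, and your point that the redundancy arguments (Remarks \ref{103}, \ref{020}, \ref{022}) only ever use relations that are still present in the larger presentation, is exactly the content the paper leaves implicit.

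One concrete slip in your second paragraph: the claim that (V4) provides ``exactly'' the hypothesis $y_ix_jx_i=x_jx_iy_j$ of Proposition \ref{019} for $x_i=\sigma_i$, $y_i=\nu_i$ is false. Under that substitution the hypothesis reads $\nu_i\sigma_j\sigma_i=\sigma_j\sigma_i\nu_j$, which is a forbidden move and does \emph{not} hold in $V\BB_n$ (hence not in $VS\BB_n$ either); relation (V4) is $\sigma_i\nu_j\nu_i=\nu_j\nu_i\sigma_j$, the same shape but with the roles of $\sigma$ and $\nu$ exchanged, and swapping roles runs into the further problem that $\sigma_i\nu_i\neq\nu_i\sigma_i$. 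So Proposition \ref{019} applies verbatim only to the $\tau$-family, where (S2)--(S3) really are its hypotheses; it does not apply verbatim to the $\nu$-family. This imprecision is inherited from the paper itself, whose proof of Theorem \ref{028} cites Proposition \ref{019} for (TV2), (TV3), (TV5) without comment, and it does not sink your argument, because your first and third paragraphs already delegate the $\nu$-family to the reduction carried out in the proof of Theorem \ref{028} (a derivation that remains valid after adjoining (VS1)--(VS2) and the $\tau$-relations, since derivability is preserved under adding relations). The clean statement is therefore: reuse that reduction as a black box, as the paper does, rather than re-instantiate Proposition \ref{019} with $(x,y)=(\sigma,\nu)$.
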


\section{Tied monoids of type B and D }\label{S5}

This section have as goal write out the Lavers's presentation of $T^{\Gamma}M$ for $M=\BB_n^{\Gamma}$, where $\BB_n^\Gamma$ denotes the Artin (braid) monoid of type $\Gamma$ with $\Gamma\in\{B,D\}$. When $\Gamma=B$ (resp. D) the idempotent factor is $P_n^B$ (resp. $P_n^D$). In order to explain the action $\rho$ of $\BB_n^{\Gamma}$ on the idempotent factor, we need to introduce some background. Let $\SS_{\pm n}$ be the group of permutations of $[\pm n]$, and for $k\in[\pm n]$ with $k<n$, we denote by $s_k$ the transposition exchanging $k$ with its immediate successor in $[\pm n]$. We will denote by $\SS_n^B$ the \emph{signed symmetric group} on $2n$ symbols, that is the subgroup of permutations $t\in\SS_{\pm n}$ satisfying $t(-k)=-t(k)$ for all $k\in[\pm n]$, see \cite[Section 8.1]{BjBr05}. Recall that $\SS_n^B$ realizes as the Coxeter group of type B. The Coxeter group of type D can be realized as the \emph{even--signed symmetric group} $\SS_n^D$ on $2n$ symbols, that is the subgroup of signed permutations $t\in\SS_n^B$ satisfying $t(1)\cdots t(n)>0$, see \cite[Section 8.2]{BjBr05}. Observe that we have the following inclusions of subgroups\[\SS_n^D\subset\SS_n^B\subset\SS_{\pm n}.\]Thus, the permutation action of $\SS_{\pm n}$ on $[\pm n]$ induces an action (by restriction) of $\SS_n^B$ on $P_n^B$ and also so an action of $\SS_n^D$ on $P_n^D$, see \ref{135}. These permutation actions will be denoted by $\pp$. So, the action defining $T\BB_n^{\Gamma}$ is the composition of $\pp$ with the natural projection $\pi$ of $\BB_n^{\Gamma}$ in $\SS_n^{\Gamma}$.

To state the Lavers' presentation of $T^B\BB_n^B$, we need  recall that the \emph{Artin (braid) group} $\BB_n^B$, also called \emph{signed braid group}, is presented with generators $\sigma_0,\sigma_1,\ldots,\sigma_{n-1}$ subject to the following relations:\begin{enumerate}
\item[](BB1)\quad$\sigma_0\sigma_1\sigma_0\sigma_1=\sigma_1\sigma_0\sigma_1\sigma_0$,
\item[](BB2)\quad$\sigma_i\sigma_j\sigma_i=\sigma_j\sigma_i\sigma_j$ for all $i,j\in[n-1]$ with $|i-j|=1$,
\item[](BB3)\quad$\sigma_i\sigma_j=\sigma_j\sigma_i$ for all $i,j\in[n-1]$ with $|i-j|\geq2$.
\end{enumerate} 

\begin{thm}\label{031}
The monoid $T^B\BB_n^B$ can be presented with generators $\sigma_0,\sigma_1,\ldots,\sigma_{n-1}$ and $\sigma_0^{-1},\sigma_1^{-1},\ldots,\sigma_{n-1}^{-1}$ satisfying {\normalfont(BB0)} to {\normalfont(BB3)}, and the additional generators $\th_0,\th_1,\ldots,\th_{n-1}$ subject to the relations {\normalfont(TB1)} to {\normalfont(TB9)}, where:
\begin{enumerate}
\item[]{\normalfont(BB0)}\quad$\sigma_i\sigma_i^{-1}=\sigma_i^{-1}\sigma_i=1$ for all $i\in[n-1]\cup\{0\}$,
\item[]{\normalfont(TB1)}\quad$\th_i^2=\th_i$ for all $i\in[n-1]\cup\{0\}$,
\item[]{\normalfont(TB2)}\quad$\th_i\th_j=\th_j\th_i$ for all $i,j\in[n-1]\cup\{0\}$,
\item[]{\normalfont(TB3)}\quad$\th_0\sigma_1\sigma_0\sigma_1=\sigma_1\sigma_0\sigma_1\th_0$ and $\th_1\sigma_0\sigma_1\sigma_0=\sigma_0\sigma_1\sigma_0\th_1$,
\item[]{\normalfont(TB4)}\quad$\sigma_i\sigma_j\th_i=\th_j\sigma_i\sigma_j$  for all $i,j\in[n-1]$ with $|i-j|=1$,
\item[]{\normalfont(TB5)}\quad$\sigma_i\th_j=\th_j\sigma_i$ with $|i-j|\neq1$,
\item[]{\normalfont(TB6)}\quad$\th_i\th_j\sigma_i=\th_j\sigma_i\th_j=\sigma_i\th_i\th_j$ for all $i,j\in[n-1]\cup\{0\}$ with $|i-j|=1$,
\item[]{\normalfont(TB7)}\quad$\th_1\sigma_0\th_1\sigma_0=\sigma_0\th_1\sigma_0\th_1$ and $\th_0\sigma_1\th_0\sigma_1=\sigma_1\th_0\sigma_1\th_0$,
\item[]{\normalfont(TB8)}\quad$\sigma_1^2\th_0=\th_0\sigma_1^2$ and $\sigma_0^2\th_1=\th_1\sigma_0^2$,
\item[]{\normalfont(TB9)}\quad$\sigma_i\sigma_j^{-1}\th_i=\th_j\sigma_i\sigma_j^{-1}$  for all $i,j\in[n-1]$ with $|i-j|=1$.
\end{enumerate}
\end{thm}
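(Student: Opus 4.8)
The plan is to apply the Lavers' method (Theorem~\ref{013}) to the semidirect product $P_n^B\rtimes_\rho\BB_n^B$, feeding in the presentation of $P_n^B$ from Theorem~\ref{087} and the presentation (BB0)--(BB3) of $\BB_n^B$, and then to simplify the resulting presentation by a sequence of Tietze transformations (Proposition~\ref{012}), exactly as was done for type A in Proposition~\ref{014}. The initial Lavers presentation carries generators $\sigma_i^{\pm1}$ and $\ve_{i,j}$, the relations (BB0)--(BB3) and (PB1)--(PB5), and the mixed relations $\ve_{i,j}\sigma_k=\sigma_k\rho_{\sigma_k}(\ve_{i,j})$ coming from the action. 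Since $\rho=\pp\circ\pi$ and each $\sigma_k$ is invertible, these read $\rho_{\sigma_k}(\ve_{i,j})=\sigma_k^{-1}\ve_{i,j}\sigma_k=\ve_{s_k(\{i,j\})}$, the type-B analogue of the action formula \eqref{100} and of Corollary~\ref{097}.

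First I would single out the \emph{fundamental} idempotents $\th_i:=\ve_{i,i+1}$ for $i\in[n-1]$ and $\th_0:=\ve_{-1,1}$, the blocks attached to the Coxeter generators $s_1,\dots,s_{n-1}$ and the sign change $s_0$. The key structural step, analogous to Lemma~\ref{098} and Remark~\ref{103}, is to show that every generator $\ve_{i,j}$ is a conjugate of one of the $\th_k$ by a word in the $\sigma$'s: writing $\{i,j\}=w(\{k,k+1\})$ or $\{i,j\}=w(\{-1,1\})$ for a suitable $w\in\SS_n^B$ and lifting $w$ to the braid group gives $\ve_{i,j}=\sigma_w^{-1}\th_k\sigma_w$. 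The genuinely new feature relative to type A is that $\sigma_0$ realizes a sign change, so that conjugation reaches the zero blocks $\ve_{-i,i}$ and the mixed blocks $\ve_{-i,j}$ from $\th_0$ and $\th_1$ respectively. Once this is established, Tietze transformations of type \tf{4} remove all the $\ve_{i,j}$ with $\{i,j\}\notin\{\{k,k+1\},\{-1,1\}\}$, leaving only $\th_0,\dots,\th_{n-1}$ together with the $\sigma$'s.

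It then remains to rewrite (PB1)--(PB5) and the mixed relations in terms of $\th_0,\dots,\th_{n-1}$ and to discard the redundant ones. Restricting (PB1) and (PB2) to fundamental generators gives (TB1) and (TB2), while the remaining instances of (PB1)--(PB3) are recovered as (P1)--(P3) were in Remark~\ref{020}, yielding (TB6). Among the mixed relations, those indexed by $i,j\in[n-1]$ reproduce the type-A computation of Proposition~\ref{104}, giving (TB4), (TB5) and the inverse version (TB9); the instances in which $s_k$ stabilizes the relevant block contribute the extra commutations in (TB5), while the involution identity $\rho_{\sigma_k^2}=\mathrm{id}$ forces $\sigma_1^2\th_0=\th_0\sigma_1^2$ and $\sigma_0^2\th_1=\th_1\sigma_0^2$, that is (TB8), exactly as in the remark on involutions in Section~\ref{S1}. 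The pair $(0,1)$ must be handled separately: because $\langle s_0,s_1\rangle$ is the dihedral group of order $8$, the naive braiding relation fails and one finds instead that $s_1s_0s_1$ fixes $\{-1,1\}$ and $s_0s_1s_0$ fixes $\{1,2\}$, which produces (TB3).

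The main obstacle is the treatment of relation (PB4), $\ve_{-i,i}\ve_{-j,j}=\ve_{-i,j}\ve_{i,j}$, which encodes the merging of zero blocks and has no type-A counterpart. Expressing both sides through the conjugate formulas for $\ve_{-i,i},\ve_{-j,j},\ve_{-i,j}$ and simplifying with the already-derived mixed relations, I expect (PB4) to collapse to the commutation of $\th_0$ with the non-fundamental zero block $\ve_{-2,2}=\sigma_1^{-1}\th_0\sigma_1$ (together with its symmetric companion involving $\th_1$ and $\ve_{-1,2}=\sigma_0^{-1}\th_1\sigma_0$), and that this is precisely what (TB7) records. I would verify this reduction in the base case $i=1,j=2$ and then propagate it to general $i<j$ using (TB4) and (TB5), checking along the way that (PB5) is automatic from the conjugate description and that (TB7) and (TB8), together with the remaining relations, re-derive every instance of (PB4). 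Assembling these identifications and invoking Proposition~\ref{012} completes the proof.
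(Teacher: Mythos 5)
Your overall scaffolding is the same as the paper's: apply Lavers' method, set $\th_0=\ve_{-1,1}$, $\th_i=\ve_{i,i+1}$, show every $\ve_{i,j}$ is a braid-conjugate of $\th_0$ or $\th_1$, remove the $\ve$'s by Tietze transformations, and source (TB1)--(TB2) from (PB1)--(PB2) and (TB3), (TB4), (TB5), (TB8), (TB9) from the mixed action relations (this is the content of the paper's Lemma \ref{145} and Proposition \ref{057}, whose substantial computation with the theta elements $\tb_k$ you assert rather than carry out). However, your resolution of what you yourself call the main obstacle --- relation (PB4) --- is wrong, and it cannot be repaired along the lines you describe. You claim that (PB4) ``collapses to the commutation of $\th_0$ with $\ve_{-2,2}$ and of $\th_1$ with $\ve_{-1,2}$, which is precisely what (TB7) records.'' But (PB4) is not a commutation relation in disguise: it is exactly the zero-closure relation presenting $P_n^B$ as a quotient of the signed-partition monoid $SP_n$ (see the proof of Theorem \ref{087}). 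To see that no manipulation with the mixed relations can reduce it to (TB7), consider the monoid $SP_n\rtimes_{\rho}\BB_n^B$ (the action of $\SS_n^B$ on $SP_n$ is defined by the same formulas). In that monoid \emph{every} relation you have derived holds --- (BB0)--(BB3), (TB1)--(TB5), (TB7), (TB8), (TB9), and the instances of (TB6) with $i,j\in[n-1]$ --- because commutativity, idempotency, the (PB3)-type relations and the action relations are all valid in $SP_n$. Yet (PB4) fails there: $\e_{-1,1}\e_{-2,2}$ has two zero blocks $\{-1,1\},\{-2,2\}$, while $\e_{-1,2}\e_{1,2}$ is the single merged zero block $\{\pm1,\pm2\}$. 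Hence (PB4) is not a consequence of those relations, and the completeness half of your Tietze argument breaks down.

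What actually carries (PB4) in the theorem is the index-$0$ part of (TB6), not (TB7). Concretely, the instance $(i,j)=(1,0)$, namely $\th_0\sigma_1\th_0=\th_1\th_0\sigma_1$, unwinds (using $\ve_{-2,2}=\sigma_1\th_0\sigma_1^{-1}$) to $\ve_{-1,1}\ve_{-2,2}=\ve_{1,2}\ve_{-1,1}$, which is (PB4) up to commutativity --- and indeed this single instance of (TB6) is the one relation of the presentation that fails in $SP_n\rtimes_{\rho}\BB_n^B$. This is the paper's route (Lemma \ref{154} and Remark \ref{059}): the base instance $\ve_{-1,2}\ve_{1,2}=\ve_{-1,1}\ve_{-2,2}$ of (PB4) is appended to the chain of (PB3) identities, and the whole chain is encoded as (TB6) for $i,j\in[n-1]\cup\{0\}$. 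Consequently your companion claim, that (TB6) is recovered from (PB1)--(PB3) alone ``as in Remark \ref{020}'', also fails for that instance and for the same reason. The role of (TB7) is different: by Lemma \ref{129} it records the exceptional instances of (PB2) --- commutativity of $\th_1$ with $\ve_{-1,2}$ and of $\th_0$ with $\ve_{-2,2}$ --- which do not follow from (TB2) and must be added separately. In short, you have interchanged the roles of (TB6) and (TB7); the presentation you end up with is the correct one, but your argument does not establish that it presents $T^B\BB_n^B$.
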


Now, in order to state the Lavers's presentation of $T^D\BB_n^D$, we need to recall first that the \emph{Artin (braid) group of type D} or the \emph{even--signed braid group}, which we will denote by $\BB_n^D$, is presented with generators $\sigma_{-1},\sigma_1,\sigma_2,\ldots,\sigma_{n-1}$ subject to the relations:\begin{enumerate}
\item[](DB1)\quad$\sigma_{-1}\sigma_2\sigma_{-1}=\sigma_2\sigma_{-1}\sigma_2$,
\item[](DB2)\quad$\sigma_{-1}\sigma_i=\sigma_i\sigma_{-1}$ for all $i\in[n-1]$ with $i\neq2$,
\item[](DB3)\quad$\sigma_i\sigma_j\sigma_i=\sigma_j\sigma_i\sigma_j$ for all $i,j\in[n-1]$ with $|i-j|=1$,
\item[](DB4)\quad$\sigma_i\sigma_j=\sigma_j\sigma_i$ for all $i,j\in[n-1]$ with $|i-j|\geq2$.
\end{enumerate}

\begin{thm}\label{TMD} 
The monoid $T^D\BB_n^D$ can be presented with generators $\sigma_{-1},\sigma_1,\ldots,\sigma_{n-1}$ and $\sigma_{-1}^{-1},\sigma_1^{-1},\sigma_2\ldots,\sigma_{n-1}^{-1}$ satisfying {\normalfont(DB0)} to {\normalfont(DB4)}, and the additional generators $\th_{-1},\th_1,\th_2,\ldots,\th_{n-1}$ subject to the relations {\normalfont(TD1)} to {\normalfont(TD7)}, where:
\begin{enumerate}
\item[]{\normalfont(DB0)}\quad$\sigma_i\sigma_i^{-1}=\sigma_i^{-1}\sigma_i=1$ for all $i\in[n-1]\cup\{-1\}$,
\item[]{\normalfont(TD1)}\quad$\th_i^2=\th_i$ for all $i\in[n-1]\cup\{-1\}$,
\item[]{\normalfont(TD2)}\quad$\th_i\th_j=\th_j\th_i$ for all $i,j\in[n-1]\cup\{-1\}$,
\item[]{\normalfont(TD3)}\quad$\sigma_i\sigma_j\th_i=\th_j\sigma_i\sigma_j$ with $||i|-|j||=1$,
\item[]{\normalfont(TD4)}\quad$\sigma_i\th_j=\th_j\sigma_i$ with $||i|-|j||\neq1$,
\item[]{\normalfont(TD5)}\quad$\th_i\th_j\sigma_i=\th_j\sigma_i\th_j=\sigma_i\th_i\th_j$ with $||i|-|j||=1$,
\item[]{\normalfont(TD6)}\quad$\th_{-1}\th_1\th_3\sigma_2\sigma_{-1}\sigma_1\sigma_2\th_3=\th_{-1}\th_1\th_2\th_3\sigma_2\sigma_{-1}\sigma_1\sigma_2$,
\item[]{\normalfont(TD7)}\quad$\sigma_i\sigma_j^{-1}\th_i=\th_j\sigma_i\sigma_j^{-1}$ with $||i|-|j||=1$,
\end{enumerate}
\end{thm}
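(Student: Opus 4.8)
The plan is to follow the template of the proof of Theorem~\ref{031}: feed the presentation (DB0)--(DB4) of $\BB_n^D$ and the presentation (PD1)--(PD5) of $P_n^D$ furnished by Theorem~\ref{106} into the Lavers' method (Theorem~\ref{013}), and then prune the output with Tietze transformations (Proposition~\ref{012}). The raw Lavers' presentation carries the generators $\sigma_{-1}^{\pm1},\sigma_1^{\pm1},\dots,\sigma_{n-1}^{\pm1}$, all the $\ve_{i,j}$, the relations (DB0)--(DB4) and (PD1)--(PD5), and the mixing relations $(\ve_{i,j}\sigma_k,\sigma_k\rho_{\sigma_k}(\ve_{i,j}))$. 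Since $\rho=\pp\circ\pi$ as in \ref{135}, one has $\rho_{\sigma_k}(\ve_{i,j})=\ve_{s_k(i),s_k(j)}$, so the first step is to record the type-D analogue of \ref{100}; the only new ingredient is the branch generator $s_{-1}$, which swaps $1\leftrightarrow-2$ and $2\leftrightarrow-1$ and fixes the remaining $\pm k$.

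Next I would carry out the generator reduction. Put $\th_i:=\ve_{i,i+1}$ for $i\in[n-1]$ and $\th_{-1}:=\ve_{-1,2}$. Under the bijection between the $n(n-1)$ generators $\ve_{i,j}\in E_n^{\times}$ and the positive roots $e_p\pm e_q$ of type D, the Weyl group $\SS_n^D$ acts transitively on the root system, so every $\ve_{i,j}$ is $\pi$-conjugate to one of the $n$ simple generators $\th_k$. Thus, exactly as in Lemma~\ref{098} and Remark~\ref{103}, each $\ve_{i,j}$ can be rewritten as a $\sigma$-conjugate of some $\th_k$ and deleted, rewriting (PD1)--(PD5) and the mixing relations purely in the $\th$'s. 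What makes this routine is that $\langle\sigma_{-1},\sigma_2\rangle$ and $\langle\sigma_1,\sigma_2\rangle$ each surject onto a copy of $\SS_3$ permuting the ``strands'' $\{-1,2,3,\dots\}$ and $\{1,2,3,\dots\}$ respectively: one checks $\rho_{\sigma_{-1}}(\th_2)=\rho_{\sigma_2}(\th_{-1})=\ve_{-1,3}$, so each is a genuine type-A chain.

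I would then read off (TD1)--(TD7). Relations (TD1) and (TD2) are literally (PD1) and (PD2). The structural point is that every rank-two parabolic of $D_n$ is of type $A_1\times A_1$ or $A_2$ (there is no $B_2$ edge, which is exactly why no analogue of the type-B relations (TB7), (TB8) is required). Hence Proposition~\ref{104}, applied to the two overlapping type-A chains above, yields (TD3), (TD4) and (TD7) verbatim, with the adjacency $|i-j|=1$ replaced by $||i|-|j||=1$, while Remark~\ref{020} and \ref{150} give (TD5); as in the proofs of Propositions~\ref{014} and \ref{021}, the partition relations (PD1)--(PD3) then become redundant and are discarded. No analogue of Proposition~\ref{019} is needed, since $T^D\BB_n^D$ has no generators beyond the $\sigma$'s and $\th$'s.

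The genuinely new input, and the main obstacle, is (TD6), the image of (PD5). In the idempotent factor $\th_{-1}\th_1$ is already the minimal positive zero block $\{\pm1,\pm2\}$, and (PD5) records the merging of two zero blocks into one, a phenomenon with no type-A or type-B counterpart of the same shape. The key computation is that the braid word $w=\sigma_2\sigma_{-1}\sigma_1\sigma_2$ maps to the signed permutation negating the coordinates $1$ and $3$, so that $\rho_w(\th_3)=w\th_3w^{-1}=\ve_{-3,4}$; the minimal instance of (PD5), namely $\th_{-1}\th_1\,\ve_{-3,4}\,\th_3=\th_{-1}\th_1\th_2\th_3$ for the quadruple $\{1,2,3,4\}$ (available once $n\ge4$), becomes, after substituting $\ve_{-3,4}=w\th_3w^{-1}$ and right-multiplying by $w$, precisely (TD6). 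The delicate part — where the bookkeeping is heaviest — is the converse: showing that (TD6), together with (TD1)--(TD5) and (TD7), recovers every instance of (PD5) for an arbitrary quadruple $a<b<c<d$. This is the signed analogue of reducing all of (P3) to \ref{150} in Remark~\ref{020}, the new difficulty being that type D cannot negate a single coordinate, so the conjugators transporting (TD6) to a general quadruple must change signs in pairs while still landing on the correct generators $\ve_{-a,b},\ve_{a,b},\ve_{b,c},\ve_{c,d}$. Once this is verified, the reduced presentation is Tietze-equivalent to the Lavers' presentation of $P_n^D\rtimes_{\rho}\BB_n^D=T^D\BB_n^D$, which completes the proof.
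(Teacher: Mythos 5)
Your outline follows the same route as the paper: run Lavers' method (Theorem~\ref{013}) on the presentations of $\BB_n^D$ and $P_n^D$, set $\th_{-1}=\ve_{-1,2}$ and $\th_i=\ve_{i,i+1}$, eliminate the remaining $\ve_{i,j}$ by Tietze transformations, and recognize (TD6) as the minimal instance of (PD5); your computation that $w=\sigma_2\sigma_{-1}\sigma_1\sigma_2$ maps to the signed permutation negating $1$ and $3$, so that $\ve_{-3,4}=w\th_3w^{-1}$ and the quadruple $\{1,2,3,4\}$ yields exactly (TD6), is precisely the content of the paper's Lemma~\ref{164} and Equation~\ref{166} (note $w=\td_2$). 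But there is a genuine gap in your reduction of the mixing relations. You claim this step is ``routine'' because Proposition~\ref{104} applies to the two type-A chains $\langle\sigma_{-1},\sigma_2,\ldots,\sigma_{n-1}\rangle$ and $\langle\sigma_1,\ldots,\sigma_{n-1}\rangle$. Those chains, however, only reach the generators $\ve_{i,j}$ with $1\leq i<j$ and $\ve_{-1,j}$ with $j\geq2$: the $\binom{n-1}{2}$ generators $\ve_{-i,j}$ with $2\leq i<j$ (for instance $\ve_{-2,3}$, and the $\ve_{-3,4}$ you yourself need for (TD6)) lie in neither chain, and the Lavers presentation contains a mixing relation for \emph{every} pair $(\ve_{i,j},\sigma_k)$, including cross-terms such as $\ve_{-1,3}\sigma_1=\sigma_1\ve_{-2,3}$ that exit both chains. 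So two applications of Proposition~\ref{104} cannot show that all mixing relations are consequences of (TD3), (TD4) and (TD7). This is exactly what the paper's Corollary~\ref{118} and Proposition~\ref{134} are for: one writes $\ve_{-i,j}=a_{i,j}\bigl(\td_{j-2}^{-1}\th_{j-1}\td_{j-2}\bigr)a_{i,j}^{-1}$ using the type-D theta elements $\td_k$, and then re-runs the type-B computations of Proposition~\ref{057}, using \ref{117} together with the identities $\oa_{1,j}^{-1}\td_{j-2}=\sigma_{-1}a_{2,j}$ and $a_{1,j}\td_{j-2}^{-1}=\sigma_{-1}^{-1}\oa_{2,j}^{-1}$. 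None of that is a formal consequence of the type-A proposition, and it is where the real work of the theorem lives.

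The second gap is the one you flag and then skip: showing that (TD6), together with (TD1)--(TD5) and (TD7), recovers \emph{every} instance of (PD5), and likewise that (TD5) recovers the instances of (PD3) carrying a negative index, e.g.\ $\ve_{-2,1}\ve_{-2,3}=\ve_{-2,1}\ve_{1,3}=\ve_{-2,3}\ve_{1,3}$, which is not covered by Remark~\ref{020}. ``Once this is verified'' is not a proof step; without it the presentation you end with has not been shown Tietze-equivalent to the Lavers one. The paper carries this out in Lemma~\ref{132} and Lemma~\ref{164} by exhibiting explicit conjugators in $\BB_n^D$: every (PD3)$(i,j,k)$ is conjugate, via elements such as $\sigma_1\sigma_2\sigma_{-1}$, $\sigma_{-1}\sigma_2\sigma_{-1}$ or $\sigma_{-1}$, to (PD3)$(1,2,3)$, and every (PD5)$(i,j,r,s)$ is conjugate, via words built from the $d$'s and $b$'s together with $\sigma_2$ and $\sigma_2\sigma_3$, to (PD5)$(1,2,3,4)$. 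Your worry that type D can only change signs in pairs is resolved precisely by these formulas, but someone has to produce them; as written, your argument is an outline of the paper's proof with its two hardest steps missing.
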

The proof of Theorem \ref{031} and Theorem \ref{TMD} will be done, respectively, at the end of Subsection \ref{proofTMB} and Subsection \ref{proofTMD}.
 
\subsection{Proof of Theorem \ref{031}}\label{proofTMB}

To prove the Theorem \ref {031} we need some technical material, all of which is provided below.
 
For $i\in[n-1]$ denote by $t_i$ the signed transposition exchanging $i$ with $i+1$, and by $t_0$ the transposition exchanging $-1$ with $1$, that is $t_0=s_{-1}$ and $t_i=s_{-(i+1)}s_i$ for all $i\in[n]$. Recall that $\SS_n^B$ is generated by those elements and may be presented with generators $t_0,t_1,\ldots,t_{n-1}$ subject to the following relations:\begin{enumerate}
\item[](BS1)\quad$t_0t_1t_0t_1=t_1t_0t_1t_0$,
\item[](BS2)\quad$t_it_jt_i=t_jt_it_j$ for all $i,j\in[n-1]$ with $|i-j|=1$,
\item[](BS3)\quad$t_it_j=t_jt_i$ for all $i,j\in[n-1]\cup\{0\}$ with $|i-j|\geq2$,
\item[](BS4)\quad$t_i^2=1$ for all $i\in[n-1]\cup\{0\}$.
\end{enumerate} 
  
As the Lavers' method states (Theorem \ref{013}), to construct a presentation for $T^BM$, we need to consider three families of relations: the relations of $M$ and $P_n^B$ which are already defined, and the relations given by the action $\pp\circ\pi$ which we will study below.

Recall that the permutation action of $\SS_n^B$ on $P_n^B$, is given by $t(I):=\{t(I_1),\ldots,t(I_k)\}$ for all $t\in\SS_n^B$ and $I=\{I_1,\ldots,I_k\}\in P_n^B$. Then, for $k\in[n-1]$ and $(i,j)\in[\pm n]\times[n]$ with $|i|<j$, we have
\begin{equation}\label{025}t_k(\ve_{i,j})=\left\{\begin{array}{ll}
\ve_{i-1,j}&\text{if }i\in\{-k,k+1\}\text{ and }i<j,\\
\ve_{i+1,j}&\text{if }i\in\{-k-1,k\}\text{ and }i+1<j,\\
\ve_{i,j-1}&\text{if }j=k+1\text{ and }i+1<j,\\
\ve_{i,j+1}&\text{if }j=k\text{ and }i<j,\\
\ve_{i,j}&\text{if }\{k,k+1\}=\{|i|,j\}\text{ or }\{k,k+1\}\cap\{|i|,j\}=\emptyset.
\end{array}\right.
\end{equation}Furthermore, for $k\in[n-1]$ and $i\in[n]$, we have\begin{equation}\label{133}
t_k(\ve_{-i,i})=\left\{\begin{array}{ll}
\ve_{-(i+1),i+1}&\text{if }i=k,\\
\ve_{-(i-1),i-1}&\text{if }i=k+1,\\
\ve_{-i,i} & \text{if }i\not\in\{k,k+1\}.
\end{array}\right.\end{equation}In the case $k=0$, we have:
\begin{equation}\label{034}
\begin{array}{llll}
t_0(\ve_{i,j})&=&\ve_{-i,j}&\quad\text{for $i=\pm1$, $1<j$,}\\
t_0(\ve_{i,j})&=&\ve_{i,j}&\quad\text{for $i\neq\pm1$, $|i|<j$,}\\
t_0(\ve_{-i,i})&=&\ve_{-i,i}&\quad\text{for $i\in[n]$.}
\end{array} 
\end{equation} Observe that $t_k(\ve_{-j,-i})=t_k(\ve_{i,j})$ for all $i,j\in[n]$ with $i<j$.

We will study the relations of $T^BM$ obtained through the Lavers' method. For that we set $T$ to be the set of pairs  of words $(\ve_{i,j}\sigma_k,\sigma_k\rho_{\sigma_k}(\ve_{i,j}))$ for all $i,j\in[\pm n]$ with $i<j$ and $k\in[n-1]\cup\{0\}$.

\begin{lem}\label{024}
The set $T$ corresponds to the following relations of $T^BM$:\begin{enumerate}
\item $\ve_{-k,j}\sigma_k=\sigma_k\ve_{-(k+1),j}$ and $\ve_{-(k+1),j}\sigma_k=\sigma_k\ve_{-k,j}$ with $k+1<j$,
\item $\ve_{k+1,j}\sigma_k=\sigma_k\ve_{k,j}$ and $\ve_{k,j}\sigma_k=\sigma_k\ve_{k+1,j}$ with $k+1<j$,
\item $\ve_{i,k+1}\sigma_k=\sigma_k\ve_{i,k}$ and $\ve_{i,k}\sigma_k=\sigma_k\ve_{i,k+1}$ with $i<k$,
\item $\ve_{-k,k}\sigma_k=\sigma_k\ve_{-(k+1),k+1}$ and $\ve_{-(k+1),k+1}\sigma_k=\sigma_k\ve_{-k,k}$,
\item $\ve_{-1,j}\sigma_0=\sigma_0\ve_{1,j}$ and $\ve_{1,j}\sigma_0=\sigma_0\ve_{-1,j}$, and $\ve_{i,j}\sigma_0=\sigma_0\ve_{i,j}$ with $1<|i|$,
\item $\ve_{i,j}\sigma_k=\sigma_k\ve_{i,j}$ if $\{k,k+1\}=\{|i|,j\}$ or $\{k,k+1\}\cap\{|i|,j\}=\emptyset$,
\item $\ve_{-i,i}\sigma_k=\sigma_k\ve_{-i,i}$ if $k=0$ or $i\not\in\{k,k+1\}$.
\end{enumerate}
\end{lem}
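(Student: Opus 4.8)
The plan is to argue exactly as in the type A case (Lemma \ref{101}), reducing everything to the explicit formulas for the permutation action. The key observation is that, by definition of the projection $\pi\colon\BB_n^B\to\SS_n^B$, one has $\pi(\sigma_k)=t_k$, and the defining action is $\rho=\pp\circ\pi$; hence $\rho_{\sigma_k}(\ve_{i,j})=\pp_{t_k}(\ve_{i,j})=t_k(\ve_{i,j})$. Therefore each generating pair $(\ve_{i,j}\sigma_k,\sigma_k\rho_{\sigma_k}(\ve_{i,j}))$ of $T$ is precisely $(\ve_{i,j}\sigma_k,\sigma_k\,t_k(\ve_{i,j}))$, and the whole lemma amounts to computing $t_k(\ve_{i,j})$ for every admissible $(i,j)$ and $k\in[n-1]\cup\{0\}$ using the formulas \eqref{025}, \eqref{133} and \eqref{034}, and then recording the resulting equalities as relations of $T^BM$.

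First I would split the verification according to the type of generator and the value of $k$. For $k\in[n-1]$ and a non-zero generator $\ve_{i,j}$ (i.e. $|i|<j$), formula \eqref{025} produces the four moved cases, which after reading off the images $\ve_{i-1,j}$, $\ve_{i+1,j}$, $\ve_{i,j-1}$, $\ve_{i,j+1}$ yield relations (1), (2) and (3) together with their reversed companions, obtained by starting instead from the image generator and using $t_k^2=1$; the fixed case yields relation (6). For $k\in[n-1]$ and a zero generator $\ve_{-i,i}$, formula \eqref{133} gives the shift $\ve_{-i,i}\mapsto\ve_{-(i\pm1),i\pm1}$ when $i\in\{k,k+1\}$ and fixes it otherwise, which are exactly relations (4) and (7). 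For $k=0$, formula \eqref{034} swaps the sign of the entry $\pm1$ and fixes everything else, giving relation (5) together with the $k=0$ instances of (6) and (7). Throughout, the identity $t_k(\ve_{-j,-i})=t_k(\ve_{i,j})$ noted just before the statement lets me normalize each image so that it is written as a genuine generator $\ve_{p,q}$ with $p<q$.

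The step requiring the most care is the bookkeeping of the boundary configurations, where the branches of \eqref{025} appear to overlap. For instance, when $j=k+1$ and $i\in\{-k,k+1\}$ the block $\{i,j\}$ satisfies $\{|i|,j\}=\{k,k+1\}$, so the genuine permutation action fixes it and the pair belongs to relation (6) rather than to (1) or (3); this is precisely why the side conditions $k+1<j$ in (1)--(2) and $i<k$ in (3) are imposed, and likewise why $1<|i|$ appears in (5). The main obstacle is thus not any single computation but verifying that the side conditions attached to relations (1)--(5) carve out exactly the genuinely moved triples $(i,j,k)$, so that every admissible pair in $T$ is accounted for once, with all remaining (fixed) pairs collected in (6) and (7). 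Once this partition of cases is checked, the correspondence between $T$ and relations (1)--(7) follows directly.
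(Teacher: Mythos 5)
Your proposal is correct and takes essentially the same approach as the paper: the paper's proof is a single sentence observing that $\rho_{\sigma_k}=\pi_{t_k}$ and $\ve_{i,j}=\ve_{-j,-i}$ and then citing the action formulas \ref{025}, \ref{133} and \ref{034}, which is exactly the case-by-case computation you spell out. Your additional care with the overlapping branches at the boundary (e.g.\ $j=k+1$ with $|i|=k$, where the block is genuinely fixed and the pair belongs to item (6)) only makes explicit the bookkeeping that the paper's one-line proof leaves implicit.
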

\begin{proof}
Since $\ve_{i,j}=\ve_{-j,-i}$ for all $i,j\in[\pm n]$ with $i<j$ and $\rho_{\sigma_i}=\pi_{t_i}$ for all $i\in[n-1]\cup\{0\}$, it is obtained by applying \ref{025} to \ref{034}.
\end{proof}

\begin{crl}\label{026}
 The set $T$ corresponds to the following relations of $T^BM$.\begin{enumerate}
\item $\ve_{-(k+1),j}=\sigma_k\ve_{-k,j}\sigma_k^{-1}=\sigma_k^{-1}\ve_{-k,j}\sigma_k$ with $k+1<j$,\label{110}
\item $\ve_{k+1,j}=\sigma_k\ve_{k,j}\sigma_k^{-1}=\sigma_k^{-1}\ve_{k,j}\sigma_k$ with $k+1<j$,\label{111}
\item $\ve_{i,k+1}=\sigma_k\ve_{i,k}\sigma_k^{-1}=\sigma_k^{-1}\ve_{i,k}\sigma_k$ with $|i|<k$,\label{112}
\item $\ve_{-(k+1),k+1}=\sigma_k\ve_{-k,k}\sigma_k^{-1}=\sigma_k^{-1}\ve_{-k,k}\sigma_k$,\label{113}
\item $\ve_{-1,j}=\sigma_0\ve_{1,j}\sigma_0^{-1}=\sigma_0^{-1}\ve_{1,j}\sigma_0$ with $1<j$, and $\ve_{i,j}\sigma_0=\sigma_0\ve_{i,j}$ with $1<|i|$,\label{114}
\item $\ve_{i,j}\sigma_k=\sigma_k\ve_{i,j}$ if $\{k,k+1\}=\{|i|,j\}$ or $\{k,k+1\}\cap\{|i|,j\}=\emptyset$,\label{115}
\item $\ve_{-i,i}\sigma_k=\sigma_k\ve_{-i,i}$ if $k=0$ or $i\not\in\{k,k+1\}$.\label{116}
\end{enumerate}
\end{crl}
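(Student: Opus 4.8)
The plan is to deduce Corollary \ref{026} directly from Lemma \ref{024} together with the invertibility of the generators $\sigma_k$ guaranteed by relation (BB0), in exact analogy with the passage from Lemma \ref{101} to Corollary \ref{097} in the type-$\A$ case.

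First I would notice that parts (1)--(4) of Lemma \ref{024}, as well as the first two equalities of part (5), each comes as a \emph{mirror pair} of relations, reflecting that the Coxeter generator $t_k$ acts as an involution on the relevant pair of generators $\ve_{\cdot,\cdot}$. Take part (1): from the two relations $\ve_{-k,j}\sigma_k=\sigma_k\ve_{-(k+1),j}$ and $\ve_{-(k+1),j}\sigma_k=\sigma_k\ve_{-k,j}$, multiplying the latter on the right by $\sigma_k^{-1}$ gives $\ve_{-(k+1),j}=\sigma_k\ve_{-k,j}\sigma_k^{-1}$, while multiplying the former on the left by $\sigma_k^{-1}$ gives $\sigma_k^{-1}\ve_{-k,j}\sigma_k=\ve_{-(k+1),j}$; combining these two yields the double equality of Corollary \ref{026}(1). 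The identical two-step manipulation --- read off $\sigma_k\ve\sigma_k^{-1}$ from one member of a pair and $\sigma_k^{-1}\ve\sigma_k$ from the other --- applied to parts (2), (3), (4) produces Corollary \ref{026}(2)--(4), and applied to $\ve_{-1,j}\sigma_0=\sigma_0\ve_{1,j}$ and $\ve_{1,j}\sigma_0=\sigma_0\ve_{-1,j}$ produces the conjugation statement of Corollary \ref{026}(5).

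The remaining relations require no rewriting: the commutation $\ve_{i,j}\sigma_0=\sigma_0\ve_{i,j}$ with $1<|i|$ in part (5), and the commutations in parts (6) and (7), are already in the asserted form and transcribe verbatim.

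The content here is bookkeeping rather than a genuine difficulty: the one point to verify is that, for each fixed $k$ (and each fixed $j$ or $i$), Lemma \ref{024} indeed supplies \emph{both} members of the mirror pair, so that conjugation by $\sigma_k$ and by $\sigma_k^{-1}$ can both be extracted. This is automatic because \ref{025}--\ref{034} describe a permutation action of an involution, so the two directions $\ve_{-k,j}\mapsto\ve_{-(k+1),j}$ and $\ve_{-(k+1),j}\mapsto\ve_{-k,j}$ (and their analogues) always occur together. A minor cosmetic adjustment worth flagging is that in Corollary \ref{026}(3) the hypothesis is sharpened to $|i|<k$ so that both $\ve_{i,k}$ and $\ve_{i,k+1}$ are legitimate generators; this does not affect the derivation.
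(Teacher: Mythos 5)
Your proposal is correct and matches the paper's (implicit) argument: the paper offers no separate proof of Corollary \ref{026}, treating it exactly as the type-$\A$ analogue (Lemma \ref{101} $\Rightarrow$ Corollary \ref{097}), namely as an immediate consequence of Lemma \ref{024} once the generators $\sigma_k$ are invertible, which is precisely your mirror-pair manipulation. Your side remark about the hypothesis $|i|<k$ in item (3) is also apt, since the condition $i<k$ in Lemma \ref{024}(3) is only meaningful for the canonical range $|i|<j$ of indices used in \ref{025}.
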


Put $\th_0:=\ve_{-1,1}$, and for $i\in[n-1]$ we set $\th_i:=\ve_{i,i+1}$. So, we add these symbols to the presentation of $T^BM$ by using Tietze transformations of type \tf{3}. Note that, by definition, $\th_0,\th_1,\ldots,\th_{n-1}$ satisfy relations (PB1), (PB2) and (PB5) of $P_n^B$.  For each $i\in[n-1]$ we will denote by $\oth_i$ the generator $\ve_{-i,i+1}$.
 
For $i,j\in[n]$ with $i<j$ consider $a_{i,j},\oa_{i,j},b_{i,j}$ and $\ob_{i,j}$ as defined in \ref{029}, and additionally, for each $k\in[n]\backslash\{1\}$, we set $d_k=\sigma_{k-1}\cdots\sigma_1$ and $\od_k=\sigma_1\cdots\sigma_{k-1}$.

\begin{lem}\label{027}
Let $i,j,k\in[n]$ with $i<j$. Then:
\begin{enumerate}
\item $\ve_{i,j}=a_{i,j}\th_{j-1}a_{i,j}^{-1}=\oa_{i,j}^{-1}\th_{j-1}\oa_{i,j}$,\label{052}
\item $\ve_{i,j}=b_{i,j}\th_ib_{i,j}^{-1}=\ob_{i,j}^{-1}\th_i\ob_{i,j}$,\label{151}
\item $\ve_{-i,j}=a_{i,j}\oth_{j-1}$ $a_{i,j}^{-1}=\oa_{i,j}^{-1}\oth_{j-1}$ $\oa_{i,j}$,\label{152}
\item $\ve_{-i,j}=b_{i,j}\oth_ib_{i,j}^{-1}=\ob_{i,j}^{-1}\oth_i\ob_{i,j}$,\label{153}
\item $\ve_{-k,k}=d_k\th_0d_k^{-1}=\od_k^{-1}\th_0\od_k$.
\end{enumerate}
\end{lem}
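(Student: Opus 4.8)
The plan is to derive all five identities by the same recursive conjugation argument that proves Lemma~\ref{098}, now drawing on the relations assembled in Corollary~\ref{026}. The crucial feature of that corollary is that each of its clauses records \emph{two} conjugation equalities, one by $\sigma_k$ and one by $\sigma_k^{-1}$, which produce the same element; iterating the $\sigma_k$-version yields the forms written with $a_{i,j}$, $b_{i,j}$ and $d_k$, while iterating the $\sigma_k^{-1}$-version yields those written with $\oa_{i,j}$, $\ob_{i,j}$ and $\od_k$. Hence it is enough to establish one equality in each part, the companion following verbatim after replacing every $\sigma_k$ by $\sigma_k^{-1}$ and recognizing the reversed product of inverses as $\oa_{i,j}^{-1}$, $\ob_{i,j}^{-1}$ or $\od_k$.

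For part~(\ref{052}) I would begin from $\th_{j-1}=\ve_{j-1,j}$ and apply Corollary~\ref{026}(\ref{111}) in the rearranged form $\ve_{k,j}=\sigma_k\ve_{k+1,j}\sigma_k^{-1}$, decreasing the first index one step at a time for $k=j-2,\dots,i$. The accumulated conjugators telescope to $a_{i,j}=\sigma_i\cdots\sigma_{j-2}$, and the side condition $k+1<j$ holds at every step since $k\le j-2$. Part~(\ref{151}) is the mirror computation raising the second index: starting from $\th_i=\ve_{i,i+1}$ and applying Corollary~\ref{026}(\ref{112}) as $\ve_{i,k+1}=\sigma_k\ve_{i,k}\sigma_k^{-1}$ for $k=i+1,\dots,j-1$ builds the conjugator $b_{i,j}=\sigma_{j-1}\cdots\sigma_{i+1}$, the hypothesis $|i|<k$ being met because $i<i+1\le k$.

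Parts~(\ref{152}) and~(\ref{153}) proceed identically but start from $\oth_{j-1}=\ve_{-(j-1),j}$ and $\oth_i=\ve_{-i,i+1}$. For~(\ref{152}) I would lower the negative first index using Corollary~\ref{026}(\ref{110}) in the form $\ve_{-k,j}=\sigma_k\ve_{-(k+1),j}\sigma_k^{-1}$ for $k=j-2,\dots,i$, again telescoping to $a_{i,j}$; for~(\ref{153}) I would raise the second index with Corollary~\ref{026}(\ref{112}), whose hypothesis $|i|<k$ is insensitive to the sign of the first coordinate. Finally, for part~(5) I would iterate Corollary~\ref{026}(\ref{113}) as $\ve_{-(k+1),k+1}=\sigma_k\ve_{-k,k}\sigma_k^{-1}$ starting from $\th_0=\ve_{-1,1}$, so that the conjugators telescope to $d_k=\sigma_{k-1}\cdots\sigma_1$. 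In each part the base case $j=i+1$ (respectively $k=1$) is the tautology $\ve=\th$, the relevant word being empty.

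No genuine difficulty arises; the substance is the bookkeeping of indices. The one point demanding care is to verify that the side conditions ($k+1<j$ in Corollary~\ref{026}(\ref{110}) and~(\ref{111}), and $|i|<k$ in Corollary~\ref{026}(\ref{112})) remain valid at every intermediate stage of the recursion, and to confirm that the two conjugation directions coincide so that both the $a$-form and the $\oa$-form (respectively $b$/$\ob$ and $d$/$\od$) are legitimate; both facts are immediate from the paired equalities already recorded in Corollary~\ref{026}.
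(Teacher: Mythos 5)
Your proposal is correct and is precisely the paper's argument: the paper's own proof is the single line ``It is obtained by applying Corollary \ref{026} inductively,'' and your write-up simply makes that induction explicit, with the right clauses of Corollary \ref{026} chosen for each part, the side conditions $k+1<j$ and $|i|<k$ verified along the recursion, and the paired $\sigma_k$/$\sigma_k^{-1}$ conjugations correctly identified as the source of the $a_{i,j},b_{i,j},d_k$ versus $\oa_{i,j},\ob_{i,j},\od_k$ forms.
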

\begin{proof}
It is obtained by applying Corollary \ref{026} inductively.
\end{proof}

Set $\tb_0=\sigma_0$, and $\tb_k=\sigma_k\tb_{k-1}\sigma_k$ for each $k\in[n-1]$.

\begin{rem}[Theta elements]\label{109}
Each $\tb_k$ is called a \emph{theta element} of the subgroup $\langle\sigma_0,\ldots,\sigma_k\rangle$ of $\BB_n^B$ which were introduced in \cite{ArPa19} to define left orders. The product of these elements, namely $\tb_0\tb_1\cdots\tb_{n-1}$, is the so-called  Garside element of $\BB_n^B$. Because of \ref{025} to \ref{034}, each  $\oth_k$  can be reached by conjugating $\th_k$ by a theta element, that is\begin{equation}\label{128}
\oth_k=\tb_{k-1}\th_k\tb_{k-1}^{-1}=\tb_{k-1}^{-1}\th_k\tb_{k-1},\qquad k\in[n-1].\end{equation}Furthermore, for every $i,j\in[n-1]$ with $i<j$, we have the following relations for the elements $a_{i,j}$ defined in \ref{029}\begin{equation}\label{117}a_{i,j}\tb_{j-2}^{-1}=\tb_{i-1}^{-1}\oa_{i,j}^{-1},\qquad\oa_{i,j}^{-1}\tb_{j-2}=\tb_{i-1}a_{i,j}.\end{equation}By \cite[Lemma 3.3(1)]{ArPa19}, we have $\tb_kg=g\tb_k$ for all $g\in\langle\sigma_0,\ldots,\sigma_{k-1},\sigma_{k+2},\ldots,\sigma_{n-1}\rangle$.
\end{rem}

\begin{crl}\label{030}
The set $T$ corresponds to the following relations in $T^BM$:
\begin{enumerate}
\item$\ve_{i,j}=a_{i,j}\th_{j-1}a_{i,j}^{-1}=\ob_{i,j}^{-1}\th_i\ob_{i,j}=\oa_{i,j}^{-1}\th_{j-1}\oa_{i,j}=b_{i,j}\th_ib_{i,j}^{-1}$ with $1\leq i<j$,\label{047}
\item$\ve_{-i,j}=a_{i,j}(\tb_{j-2}^{-1}\th_{j-1}\tb_{j-2})a_{i,j}^{-1}=\ob_{i,j}^{-1}(\tb_{i-1}\th_i\tb_{i-1}^{-1})\ob_{i,j}=\oa_{i,j}^{-1}(\tb_{j-2}\th_{j-1}\tb_{j-2}^{-1})\oa_{i,j}\\=b_{i,j}(\tb_{i-1}^{-1}\th_i\tb_{i-1})b_{i,j}^{-1}$ with $1\leq i<j$,\label{048}
\item$\ve_{-k,k}=d_k\th_0d_k^{-1}=\od_k^{-1}\th_0\od_k$ with $1\leq k$,\label{049}
\item$(\oa_{i,j}^{-1}\th_{j-1}\oa_{i,j})\sigma_k=\sigma_k(a_{i,j}\th_{j-1}a_{i,j}^{-1})$ if $(k,k+1)=(i,j)$ or $\{k,k+1\}\cap\{i,j\}=\emptyset$ with $k\geq0$ and $1\leq i<j$,\label{051}
\item$(\oa_{i,j}^{-1}\tb_{j-2}\th_{j-1}\tb_{j-2}^{-1}\oa_{i,j})\sigma_k=\sigma_k(a_{i,j}\tb_{j-2}^{-1}\th_{j-1}\tb_{j-2}a_{i,j}^{-1})$ if $(k,k+1)=(i,j)$ or $\{k,k+1\}\cap\{i,j\}=\emptyset$ with $k\geq0$ and $1\leq i<j$,\label{136}
\item$(\od_i^{-1}\th_0\od_i)\sigma_k=\sigma_k(d_i\th_0d_i^{-1})$ if $k=0$ or $k\geq1$ and $i\not\in\{k,k+1\}$.\label{053}
\end{enumerate}
\end{crl}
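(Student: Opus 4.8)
The plan is to recognise Corollary \ref{030} as a purely notational reformulation of the relation set $T$ already written out in Corollary \ref{026}: every relation listed there is to be re-expressed in terms of the abbreviations $\th_i$, $\oth_i$ and the theta elements $\tb_k$ introduced before the statement. Since all of these abbreviations were adjoined by Tietze transformations of type \tf{3}, substituting their defining identities into the relations of Corollary \ref{026} yields an equivalent presentation; hence it suffices to carry out the substitutions case by case. I would not reprove anything from scratch — the entire content sits in Lemma \ref{027}, the conjugation identity \ref{128} of Remark \ref{109}, and Corollary \ref{026}.

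First I would dispose of the expressions for the generators. Items (1) and (3) are literally Lemma \ref{027}(\ref{052})--(\ref{151}) and Lemma \ref{027}(5), so nothing is needed beyond citing them. For item (2) I would start from the four forms $\ve_{-i,j}=a_{i,j}\oth_{j-1}a_{i,j}^{-1}=\oa_{i,j}^{-1}\oth_{j-1}\oa_{i,j}$ and $\ve_{-i,j}=b_{i,j}\oth_i b_{i,j}^{-1}=\ob_{i,j}^{-1}\oth_i\ob_{i,j}$ of Lemma \ref{027}(\ref{152})--(\ref{153}), and then eliminate the bar-generators using \ref{128}, which gives $\oth_{j-1}=\tb_{j-2}\th_{j-1}\tb_{j-2}^{-1}=\tb_{j-2}^{-1}\th_{j-1}\tb_{j-2}$ and $\oth_i=\tb_{i-1}\th_i\tb_{i-1}^{-1}=\tb_{i-1}^{-1}\th_i\tb_{i-1}$. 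Choosing for each of the four forms the conjugate matching the displayed right-hand side reproduces exactly the four expressions in item (2).

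For the commutation relations I would take the two commuting relations of Corollary \ref{026}: relation (\ref{115}), $\ve_{i,j}\sigma_k=\sigma_k\ve_{i,j}$ under the stated index condition (which, being phrased through $|i|$, applies to $\ve_{i,j}$ and to $\ve_{-i,j}$ alike), and relation (\ref{116}) for $\ve_{-i,i}$. Into each I substitute the relevant expression from items (1), (2) or (3): item (4) comes from (\ref{115}) with the forms of item (1), item (5) from (\ref{115}) applied to $\ve_{-i,j}$ with the forms of item (2), and item (6) from (\ref{116}) with Lemma \ref{027}(5). The one point worth flagging is the deliberately asymmetric choice of forms on the two sides — the $\oa$-conjugate on the left and the $a$-conjugate on the right in items (4) and (5) — which is legitimate precisely because items (1) and (2) assert that both forms represent the same element $\ve_{i,j}$ (resp. $\ve_{-i,j}$); the asymmetry is simply what is convenient for the later elimination of the $\ve$-generators.

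The computations are routine, so the only genuine care is bookkeeping rather than any conceptual obstacle: one must keep the degenerate instances consistent with the general formulas (the case $j=i+1$, where $a_{i,j}=\oa_{i,j}=1$, and the cases $k=0$), and apply \ref{128} with the theta-element index matching the relevant block endpoint — $j-2$ when conjugating $\th_{j-1}$ and $i-1$ when conjugating $\th_i$. The remaining task is merely to verify that every case of Corollary \ref{026} is accounted for and that no $\ve$-generator survives the substitution, after which the equivalence of the two relation sets, and hence the identification of $T$ with (1)--(6), is immediate.
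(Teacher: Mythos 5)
Your proposal is correct and follows exactly the paper's own route: the paper proves Corollary \ref{030} as ``a direct consequence of Lemma \ref{027}, Corollary \ref{026} and Equation \ref{128}'', which are precisely the three ingredients you use — Lemma \ref{027}(1), (2), (5) for items (1) and (3), Lemma \ref{027}(3)--(4) combined with the conjugation identity \ref{128} to eliminate the $\oth$'s in item (2), and substitution of these expressions into the commuting relations \ref{026}(6)--(7) for items (4)--(6). Your case-by-case bookkeeping (including the choice of conjugate direction in \ref{128} and the asymmetric $\oa$/$a$ forms in items (4)--(5)) simply spells out what the paper leaves implicit.
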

\begin{proof}
It is a direct consequence of Lemma \ref{027}, Corollary \ref{026}  and Equation \ref{128}.
\end{proof}

\begin{rem}\label{058}
Lemma \ref{027} implies that each generator $\ve_{i,j}$ can be replaced by expressions with $\th$'s in all relations of $T^BM$, in particular in relations (PB1) to (PB5). Hence, as in Corollary \ref{030}, each $\ve_{i,j}$ appears at most once in the relations of $T^BM$. So, Proposition \ref{012} implies that we can remove every $\ve_{i,j}$ from the presentation of $T^BM$. Note that relations (PB1) to (PB5) were not removed from the presentation, however they appear with $\ve_{i,j}$ replaced by their expressions given in Lemma \ref{027}.  Moreover, via conjugation, every $\ve_{i,j}$ may be reached by $\th_1$ or $\th_0$, indeed, for $i,j,k\in[n]$ with $i<j$, we have:\[\ve_{i,j}=d_ib_{1,j}\th_1b_{1,j}^{-1}d_i^{-1},\quad\ve_{-i,j}=d_ib_{1,j}\sigma_0\th_1\sigma_0^{-1}b_{1,j}^{-1}d_i^{-1}\quad\text{and}\quad\ve_{-k,k}=d_k\th_0d_k^{-1}.\]
\end{rem}

Consider the following relations in the free monoid  $\left(\{\sigma_1,\ldots,\sigma_{n-1}\}\cup\{\th_0,\ldots,\th_{n-1}\}\right)^*$.\begin{align}
\sigma_i\sigma_j\th_i&=\th_j\sigma_i\sigma_j,&\mkern-54mu|i-j|&=1,\quad i,j\geq1,\label{137}\\
\sigma_i\sigma_j^{-1}\th_i&=\th_j\sigma_i\sigma_j^{-1},&\mkern-54mu|i-j|&=1,\quad i,j\geq1,\label{142}\\
\sigma_i\th_j&=\th_j\sigma_i,&\mkern-54mu|i-j|&\neq1,\label{139}\\
\sigma_1^2\th_0&=\th_0\sigma_1^2,\label{138}\\
\sigma_0^2\th_1&=\th_1\sigma_0^2,\label{143}\\
\th_0\sigma_1\sigma_0\sigma_1&=\sigma_1\sigma_0\sigma_1\th_0,\label{140}\\
\th_1\sigma_0\sigma_1\sigma_0&=\sigma_0\sigma_1\sigma_0\th_1\label{144}.
\end{align}
 
\begin{lem}\label{145}
Relations \ref{137} to \ref{144} hold in $T^BM$ and can be deduced from $T$.
\end{lem}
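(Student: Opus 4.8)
The plan is to derive each of the seven families \ref{137}--\ref{144} from the relations collected in Lemma \ref{024} (equivalently Corollary \ref{026} and Corollary \ref{030}), together with Lemma \ref{027}, Equation \ref{128} and the defining relations of $\BB_n^B$; since these reformulations of $T$ are all consequences of $T$, producing such derivations simultaneously shows that the seven families hold in $T^BM$ and lie in the congruence generated by $T$. I would organize them into three groups: the \emph{positive} relations \ref{137}, \ref{142} and \ref{139}, which involve only the type~A subsystem generated by $\sigma_1,\dots,\sigma_{n-1}$; the \emph{square} relations \ref{138} and \ref{143}; and the \emph{length--four} relations \ref{140} and \ref{144}, which are the genuinely type~B relations.

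For the positive relations I would argue exactly as in Proposition \ref{104}. The crucial observation is that on the generators $\ve_{i,j}$ with $1\le i<j$, the action of $t_k$ for $k\ge 1$ given by \ref{025} coincides with the type~A action \ref{100} under $\ve_{i,j}\leftrightarrow\mu_{i,j}$, $\th_i\leftrightarrow\eta_i$; hence $(\sigma_1,\dots,\sigma_{n-1};\th_1,\dots,\th_{n-1})$ is a copy of the type~A data and Proposition \ref{104} applies verbatim, yielding \ref{137}, \ref{142} and the part of \ref{139} with $i,j\ge 1$ (the case $i=j$ being Lemma \ref{024}(6) applied to $\ve_{i,i+1}$). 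The remaining cases of \ref{139} in which one index is $0$ come directly from Lemma \ref{024}: $\sigma_0\th_j=\th_j\sigma_0$ for $j\ge 2$ is clause (5) applied to $\th_j=\ve_{j,j+1}$ (since $|j|>1$), while $\sigma_i\th_0=\th_0\sigma_i$ for $i\ge 2$ and $\sigma_0\th_0=\th_0\sigma_0$ are clause (7) applied to $\th_0=\ve_{-1,1}$.

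The square relations are immediate specializations. Taking $k=2$ in Corollary \ref{030}(\ref{049}) gives $\ve_{-2,2}=\sigma_1\th_0\sigma_1^{-1}=\sigma_1^{-1}\th_0\sigma_1$, and equating the two conjugates yields $\sigma_1^2\th_0=\th_0\sigma_1^2$, which is \ref{138}. Likewise, Equation \ref{128} at $k=1$ (equivalently Corollary \ref{030}(\ref{048}) at $(i,j)=(1,2)$) gives $\oth_1=\sigma_0\th_1\sigma_0^{-1}=\sigma_0^{-1}\th_1\sigma_0$, and equating the two conjugates yields $\sigma_0^2\th_1=\th_1\sigma_0^2$, which is \ref{143}.

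Finally, I would obtain the length--four relations by pushing the $\sigma$'s through one at a time using Lemma \ref{024}. For \ref{140},
\[
\th_0\sigma_1\sigma_0\sigma_1=\sigma_1\ve_{-2,2}\sigma_0\sigma_1=\sigma_1\sigma_0\ve_{-2,2}\sigma_1=\sigma_1\sigma_0\sigma_1\th_0,
\]
using clause (4), then clause (7) with $k=0$, then clause (4) again; and for \ref{144}, with $\oth_1=\ve_{-1,2}$,
\[
\th_1\sigma_0\sigma_1\sigma_0=\sigma_0\oth_1\sigma_1\sigma_0=\sigma_0\sigma_1\oth_1\sigma_0=\sigma_0\sigma_1\sigma_0\th_1,
\]
using clause (5), then clause (6), then clause (5). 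The step I expect to be the main obstacle is the middle move in each chain: there the composite signed permutation fixes the relevant block — the zero block $\{-2,2\}$ in the first chain and the non-zero block of $\oth_1$ in the second — which is precisely the overlapping case $\{k,k+1\}=\{|i|,j\}$ of the action \ref{025}, where a careless ``move a single endpoint'' reading would change both endpoints at once and produce a wrong generator. The care needed is exactly in recognizing that $\ve_{-2,2}$ commutes with $\sigma_0$ and that $\oth_1$ commutes with $\sigma_1$; once these two commutations are justified, all seven families are exhibited as consequences of $T$ together with the relations of $\BB_n^B$, as asserted.
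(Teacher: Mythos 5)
Your proof is correct: every step you take is an instance of a relation listed in Lemma \ref{024} (hence of $T$), and the two commutations you flag as the delicate points, $\ve_{-2,2}\sigma_0=\sigma_0\ve_{-2,2}$ (Lemma \ref{024}(7) with $k=0$) and $\oth_1\sigma_1=\sigma_1\oth_1$ (Lemma \ref{024}(6), the case $\{k,k+1\}=\{|i|,j\}$), are exactly the fixed-block cases of the action \ref{025}--\ref{034}. The difference with the paper is in the routing. The paper proves all seven families by naming specializations of Corollary \ref{030}: relations \ref{137} and \ref{142} from Corollary \ref{030}(\ref{047}) at $j=i+2$; relation \ref{139} from Corollary \ref{030}(\ref{051}) and Corollary \ref{030}(\ref{053}); relations \ref{138} and \ref{143} exactly as you obtain them; relation \ref{140} from Corollary \ref{030}(\ref{053}) at $(k,i)=(0,2)$; and relation \ref{144} from Corollary \ref{030}(\ref{136}) at $(i,j,k)=(1,2,1)$ combined with \ref{143}. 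You instead (i) import \ref{137}, \ref{142} and the part of \ref{139} with $i,j\geq1$ wholesale from Proposition \ref{104}, justified by the observation that the generators $\sigma_k$ and $\ve_{i,j}$ with $k,i,j\geq1$ carry exactly the type A action \ref{100}, handling the index-$0$ cases of \ref{139} directly from Lemma \ref{024}(5)--(7); and (ii) derive the length-four relations \ref{140} and \ref{144} by explicit three-step chains through the auxiliary generators $\ve_{-2,2}$ and $\oth_1=\ve_{-1,2}$. Point (i) is the same computation as the paper's, only cited rather than redone in place. Point (ii) is genuinely more elementary: your chains stay inside the monoid relations of Lemma \ref{024}, use no inverses, and, unlike the paper's derivation of \ref{144}, need neither \ref{143} nor the theta elements $\tb_k$ as auxiliary input. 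What the paper's version buys in exchange is that its specializations of Corollary \ref{030} are precisely the ones reused and extended in the proof of Proposition \ref{057}, so its bookkeeping sets up the harder converse direction, while your verifications are self-contained one-off checks.
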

\begin{proof}
Relations \ref{137} and \ref{142} are obtained by taking $j=i+2$ in Corollary \ref{030}(\ref{047}). We get \ref{139} by taking $(k,i)=(0,1)$ in Corollary \ref{030}(\ref{053}) and $j=i+1$ in Corollary \ref{030}(\ref{051}). Relation \ref{138} is obtained from Corollary \ref{030}(\ref{049}) with $k=2$. We get relation \ref{143} by taking $(i,j)=(1,2)$ in the second equality of Corollary \ref{030}(\ref{048}). We obtain relation \ref{140} by taking by taking $(k,i)=(0,2)$ in Corollary \ref{030}(\ref{053}). By using \ref{143} and taking $(i,j,k)=(1,2,1)$ in Corollary \ref{030}(\ref{136}) we get relation \ref{144}.
\end{proof}

Relation (BS2) together with \ref{142} and \ref{139} imply the following relations:
\begin{align}
\th_i\sigma_j^{-1}\sigma_i&=\sigma_j^{-1}\sigma_i\th_j,&|i-j|&=1,\quad i,j\geq1,\label{146}\\
\sigma_i^2\th_j&=\th_j\sigma_i^2,&|i-j|&=1,\quad i,j\geq1.\label{141}
\end{align}

\begin{pro}\label{057}
The congruence $\overline{T}$ is generated by the relations \ref{137} to \ref{144}.
\end{pro}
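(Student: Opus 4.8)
The inclusion $\{\ref{137},\ldots,\ref{144}\}\subseteq\overline{T}$ is exactly Lemma \ref{145}, so the plan is to establish the reverse inclusion: every pair of $T$ is a consequence of the relations \ref{137}--\ref{144} (working, as throughout, modulo the braid relations (BB1)--(BB3) and the invertibility relations (BB0)). After the Tietze elimination of the generators $\ve_{i,j}$ carried out in Remark \ref{058}, Corollary \ref{030} shows that $T$ is equivalent to the six families \ref{047}, \ref{048}, \ref{049}, \ref{051}, \ref{136} and \ref{053}; hence it suffices to derive each of these from \ref{137}--\ref{144}.

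First I would dispose of the positive-index families \ref{047} and \ref{051}. These involve only $\sigma_1,\ldots,\sigma_{n-1}$ and $\th_1,\ldots,\th_{n-1}$ and are literally the type-A action relations, so they follow from \ref{137}, \ref{142} and \ref{139} by the very telescoping argument already used in the proof of Proposition \ref{104}: one checks the base case $j=i+2$ (which is precisely \ref{137} and \ref{142}) and then pushes $\th_{j-1}$ across the words $a_{i,j},\oa_{i,j},b_{i,j},\ob_{i,j}$ by induction on $j-i$, using \ref{139} to commute $\th$ past the non-adjacent $\sigma$'s. The consequences \ref{146} and \ref{141} established just above streamline the handling of the inverse letters occurring in \ref{142}.

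The substance of the proof lies in the families \ref{048}, \ref{049}, \ref{136} and \ref{053}, which carry the index $0$, the generator $\sigma_0$ and the conjugates $\oth_k$. Here the extra ingredients are Equation \ref{128}, which rewrites $\oth_k=\tb_{k-1}\th_k\tb_{k-1}^{-1}$ in terms of theta elements, the commutation $\tb_kg=g\tb_k$ for $g\in\langle\sigma_0,\ldots,\sigma_{k-1},\sigma_{k+2},\ldots,\sigma_{n-1}\rangle$ recalled in Remark \ref{109}, and Equation \ref{117}, which moves theta elements past the words $a_{i,j}$ and $\oa_{i,j}$. Using \ref{128} I would reduce every conjugation by $\oth_k$ to a conjugation by $\th_k$ together with commutations involving the $\tb$'s; the crossings of $\sigma_0$ that then remain are resolved by the type-B relations \ref{140} and \ref{144} (the images of (BB1)) and the quadratic relations \ref{138} and \ref{143}. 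For family \ref{049} the claimed equality $d_k\th_0d_k^{-1}=\od_k^{-1}\th_0\od_k$ amounts to $[\th_0,\od_kd_k]=1$, where $\od_kd_k$ is a pure braid; I would prove it by induction on $k$, the base case $k=2$ being \ref{138} and the step using $[\th_0,\sigma_0]=1$ together with $[\th_0,\sigma_i]=1$ for $i\geq2$ (both instances of \ref{139}) and the braid relations to reduce the outer letters to squares.

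The main obstacle is precisely this index-$0$ bookkeeping. Whereas the positive-index relations are a verbatim transcription of the type-A case, the relations at $\sigma_0$ genuinely require the braid-$4$ relations \ref{140}, \ref{144} and the quadratic relations \ref{138}, \ref{143}; the delicate points are the agreement of the two conjugate expressions for $\ve_{-i,j}$ in \ref{048} and for $\ve_{-k,k}$ in \ref{049}, where one must commute a theta element past a long braid word by means of Remark \ref{109} and \ref{117} without ever having the single-letter commutation $[\th_0,\sigma_1]=1$ available. I expect the cleanest organization to be a double induction — on $j-i$ for families \ref{048} and \ref{136}, and on $k$ for families \ref{049} and \ref{053} — with the relations \ref{140}, \ref{144}, \ref{138} and \ref{143} furnishing the base cases and the relations \ref{139}, \ref{117} and \ref{128} powering the inductive steps, in the same spirit as the reduction carried out in Proposition \ref{104}.
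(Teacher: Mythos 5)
Your proposal is correct and takes essentially the same route as the paper: Lemma \ref{145} for one inclusion, then deriving the six families of Corollary \ref{030} from \ref{137}--\ref{144}, with the positive-index families handled by the Proposition~\ref{104} telescoping (via \ref{146}, \ref{141}) and the index-$0$ families via \ref{117}, \ref{128} and inductions anchored at \ref{138}, \ref{140}, \ref{143}, \ref{144}. The one step your outline leaves implicit but the paper works out explicitly --- the commutation $\tb_{i-1}^2\th_i=\th_i\tb_{i-1}^2$, obtained by decomposing $\tb_i^2$ and inducting with \ref{143} as base --- is precisely the induction you sketch, so the plan goes through.
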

\begin{proof}
Because of Lemma \ref{145}, it is enough to show that the relations in Corollary \ref{030} can be deduced from \ref{137} to \ref{144}. Corollary \ref{030}(\ref{047}) is equivalent to:\[\begin{array}{rcccl}
\ob_{i,j}a_{i,j}\th_{j-1}&=&(\sigma_{i+1}\sigma_i)\cdots(\sigma_{j-1}\sigma_{j-2})\th_{j-1}&=&\th_i\ob_{i,j}a_{i,j},\\
\oa_{i,j}\ob_{i,j}^{-1}\th_i&=&(\sigma_{j-2}\sigma_{j-1}^{-1})\cdots(\sigma_i\sigma_{i+1}^{-1})\th_i&=&\th_{j-1}\oa_{i,j}\ob_{i,j}^{-1},\\
\oa_{i,j}b_{i,j}\th_i&=&(\sigma_{j-2}\sigma_{j-1})\cdots(\sigma_i\sigma_{i+1})\th_i&=&\th_{j-1}\oa_{i,j}b_{i,j}.
\end{array}\]which can be deduced from \ref{137} and \ref{142}. We will study now Corollary \ref{030}(\ref{048}). Equation \ref{117} and the fact that $b_{i,j}\tb_{i-1}=\tb_{i-1}b_{i,j}$ and $\ob_{i,j}\tb_{i-1}=\tb_{i-1}\ob_{i,j}$ for all $i,j\in[n]$ with $i<j$ imply that the relations obtained from Corollary \ref{030}(\ref{048}) are:\begin{equation}\label{054}\tb_{i-1}^2\ob_{i,j}^{-1}\th_i\ob_{i,j}=\oa_{i,j}^{-1}\th_{j-1}\oa_{i,j}\tb_{i-1}^2,\quad\tb_{i-1}^2a_{i,j}\th_{j-1}a_{i,j}^{-1}=b_{i,j}\th_ib_{i,j}^{-1}\tb_{i-1}^2.\end{equation}In particular, for $j=i+1$ we obtain $\tb_{i-1}^2\th_i=\th_i\tb_{i-1}^2$. By applying this relations together with \ref{137} in \ref{054} we get the following additional relations that can be deduced from \ref{146}.\[a_{i,j}^{-1}b_{i,j}\th_i=(\sigma_{j-2}^{-1}\sigma_{j-1})\cdots(\sigma_i^{-1}\sigma_{i+1})\th_i=\th_{j-1}a_{i,j}^{-1}b_{i,j}.\] 

To finish with Corollary \ref{030}(\ref{048}) we need to show that $\tb_{i-1}^2\th_i=\th_i\tb_{i-1}^2$ can be deduced from \ref{137} to \ref{144}. Observe that \cite[Lemma 3.3(1)]{ArPa19} implies $\tb_{i-1}\sigma_i\tb_{i-1}\sigma_i=\sigma_i\tb_{i-1}\sigma_i\tb_{i-1}$. So, each $\tb_i^2$ can be decomposed in $M$ as follows\[\tb_i^2=\sigma_i\tb_{i-1}\sigma_i\tb_{i-1}^{-1}\tb_{i-1}\sigma_i\tb_{i-1}\sigma_i=\tb_{i-1}^{-1}\sigma_i\sigma_{i+1}^{-1}\tb_{i-1}^2\sigma_{i+1}\sigma_i\tb_{i-1}\sigma_i^2.\]

A simple induction with \ref{143} as hypothesis together with \ref{137} and \ref{142} imply that $\tb_{i-1}^2\th_i=\th_i\tb_{i-1}^2$. Now, Corollary \ref{030}(\ref{049}) can be rewritten as $\od_kd_k\th_0=\th_0\od_kd_k$ which is deduced from the fact that $\od_kd_k=\sigma_2^{-1}\cdots\sigma_k^{-1}\od_{k-1}d_{k-1}\sigma_k\cdots \sigma_2\sigma_1^2$ by using \ref{139} and \ref{138}. We study now Corollary \ref{030}(\ref{051}); this relation can be rewritten as $\th_{j-1}\oa_{i,j}\sigma_ka_{i,j}=\oa_{i,j}\sigma_ka_{i,j}\th_{j-1}$ which can be also deduced from \ref{137} to \ref{139} together with \ref{141}. Indeed, we have $\th_{j-1}\oa_{i,j}a_{i,j}=\oa_{i,j}a_{i,j}\th_{j-1}$ because\[\oa_{i,j}a_{i,j}=\sigma_{j-2}\sigma_{j-1}\cdots\sigma_{i+1}\sigma_{i+2}\sigma_i^2\sigma_{i+2}^{-1}\sigma_{i+1}\cdots\sigma_{j-1}^{-1}\sigma_{j-2}.\]If $k<i-1$ or $k>j$ we are done because $\sigma_k\oa_{i,j}=\oa_{i,j}\sigma_k$ and $\sigma_ka_{i,j}=a_{i,j}\sigma_k$. Otherwise,\[\th_{j-1}\oa_{i,j}\sigma_ka_{i,j}=\th_{j-1}\oa_{i,j}a_{i,j}\sigma_{k-1}=\oa_{i,j}a_{i,j}\th_{j-1}\sigma_{k-1}=\oa_{i,j}a_{i,j}\sigma_{k-1}\th_{j-1}=\oa_{i,j}\sigma_ka_{i,j}\th_{j-1}.\]By using \ref{117} the relation Corollary \ref{030}(\ref{136}) corresponds to:\begin{align}\label{007}\oa_{i,j}\tb_{i-1}\sigma_k^{-1}\tb_{i-1}a_{i,j}\th_{j-1}&=\th_{j-1}\oa_{i,j}\tb_{i-1}\sigma_k^{-1}\tb_{i-1}a_{i,j}&j>i+1,&\\
\tb_{i-1}\sigma_k^{-1}\tb_{i-1}\th_i&=\th_i\tb_{i-1}\sigma_k^{-1}\tb_{i-1}&j=i+1.\label{147}\end{align}Note that \ref{147} and the fact that $a_{i,j}=a_{i+1,j+1}^{-1}\sigma_{i+1}\sigma_i\cdots\sigma_{j+1}\sigma_j$ imply \ref{007}. So, it is enough to deduce \ref{147}. If $\{i,j\}\cap\{k,k+1\}=\emptyset$ it is obtained because of \ref{054} and \cite[Lemma 3.3(1)]{ArPa19}. If $(i,j)=(k,k+1)$, \ref{054} with $j=i+1$ implies that \ref{147} can be written as $\tb_{i-1}\sigma_i\tb_{i-1}\th_i=\th_i\tb_{i-1}\sigma_i\tb_{i-1}$ which is deduced from \ref{137}  and \ref{143} as\[\tb_{i-1}\sigma_i\tb_{i-1}\th_i=\sigma_{i-1}\sigma_i\cdots\sigma_1\sigma_2\sigma_0^2\sigma_2\sigma_1\cdots \sigma_i\sigma_{i-1}\th_i=\th_i\tb_{i-1}\sigma_i\th_{i-1}.\]Finally, we study Corollary \ref{030}(\ref{053}). If $k=0$ we have\[\od_i\sigma_0d_i=\sigma_1\sigma_0\sigma_1\sigma_2\cdots\sigma_i\od_{i-1}d_{i-1}\sigma_i^{-1}\cdots\sigma_2^{-1}.\]If $i<k$ then $\od_i\sigma_kd_i=\sigma_k\od_id_i$, and if $0<k<i-1$ we have $\od_i\sigma_kd_i=\sigma_{k+1}\od_id_i$. So, by using \ref{138} and \ref{140} we obtain the relations. Thus, the proof is concluded.
\end{proof}

Now, as we remove generators $\ve$'s and add $\th$'s (Remark \ref{058}), we need to rewrite relations of $P_n^B$ in terms of these generators. Lemma \ref{027} implies that (PB1) can be deduced from the involution of $\th$'s. Note that (PB5) becomes trivial. Relations (PB2), (PB3) and (PB4) are studied, respectively, in Lemma \ref{129}, Lemma \ref{154} and Remark \ref{059}.

\begin{lem}\label{129}
Relations in \normalfont{(PB2)} can be obtained from commutativity of $\th$'s and the relation $\ve_{i,j}\ve_{r,s}=\ve_{r,s}\ve_{i,j}$ with $(i,j,r,s)\in\{(-1,2,1,2),(-1,1,-2,2)\}$.
\end{lem}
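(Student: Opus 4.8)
The plan is to eliminate the generators $\ve_{i,j}$ in favour of the $\th$'s by means of the conjugation formulas collected in Remark~\ref{058}: each $\ve_{i,j}$ with $1\le i<j$ equals $d_ib_{1,j}\th_1b_{1,j}^{-1}d_i^{-1}$, each $\ve_{-i,j}$ equals $d_ib_{1,j}\sigma_0\th_1\sigma_0^{-1}b_{1,j}^{-1}d_i^{-1}$, and each zero generator $\ve_{-k,k}$ equals $d_k\th_0d_k^{-1}$. These identities are consequences of the action relations \ref{137}--\ref{144} (equivalently of $\overline{T}$, by Proposition~\ref{057}), hence are freely available. Since conjugating the equation $\ve_{i,j}\ve_{r,s}=\ve_{r,s}\ve_{i,j}$ by any word in the $\sigma$'s preserves it, and since the shift relations of Corollary~\ref{026} realise such a conjugation as a mere re-indexing of the generators, I would reduce an arbitrary instance of (PB2) to a short list of canonical pairs, organised by the combinatorial type of the two mirror pairs of blocks $\{i,j\}$ and $\{r,s\}$.

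First I would dispose of the instances that collapse onto commutativity of the $\th$'s. When the two blocks have disjoint underlying strand sets, the two conjugators separate and the relation becomes $\th_a\th_b=\th_b\th_a$, i.e.\ (TB2). When both blocks are non-zero and positive but overlap --- the model being $\ve_{1,2}=\th_1$ against $\ve_{1,3}=\sigma_2\th_1\sigma_2^{-1}$ --- I would push the conjugating $\sigma$'s through with \ref{137}, \ref{139} and \ref{141} and then apply (TB2); this mirrors exactly the derivation of commutativity of the $\mu$'s from that of the $\eta$'s in Remark~\ref{020}. A zero block set against a non-zero block likewise reduces to (TB2), using in addition that $\th_0$ commutes with $\sigma_0$ (Corollary~\ref{026}(\ref{116})).

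The two genuinely new instances come from the remaining two configurations. If the blocks share their underlying strand pair but carry opposite signs there, conjugation by $w_{i,j}:=d_ib_{1,j}$ carries the pair to $(\th_1,\oth_1)=(\ve_{1,2},\ve_{-1,2})$, so the relation is equivalent to the instance $(i,j,r,s)=(-1,2,1,2)$; writing $\oth_1=\sigma_0\th_1\sigma_0^{-1}$ and using \ref{143}, this is precisely the first relation of (TB7). If both blocks are zero blocks, conjugation by $d_k$ carries the pair to $(\th_0,\ve_{-2,2})=(\ve_{-1,1},\ve_{-2,2})$, i.e.\ the instance $(i,j,r,s)=(-1,1,-2,2)$, which by \ref{138} unwinds to the second relation of (TB7). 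Hence every instance of (PB2) follows from (TB2) together with these two.

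The main obstacle is \emph{completeness}: making sure that no configuration needs more than (TB2) and the two listed instances. In practice this amounts to checking that each conjugation respects the index side-conditions in Corollary~\ref{026} (and in particular never crosses the $\sigma_0$ boundary illegitimately), and that the mixed overlaps --- a non-zero block meeting a zero block, or two non-zero blocks sharing a single strand across a sign change --- are pushed by the action relations onto one of the three canonical pairs rather than producing a fresh relation. The most delicate point is the opposite-sign overlap, where one must verify that the residual relation is exactly $(-1,2,1,2)$ and not a longer $\sigma_0$-conjugated variant.
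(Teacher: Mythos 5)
Your proposal is correct and takes essentially the same route as the paper's proof: exploit the symmetry $\ve_{-j,-i}=\ve_{i,j}$ and conjugation by explicit braid words (available through the action relations of Corollary \ref{026}/Proposition \ref{057}) to re-index an arbitrary instance of (PB2) to a canonical one, with every configuration except two collapsing onto commutativity of the $\th$'s as in Remark \ref{020}, the exceptions being exactly the same-pair/opposite-sign case, carried by $d_ib_{1,j}$ onto $(-1,2,1,2)$, and the two-zero-blocks case, carried onto $(-1,1,-2,2)$. The ``completeness'' step you defer is precisely the body of the paper's proof --- an explicit table of conjugating words for each sign/overlap configuration, including theta-element conjugations such as $\text{(PB2)}(-1,1,-2,3)=\tb_1\text{(PB2)}(-1,1,2,3)\tb_1^{-1}$ for the mixed cases you flag as delicate --- and those checks go through exactly as you anticipate.
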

\begin{proof}
Denote by (PB2)$(i,j,r,s)$ the relation $\ve_{i,j}\ve_{r,s}=\ve_{r,s}\ve_{i,j}$ such that $i,j,r,s\in[\pm n]$ with $i<j$ and $r<s$. Note that (PB2)$(i,j,r,s)$ is equivalent to (PB2)$(r,s,i,j)$. Further, since $\ve_{-j,-i}=\ve_{i,j}$ for all $i,j\in[\pm n]$ with $i<j$, then (PB2)$(i,j,r,s)$ is equivalent to (PB2)$(i,j,-s,-r)$, (PB2)$(-j,-i,r,s)$ and (PB2)$(-j,-i,-s,-r)$. So, we may assume that $j,s\geq1$. Let $i,j,r,s\in[n]$ such that $i<j$ and $r<s$. The relation (PB2)$(i,j,r,s)$ can be deduced from commutativity of $\th$'s as in Remark \ref{020}. The relation (PB2)$(-i,j,r,s)$ is equivalent to the following:\[\left\{\begin{array}{ll}
d_ib_{1,j}\text{(PB2)}(-1,2,1,2)(d_ib_{1,j})^{-1}&\text{if }r=i,\,\,s=j,\\
d_ib_{1,j}\sigma_1\sigma_0\text{(PB2)}(1,2,2,s)(d_ib_{1,j}\sigma_1\sigma_0)^{-1}&\text{if }r=i,\,\,s>j,\\
d_ib_{1,j}\sigma_1\sigma_0\text{(PB2)}(1,2,2,s+1)(d_ib_{1,j}\sigma_1\sigma_0)^{-1}&\text{if }r=i,\,\,s<j,\\
d_i\sigma_0\text{(PB2)}(1,j,r,r+1)(d_i\sigma_0)^{-1}&\text{if }r<i=s,\\
d_i\sigma_0\text{(PB2)}(1,j,r+1,s+1)(d_i\sigma_0)^{-1}&\text{if }r<i>s,\\
d_i\sigma_0\text{(PB2)}(1,j,r+1,s)(d_i\sigma_0)^{-1}&\text{if }r<i<s,\\
d_i\sigma_0\text{(PB2)}(1,j,r,s)(d_i\sigma_0)^{-1}&\text{if }r>i.
\end{array}\right.\]Similarly, the relation (PB2)$(-i,j,-r,s)$ can be written as one of the relations above\[\left\{\begin{array}{ll}
d_i\sigma_0\text{(PB2)}(1,j,1,s)(d_i\sigma_0)^{-1}&\text{if }r=i,\\
d_i\sigma_0\text{(PB2)}(1,j,-r,s)(d_i\sigma_0)^{-1}&\text{if }r>i,\\
d_r\sigma_0\text{(PB2)}(-i,j,1,s)(d_i\sigma_0)^{-1}&\text{if }r<i.
\end{array}\right.\]Relation (PB2)$(-i,i,\pm r,s)$ can be deduced from the commutativity of $\th$'s, because\[\left\{\begin{array}{ll}
d_ib_{1,r}b_{2,s}\text{(PB2)}(-1,1,\pm2,3)(d_ib_{1,r}b_{2,s})^{-1}&\text{if }i<r<s,\\
d_ib_{1,r+1}b_{2,s}\text{(PB2)}(-1,1,\pm2,3)(d_ib_{1,r+1}b_{2,s})^{-1}&\text{if }r<i<s,\\
d_ib_{1,r+1}b_{2,s+1}\text{(PB2)}(-1,1,\pm2,3)(d_ib_{1,r+1}b_{2,s+1})^{-1}&\text{if }r<s<i,\\
d_ib_{1,s}\text{(PB2)}(-1,1,\pm1,2)(d_ib_{1,s})^{-1}&\text{if }i=r<s,\\
d_ib_{1,r}b_{2,r+1}\text{(PB2)}(-1,1,\pm2,3)(d_ib_{1,r}b_{2,r+1})^{-1}&\text{if }r<s=i.\end{array}\right.\]where\[\begin{array}{rcl}\text{(PB2)}(-1,1,-2,3)=\tb_1\text{(PB2)}(-1,1,2,3)\tb_1^{-1},\\
\text{(PB2)}(-1,1,-1,2)=\tb_0\text{(PB2)}(-1,1,1,2)\tb_0^{-1}.\end{array}\]Finally, we have (PB2)$(-i,i,-j,j)=d_ib_{1,j}\text{(PB2)}(-1,1,-2,2)(d_ib_{1,j})^{-1}$.
\end{proof}

\begin{lem}\label{154}
Relations in {\normalfont(PB3)} can be obtained from $\ve_{i,j}\ve_{i,k}=\ve_{i,j}\ve_{j,k}=\ve_{i,k}\ve_{j,k}$ such that $(i,j,k)\in\{(1,2,3), (-1,1,2),(-2,1,2)\}$.
\end{lem}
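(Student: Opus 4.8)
The plan is to mirror the argument used for \textbf{(PB2)} in Lemma \ref{129}. I would first denote by \textrm{(PB3)}$(i,j,k)$ the relation $\ve_{i,j}\ve_{i,k}=\ve_{i,j}\ve_{j,k}=\ve_{i,k}\ve_{j,k}$ for $i<j<k$ in $[\pm n]$, and exploit the involution $\ve_{a,b}=\ve_{-b,-a}$ (relation \textrm{(PB5)}) to reduce the number of cases. Applying this involution to all three factors shows that \textrm{(PB3)}$(i,j,k)$ is equivalent to \textrm{(PB3)}$(-k,-j,-i)$, so I may assume $k\geq1$. A triple $i<j<k$ contains at most one \emph{zero pair} (a pair $\{x,y\}$ with $y=-x$), since two zero pairs among three indices would force two of the indices to coincide. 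This splits the argument into three structural cases according to which pair, if any, is a zero pair: none, $\{i,j\}$ (that is $i=-j$), or $\{i,k\}$ (that is $i=-k$); the remaining possibility $\{j,k\}=\{-k,k\}$ reduces to the $\{i,j\}$ case after one further application of the involution.

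The key mechanism is conjugation by braid words. The three distinguished triples $(1,2,3)$, $(-1,1,2)$, $(-2,1,2)$ involve respectively the generators $\ve_{1,2},\ve_{1,3},\ve_{2,3}$, then $\ve_{-1,1},\ve_{-1,2},\ve_{1,2}$, then $\ve_{-2,1},\ve_{-2,2},\ve_{1,2}$; in each case these are the three generators $\ve_{w(1),w(2)},\ve_{w(1),w(3)},\ve_{w(2),w(3)}$ for a suitable signed permutation $w$. For a target triple I would produce $w\in\SS_n^B$ fixing the appropriate standard data: in the no-zero-pair case I ask $w(1)=i,\,w(2)=j,\,w(3)=k$, while in the two zero cases it suffices to impose $w(1)=j,\,w(2)=k$ (the value on the zero pair then follows from $w(-m)=-w(m)$). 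Such a $w$ exists exactly because the absence of a \emph{second} zero pair guarantees that $i,j,k,-i,-j,-k$ are pairwise distinct, so the prescribed values extend to a signed permutation. Choosing any braid word $g$ with $\pi(g)=w$ (possible since $\pi$ is surjective) and conjugating, the action relations in $T$---concretely the conjugation formulas of Lemma \ref{027} together with Corollary \ref{026} applied generator by generator along $g$---give $g\,\ve_{a,b}\,g^{-1}=\ve_{w(a),w(b)}$ for all three pairs simultaneously. Hence $g\,(\text{standard \textrm{(PB3)}})\,g^{-1}$ is exactly \textrm{(PB3)}$(i,j,k)$, so the latter is a consequence of the corresponding distinguished relation.

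Concretely, in the no-zero-pair case I transport $(1,2,3)$, using the explicit conjugators $d_i$, $b_{1,j}$, $\sigma_0$ and the theta elements $\tb_k$ recorded in Remark \ref{058} to realize the required sign changes; in the $\{i,j\}$-zero case $(-j,j,k)$ I transport $(-1,1,2)$; and in the $\{i,k\}$-zero case $(-k,j,k)$ I transport $(-2,1,2)$. A single conjugator handles all three factors at once because conjugation by $g$ acts uniformly as $\rho_g^{\pm1}$, which is precisely why it is enough to exhibit $w$ and not to rewrite each generator separately.

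The main obstacle I anticipate is bookkeeping rather than conceptual: verifying that the chosen $w$ maps the three standard pairs onto $\{i,j\},\{i,k\},\{j,k\}$ with the correct internal ordering, so that the reorderings introduced by $\ve_{x,y}=\ve_{-y,-x}$ land on the intended generators, and checking the boundary subcases in which indices are adjacent or a sign flip must be routed through $\sigma_0$ and a theta element $\tb_k$. These are exactly the manipulations carried out in the proof of Lemma \ref{129}, and I would organize them into the same kind of case table, each line displaying \textrm{(PB3)}$(i,j,k)$ as $g\,(\text{standard})\,g^{-1}$ for an explicit $g$ assembled from the $d$'s, the $b$'s, $\sigma_0$ and the $\tb_k$.
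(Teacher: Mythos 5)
Your proposal is correct and takes essentially the same route as the paper: reduce the index set using the involution (PB5), then transport the three distinguished relations to arbitrary triples by conjugation with braid words, the conjugation formulas being consequences of the action relations (Corollary \ref{026}, Lemma \ref{027}, Remark \ref{058}). The only difference is organizational: the paper exhibits explicit conjugators ($\tb_{i-1}$, $d_ib_{1,j}b_{2,k}\sigma_1\sigma_0$, $d_ib_{1,j}b_{1,k}\sigma_1\sigma_0$, $d_ib_{1,j}$) in six sign cases, whereas you argue uniformly via the existence of a signed permutation $w$ with prescribed values on $1,2,3$ together with surjectivity of $\pi$, which is the same mechanism stated abstractly.
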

\begin{proof}
Denote by (PB3)$(i,j,k)$ the relation $\ve_{i,j}\ve_{i,k}=\ve_{i,j}\ve_{j,k}=\ve_{i,k}\ve_{j,k}$ with $i,j,k\in[\pm n]$ and $i<j<k$. Since $\ve_{-j,-i}=\ve_{i,j}$ for all $i,j\in[\pm n]$ with $i<j$, then (PB3)$(i,j,k)$ is equivalent to (PB3)$(-k,-j,-i)$, so, we may assume that $j,k\geq1$. Let $i,j,k\in[n]$ with $i<j<k$. The relation (PB3)$(i,j,k)$ can be obtained from (PB3)$(1,2,3)$ as in Remark \ref{020}. Relation (PB3)$(-i,j,k)$ is deduced from (PB3)$(1,2,3)$ as well, because\[\text{(PB3)}(-i,j,k)=\tb_{i-1}\text{(PB3)}(i,j,k)\tb_{i-1}^{-1}.\] 
Similarly, (PB3)$(-j,i,k)$ and (PB3)$(-k,i,j)$ can be deduced  from (PB3)$(1,2,3)$, because \[\begin{array}{rcl}
\text{(PB3)}(-j,i,k)&=&d_ib_{1,j}b_{2,k}\sigma_1\sigma_0\text{(PB3)}(1,2,3)(d_ib_{1,j}b_{2,k}\sigma_1\sigma_0)^{-1},\\
\text{(PB3)}(-k,i,j)&=&d_ib_{1,j}b_{1,k}\sigma_1\sigma_0\text{(PB3)}(1,2,3)(d_ib_{1,j}b_{1,k}\sigma_1\sigma_0)^{-1}.
\end{array}\]Finally, (PB3)$(-i,i,j)$ and (PB3)$(-j,i,j)$ are obtained from (PB3)$(-1,1,2)$ and (PB3)$(-2,1,2)$ respectively, because\[\begin{array}{rcl}
\text{(PB3)}(-i,i,j)&=&d_ib_{1,j}\text{(PB3)}(-1,1,2)(d_ib_{1,j})^{-1},\\
\text{(PB3)}(-j,i,j)&=&d_ib_{1,j}\text{(PB3)}(-2,1,2)(d_ib_{1,j})^{-1}.
\end{array}\]This concludes the proof.
\end{proof}

\begin{rem}\label{059}
 Lemma \ref{129} implies that (PB2) in $T^BM$ is obtained from the commutativity of $\th$'s together with the following relations:\[\ve_{-1,2}\ve_{1,2}=\ve_{1,2}\ve_{-1,2},\qquad\ve_{-1,1}\ve_{-2,2}=\ve_{-2,2}\ve_{-1,1}.\]Because of Lemma \ref{027}, these relations can be rewritten as follows:\[\th_1\sigma_0\th_1\sigma_0=\sigma_0\th_1\sigma_0\th_1,\qquad\th_0\sigma_1\th_0\sigma_1=\sigma_1\th_0\sigma_1\th_0.\]On the other hand, relation {\normalfont(PB4)} in $T^BM$ can be obtained from $\ve_{-1,2}\ve_{1,2}=\ve_{-1,1}\ve_{-2,2}$. Indeed, for $i,j\in[n]$ with $i<j$, we have\[\ve_{-i,j}\ve_{i,j}=d_ib_{1,j}\ve_{-1,2}\ve_{1,2}b_{1,j}^{-1}d_i^{-1},\quad\ve_{-i,i}\ve_{-j,j}=d_ib_{1,j}\ve_{-1,1}\ve_{-2,2}b_{1,j}^{-1}d_i^{-1}.\]This together with Lemma \ref{154} imply that, for $i<j<k$ consecutive in $[n]$, (PB3) and (PB4) can be replaced by the following relations:\[\begin{array}{c}\ve_{i,j}\ve_{i,k}=\ve_{i,j}\ve_{j,k}=\ve_{i,k}\ve_{j,k},\\
\ve_{-1,1}\ve_{1,2}=\ve_{-1,1}\ve_{-1,2}=\ve_{-1,2}\ve_{1,2}=\ve_{-1,2}\ve_{-2,2}=\ve_{1,2}\ve_{-2,2}=\ve_{-1,1}\ve_{-2,2}.\end{array}\]So, as in Remark \ref{020}, we obtain the equivalent relations:\[\begin{array}{c}\th_i\th_j\sigma_i=\th_j\sigma_i\th_j=\sigma_i\th_i\th_j,\quad{ i,j\in[n-1]\cup\{0\},\,\,\,|i-j|=1.}
\end{array}\]
\end{rem}

\begin{proof}[Proof of Theorem \ref{031}]
Relations (BB0) to (BB3) and (PB1) to (PB5) are obtained from $\BB_n^B$ and $P_n^B$ respectively. Relations (TB3), (TB4), (TB5), (TB8) and (TB9) are obtained from Proposition \ref{057} and so $\ve$'s and (PB5) are removed as in Remark \ref{058}. We add (TB1) and (TB2), so remove (PB1) and (PB2) because they are deduced from the first ones. We finally add redundantly relations (TB6) and (TB7), which are constructed and equivalent to (PB2), (PB3) and (PB4) as in Remark \ref{059}.
\end{proof}

\subsection{Proof of Theorem \ref{TMD}}\label{proofTMD}

Below we give the necessary  ingredients to prove Theorem \ref{TMD}. This proof is done at the end of the present section.

Let $t_{-1}$ be the signed transposition exchanging $-1$ with $2$, that is $t_{-1}=(-2\,\,1)(-1\,\,2)$. The subgroup $\SS_n^D$ is generated by $t_{-1},t_1,\ldots,t_{n-1}$ and may be presented with these generators subject to the following relations: 
\begin{enumerate}
\item[](DS1)\quad$t_{-1}t_2t_{-1}=t_2t_{-1}t_2$,
\item[](DS2)\quad$t_{-1}t_i=t_it_{-1}$ for all $i\in[n-1]$ with $i\neq2$,
\item[](DS3)\quad$t_it_jt_i=t_jt_it_j$ for all $i,j\in[n-1]$ with $|i-j|=1$,
\item[](DS4)\quad$t_it_j=t_jt_i$ for all $i,j\in[n-1]$ with $|i-j|\geq2$,
\item[](DS5)\quad$t_i^2=1$ for all $i\in[n-1]\cup\{-1\}$.
\end{enumerate}

We describe now the permutation action of the generators $t_k$'s on the $\ve_{i,j}$'s. More precisely, for $(i,j)\in[\pm n]\times[n]$ with $|i|<j$, we have \ref{025} and the following equation\begin{equation}\label{108}t_{-1}(\ve_{i,j})=\left\{\begin{array}{ll}
\ve_{-2i,j}&\text{if }i=\pm1\text{ and }j>2,\\
\ve_{-\frac{i}{2},j}&\text{if }i=\pm2\text{ and }j>2,\\[-0.5mm] 
\ve_{i,j}&\text{if }(i,j)=(\pm1,2)\text{ or }\{|i|,j\}\cap\{1,2\}=\emptyset.
\end{array}\right.\end{equation} Observe that $t_k(\ve_{-j,-i})=t_k(\ve_{i,j})$ for all $i,j\in[n]$ with $i<j$ and $j\neq-i$.
 
We will study the relations of $T^DM$ obtained through the Lavers' method. For that we set $V$ to be the set of pairs  of words $(\ve_{i,j}\sigma_k,\sigma_k\rho_{\sigma_k}(\ve_{i,j}))$ for all $i,j\in[\pm n]$ with $|i|<j$ and $k\in[n-1]\cup\{-1\}$.

\begin{lem}
The set $V$ corresponds to the following relations of $T^DM$:\begin{enumerate}
\item$\ve_{-k,j}\sigma_k=\sigma_k\ve_{-(k+1),j}$ and $\ve_{-(k+1),j}\sigma_k=\sigma_k\ve_{-k,j}$,
\item$\ve_{k+1,j}\sigma_k=\sigma_k\ve_{k,j}$ and $\ve_{k,j}\sigma_k=\sigma_k\ve_{k+1,j}$ with $k+1<j$,
\item$\ve_{i,k+1}\sigma_k=\sigma_k\ve_{i,k}$ and $\ve_{i,k}\sigma_k=\sigma_k\ve_{i,k+1}$ with $i<k$,
\item$\ve_{-1,j}\sigma_{-1}=\sigma_{-1}\ve_{2,j}$ and $\ve_{1,j}\sigma_{-1}=\sigma_{-1}\ve_{-2,j}$ with $j>2$,
\item$\ve_{-2,j}\sigma_{-1}=\sigma_{-1}\ve_{1,j}$ and $\ve_{2,j}\sigma_{-1}=\sigma_{-1}\ve_{-1,j}$ with $j>2$,
\item$\ve_{i,j}\sigma_k=\sigma_k\ve_{i,j}$ if $\{|k|,|k|+1\}=\{|i|,j\}$ or $\{|k|,|k|+1\}\cap\{|i|,j\}=\emptyset$.
\end{enumerate}
\end{lem}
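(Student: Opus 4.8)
The plan is to observe that the statement is the exact type~D analogue of Lemma~\ref{024}, and to prove it the same way: by reading off the permutation action. By the Lavers' method each pair of $V$ encodes the relation $\ve_{i,j}\sigma_k=\sigma_k\rho_{\sigma_k}(\ve_{i,j})$. Since $\rho=\pp\circ\pi$ and $\pi(\sigma_k)=t_k$, we have $\rho_{\sigma_k}(\ve_{i,j})=\pp_{t_k}(\ve_{i,j})=t_k(\ve_{i,j})$, so every relation in $V$ is obtained by computing how the Coxeter generator $t_k$ of $\SS_n^D$ permutes the blocks of $\ve_{i,j}$. This permutation action has already been tabulated: in \ref{025} for $k\in[n-1]$ and in \ref{108} for $k=-1$. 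First I would reduce to the case $j\geq1$ using the symmetry $\ve_{i,j}=\ve_{-j,-i}$ together with the remark, recorded right after \ref{108}, that $t_k(\ve_{-j,-i})=t_k(\ve_{i,j})$; this makes the parametrization $|i|<j$, $j\geq1$, of the generators exhaustive and lets me quote \ref{025} and \ref{108} directly.

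Next I would run the case analysis generator by generator. For $k\in[n-1]$, feeding $\ve_{i,j}$ into \ref{025}: the branch $i\in\{-k,k+1\}$ with $k+1<j$ produces the first halves of relations~(1) and~(2); the branch $i\in\{-k-1,k\}$ with $k+1<j$ produces their second halves; the branches $j=k+1$ and $j=k$ with $i<k$ produce the two relations of~(3); and the stable branch gives exactly the relations~(6) for $k\geq1$. For $k=-1$, feeding $\ve_{i,j}$ into \ref{108}: the branch $i=\pm1$, $j>2$, gives relation~(4) via $\ve_{-2i,j}$; the branch $i=\pm2$, $j>2$, gives relation~(5) via $\ve_{-i/2,j}$; and the stable branch, namely $(i,j)=(\pm1,2)$ or $\{|i|,j\}\cap\{1,2\}=\emptyset$, gives exactly the relation~(6) for $k=-1$. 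This is why~(6) is phrased with $|k|$: when $k=-1$ one has $\{|k|,|k|+1\}=\{1,2\}$, so the single formula covers both ranges of $k$.

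The argument is essentially bookkeeping, so the hard part is not any individual computation but making the case split genuinely exhaustive and disjoint. The one point that needs care is the overlap at the boundaries of \ref{025}: when $i=-k$ and $j=k+1$ the shift formula would read $\ve_{i-1,j}=\ve_{-j,j}$, which is not a type~D generator, and indeed the actual action fixes $\ve_{-k,k+1}$, so the stable branch must take precedence there. I would therefore verify that the side conditions ($k+1<j$ in~(1)--(2), $i<k$ in~(3), $j>2$ in~(4)--(5)) are exactly what separates the shifting branches from the stable branch~(6), so that each admissible triple $(i,j,k)$ lands in exactly one of~(1)--(6) and, conversely, each listed relation comes from an admissible triple. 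Once this matching is checked, the lemma follows immediately from \ref{025} and \ref{108} through the identity $\rho_{\sigma_k}=\pp_{t_k}$, exactly as in the type~B case.
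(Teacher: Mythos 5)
Your proposal is correct and takes essentially the same route as the paper: the paper handles parts (1)--(3) and the $k\geq1$ portion of (6) by citing Lemma \ref{024} (whose own proof is precisely your direct application of \ref{025} via $\rho_{\sigma_k}=\pp_{t_k}$ and the symmetry $\ve_{i,j}=\ve_{-j,-i}$), and obtains (4), (5) and the $k=-1$ portion of (6) directly from \ref{108}. Your extra care at the boundary case $i=-k$, $j=k+1$, where the stable branch must take precedence, is a sound observation that the paper leaves implicit.
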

\begin{proof}
Parts (1) to (3) and some relations of (6) are obtained from Lemma \ref{024}. The rest of relations are obtained directly from \ref{108}.
\end{proof}

\begin{crl}\label{119}
The set $V$ corresponds to the following relations of $T^DM$:
\begin{enumerate}
\item$\ve_{-(k+1),j}=\sigma_k^{-1}\ve_{-k,j}\sigma_k=\sigma_k\ve_{-k,j}\sigma_k^{-1}$ with $k+1<j$,
\item$\ve_{k+1,j}=\sigma_k\ve_{k,j}\sigma_k^{-1}=\sigma_k^{-1}\ve_{k,j}\sigma_k$ with $k+1<j$,
\item$\ve_{i,k+1}=\sigma_k\ve_{i,k}\sigma_k^{-1}=\sigma_k^{-1}\ve_{i,k}\sigma_k$ with $|i|<k$,
\item$\ve_{-1,j}=\sigma_{-1}\ve_{2,j}\sigma_{-1}^{-1}=\sigma_{-1}^{-1}\ve_{2,j}\sigma_{-1}$ with $j>2$,
\item$\ve_{-2,j}=\sigma_{-1}\ve_{1,j}\sigma_{-1}^{-1}=\sigma_{-1}^{-1}\ve_{1,j}\sigma_{-1}$ with $j>2$,
\item$\ve_{i,j}\sigma_k=\sigma_k\ve_{i,j}$ if $\{|k|,|k|+1\}=\{|i|,j\}$ or $\{|k|,|k|+1\}\cap\{|i|,j\}=\emptyset$.\label{161}
\end{enumerate}
\end{crl}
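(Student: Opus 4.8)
The plan is to observe that \emph{each} relation in the statement is merely a rearrangement of one of the commutation relations established in the previous lemma, the rearrangement being licensed by the invertibility of the generators. Since $M=\BB_n^D$ is a group, every $\sigma_k$ with $k\in[n-1]$ together with $\sigma_{-1}$ is invertible, and the relations (DB0) record these inverses inside $T^DM$; consequently, in $T^DM$ any relation of the form $\ve_{\,\cdot\,}\sigma_k=\sigma_k\ve_{\,\cdot\,}'$ coming from the previous lemma may be left- or right-multiplied by $\sigma_k^{\pm1}$ to isolate a conjugate. No new relations are introduced: the set $V$ is unchanged, only reformulated, so it suffices to check that the listed conjugation relations are interderivable with the listed commutation relations.

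First I would treat parts (1)--(3), which involve only the $\sigma_k$ with $k\geq1$. For part (1), the first relation of part (1) of the previous lemma, $\ve_{-k,j}\sigma_k=\sigma_k\ve_{-(k+1),j}$, gives $\ve_{-(k+1),j}=\sigma_k^{-1}\ve_{-k,j}\sigma_k$, while its second relation $\ve_{-(k+1),j}\sigma_k=\sigma_k\ve_{-k,j}$ gives $\ve_{-(k+1),j}=\sigma_k\ve_{-k,j}\sigma_k^{-1}$; together these are exactly the asserted double equality (the hypothesis $k+1<j$ being just the condition for both generators to be defined). Parts (2) and (3) follow in the same fashion from parts (2) and (3) of the lemma, and part (6) is transcribed verbatim, since it is already in commuting form.

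The only step requiring genuine care is the pair (4)--(5), the distinctively type-$\D$ relations attached to $\sigma_{-1}$, because here the two conjugate expressions for a single $\ve_{\,\cdot\,}$ are \emph{not} drawn from the same line of the lemma. Concretely, $\ve_{-1,j}=\sigma_{-1}\ve_{2,j}\sigma_{-1}^{-1}$ comes from the first relation of part (4) of the lemma, whereas $\ve_{-1,j}=\sigma_{-1}^{-1}\ve_{2,j}\sigma_{-1}$ comes from rearranging the second relation of part (5), namely $\ve_{2,j}\sigma_{-1}=\sigma_{-1}\ve_{-1,j}$; symmetrically, the two expressions for $\ve_{-2,j}$ in part (5) are supplied one each by parts (4) and (5) of the lemma. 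I would therefore cross-match these four relations explicitly, keeping the inverses aligned.

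I expect no substantive obstacle: the whole corollary is a direct algebraic consequence of the previous lemma together with the invertibility of the $\sigma$'s, exactly parallel to the passage from Lemma \ref{024} to Corollary \ref{026} in the type-$\B$ case. The only pitfall is the cross-pairing in (4)--(5), which is bookkeeping rather than mathematics.
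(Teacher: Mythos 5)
Your proposal is correct and is essentially the paper's own (implicit) argument: the corollary is obtained from the preceding lemma by multiplying each relation by $\sigma_k^{\pm1}$ or $\sigma_{-1}^{\pm1}$, exactly as in the passage from Lemma \ref{024} to Corollary \ref{026}. Your cross-matching of the $\sigma_{-1}$ relations --- pairing the first relation of lemma part (4) with the second of part (5) to get both conjugate expressions for $\ve_{-1,j}$, and symmetrically for $\ve_{-2,j}$ --- is the right bookkeeping and is exactly what the statement requires.
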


Let $\th_{-1}:=\ve_{-1,2}$. We add $\th_{-1},\th_1,\ldots,\th_{n-1}$ to the presentation of $T^DM$ by using Tietze transformations of type \tf{3}. Note that, by definition, these symbols satisfy relations (PD1), (PD2) and (PD4) of $P_n^D$. In particular, we have $\oth_1=\th_{-1}$.

\begin{lem}\label{120}
Let $i,j,k\in[n]$ with $i<j$. Then
\begin{enumerate}
\item$\ve_{i,j}=a_{i,j}\th_{j-1}a_{i,j}^{-1}=\oa_{i,j}^{-1}\th_{j-1}\oa_{i,j}$,\label{125}
\item$\ve_{i,j}=b_{i,j}\th_ib_{i,j}^{-1}=\ob_{i,j}^{-1}\th_i\ob_{i,j}$,\label{156}
\item$\ve_{-i,j}=a_{i,j}\oth_{j-1}a_{i,j}^{-1}=\oa_{i,j}^{-1}\oth_{j-1}\oa_{i,j}$,\label{157}
\item$\ve_{-i,j}=b_{i,j}\oth_ib_{i,j}^{-1}=\ob_{i,j}^{-1}\oth_i\ob_{i,j}$,\label{158}
\item$\ve_{-1,j}=\sigma_{-1}b_{2,j}\th_2b_{2,j}^{-1}\sigma_{-1}^{-1}=\sigma_{-1}^{-1}\oa_{2,j}^{-1}\th_{j-1}\oa_{2,j}\sigma_{-1}$,\label{159}
\item$\ve_{-2,j}=\sigma_{-1}b_{1,j}\th_1b_{1,j}^{-1}\sigma_{-1}^{-1}=\sigma_{-1}^{-1}\oa_{1,j}^{-1}\th_{j-1}\oa_{1,j}\sigma_{-1}$.\label{160}
\end{enumerate}
\end{lem}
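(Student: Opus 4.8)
The plan is to mirror the one-line proof of Lemma \ref{027}: everything follows by induction from Corollary \ref{119}, the positive-index cases (1)--(4) coming from the ``type-A like'' relations Corollary \ref{119}(1)--(3), and the branching cases (5)--(6) being reduced to (1) and (2) via Corollary \ref{119}(4)--(5). Throughout I use that by definition $\ve_{i,i+1}=\th_i$ and $\ve_{-i,i+1}=\oth_i$, and that the words $a_{i,j},\oa_{i,j},b_{i,j},\ob_{i,j}$ of \ref{029} are empty when $j=i+1$.

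First I would prove (1) and (3) simultaneously by induction on the first index, for $j$ fixed. The base case is first index $j-1$, where $a_{j-1,j}=\oa_{j-1,j}=1$ and the identities read $\ve_{j-1,j}=\th_{j-1}$ and $\ve_{-(j-1),j}=\oth_{j-1}$, true by definition. For the inductive step I use the two equalities of Corollary \ref{119}(2) in the form $\ve_{i,j}=\sigma_i\ve_{i+1,j}\sigma_i^{-1}=\sigma_i^{-1}\ve_{i+1,j}\sigma_i$ and substitute the inductive expression for $\ve_{i+1,j}$; the conjugators accumulate to $\sigma_i\cdots\sigma_{j-2}=a_{i,j}$ and $\sigma_{j-2}\cdots\sigma_i=\oa_{i,j}$, yielding the two displayed forms. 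The proof of (3) is identical with Corollary \ref{119}(1) in place of \ref{119}(2), $\ve_{i+1,j}$ replaced by $\ve_{-(i+1),j}$, and $\th_{j-1}$ by $\oth_{j-1}$. Parts (2) and (4) run the same argument on the second index: starting from $\ve_{i,i+1}=\th_i$ (resp.\ $\oth_i$) and raising the second index from $i+1$ to $j$ by Corollary \ref{119}(3), the conjugators collect to $b_{i,j}=\sigma_{j-1}\cdots\sigma_{i+1}$ and $\ob_{i,j}=\sigma_{i+1}\cdots\sigma_{j-1}$.

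Finally, (5) and (6) follow without further induction. Corollary \ref{119}(4) gives $\ve_{-1,j}=\sigma_{-1}\ve_{2,j}\sigma_{-1}^{-1}=\sigma_{-1}^{-1}\ve_{2,j}\sigma_{-1}$ for $j>2$; substituting $\ve_{2,j}=b_{2,j}\th_2 b_{2,j}^{-1}$ (from (2)) into the first equality and $\ve_{2,j}=\oa_{2,j}^{-1}\th_{j-1}\oa_{2,j}$ (from (1)) into the second gives exactly the two expressions for $\ve_{-1,j}$. Likewise Corollary \ref{119}(5), $\ve_{-2,j}=\sigma_{-1}\ve_{1,j}\sigma_{-1}^{-1}=\sigma_{-1}^{-1}\ve_{1,j}\sigma_{-1}$, together with the $i=1$ cases of (2) and (1) applied to $\ve_{1,j}$, yields (6).

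The computation is entirely routine; the only point needing care is bookkeeping the direction of conjugation so that the alternative conjugators ($a_{i,j}$ versus $\oa_{i,j}$, and $b_{i,j}$ versus $\ob_{i,j}$) are read off in the correct order, and checking that the branching relations Corollary \ref{119}(4)--(5) are invoked only for $j>2$, where they are valid. I expect no genuine obstacle, since the special generator $\sigma_{-1}$ enters only through the last two cases, which reduce to the already-established positive-index identities.
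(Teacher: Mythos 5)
Your proof is correct and follows exactly the paper's approach: the paper's own proof is the one-liner ``It is obtained by applying Corollary \ref{119} inductively,'' and your write-up is a faithful, carefully bookkept expansion of that induction (base cases $\ve_{i,i+1}=\th_i$, $\ve_{-i,i+1}=\oth_i$; descent/ascent via Corollary \ref{119}(1)--(3); parts (5)--(6) reduced to (1)--(2) through Corollary \ref{119}(4)--(5), valid since $j>2$). Nothing further is needed.
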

\begin{proof}
It is obtained by applying Corollary \ref{119} inductively.
\end{proof}

Let $\td_1=\sigma_{-1}\sigma_1$ and $\td_i:=\sigma_i\td_{i-1}\sigma_i$  for each $k\in[n-1]$ with $k\geq2$. As in Remark \ref{109}, the $\td_i$'s are also called \emph{theta elements} of $\BB_n^D$, which satisfy \ref{128} for $k\geq2$ and \ref{117}  for $j>i\geq2$ respectively.

\begin{crl}\label{118} 
The set $V$ corresponds to the following relations of $T^DM$. 
\begin{enumerate}
\item$\ve_{i,j}=a_{i,j}\th_{j-1}a_{i,j}^{-1}=\ob_{i,j}^{-1}\th_i\ob_{i,j}=\oa_{i,j}^{-1}\th_{j-1}\oa_{i,j}=b_{i,j}\th_ib_{i,j}^{-1}$ with $1\leq i<j$,\label{121}
\item$\ve_{-i,j}=a_{i,j}(\td_{j-2}^{-1}\th_{j-1}\td_{j-2})a_{i,j}^{-1}=\ob_{i,j}^{-1}(\td_{i-1}\th_i\td_{i-1}^{-1})\ob_{i,j}=\oa_{i,j}^{-1}(\td_{j-2}\th_{j-1}\td_{j-2}^{-1})\oa_{i,j}=b_{i,j}(\td_{i-1}^{-1}\th_i\td_{i-1})b_{i,j}^{-1}$ with $2\leq i<j$,\label{122}
\item$\ve_{-1,j}=a_{1,j}(\td_{j-2}^{-1}\th_{j-1}\td_{j-2})a_{1,j}^{-1}=\ob_{1,j}^{-1}\th_{-1}\ob_{1,j}=\oa_{1,j}^{-1}(\td_{j-2}\th_{j-1}\td_{j-2}^{-1})\oa_{1,j}=b_{1,j}\th_{-1}b_{1,j}^{-1}$  with $j\geq3$,\label{165}
\item$(\oa_{i,j}^{-1}\th_{j-1}\oa_{i,j})\sigma_k=\sigma_k(a_{i,j}\th_{j-1}a_{i,j}^{-1})$ if $(|k|,|k|+1)=(i,j)$ or \mbox{$\{|k|,|k|+1\}\cap\{i,j\}=\emptyset$}.\label{127}
\item$(\oa_{i,j}^{-1}\td_{j-2}\th_{j-1}\td_{j-2}^{-1}\oa_{i,j})\sigma_k=\sigma_k(a_{i,j}\td_{j-2}^{-1}\th_{j-1}\td_{j-2}a_{i,j}^{-1})$ if $(|k|,|k|+1)=(i,j)$ or $\{|k|,|k|+1\}\cap\{i,j\}=\emptyset$.\label{163}
\end{enumerate}
\end{crl}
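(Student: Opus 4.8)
The plan is to derive each family of relations in the statement by substituting, into the commutation and conjugation relations already produced in Corollary \ref{119}, the conjugation formulas for $\ve_{i,j}$ and $\ve_{-i,j}$ furnished by Lemma \ref{120}, and then rewriting every barred generator $\oth$ in terms of the unbarred $\th$ via the theta elements $\td$ and Equation \ref{128}. This is the exact type D analogue of the proof of Corollary \ref{030}, with the type B theta elements $\tb$ replaced by their type D counterparts $\td$, which (as noted just after the definition of $\td_i$) satisfy \ref{128} for all indices $k\geq2$.

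First I would dispatch part (\ref{121}): it is simply the conjunction of Lemma \ref{120}(\ref{125}) and Lemma \ref{120}(\ref{156}), which already express $\ve_{i,j}$ as a conjugate of $\th_{j-1}$ (by $a_{i,j}$ or $\oa_{i,j}$) and of $\th_i$ (by $b_{i,j}$ or $\ob_{i,j}$); since $i\geq1$ the indices already refer to genuine generators, so no further rewriting is needed. For part (\ref{122}) I would start from Lemma \ref{120}(\ref{157}) and (\ref{158}), which present $\ve_{-i,j}$ as a conjugate of $\oth_{j-1}$ and of $\oth_i$. Because $2\leq i<j$, both indices $i$ and $j-1$ are at least $2$, so Equation \ref{128} applies and replaces $\oth_{j-1}$ by $\td_{j-2}^{\mp1}\th_{j-1}\td_{j-2}^{\pm1}$ and $\oth_i$ by $\td_{i-1}^{\pm1}\th_i\td_{i-1}^{\mp1}$, where in each of the four conjugations I choose the form of \ref{128} that matches the surrounding $a,\oa,b,\ob$; this yields precisely the four displayed expressions.

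Part (\ref{165}) is the boundary case $i=1$. Here the $a$ and $\oa$ conjugates still involve $\oth_{j-1}$ with $j-1\geq2$ (as $j\geq3$), so \ref{128} again converts them into theta conjugates of $\th_{j-1}$; but the $b$ and $\ob$ conjugates involve $\oth_1$, which by the identification $\oth_1=\th_{-1}$ recorded immediately before Lemma \ref{120} is literally the generator $\th_{-1}$, needing no theta element. Finally, parts (\ref{127}) and (\ref{163}) follow from the commutation relation Corollary \ref{119}(\ref{161}), namely $\ve_{i,j}\sigma_k=\sigma_k\ve_{i,j}$ under the stated index condition, by substituting the two sides of the conjugation identities above for $\ve_{i,j}$ and $\ve_{-i,j}$. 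The only delicate point throughout is the bookkeeping of index ranges: one must ensure that every appeal to \ref{128} is made with $k\geq2$ so that a valid theta element $\td_{k-1}$ exists, and treat the case $i=1$ separately through $\oth_1=\th_{-1}$. Once this is handled, the whole statement reduces to direct substitution, and the proof is a consequence of Lemma \ref{120}, Corollary \ref{119} and Equation \ref{128}.
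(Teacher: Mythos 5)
Your substitution argument is the same one the paper uses, and your index bookkeeping is right: items (\ref{121})--(\ref{165}) come from Lemma \ref{120} together with Equation \ref{128} (which for the type D theta elements is only available for indices $\geq 2$, exactly what your hypotheses $2\leq i<j$, resp.\ $j\geq 3$ plus the identification $\oth_1=\th_{-1}$, guarantee), and items (\ref{127})--(\ref{163}) come from substituting these expressions into Corollary \ref{119}(\ref{161}). However, the corollary does not merely claim that the displayed relations hold in $T^DM$; it claims that the set $V$ \emph{corresponds to} this list, i.e.\ that the list can replace $V$ among the defining relations. For that, every pair in $V$ must be accounted for, and your enumeration never touches the conjugation pairs coming from $\sigma_{-1}$, namely Corollary \ref{119}(4)--(5), whose substituted forms are Lemma \ref{120}(\ref{159})--(\ref{160}): $\ve_{-1,j}=\sigma_{-1}^{-1}\oa_{2,j}^{-1}\th_{j-1}\oa_{2,j}\sigma_{-1}$ and $\ve_{-2,j}=\sigma_{-1}^{-1}\oa_{1,j}^{-1}\th_{j-1}\oa_{1,j}\sigma_{-1}$. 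These get no item of their own in the statement (unlike the generic $\sigma_k$-relations), so the claimed correspondence requires checking that they are already contained in the list. This is precisely the extra point the paper's proof records (``the relations in Lemma \ref{120}(\ref{160}) are part of (\ref{122})''): using the identities $\oa_{1,j}^{-1}\td_{j-2}=\sigma_{-1}a_{2,j}$ and $a_{1,j}\td_{j-2}^{-1}=\sigma_{-1}^{-1}\oa_{2,j}^{-1}$ (stated later in the proof of Proposition \ref{134}), the expression $\sigma_{-1}^{-1}\oa_{1,j}^{-1}\th_{j-1}\oa_{1,j}\sigma_{-1}$ is literally the form $a_{2,j}(\td_{j-2}^{-1}\th_{j-1}\td_{j-2})a_{2,j}^{-1}$ of item (\ref{122}) with $i=2$, and $\sigma_{-1}^{-1}\oa_{2,j}^{-1}\th_{j-1}\oa_{2,j}\sigma_{-1}$ is the first form of item (\ref{165}).

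This omission is not cosmetic, because your own derivation of items (\ref{122}) and (\ref{165}) consumes exactly those pairs: Equation \ref{128} for the type D elements $\td_{k-1}$ (which contain $\sigma_{-1}$) is itself a consequence of Corollary \ref{119}(4)--(5). So, as written, the $k=-1$ conjugation pairs of $V$ enter your argument as hypotheses but are never recovered from the listed relations; you have proved that the list follows from $V$, not that $V$ and the list are interchangeable. Adding the one-line verification above closes the gap and reproduces the paper's proof.
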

\begin{proof}
It is a consequence of Lemma \ref{120} and Corollary \ref{119} (\ref{161}) together with the fact that relations in Lemma \ref{120}(\ref{160}) is part of (\ref{122}).
\end{proof}
 
As in Remark \ref{058}, each generator $\ve_{i,j}$ can be replaced by expressions with $\th$'s in all relations of $T^DM$. Hence, as in Corollary \ref{118}, each $\ve_{i,j}$ appears at most once and so Proposition \ref{012} implies that we can remove every $\ve_{i,j}$ from the presentation of $T^DM$.

In what follows assume that $\sigma_{-1}\sigma_1,\sigma_2,\ldots,\sigma_{n-1}$ satisfy relations (DS1) to (DS4).

\begin{pro}\label{134}
The congruence $\overline{V}$ is generated by the following  relations:
\begin{enumerate}
\item$\sigma_i\sigma_j\th_i=\th_j\sigma_i\sigma_j$ with $i,j\in[n-1]\cup\{-1\}$ and $||i|-|j||=1$,\label{050}
\item$\sigma_i\sigma_j^{-1}\th_i=\th_j\sigma_i\sigma_j^{-1}$ with $i,j\in[n-1]\cup\{-1\}$ and $||i|-|j||=1$,\label{162}
\item$\sigma_i\th_j=\th_j\sigma_i$ with $i,j\in[n-1]\cup\{-1\}$ and $||i|-|j||\neq1$.\label{124}
\end{enumerate}
\end{pro}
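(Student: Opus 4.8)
The plan is to follow the blueprint of the type-B computation in Proposition \ref{057}, proving the two inclusions between $\overline{V}$ and the congruence generated by \ref{050}, \ref{162}, \ref{124}. First I would check that the three families are consequences of $V$ (so that they hold in $T^DM$): specializing Corollary \ref{118}(\ref{121}) to $j=i+2$ produces \ref{050} and \ref{162} for positive indices $i,j\geq1$, while taking $j=i+1$ in Corollary \ref{118}(\ref{127}) together with the disjointness case \ref{161} yields the commutations \ref{124}. The instances involving the branch generator $\sigma_{-1}$, namely $\sigma_{-1}\sigma_2\th_{-1}=\th_2\sigma_{-1}\sigma_2$ and its inverse-twisted form, together with $\sigma_{-1}\th_1=\th_1\sigma_{-1}$ and $\sigma_{-1}\th_j=\th_j\sigma_{-1}$ for $j\geq3$, come out of Corollary \ref{118}(\ref{165}) and (\ref{122}) evaluated at the smallest admissible indices, after substituting $\td_1=\sigma_{-1}\sigma_1$ and using $\sigma_{-1}\sigma_1=\sigma_1\sigma_{-1}$.

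For the reverse inclusion I would show that every relation of Corollary \ref{118} follows from \ref{050}, \ref{162}, \ref{124}. The positive relations (\ref{121}) reduce, exactly as in type B, to telescoping identities: rewriting $\ob_{i,j}a_{i,j}\th_{j-1}$, $\oa_{i,j}\ob_{i,j}^{-1}\th_i$ and $\oa_{i,j}b_{i,j}\th_i$ as products of consecutive pairs $(\sigma_{k+1}\sigma_k)$ or $(\sigma_k\sigma_{k+1}^{-1})$ and pushing $\th$ across each pair by \ref{050} or \ref{162}; the commutation relations (\ref{127}) reduce to \ref{124} verbatim. A preparatory step I would record first is the squared-generator commutation $\sigma_i^2\th_j=\th_j\sigma_i^2$ whenever $||i|-|j||=1$: comparing the two expressions $\sigma_i\sigma_j\th_i(\sigma_i\sigma_j)^{-1}=\th_j=\sigma_i\sigma_j^{-1}\th_i(\sigma_i\sigma_j^{-1})^{-1}$ furnished by \ref{050} and \ref{162} collapses to $\sigma_j\th_i\sigma_j^{-1}=\sigma_j^{-1}\th_i\sigma_j$, and applying this with both orderings of $\{-1,2\}$ gives in particular $\sigma_{-1}^2\th_2=\th_2\sigma_{-1}^2$ and $\sigma_1^2\th_2=\th_2\sigma_1^2$.

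The negative relations (\ref{122}), (\ref{165}) and (\ref{163}) are the substantive part, and they require the auxiliary identity $\td_{i-1}^2\th_i=\th_i\td_{i-1}^2$ for $i\geq2$, the type-D analogue of the $\tb_{i-1}^2\th_i=\th_i\tb_{i-1}^2$ step. I would prove it by induction on $i$ using the decomposition of $\td_i^2$ in $\BB_n^D$ afforded by \cite[Lemma 3.3(1)]{ArPa19} and Remark \ref{109}, with the squared-generator commutations and \ref{050}, \ref{162} driving the inductive step. The base case $i=2$ sits at the branch: since $\sigma_{-1}$ and $\sigma_1$ commute, $\td_1^2=\sigma_{-1}^2\sigma_1^2$, and this commutes with $\th_2$ because each of $\sigma_{-1}^2$ and $\sigma_1^2$ does by the previous paragraph. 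Granting this identity, the four forms in (\ref{122}) are matched to one another by conjugating the positive identity (\ref{121}) by the theta element and invoking \ref{128} and \ref{117} together with $b_{i,j}\td_{i-1}=\td_{i-1}b_{i,j}$, exactly as the passage around \ref{054} in Proposition \ref{057}; the case $i=1$ of (\ref{165}) is simpler because $\oth_1=\th_{-1}$ appears directly, with no $\td$-conjugation and only positive words in $b_{1,j},\ob_{1,j}$. Finally (\ref{163}) follows from (\ref{122}) and its $j=i+1$ specialization, as the type-B transition from \ref{007} to \ref{147} shows.

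I expect the main obstacle to be the branch node formed by $\sigma_{-1}$, $\sigma_2$ and $\sigma_1$ in the type-D diagram. Unlike type B, where the single end generator $\sigma_0$ carries a length-four relation, here the theta element $\td_1=\sigma_{-1}\sigma_1$ fuses two distinct commuting generators, and the transposition $t_{-1}$ swaps the roles of $\sigma_{-1}$ and $\sigma_1$ (compare the $\ve_{-1,j}$ and $\ve_{-2,j}$ cases of Lemma \ref{120}). Keeping this bookkeeping straight in the base case of the $\td_{i-1}^2$-commutation, and in matching the $\th_{-1}$- and $\th_2$-conjugates across (\ref{165}), is the delicate point; away from the branch everything is a faithful transcription of the type-B argument, so no essentially new phenomenon should arise, which is also why the final presentation in Theorem \ref{TMD} needs no analogue of the long relations (TB3) or (TB7).
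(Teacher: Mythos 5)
Your proposal is correct and follows essentially the same route as the paper: both inclusions are proved by specializing Corollary \ref{118} (taking $j=i+2$ in (\ref{121}), $j=3$ in (\ref{165}), and the commutation cases) to obtain relations (\ref{050})--(\ref{124}), and conversely by transcribing the type-B argument of Proposition \ref{057} with the theta elements $\td_i$ in place of $\tb_i$, the branch node being handled through $\td_1=\sigma_{-1}\sigma_1$ and the commutation $\sigma_{-1}\sigma_1=\sigma_1\sigma_{-1}$. In fact your write-up supplies two details that the paper's terse appeal to ``the same way as in Proposition \ref{057}'' leaves implicit and that are genuinely needed: the derivation of $\sigma_j^2\th_i=\th_i\sigma_j^2$ (for $||i|-|j||=1$) directly from (\ref{050}) and (\ref{162}), which is required because the type-D generating set contains no analogue of the square relations (TB8), and the base case $\td_1^2=\sigma_{-1}^2\sigma_1^2$ of the induction establishing $\td_{i-1}^2\th_i=\th_i\td_{i-1}^2$, which in type B rested on the generating relation $\sigma_0^2\th_1=\th_1\sigma_0^2$ that is absent here.
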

\begin{proof}

Note that (\ref{050}) to (\ref{124}) hold in $T^DM$ and can be deduced from $V$. More specifically, (\ref{050}) and (\ref{162}) are obtained by taking $j=i+2$ in Corollary \ref{118}(\ref{121}) and $j=3$ in Corollary \ref{018}(\ref{165}). Relation (\ref{124}) is obtained by taking $j=i+1$ in Corollary \ref{118}(\ref{127}) and $(i,j)=(1,3)$ in Corollary \ref{118}(\ref{163}). So, it is enough to show that the relations in Corollary \ref{118} can be deduced from (\ref{050}) to (\ref{124}). Since \ref{117} holds for $\td$'s and we have $\oa_{1,j}^{-1}\td_{j-2}=\sigma_{-1}a_{2,j}$ and $a_{1,j}\td_{j-2}^{-1}=\sigma_{-1}^{-1}\oa_{2,j}^{-1}$, then, relations in Corollary \ref{118} are obtained from (\ref{050}) to (\ref{124}) in the same way as they were obtained in Proposition \ref{057}.
\end{proof}

Now, as we remove generators $\ve$'s and add $\th_{-1},\th_1,\ldots,\th_{n-1}$, we need to rewrite relations of $P_n^D$ in terms of these generators. Lemma \ref{120} implies that (PD1) can be deduced from the involution of $\th$'s. Further, (PD4) becomes trivial and, as in Lemma \ref{129}, (PD2) is deduced from the commutativity of $\th$'s.

\begin{lem}\label{132}
Relations in \normalfont{(PD3)} can be obtained from $\ve_{1,2}\ve_{1,3}=\ve_{1,2}\ve_{2,3}=\ve_{1,3}\ve_{2,3}$.
\end{lem}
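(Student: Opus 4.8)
The plan is to mimic the proof of Lemma \ref{154}, replacing the type B theta elements $\tb$ by the type D theta elements $\td$ and the generator $\sigma_0$ by $\sigma_{-1}$. Write $\text{(PD3)}(i,j,k)$ for the relation $\ve_{i,j}\ve_{i,k}=\ve_{i,j}\ve_{j,k}=\ve_{i,k}\ve_{j,k}$, where $i<j<k$ in $[\pm n]$ with $j\neq-i$, $k\neq-j$ and (so that $\ve_{i,k}$ is a generator) $k\neq-i$. First I would reduce the sign pattern: since $\ve_{-j,-i}=\ve_{i,j}$ by \normalfont{(PD4)}, the equations $\text{(PD3)}(i,j,k)$ and $\text{(PD3)}(-k,-j,-i)$ are literally the same relation, because $\{\ve_{-j,-i},\ve_{-k,-i},\ve_{-k,-j}\}=\{\ve_{i,j},\ve_{i,k},\ve_{j,k}\}$. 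Applying this involution at most once, I may assume that the two largest indices $j,k$ are positive, leaving only two shapes: all three indices positive, or exactly the smallest one negative.

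For the all-positive case $\text{(PD3)}(i,j,k)$ with $1\le i<j<k$, I would argue exactly as in Remark \ref{020}: Lemma \ref{120}(\ref{125}) writes $\ve_{i,j}=a_{i,j}\th_{j-1}a_{i,j}^{-1}$, and Corollary \ref{119} shows that a suitable positive word $g\in\langle\sigma_1,\ldots,\sigma_{n-1}\rangle$ conjugates the three blocks $\ve_{1,2},\ve_{1,3},\ve_{2,3}$ simultaneously onto $\ve_{i,j},\ve_{i,k},\ve_{j,k}$. Hence $\text{(PD3)}(i,j,k)=g\,\text{(PD3)}(1,2,3)\,g^{-1}$, and this shape follows from the single relation $\text{(PD3)}(1,2,3)$.

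The one-negative case $\text{(PD3)}(-p,j,k)$, with $p\ge1$, $1\le j<k$ and $p\notin\{j,k\}$, is the core of the argument. Structurally it is obtained from the (already available) all-positive relation on the index set $\{p,j,k\}$ by flipping the sign of the single coordinate $p$ while fixing the positive block $\{j,k\}$. The enabling computation is that, in $\SS_n^D$, conjugation by $\td_{m-1}$ realizes the \emph{simultaneous} sign change of the coordinates $1$ and $m$; this is exactly what underlies $\oth_k=\td_{k-1}\th_k\td_{k-1}^{-1}$ for $k\ge2$ and $\oth_1=\th_{-1}$, used already in Lemma \ref{120} and Corollary \ref{118}. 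Thus, to turn $\ve_{p,\ast}$ into $\ve_{-p,\ast}$ without disturbing $\{j,k\}$, I pair the coordinate $p$ with a coordinate absent from the relation and conjugate by the corresponding $\td$; when $p=1$ I pair it with a free index $m\ge2$, $m\notin\{j,k\}$, and invoke $\oth_1=\th_{-1}$. Splitting according to whether $p$ is the smallest, middle, or largest of the three absolute values reproduces, with $\sigma_{-1}$ and the $\td$'s in place of $\sigma_0$ and the $\tb$'s, the explicit conjugators appearing in Lemma \ref{154}.

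The hard part will be precisely the parity constraint of type D: unlike type B, where $\tb_{i-1}$ flips a single sign and the identity $\text{(PB3)}(-i,j,k)=\tb_{i-1}\text{(PB3)}(i,j,k)\tb_{i-1}^{-1}$ is immediate, in $\SS_n^D$ every element is even-signed, so a theta element must flip two signs at once. The work therefore consists in always being able to pair the intended sign flip with an innocuous free coordinate, and in treating directly the few small-$n$ configurations (notably $\text{(PD3)}(-1,2,3)$ when $n=3$) in which no such free coordinate is available; in those boundary cases one deduces the relation straight from $\text{(PD3)}(1,2,3)$ using $\oth_1=\th_{-1}$ together with the braid relations \normalfont{(DB1)}--\normalfont{(DB4)}.
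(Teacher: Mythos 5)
Your overall skeleton (reduce via (PD4) so that $j,k>0$, then handle the all-positive shape by conjugation as in Remark \ref{020} and Lemma \ref{154}) matches the paper, but your treatment of the one-negative shape takes a genuinely different route, and that route has a real gap exactly where the difficulty of the lemma sits. The paper uses no free fourth coordinate at all: after reducing, via conjugation by $d_{|i|}b_{1,|j|}b_{2,|k|}$, to the four base cases $(1,2,3)$, $(-1,2,3)$, $(-2,1,3)$, $(-3,1,2)$, it produces the three negative cases from $\text{(PD3)}(1,2,3)$ by conjugating with the short words $\sigma_1\sigma_2\sigma_{-1}$, $\sigma_{-1}\sigma_2\sigma_{-1}$ and $\sigma_{-1}$, respectively. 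The point is that $t_{-1}$ flips \emph{two} signs while exchanging $1$ and $2$, and the second, unwanted sign flip is absorbed by the identification $\ve_{-j,-i}=\ve_{i,j}$ of (PD4); for instance, under conjugation by $\sigma_{-1}$ one has $\ve_{1,2}\mapsto\ve_{-2,-1}=\ve_{1,2}$, $\ve_{1,3}\mapsto\ve_{-2,3}$ and $\ve_{2,3}\mapsto\ve_{-1,3}$, which is precisely $\text{(PD3)}(-3,1,2)$. This works uniformly for every $n\geq3$.

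Your mechanism --- pair the sign change of $p$ with a coordinate $m\notin\{p,j,k\}$ and conjugate by a theta element --- is sound when such an $m$ exists, i.e.\ when $n\geq4$ (with the minor imprecision that $\td_{m-1}$ only flips the pair $(1,m)$, so for $p\neq1$ you must conjugate $\td_{m-1}$ by a suitable permutation word, which you do not construct). But for $n=3$ there is no free coordinate for \emph{any} of the three negative base cases, not only for $(-1,2,3)$, and the lemma is not vacuous there. You acknowledge this boundary situation but dispose of it with ``one deduces the relation straight from $\text{(PD3)}(1,2,3)$ using $\oth_1=\th_{-1}$ together with the braid relations'' --- that sentence is the entire content of the lemma in the $n=3$ case, and it is exactly what the paper's three explicit conjugators accomplish. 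As it stands, the proposal defers rather than proves the hard part; once you write down those conjugators (or equivalent ones), the free-coordinate detour for $n\geq4$ becomes unnecessary, since the same three identities settle all $n$ at once after the standard reduction to indices in $\{1,2,3\}$.
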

\begin{proof}
Denote by (PD3)$(i,j,k)$ the relation $\ve_{i,j}\ve_{i,k}=\ve_{i,j}\ve_{j,k}=\ve_{i,k}\ve_{j,k}$ with $i,j,k\in[\pm n]$ satisfying $i<j<k$ and $|i|,|j|,|k|$ all different. As in Lemma \ref{154} we can show that each (PD3)$(i,j,k)$ can be obtained, via conjugation by $d_{|i|}b_{1,|j|}b_{2,|k|}$, from one of themselves with indexes in $\{(1,2,3),(-1,2,3),(-2,1,3),(-3,1,2)\}$. However\[\begin{array}{rcl}
\text{(PD3)}(-1,2,3)&=&\sigma_1\sigma_2\sigma_{-1}\text{(PD3)}(1,2,3)(\sigma_1\sigma_2\sigma_{-1})^{-1},\\
\text{(PD3)}(-2,1,3)&=&\sigma_{-1}\sigma_2\sigma_{-1}\text{(PD3)}(1,2,3)(\sigma_{-1}\sigma_2\sigma_{-1})^{-1},\\
\text{(PD3)}(-3,1,2)&=&\sigma_{-1}\text{(PD3)}(1,2,3)\sigma_{-1}^{-1}.\end{array}\]Therefore, in $T^DM$, (PD3) is a consequence of (PD3)$(1,2,3)$.
\end{proof}
\begin{lem}\label{164}
Relations in \normalfont{(PD5)} can be obtained from $\ve_{-1,2}\ve_{1,2}\ve_{-3,4}\ve_{3,4}=\ve_{-1,2}\ve_{1,2}\ve_{2,3}\ve_{3,4}$.
\end{lem}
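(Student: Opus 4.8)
The plan is to mimic the reduction strategy already used for Lemma~\ref{132} and Lemma~\ref{154}: realise every instance of (PD5) as a conjugate of the single named relation, tidy the right-hand side with relations that are already available, and exploit the special geometry of zero blocks to get around the even-signed restriction that is peculiar to type~$\D$.

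First I would fix notation, writing (PD5)$(i,j;r,s)$ for the relation $\ve_{-i,j}\ve_{i,j}\ve_{-r,s}\ve_{r,s}=\ve_{-a,b}\ve_{a,b}\ve_{b,c}\ve_{c,d}$ with $\{a<b<c<d\}$ the sorted reindexing of $\{i,j,r,s\}$. The first reduction is on signs: the element $\ve_{-i,j}\ve_{i,j}$ is the $\D_n$-partition whose only non-singleton block is the zero block $\pm\{|i|,|j|\}$, so it depends only on $\{|i|,|j|\}$, and combined with (PD4) and the commutativity (PD2) this lets me assume $0<i<j$ and $0<r<s$. Since the four indices are distinct, $\{i,j\}$ and $\{r,s\}$ are then disjoint positive pairs, and by (PD2) I may further assume $i<r$. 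Thus it remains to derive (PD5)$(i,j;r,s)$ for disjoint positive pairs from the base relation (PD5)$(1,2;3,4)$, which is exactly $\ve_{-1,2}\ve_{1,2}\ve_{-3,4}\ve_{3,4}=\ve_{-1,2}\ve_{1,2}\ve_{2,3}\ve_{3,4}$.

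Next I would conjugate. Here $\ve_{-1,2}\ve_{1,2}$ and $\ve_{-3,4}\ve_{3,4}$ are the zero blocks $\pm\{1,2\}$ and $\pm\{3,4\}$, while their merge $\ve_{-1,2}\ve_{1,2}\ve_{2,3}\ve_{3,4}$ is the single zero block $\pm\{1,2,3,4\}$. Because a zero block is invariant under every sign change, I only need a signed permutation carrying the unsigned pair $\{1,2\}$ onto $\{i,j\}$ and $\{3,4\}$ onto $\{r,s\}$; such a permutation always exists, and its signature can be corrected by composing with one coordinate flip $k\mapsto-k$ (which fixes the identity partition and every zero block) so that it lands in the even-signed group $\SS_n^D$. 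Lifting it to an element $g\in\BB_n^D$ (a word in $\sigma_{-1},\sigma_1,\ldots,\sigma_{n-1}$) and using the conjugation formulas of Corollary~\ref{119} and Lemma~\ref{120}, conjugation by $g$ sends each $\ve_{p,q}$ to $\ve_{\pi(g)(p),\pi(g)(q)}$. Applying this to (PD5)$(1,2;3,4)$ carries the left-hand side to $\ve_{-i,j}\ve_{i,j}\ve_{-r,s}\ve_{r,s}$ and the right-hand side to some representative of $\pm\{a,b,c,d\}$, which (PD3)---established in Lemma~\ref{132}---rewrites into the canonical word $\ve_{-a,b}\ve_{a,b}\ve_{b,c}\ve_{c,d}$. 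Explicit conjugators in the style of Lemma~\ref{132} can be written out case by case (separated, interleaved, nested pairs), but structurally they all arise this way.

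The main obstacle is precisely the type-$\D$ restriction that only even-signed permutations, that is elements of $\SS_n^D$, are realised by conjugation inside $\BB_n^D$: a naive permutation matching the two target pairs may well be odd. The proposal removes this difficulty through the sign-invariance of zero blocks, which supplies enough freedom to fix the parity without disturbing the blocks being transported. Verifying that the chosen conjugator $g$ acts as claimed on both sides---in particular tracking the contributions of $\sigma_{-1}$ and of the theta elements $\td_k$ that appear in the formulas of Lemma~\ref{120}---is the computational heart, but it becomes routine once $g$ is fixed and the sign reduction of the first step has been carried out.
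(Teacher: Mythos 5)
Your proposal is correct and takes essentially the same route as the paper: there too, every instance of (PD5) is obtained by conjugating the single relation (PD5)$(1,2,3,4)$ by braid words realizing suitable permutations --- the paper just makes the conjugators explicit (words of the form $d_ab_{1,b}b_{2,c}b_{3,c}$, then $\sigma_2$ and $\sigma_2\sigma_3$ to collapse the six interleaving patterns) and tidies both sides with commutativity and (PD3), exactly as you do. The only point worth correcting is that the ``main obstacle'' you identify is illusory: after your sign reduction only positive index pairs remain, so the conjugating permutation can be chosen unsigned, and every unsigned permutation already lies in $\SS_n^D$ (which is precisely why the paper's conjugators involve only $\sigma_1,\ldots,\sigma_{n-1}$); your coordinate-flip parity fix is therefore valid but unnecessary.
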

\begin{proof}
Denote by (PD5)$(i,j,r,s)$ the relation $\ve_{-i,j}\ve_{i,j}\ve_{-r,s}\ve_{r,s}=\ve_{-a,b}\ve_{a,b}\ve_{b,c}\ve_{c,d}$ such that $\{i<j,r<s\}=\{a<b<c<d\}$. Clearly, these relations can be obtained by conjugating with $d_ab_{1,b}b_{2,c}b_{3,c}$ one of the following relations:\[\begin{array}{lll}
\text{(PD5)}(1,2,3,4),&\text{(PD5)}(1,3,2,4),&\text{(PD5)}(1,4,2,3),\\
\text{(PD5)}(2,3,1,4),&\text{(PD5)}(2,4,1,3),&\text{(PD5)}(3,4,1,2).\end{array}\]By commutativity, we can consider only the relations on the first line. Further, by conjugating, we obtain the following:\[\text{(PD5)}(1,2,3,4)=\sigma_2\text{(PD5)}(1,3,2,4)\sigma_2^{-1}=\sigma_2\sigma_3\text{(PD5)}(1,4,2,3)(\sigma_2\sigma_3)^{-1}.\]Therefore, we deduce that each relation in (PD5) is obtained from (PD5)$(1,2,3,4)$.
\end{proof}

As in Remark \ref{059}, because of Lemma \ref{132} and Lemma \ref{164}, in $T^DM$, relations (PD3) and (PD5) are equivalent to the following:\begin{equation}\label{166}\begin{array}{cl}
\th_i\th_j\sigma_i=\th_j\sigma_i\th_j=\sigma_i\th_j\th_i,&||i|-|j||=1,\\
\th_{-1}\th_1\th_3\td_2\th_3=\th_{-1}\th_1\th_2\th_3\td_2.
\end{array}\end{equation}

\begin{proof}[Proof of Theorem \ref{TMD}]
Relations (DB0) to (DB4) and (PD1) to (PD5) are obtained from $\BB_n^D$ and $P_n^D$ respectively. Relations (TD3), (TD4) and (TD8) are obtained from Proposition \ref{134} and so $\ve$'s and (PD4) are removed. We add (TD1) and (TD2) and remove (PD1) and (PD2) because they are deduced from the first ones. Finally, we add redundantly relations (TD5)  and (TD6), and so remove (PD3)  and (PD5) as in Equation \ref{166}.
\end{proof}

\section{Tied algebras}\label{S6}

In this section we give a mechanism to construct algebras from tied monoids, we call these algebras \emph{tied algebras}. This mechanism produce the Hecke algebras, bt--algebras of type A \cite{AiJu19}, as well to those bt--algebras of type B defined in \cite{Ma17,Fl19}. At the end of the paper we show out the tied algebra obtained from the monoid $T^B\BB_n^B$ and $T^D\BB_n^D$.

For a commutative ring $R$ with unity $1$, and a monoid $M$, we denote by $R[M]$ the monoid ring of $M$ over $R$. Note that $R$ can be regarded as a subring of $R[M]$; the unity of $R[M]$ is $1$ and $M$ can be considered as subset of $R[M]$. Given a presentation $\langle D\mid\Gamma\rangle$ of $M$, we shall regard $D$ as a subset of $M$ and $\Gamma$ as equalities of elements in $M$. Note that $R[M]$ can be presented by $D$ subject to the relations derived from $\Gamma$. For an action $f:M\to\End(N)$, we denote by $\Fix_f(m)$ the set of points of the monoid $N$ fixed by $m\in M$, that is $\Fix_f(m)=\{n\in N\mid f_m(n)=n\}$.
 
Suppose the following data:\begin{enumerate}
\item $(W,S)$ is a finite Coxeter group of type	$\Gamma$,
\item $M$ a monoid endowed with a finite presentation $\langle A\mid X\rangle$ and an epimorphism $\phi:\langle A\mid X\rangle\to W$ satisfying $\phi(A)=S$,
\item $P$ an idempotent commutative monoid endowed with a finite presentation $\langle B\mid Y\rangle$ and an action $\chi:W\to\End(\langle B\mid Y\rangle)$,
\item $\vert E_s\vert \leq 1$ for all $s\in S$, where $E_s:=\Fix_{\chi}(s) \cap B$.
\end{enumerate}
 
From (2) and (3) we have the action $\chi\circ\phi$ of $M$ on $\langle B\mid Y\rangle$. Observe that $\Fix_{\chi}(s)=\Fix_{\chi}(t)$ for $s,t\in S$ conjugate; further if $\pi(a)=s$ for some $a\in A$, then ${\rm Fix}_{\chi}(s)={\rm Fix}_{\chi\circ \phi}(a)$.

Let $T^{\Gamma}\!M$ be the tied monoid of $M$ respect to $P$ associated to $\phi$. Thus we may consider $M$, $W$ and $P$ as subset of the ring $R[T^{\Gamma}\!M]$.
 
From now on, for every $s$ in $S$, we set pairs of invertible elements $(\uu_s,\vv_s)\in R^2$ such that:\[(\uu_s,\vv_s)=(\uu_t,\vv_t)\text{ if $s$ and $t$ are conjugate in $W$.}\]

\begin{dfn}
The \emph{bt--algebra} associated to $R[T^{\Gamma}\!M]$, denoted by $\EE(T^{\Gamma}\!M)$, is defined as the quotient of $R[T^{\Gamma}\!M]$ factoring out the following relations:\[a^2=1+(\uu_s-1)e+(\vv_s-1)ea\qquad s\in S,a\in X_s,e\in E_s,\]where $X_s:= \{a\in X \mid \phi(a)=s\}$ and $e$ is taken as $1$ if $E_s=\emptyset$.
\end{dfn}
 
Note that $e$ commutes with $a$ for $e\in E_s$, $a\in X_s$, hence the above quadratic relation is equivalent to\begin{equation}\label{055}a^{-1}=a+(1-\vv_s)\uu_s^{-1}e+(\uu_s^{-1}-1)ea\quad\text{(cf. \cite[(20)]{AiJu19})}.\end{equation}

We will see below that $\EE(T^{\Gamma}\!M)$ capture the Hecke algebras and the bt--algebra with two parameters defined in \cite{AiJu19}. 

The Hecke algebra associated to a Coxeter group $W$ corresponds to take in the data of $\EE(T^{\Gamma}\!M)$: $M$ as the Artin group attached to $W$, and $P$ as the trivial monoid; all of them with the usual presentations and the natural actions.

The bt--algebra with two parameters $\EE_n(\uu,\vv)$, correspond to $\EE(T\!\BB_n)$. More precisely, it is obtained by taking in the data: (1) $W=\mathfrak{S}_n$ and $S$ formed by the transpositions $s_i=(i,i+1),\,i\in [n-1]$, (2) $M=\BB_n$ and $\langle A\mid X\rangle$ its usual presentation, i.e. $A$ is formed by the $\sigma_i$'s and $X$ by braid relations (B0) to (B2), and $\phi:\sigma_i\mapsto s_i$, finally (3) $P=P_n$ with the presentation $\langle B\mid Y\rangle$ of Theorem \ref{001} and $\chi$ the natural action of $W$ on $P_n$. Notice that $E_s=\{\eta_i\}$. Set now $(\uu,\vv)=(\uu_s,\vv_s)$ for all $s\in S$. Then, with the previous ingredients and Proposition \ref{014}, $\EE(TM)$ results to be the algebra presented by the generators $a_i$'s and $\eta_i$'s satisfying the relations of  Proposition \ref{014} (changing $\sigma_i$ by $a_i$), together with the relations:\[a_i^2=1+(\uu-1)\eta_i+(\vv-1)\eta_ia_i,\qquad i\in[n-1].\]By using \ref{055} and relations (T3) and (T5), we deduce that (T6) can be omitted in the defining presentation of $\EE(TM)$. Therefore, $\EE(TM)$ coincides with the bt--algebra defined in \cite[Definition 2]{AiJu19}.

Now, with the same notation of Subsection 4.2, we are going to explicit $\EE(T^B\!M)$ for $M=\BB_n^B$. First, set $W=\SS_n^B$ and $S=\{t_0,t_1,\ldots,t_{n-1}\}$. Second, $M$ is considered with the presentation $\langle A\mid X\rangle$ associated to $S$ and $\phi$ the natural epimorphism mapping $A$ onto $S$. Third, $P$ is taken as the monoid $P_n^B$ with the presentation $\langle B\mid Y\rangle$ yielded in Theorem \ref{087}     and set $\chi$ the action of $\SS_n^B$ on $P_n^B$ obtained from the natural action of $\SS_{\pm n}$ on $P_{\pm n}$. Now, we have $E_{t_0}=\{\ve_{-1,1}\mid i\in [n]\}$ and $E_{t_k}=\{\ve_{k,k+1}\}$ for $k\in [n-1]$. Set $(\uu,\vv),(\pp,\qq)\in R^2$. Therefore, the algebra $\EE(T^B\BB_n^B)$ is defined by braids generators $a_0,a_1,\ldots,a_{n-1}$ and the tied generators $\th_0,\th_1,\ldots,\th_{n-1}$ subject to the relations of Theorem \ref{031} and the relations:\[a_0^2=1+(\pp-1)\th_0+(\qq-1)\th_0a_0,\quad a_i^2=1+(\uu-1)\th_i+(\vv-1)\th_ia_i,\quad i\in[n-1].\]

Notice that due to \ref{055} and (TB6), the relations (TB8) and (TB9) are redundant.

Finally, observe that the tied algebra $\EE(T^D\BB_n^D)$ attached to $T^D\BB_n^D$ turns out to be presented by generators $a_{-1},a_1,a_2\ldots,a_{n-1}$, $\th_{-1},\th_1, \th_2\ldots,\th_{n-1}$ subject to the relations (DB1) to (DB2) among the $a_i$'s, the relations (TD1) to (TD7) of Theorem \ref{TMD}, together with the relations\[a_i^2=1+(\uu-1)\th_i+(\vv-1)\th_ia_i,\quad i\in\{-1\}\cup[n-1].\]Notice that due to \ref{055} and (TD5) the relation (TD8) is redundant. Further, we have the algebra inclusion\[\EE(T^{\rm D}\BB_n^D)\subset\EE(T^{\rm B}\BB_n^B)',\]where $\EE(T^B\BB_n^B)'$ means that in $\EE(T^B\BB_n^B)$ is taken $\pp=\qq=1$. 

\subsubsection*{Acknowledgements}

We would like to thank Prof. F. Aicardi for her important observations and many useful comments to improve our manuscript.  We also thank Prof. J. East for his kind comments about the first version of the manuscript.

The first author is part of the research group GEMA Res.180/2019 VRIP--UA and was supported, in part, by the grant Fondo Apoyo a la Investigaci\'on DIUA179-2020. The second author was supported, in part, by the grant FONDECYT Regular Nro.1180036.

\bibliographystyle{plain}
\bibliography{../../../Projects/bibtex}

\begin{thebibliography}{10}

\bibitem{AiJu00}
F.~Aicardi and J.~Juyumaya.
\newblock An algebra involving braids and ties.
\newblock ICTP Preprint IC/2000/179, 2000.

\bibitem{AiJu16}
F.~Aicardi and J.~Juyumaya.
\newblock Tied links.
\newblock {\em J Knot Theor Ramif}, 25(9):1641001, 2016.

\bibitem{AiJu18}
F.~Aicardi and J.~Juyumaya.
\newblock Kauffman type invariants for tied links.
\newblock {\em Math Z}, 289(1--2):567--591, 6 2018.

\bibitem{PreAiJu18}
F.~Aicardi and J.~Juyumaya.
\newblock Tied links and invariants for singular links.
\newblock Preprint, 2018.

\bibitem{AiJu19}
F.~Aicardi and J.~Juyumaya.
\newblock Two parameters bt-algebra and invariants for links and tied links.
\newblock {\em Arnold Math J}, 2020.
\newblock to appear.

\bibitem{ArPa19}
D.~Arcis and L.~Paris.
\newblock Ordering {G}arside groups.
\newblock {\em Int J Algebr Comput}, 29(5):861--883, 3 2019.

\bibitem{Ar25}
E.~Artin.
\newblock Theorie der {Z}\"{o}pfe.
\newblock {\em Abh Math Sem Hamburg}, 4(1):47--72, 12 1925.

\bibitem{Ar47}
E.~Artin.
\newblock Theory of {B}raids.
\newblock {\em Ann Math}, 48(1):101--126, 1 1947.

\bibitem{Ba92}
J.~Baez.
\newblock Link invariants of finite type and perturbation theory.
\newblock {\em Lett Math Phys}, 1(26):43--51, 9 1992.

\bibitem{Bi93}
J.~Birman.
\newblock New points of view in knot theory.
\newblock {\em B Am Math Soc}, 11(2):253--287, 4 1993.

\bibitem{BjBr05}
A.~Bjorner and F.~Brenti.
\newblock {\em Combinatorics of {C}oxeter groups}.
\newblock Graduate Texts in Mathematics 231. Springer Science+Business Media,
  Inc., New York, 1 edition, 2005.

\bibitem{CPM16}
C.~Caprau, A.~de~la Pena, and S.~McGahan.
\newblock Virtual singular braids and links.
\newblock {\em Manuscripta Math}, 151(1--2):147--175, 9 2016.

\bibitem{CJKL18}
M.~Chlouveraki, J.~Juyumaya, K.~Karvounis, and S.~Lambropoulou.
\newblock Identifying the invariants for classical knots and links from the
  {Y}okonuma--{H}ecke algebras.
\newblock {\em Int Math Res Notices}, 2020(1):214--286, 1 2020.

\bibitem{Fi03}
D.~Fitz{G}erald.
\newblock A presentation for the monoid of uniform block permutations.
\newblock {\em B Aust Math Soc}, 68(2):317--324, 10 2003.

\bibitem{Fl19}
M.~Flores.
\newblock A braids and ties algebra of type {$B$}.
\newblock {\em J Pure Appl Algebra}, 224(1):1--32, 1 2020.

\bibitem{JaPo17}
N.~Jacon and L.~Poulan d'Andecy.
\newblock Clifford theory for {Y}okonuma--{H}ecke algebras and deformation of
  complex reflection groups.
\newblock {\em J London Math Soc}, 96(3):501--523, 9 2017.

\bibitem{Kd04}
S.~Kamada.
\newblock Invariants of virtual braids and a remark on left stabilisations and
  virtual exchange moves.
\newblock {\em Kobe J Math}, 21(1-2):33--49, 2004.

\bibitem{Ka99}
L.~Kauffman.
\newblock Virtual {K}not {T}heory.
\newblock {\em Eur J Combin}, 20(7):663--691, 10 1999.

\bibitem{KaLa04}
L.~Kauffman and S.~Lambropoulou.
\newblock Virtual braids.
\newblock {\em Fund {M}ath}, 184(1):159--186, 2004.

\bibitem{Ll79}
G.~Lallement.
\newblock {\em Semigroup and Combinatorial Applications}.
\newblock Pure and Applied Mathematics. John Wiley \& Sons, Inc., New York,
  1979.

\bibitem{La98}
T.~Lavers.
\newblock Presentations of {General} {Products} of {Monoids}.
\newblock {\em J Algebra}, 204(2):733--741, 6 1998.

\bibitem{Ma17}
I.~Marin.
\newblock Artin groups and {Y}okonuma--{H}ecke algebras.
\newblock {\em Int Math Res Notices}, 2018(13):4022--4062, 7 2018.

\bibitem{Ma18}
I.~Marin.
\newblock Lattice extensions of {H}ecke algebras.
\newblock {\em Journal of Algebra}, 503:104--120, 6 2018.

\bibitem{Or00}
F.~Oravecz.
\newblock Symmetric partitions and pairings.
\newblock {\em Colloq Math}, 86(1):93--101, 2000.

\bibitem{Re97}
V.~Reiner.
\newblock Non-crossing partitions for classical reflection groups.
\newblock {\em Discrete Math}, (1--3):195--222, 12 1997.

\bibitem{RoGa99}
J.~Rosales and P.~Garc\'ia.
\newblock {\em Finitely {G}enerated {C}ommutative {M}onoids}.
\newblock Nova Science Publishers Inc., New York, 1 edition, 1999.

\bibitem{Ru95}
N.~Ru\v{s}kuc.
\newblock {\em Semigroup Presentations}.
\newblock PhD thesis, University of St. Andrews, 4 1995.

\bibitem{RyH11}
S.~Ryom-Hansen.
\newblock On the representation theory of an algebra of braids and ties.
\newblock {\em J Algebr Comb}, 33(1):57--79, 2 2011.

\bibitem{Sm88}
L.~Smolin.
\newblock Knot theory, loop space and the diffeomorphism group.
\newblock In {\em New Perspectives in Canonical Gravity}, volume~5 of {\em
  Monogr Textbooks Phys Sci}, chapter~6, pages 245--266. Bibliopolis, Naples, 2
  1988.

\end{thebibliography}



\end{document}